\newtheorem{theorem}{Theorem}[section]
\newtheorem{claim}{Claim}[theorem]
\newtheorem{subclaim}{Subclaim}[claim]
\newtheorem{question}[theorem]{Question}
\newtheorem{lemma}[theorem]{Lemma}
\newtheorem{fact}[theorem]{Fact}	
\newtheorem{prop}[theorem]{Proposition}
\newtheorem{cor}[theorem]{Corollary}
\newtheorem*{thma}{Theorem~A}
\newtheorem*{thmb}{Theorem~B}
\newtheorem*{thmc}{Theorem~C}
\newtheorem*{thmd}{Theorem~D}
\theoremstyle{definition}
\newtheorem{definition}[theorem]{Definition}
\theoremstyle{remark}
\newtheorem{remark}[theorem]{Remark} 
\newtheorem{conv}[theorem]{Convention}
\DeclareMathOperator{\unif}{Unif}
\DeclareMathOperator{\reg}{Reg}
\DeclareMathOperator{\otp}{otp}
\DeclareMathOperator{\ch}{CH}
\DeclareMathOperator{\gch}{GCH}
\newcommand*\axiomfont[1]{\textsf{\textup{#1}}}
\newcommand\ma{\axiomfont{MA}}
\newcommand\pfa{\axiomfont{PFA}}
\newcommand\pid{\axiomfont{PID}}
\DeclareMathOperator{\hd}{hd}
\DeclareMathOperator{\Lin}{L}
\DeclareMathOperator{\acc}{acc}
\DeclareMathOperator{\nacc}{nacc}
\DeclareMathOperator{\zfc}{ZFC}
\DeclareMathOperator{\ad}{AD}
\DeclareMathOperator{\cl}{cl}
\DeclareMathOperator{\cf}{cf}
\DeclareMathOperator{\mup}{mup}
\DeclareMathOperator{\dom}{dom}
\DeclareMathOperator{\im}{Im}
\DeclareMathOperator{\h}{ht}
\DeclareMathOperator{\p}{P}
\newcommand*\sqleft[1]{\mathrel{_{#1}{\sqsubseteq}}}
\renewcommand\mid{\mathrel{|}\allowbreak}
\def\s{\subseteq}
\def\br{\blacktriangleright}
\title[A guessing principle from a Souslin tree]{A guessing principle from a Souslin tree,\\with applications to topology}
\author{Assaf Rinot}
\address{Department of Mathematics, Bar-Ilan University, Ramat-Gan 5290002, Israel.}
\urladdr{http://www.assafrinot.com}
\author{Roy Shalev}
\address{Department of Mathematics, Bar-Ilan University, Ramat-Gan 5290002, Israel.}
\urladdr{https://roy-shalev.github.io/}
\subjclass[2010]{Primary 03E05, 54G20; Secondary 03E35, 03E65}
\begin{document}
\dedicatory{This paper is dedicated to the memory of Kenneth Kunen (1943--2020)}
\begin{abstract}
	We introduce a new combinatorial principle which we call $\clubsuit_{\ad}$.
	This principle asserts the existence of a certain multi-ladder system with guessing and almost-disjointness features,
	and is shown to be sufficient for carrying out de Caux type constructions of topological spaces.

	Our main result states that strong instances of $\clubsuit_{\ad}$ follow from the existence of a Souslin tree.
	It is also shown that the weakest instance of $\clubsuit_{\ad}$ does not follow from the existence of an almost Souslin tree.

	As an application, we obtain a simple, de Caux type proof of Rudin's result that if there is a Souslin tree,
	then there is an $S$-space which is Dowker.
	\end{abstract}
\maketitle

\section{Introduction} 
All topological spaces under consideration are assumed to be $T_1$ and Hausdorff.

A \emph{Dowker space} is a normal topological space whose product with the unit interval is not normal.
Dowker \cite{Dowker_C.H.} raised the question of their very existence, and
gave a useful characterization of these spaces.
The first consistent example of such a space was given by Rudin \cite{Rudin_souslin_line_dowker_space}, 
who constructed a Dowker space of size $\aleph_1$, assuming the existence of a Souslin tree. 
Later on, in \cite{Rudin_first_Dowker_ZFC_example}, Rudin constructed another Dowker space, this time in ZFC, and of  cardinality $ (\aleph_\omega)^{\aleph_0}$.
Two decades later, Balogh \cite{Balogh_space} gave a $\zfc$ construction of a Dowker space of size $2^{\aleph_0}$, and Kojman and Shelah \cite{MR1605988} gave a $\zfc$ construction of a Dowker space of size $\aleph_{\omega+1}$.
A question remaining of focal interest ever since is whether $\zfc$ proves the existence of a \emph{small} Dowker space.
One of the sleekest consistent constructions of a Dowker space of size $\aleph_1$
may be found in de Caux's paper \cite{de_Caux_space}, assuming the combinatorial principle $\clubsuit$.
Whether $\clubsuit$ implies the existence of a Souslin tree
was asked by Juh\'asz around 1987 and remains open to this date.
For a comprehensive survey on Dowker spaces, we refer the reader to \cite{Rudin_chapter_handbook_of_set_topology}, \cite{Szepytycki_and_Weiss_review} and \cite{Dowker_Open_Quesions}.

An \emph{$S$-space} is a regular topological space which is hereditary separable but not hereditary Lindel\"of.
Whether such a space exists was asked at the late 1960's by Hajnal and Juh\'asz and independently by Countryman.
Along the years many consistent constructions of $S$-spaces were found,
many of which are due to Kunen and his co-authors  \cite{two_more_S_spaces,MR667660,MR1234990,MR2023413,MR2492307,MR3690760,MR3962617}.
Rudin showed that the existence of a Souslin tree yields an $S$-space \cite{rudin_S_space_Souslin},
and even an $S$-space which is Dowker \cite{rudin_separable_dowker_space}.
Juhász, Kunen, and Rudin \cite{two_more_S_spaces} gave an example from $\ch$ of a first countable, locally compact
$S$-space, known as the \emph{Kunen Line},
as well as an example from $\ch$ of a first countable, $S$-space which is Dowker.

In the other direction, Kunen \cite{MR0440487} proved that, assuming $\ma_{\aleph_1}$, there are no strong $S$-spaces (that is, spaces all of whose finite powers are $S$-spaces), 
Szentmikl\'{o}ssy \cite{MR588860} proved that, assuming $\ma_{\aleph_1}$, there are no compact $S$-spaces,
and Todor\v{c}evi\'{c} \cite{Stevo_No_S_Space} proved that, assuming $\pfa$, there are no $S$-spaces whatsoever.
For a comprehensive survey on $S$-spaces, we refer the reader to \cite{MR588816}, \cite{MR776626} and \cite{MR2713439}.

An \emph{$O$-space} is an uncountable regular topological space all of whose uncountable open sets are co-countable.
Note that any $O$-space is an $S$-space of size $\aleph_1$ all of whose closed sets are $G_\delta$ (i.e., the space is \emph{perfect}),
and hence not Dowker (cf. \cite[pp.~248]{MR467673}, \cite[\S2]{CH_with_no_Ostaszweski_spaces} and \cite[\S5]{MR1934262}).
This class of spaces is named after Adam Ostaszewski who constructed in \cite{Ostaszewski_space}, assuming $\clubsuit$, a normal, 
locally compact, non-Lindel\"of $O$-space. He also showed that, assuming $\ch$, the space can be made countably compact.
A few years later, Mohammed Dahroug, who was a Ph.D. student of William Weiss at the University of Toronto,
constructed a first-countable, locally compact,
non-Lindel\"of $O$-space from a Souslin tree.
He also showed that, assuming $\ch$, the space can be made countably compact and normal.
Dahroug's work was never typed down.

The purpose of this paper is to formulate a combinatorial principle that follows both from $\clubsuit$ and from the existence of a Souslin tree,
and is still strong enough to yield an $S$-space which is Dowker, as well as a normal $O$-space. We call it $\clubsuit_{\ad}$. The exact definition may be found in Definition~\ref{clubsuit AD definition} below.
The main results of this paper read as follows.
\begin{thma} \begin{enumerate}
\item For every infinite cardinal $\lambda$, if there exists a $\cf(\lambda)$-complete $\lambda^+$-Souslin tree,
then for every partition $\mathcal S$ of $E^{\lambda^+}_{\cf(\lambda)}$ into stationary sets,
$\clubsuit_{\ad}(\mathcal S,{<}{\cf(\lambda)})$ holds.
\item For every regular uncountable cardinal $\kappa$, if there exists a regressive $\kappa$-Souslin tree,
then for every partition $\mathcal S$ of $E^\kappa_\omega$ into stationary sets,
$\clubsuit_{\ad}(\mathcal S,{<}\omega)$ holds.
\end{enumerate}
\end{thma}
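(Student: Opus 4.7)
The plan is to build the $\clubsuit_{\ad}$-sequence directly from the given Souslin tree. Write $\kappa := \lambda^+$ and $\Gamma := E^{\lambda^+}_{\cf(\lambda)}$ in case~(1), and $\Gamma := E^\kappa_\omega$ in case~(2); let $T$ denote the given Souslin tree. I would work in a streamlined presentation of $T$ --- in which the nodes at each level $\alpha$ are coded by ordinals via a coherent sequence of bijections $\pi_\alpha : T_\alpha \to \alpha$ --- so that each branch $x\in T_\alpha$ gives rise to a canonical cofinal subset $b_{x,\alpha}\s\alpha$ of order type $\cf(\alpha)$, read off from the $\pi$-codes of the ancestors of $x$ along a fixed cofinal sequence in $\alpha$. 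The candidate multi-ladder at $\alpha$ is then $\mathcal{C}_\alpha := \{b_{x,\alpha} \mid x \in T_\alpha\}$, and $\langle \mathcal{C}_\alpha \mid \alpha\in\Gamma\rangle$ is the proposed $\clubsuit_{\ad}$-sequence.

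The almost-disjointness clause comes essentially for free from the tree structure: any two distinct $x,y\in T_\alpha$ split at some level $\beta<\alpha$, whence $b_{x,\alpha}\cap b_{y,\alpha}$ is bounded in $\alpha$. To upgrade this to the full ${<}\cf(\lambda)$ (respectively ${<}\omega$) almost-disjointness, one uses the extra hypothesis on $T$: in case~(1), $\cf(\lambda)$-completeness ensures that any ${<}\cf(\lambda)$ many distinct elements of $T_\alpha$ share a common splitting configuration bounded in $\alpha$; in case~(2), regressiveness of $T$ provides a uniform bound for finite subfamilies.

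The guessing clause is where Souslin-ness enters. Given $i<\kappa$ and an unbounded $A\s\kappa$, one must produce $\alpha\in S_i$ and $x\in T_\alpha$ with $b_{x,\alpha}$ guessing $A$ in the sense of Definition~\ref{clubsuit AD definition}. Assuming not, every $\alpha\in S_i$ and every $x\in T_\alpha$ come with an ordinal $f_\alpha(x)<\alpha$ witnessing the failure. A pressing-down argument via Fodor's Lemma, combined with the fact that at each limit stage $\alpha$ the branches $x\in T_\alpha$ can be distinguished by their restriction to a fixed bounded level, allows the extraction of an uncountable antichain in $T$ from the family of failure witnesses, contradicting the $\kappa$-cc of $T$.

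The technical heart is this last step: the coherent bijections $\pi_\alpha$ must be chosen so that a failure of guessing at $\alpha$ pins down incompatible nodes at a level determined by the failure witness. This is where $\cf(\lambda)$-completeness is invoked in case~(1) --- to close branches through $T\restriction\alpha$ under unions of length ${<}\cf(\lambda)$ --- and where the regressive function on $T$ is invoked in case~(2) to convert failure witnesses into antichain candidates concentrated at finitely many levels. Once this bookkeeping is in place, Souslin-ness finishes the job and all clauses of $\clubsuit_{\ad}$ are verified.
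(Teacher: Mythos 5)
Your candidate system $\mathcal C_\alpha:=\{b_{x,\alpha}\mid x\in T_\alpha\}$ provably fails the guessing clause, so the gap is not merely in the write-up. Clause~(2) of Definition~\ref{clubsuit AD definition} demands stationarily many $\alpha\in S$ at which \emph{every} $A\in\mathcal A_\alpha$ meets $B$ cofinally. Take $B:=w^\uparrow$ for any node $w$ (a $\kappa$-sized subset of $\kappa$ once the tree is normal and ordinal-based). Since the tree splits, every level $T_\alpha$ with $\alpha>\h(w)$ contains some $x$ incomparable with $w$; all ancestors of such an $x$ above level $\h(w)$ are likewise incomparable with $w$, so $b_{x,\alpha}\cap B$ is bounded below $\alpha$ and $G(S,\{B\})$ is nonstationary. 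The paper's construction (Theorem~\ref{thm111}) avoids exactly this: it does \emph{not} take all branches at level $\alpha$, but fixes an enumeration $\langle W_\eta\mid\eta<\kappa\rangle$ of the ${<}\theta$-sized node-sets, reserves for each $\eta$ a set $S_\eta$ meeting every cell of $\mathcal S$ stationarily, and at $\alpha\in S_\eta$ uses only branches passing through the immediate successors of the members of $W_\eta$; guessing then follows from Lemma~\ref{Souslin_denseness} (every $\kappa$-sized set is dense above some node) together with a club of levels capturing maximal antichains of $B\cap w^\uparrow$. Note also that Definition~\ref{almost-disjoint ladder-system}(2) requires the members of $\mathcal A_\alpha$ to be \emph{pairwise disjoint}, not merely almost disjoint, and two branches of $T_\alpha$ share all ancestors below their splitting level; the paper handles this by truncating all chains at a single level $\epsilon_\eta$ below $\min(S_\eta)$ and rooting them at distinct immediate successors there.

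Your account of the almost-disjointness and of the role of the hypotheses is also off. For $\alpha<\alpha'$, $x\in T_\alpha$, $y\in T_{\alpha'}$ with $x<_Ty$, the branch below $y$ extends the branch below $x$, so bounding $b_{x,\alpha}\cap b_{y,\alpha'}$ rests entirely on the choice of the level-sequences $e_\alpha$ and not on any ``splitting configuration''. The genuine mechanism in the paper is that the chains placed into $\mathcal A_\alpha$ are \emph{vanishing} $\alpha$-branches, i.e., $\alpha$-branches with no upper bound in $T$: a chain living at a higher level $\alpha'$ would otherwise provide such a bound, and vanishing is what forces $\sup(A\cap A')<\alpha$ in Claim~\ref{Claim - Guess almost disjoint}. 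This is precisely where $\cf(\lambda)$-completeness and regressiveness are consumed: Lemmas~\ref{vanishcomplete} and~\ref{regressivecharacteristic} show that these hypotheses yield $V(\mathbf T)\supseteq E^{\lambda^+}_{\cf(\lambda)}$, respectively $V(\mathbf T)\supseteq E^\kappa_\omega$, so that vanishing branches through every prescribed node exist at every relevant level. Your branches $b_{x,\alpha}$ are by construction non-vanishing ($x$ itself is an upper bound), so this mechanism is unavailable to you, and the uses you assign to completeness and regressiveness do not correspond to anything needed or provable. Finally, the parameter ${<}\cf(\lambda)$ (resp.\ ${<}\omega$) is governed by the cardinality of the node-sets $W_\eta$ (each ladder being a union of $|W_\eta|<\theta\le\cf(\alpha)$ many chains), not by any strengthened almost-disjointness of the tree.
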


\begin{thmb} Suppose that $\mathcal S$ is an infinite partition of some non-reflecting stationary subset of a regular uncountable cardinal $\kappa$.
If $\clubsuit_{\ad}(\mathcal S,2)$ holds,
then there exists a Dowker space of cardinality $\kappa$.
\end{thmb}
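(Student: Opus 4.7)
The plan is to adapt de Caux's $\clubsuit$-based construction from \cite{de_Caux_space} to this setting. The underlying set of the space will be $X = \kappa$, and the infinite partition $\mathcal{S} = \langle S_n : n < \omega \rangle$ will replace de Caux's product with $\omega$: the horizontal level $\omega_1 \times \{n\}$ in his construction is now played by $S_n$ itself. Writing $S := \bigcup_n S_n$ and letting $\langle \mathcal{A}_\alpha : \alpha \in S \rangle$ be the multi-ladder system supplied by $\clubsuit_{\ad}(\mathcal{S},2)$, I would declare points in $\kappa \setminus S$ isolated whenever possible, and for each $\alpha \in S$ take basic neighborhoods of $\alpha$ to be tails of $\{\alpha\} \cup \bigcup \mathcal{A}_\alpha$, generated recursively in the de Caux manner. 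The candidate witness for Dowkerness will be $F_n := \bigcup_{m \geq n} S_m$, which is manifestly a decreasing sequence with $\bigcap_n F_n = \emptyset$.

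Routine verifications should then yield that the resulting space is Hausdorff, regular, and that each $F_n$ is closed. The last of these uses the almost-disjointness feature of $\clubsuit_{\ad}$: a point $\alpha \in S_k$ with $k < n$ meets any particular $S_m$ only boundedly often along its ladders, so a suitable tail of a basic neighborhood of $\alpha$ avoids $F_n$. The non-reflection hypothesis on $S$ enters to keep the topology well-defined: at every $\alpha < \kappa$ of cofinality $> \omega$, one may fix a club in $\alpha$ disjoint from $S$ and use its enumeration as a neighborhood base, ensuring that only countably-cofinal points ever receive ``ladder'' neighborhoods.

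Normality and the failure of countable paracompactness are the two substantive checks. For normality, given disjoint closed sets $A, B$, I would recursively thin the ladder-generated neighborhoods at points of $A$ to avoid accumulation on $B$ and vice-versa; the almost-disjointness feature of $\clubsuit_{\ad}$ leaves enough room at every stage. For the failure of countable paracompactness, suppose toward a contradiction that $\langle U_n : n < \omega \rangle$ is an open expansion with $U_n \supseteq F_n$ and $\bigcap_n U_n = \emptyset$. The guessing feature of $\clubsuit_{\ad}$ is designed precisely to produce an $\alpha \in S_0$ whose ladders are eventually captured by every single $U_n$, forcing $\alpha \in \bigcap_n \overline{U_n}$ and, after one further argument using the expansion property (or, after an appropriate reformulation in terms of Dowker's criterion), yielding the desired contradiction.

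The main obstacle I anticipate is calibrating the argument so that the almost-disjointness is strong enough for the normality step while the guessing remains strong enough to defeat every candidate countable expansion: the two demands pull in opposite directions. Presumably the formulation of $\clubsuit_{\ad}(\mathcal{S},2)$ in Definition~\ref{clubsuit AD definition} is engineered specifically so that the \emph{pair} of ladders at each point resolves this tension, one ladder carrying separation data while the other carries the guessing, or the two being interleaved so that both features remain available simultaneously.
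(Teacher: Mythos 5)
Your overall architecture (a ladder-system topology on $\kappa$ with the partition pieces $S_n$ serving as levels and $F_n=\bigcup_{m\ge n}S_m$ as the witnessing closed sets) matches the paper's, but the proposal has a genuine gap at the very first substantive step: the claim that $F_n$ is closed because ``a point $\alpha\in S_k$ with $k<n$ meets any particular $S_m$ only boundedly often along its ladders.'' Nothing in Definition~\ref{clubsuit AD definition} gives this; the almost-disjointness clause only bounds the intersection of two \emph{distinct ladders}, and says nothing about how a single ladder meets the pieces of the partition. Indeed, since each $S_m$ is a $\kappa$-sized set, the guessing clause forces $\sup(A\cap S_m)=\alpha$ for stationarily many $\alpha$ and all $A\in\mathcal A_\alpha$, so your claim fails badly for the raw system. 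The paper's fix is to \emph{trim} the ladders: after enumerating $\mathcal S=\{S_{n+1}\mid n<\omega\}$, setting $S_0:=\kappa\setminus S$ and $W_n:=\bigcup_{i\le n}S_i$, one replaces $A_\alpha$ (for $\alpha\in S_{n+1}$) by $L_\alpha:=W_n\cap A_\alpha$ when this is still cofinal in $\alpha$ and by $\emptyset$ otherwise; then each $W_n$ is open by fiat, so $D_n:=\kappa\setminus W_n$ is closed, while the guessing clause applied to a pair $B_0,B_1$ guarantees the trimmed system still guesses on a tail of the $S_n$'s. This device is absent from your proposal and cannot be recovered from almost-disjointness.

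Two further misreadings would derail the remaining steps. First, the parameter $2$ in $\clubsuit_{\ad}(\mathcal S,2)$ is the number of cofinal sets guessed \emph{simultaneously}, not a count of ladders carrying separate ``separation'' and ``guessing'' data (in fact the proof only needs one ladder per point, i.e.\ $\clubsuit_{\ad}(\mathcal S,1,2)$); its purpose is Lemma~\ref{another lambda^+ dowker - no two disjoint big closed sets clubsuit dowker}, that there are no two disjoint closed sets of size $\kappa$. Combined with Lemma~\ref{Lemma - Dowker general argument}, this yields the failure of countable paracompactness essentially for free: the complement of each open $U_n\supseteq D_n$ is closed and disjoint from the $\kappa$-sized closed set $D_n$, hence of size $<\kappa$, so $\bigcap_n U_n\neq\emptyset$. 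This replaces your sketchy direct argument, which as written aims at a point of $S_0=\kappa\setminus S$, where there are no ladders at all. Second, non-reflection is not needed to make the topology well-defined (points off $S$ are simply isolated, and $S$ need not consist of countably-cofinal points); it is needed for \emph{normality}: via Proposition~\ref{Lemma - non-reflecting stat, diagonlization of initial seg} it lets one disjointify the ladders below every $\xi<\kappa$, yielding the uniformization-type property (``almost $\mathcal P_0$'') that separates a bounded closed set from its disjoint partner, the only one that can be unbounded. Your ``recursive thinning'' sketch does not engage with this and offers no mechanism for the limit stages where $\kappa$-many thinned neighborhoods must simultaneously avoid the other closed set.
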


Note that for every infinite regular cardinal $\lambda$, $E^{\lambda^+}_\lambda$ is a non-reflecting stationary subset of $\lambda^+$.

\begin{thmc} If $\clubsuit_{\ad}(\{\omega_1\},1)$ holds, then there exists a  collectionwise normal non-Lindel\"of $O$-space.
\end{thmc}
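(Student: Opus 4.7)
The plan is to adapt Ostaszewski's classical $\clubsuit$-construction \cite{Ostaszewski_space} to the stronger principle $\clubsuit_{\ad}(\{\omega_1\},1)$, leveraging the extra almost-disjointness data to upgrade normality to collectionwise normality. The underlying set will be $\omega_1$. By transfinite recursion on $\alpha<\omega_1$, I would define an increasing chain $\langle \tau_\alpha\mid \alpha<\omega_1\rangle$ of locally compact, zero-dimensional, Hausdorff topologies, with $\tau_\alpha$ living on $\alpha+1$. Successor ordinals are declared isolated. At a countable limit $\alpha$, the ladder $c_\alpha$ supplied by $\clubsuit_{\ad}$ is used to build a countable neighborhood base at $\alpha$: for each $n<\omega$, pair $\{\alpha\}$ with a clopen compact neighborhood in $\tau_{<\alpha}$ of the tail $c_\alpha\setminus n$, with standard bookkeeping ensuring that local compactness, zero-dimensionality, and the existence of a countable local base at each point are preserved into the limit.

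The $O$-space property then follows from the guessing feature. For every uncountable $A\subseteq\omega_1$, the principle supplies some (in fact stationarily many) limit $\alpha$ with $c_\alpha\subseteq^* A$; at any such $\alpha$, every open neighborhood of $\alpha$ already meets $A$ in a tail of $c_\alpha$, so $\alpha$ lies in the closure of $A$. A routine iteration then shows that every uncountable open $U\subseteq\omega_1$ contains a club, hence is co-countable. Non-Lindel\"ofness is immediate: the space is locally countable and of size $\aleph_1$, so the open cover by countable local neighborhoods admits no countable subcover.

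Finally, for collectionwise normality I would argue as follows. Given any discrete family $\mathcal{F}$ of closed sets, the $O$-property forces all but at most one member of $\mathcal{F}$ to be countable (two disjoint uncountable closed sets would both be co-countable, hence meet). The single exceptional uncountable member, if any, has co-countable interior and can be separated from the rest after shrinking by a countable clopen set. The countable members then need pairwise disjoint clopen neighborhoods, and this is precisely where the almost-disjointness strengthening in $\clubsuit_{\ad}$ pays off: the $\ad$-clause lets one transport the combinatorial almost-disjointness of the ladders $\{c_\alpha\}$ into topological almost-disjointness of canonical clopen neighborhoods, enabling the required simultaneous separation. The principal obstacle, as is standard for such recursive constructions, is to orchestrate the limit stages so that the three demands---guessing (for the $O$-property), almost-disjointness (for collectionwise normality), and local compactness with zero-dimensionality (for regularity)---are simultaneously and coherently maintained across all $\omega_1$-many steps.
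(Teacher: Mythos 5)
Your proposal takes a genuinely different route from the paper --- the paper runs no Ostaszewski-style recursion at all. It defines the topology on $\omega_1$ in a single step, de~Caux style: each limit $\alpha$ is treated as a block $[\alpha,\alpha+\omega)$, the $\omega$ many pairwise disjoint ladders of $\mathcal A_\alpha$ are distributed among the points $\alpha+i$ of that block, and a basic neighborhood of $\alpha+i$ is just $\{\alpha+i\}$ together with a tail of its single ladder $A_{\alpha+i}$ (which converges to $\alpha$, not to $\alpha+i$). Unfortunately, your recursive locally compact variant has several concrete gaps. Most immediately, declaring all successor ordinals isolated is incompatible with the conclusion: the set of successors would then be an uncountable open set whose complement (the limit ordinals) is uncountable, so the space cannot be an $O$-space. (Any $O$-space is hereditarily separable and hence has only countably many isolated points; in the paper's space only the points below $\omega$ are isolated.) Second, you misquote the hypothesis: $\clubsuit_{\ad}$ guarantees only $\sup(A\cap B)=\alpha$ for each $A\in\mathcal A_\alpha$, never $c_\alpha\subseteq^* B$; since the paper shows $\clubsuit_{\ad}(\{\omega_1\},{<}\omega)$ is consistent with the failure of $\clubsuit(\omega_1)$, any step genuinely requiring almost-containment is not a proof from the stated assumption.

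Beyond that, your route to the $O$-property does not close. With one ladder per limit and only cofinal guessing, you obtain that $\cl(B)$ meets a stationary set, and the inference ``every uncountable open set contains a club, hence is co-countable'' is a non sequitur: the complement of a club in $\omega_1$ can have size $\aleph_1$. The paper instead uses all $\omega$ ladders of $\mathcal A_\alpha$ simultaneously to place the entire block $[\alpha,\alpha+\omega)$ inside $\cl(B\cap\alpha)$, and proves separately, by induction along blocks, that $\cl([\alpha,\alpha+\omega))=\omega_1\setminus\alpha$; your sketch never uses the multi-ladder structure that the hypothesis $\clubsuit_{\ad}(\{\omega_1\},1)$ provides. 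Finally, for collectionwise normality the combinatorial almost-disjointness of the ladders does not transfer to the compact open neighborhoods of a recursion of your kind: those neighborhoods fatten each ladder point into an open set chosen at an earlier stage, and two fattened tails can intersect heavily even when the underlying ladders are almost disjoint. The paper's neighborhoods are literally ladder tails, which is what allows Proposition~\ref{Proposition - disjointify multi-ladder system} to disjointify all relevant neighborhoods below a bound $\xi$ and the abstract separation Lemma~\ref{general normal lemma} to apply. As written, each of the main claimed properties rests on a step that either fails outright or is unsupported.
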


\begin{thmd} If $\clubsuit_{\ad}(\{E^{\lambda^+}_{\lambda}\},1)$ holds for an infinite regular cardinal $\lambda$,
then there exists a collectionwise normal Dowker space of cardinality $\lambda^+$, having hereditary density $\lambda$
and Lindel\"of degree $\lambda^+$.
\end{thmd}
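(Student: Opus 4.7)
The plan is to mirror the de Caux-style Dowker construction at the higher cardinal $\lambda^+$, using the ladder system supplied by $\clubsuit_{\ad}$ in place of an ordinary $\clubsuit$-sequence. Apply the hypothesis to fix a sequence $\vec{C}=\langle C_\delta\mid\delta\in E^{\lambda^+}_\lambda\rangle$ of cofinal subsets of order-type $\lambda$ satisfying the almost-disjointness condition $|C_\alpha\cap C_\beta|<\lambda$ for $\alpha\neq\beta$ together with the associated guessing feature. Next, invoke Solovay's theorem to partition $E^{\lambda^+}_\lambda=\bigsqcup_{\xi<\lambda}S_\xi$ into pairwise disjoint stationary pieces; the nested family $F_\xi:=\bigcup_{\eta\geq\xi}S_\eta$ will eventually serve as the decreasing sequence of closed sets witnessing failure of $\lambda$-paracompactness. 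The point of the single-color hypothesis is that the partition is manufactured externally, so a stronger version of $\clubsuit_{\ad}$ is not needed.

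I would then define a topology on $X=\lambda^+$ by declaring each $\delta\in S_\xi$ to have basic open neighborhoods of the form $V_\delta(\beta)=\{\delta\}\cup\{\gamma\in C_\delta\setminus\beta\colon\gamma\in S_\eta\text{ for some }\eta>\xi\}$ for $\beta<\delta$, supplemented by a carefully chosen local rule at ordinals outside $E^{\lambda^+}_\lambda$ arranged so that (i)~the space is right-separated by the natural well-ordering of $\lambda^+$, and (ii)~no discrete subspace of size $\lambda^+$ appears. With such a topology in hand, the cardinality is $\lambda^+$, the Lindel\"of degree is $\lambda^+$ (witnessed by the cover by initial segments together with $\cf(\lambda^+)=\lambda^+$), and the hereditary density is $\lambda$, since each basic neighborhood meets any given subset in a set of cardinality at most $\lambda$.

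The hard work splits in two: collectionwise normality and the refutation of countable paracompactness. Collectionwise normality would be proved by an inductive separation argument, leaning essentially on the almost-disjointness feature of $\vec{C}$: once the tail parameters $\beta_0,\beta_1$ of basic neighborhoods of distinct points $\alpha,\alpha'$ exceed the bounded tail of $C_\alpha\cap C_{\alpha'}$, the neighborhoods become disjoint, which permits one to separate any closed discrete family. Failure of countable paracompactness is the main obstacle. Suppose, toward a contradiction, that $\langle U_\xi\mid\xi<\lambda\rangle$ is a decreasing family of open sets with $F_\xi\subseteq U_\xi$ and $\bigcap_{\xi}U_\xi=\emptyset$; extract from the $U_\xi$'s a suitably cofinal subset $A\subseteq\lambda^+$ encoding the chosen tail parameters, and invoke the guessing feature of $\clubsuit_{\ad}$ to produce a $\delta\in E^{\lambda^+}_\lambda$ with $C_\delta$ almost contained in $A$; such a $\delta$ is then forced into $\bigcap_\xi U_\xi$, giving the sought contradiction. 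Choreographing the partition $\langle S_\xi\mid\xi<\lambda\rangle$, the ladders $\vec{C}$, and the local topology so that hereditary density, the normality induction, and the guessing-based Dowker witness all simultaneously go through is the technical heart of the argument.
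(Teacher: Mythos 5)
Your proposal diverges from the paper's construction in ways that are not merely cosmetic; two of the divergences are fatal.

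First, the witness to the Dowker property must be a \emph{countable} decreasing sequence of closed sets with empty intersection: by Dowker's characterization, normal plus not countably paracompact is what is needed, and a decreasing family $\langle F_\xi\mid\xi<\lambda\rangle$ indexed by $\lambda>\omega$ witnesses at best a failure of $\lambda$-paracompactness, which does not yield a Dowker space. Worse, no countable subfamily of your $F_\xi$'s can have empty intersection, since $\bigcap_{n<\omega}F_{\xi_n}=F_{\sup_n\xi_n}\neq\emptyset$. This is precisely why the paper does not build the space on $\lambda^+$ at all: its underlying set is $\lambda^+\times\omega$, and the required countable decreasing sequence of closed sets is $D_n:=\lambda^+\times(\omega\setminus n)$, manufactured by the extra $\omega$ factor rather than by a partition of $E^{\lambda^+}_\lambda$. (Your partition into $\lambda$ stationary pieces, obtained from Solovay's theorem, plays no role in the paper's proof of Theorem~D; partitions of that kind appear only in the Section~3 ladder-system construction, where the partition is countable and the hypothesis is the different principle $\clubsuit_{\ad}(\mathcal S,1,2)$.)

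Second, you use a single ladder $C_\delta$ per point and you invoke a guessing feature that the hypothesis does not provide. The principle $\clubsuit_{\ad}$ only guarantees that stationarily many $\alpha$ satisfy $\sup(A\cap B)=\alpha$ for every $A\in\mathcal A_\alpha$ --- cofinal intersection, not $C_\delta\subseteq^* A$; so the step ``produce a $\delta$ with $C_\delta$ almost contained in $A$'' is unavailable, and this weakened guessing is the whole point of the principle. Moreover, with $\theta=1$ one can only guess one cofinal set at a time, so a single ladder per point cannot rule out two disjoint closed sets of size $\lambda^+$ --- the fact on which both the normality argument and Lemma~\ref{Lemma - Dowker general argument} rest. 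The paper compensates for $\theta=1$ by exploiting the full strength of $|\mathcal A_\alpha|=\lambda$: the $\lambda$-many ladders at each $\alpha$ are distributed over the levels of $\lambda^+\times\omega$ (the point $(\beta,n)$ uses the ladders $A^{j,n}_\beta$ for $j\le n$), so that single-set guessing already forces every closed set of size $\lambda^+$ to contain a tail $(\lambda^+\setminus\beta)\times(\omega\setminus k)$, and any two tails meet. Finally, as a smaller point, your neighborhoods point from $S_\xi$ into \emph{higher}-indexed pieces, which makes the sets $F_\xi$ fail to be closed; the paper's neighborhoods point downward in the level structure, and hereditary density $\le\lambda$ is derived from the guessing feature (a guessed $\alpha$ reflects any $\lambda^+$-sized set to a dense subset of size $\lambda$), not from a counting of basic neighborhoods.
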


\subsection{Organization of this paper} In Section~\ref{section2}, we formulate the guessing principle $\clubsuit_{\ad}(\mathcal S,{<}\theta)$ and its refinement $\clubsuit_{\ad}(\mathcal S,\mu,{<}\theta)$,
prove  that $\clubsuit(S)$ entails a strong instance of $\clubsuit_{\ad}(\mathcal S,{<}\omega)$,
and that it is consistent that  $\clubsuit_{\ad}(\{\omega_1\},{<}\omega)$ holds, but $\clubsuit(\omega_1)$ fails.
It is also shown that the weakest instance $\clubsuit_{\ad}(\{\kappa\},1,1)$ fails for $\kappa$ weakly compact,
and may consistently fail for $\kappa:=\omega_1$.
The proof of Theorem~A will be found there.

In Section~\ref{sectionladdersystemspace}, we present a $\clubsuit_{\ad}(\mathcal S,1,2)$-based construction of a Dowker space which is moreover a ladder-system space.
This covers scenarios previously considered by Good, Rudin and  Weiss, in which the Dowker spaces constructed were not ladder-system spaces.
The proof of Theorem~B will be found there.

In Section~\ref{Collectionwisesection}, we present two $\clubsuit_{\ad}(\{E^{\lambda^+}_\lambda\},\lambda,1)$-based constructions of collectionwise normal spaces of small hereditary density and large Lindel\"of degree.
These spaces are not ladder-system spaces, rather, they are de Caux type spaces.
The proof of Theorems C and D will be found there.

In Section~\ref{normalsquare}, we comment on a construction of Szeptycki of an $\aleph_2$-sized Dowker space with a normal square,
assuming that $ \diamondsuit^*(S) $ holds for some stationary $S\s E^{\omega_2}_{\omega_1}$.
Here, it is demonstrated that the construction may be carried out from a weaker assumption which is known to be consistent with the failure of $\clubsuit(E^{\omega_2}_{\omega_1})$.

\subsection{Notation}\label{notationsubsection}
The \emph{hereditary density number} of a topological space $\mathbb X$, denoted $\hd(\mathbb X)$, is the least infinite cardinal $\lambda$ such that each subspace of $\mathbb X$ contains a dense subset of cardinality at most $\lambda$.
The \emph{Lindel\"of degree} of $\mathbb X$, denoted $\Lin(\mathbb X)$ is the least infinite cardinal $\lambda$ such that every open cover of $\mathbb X$ has a subcover of cardinality at most $\lambda$.
For an accessible cardinal $\kappa$ and a cardinal $\lambda<\kappa$, we write $\log_\lambda(\kappa):=\min\{\chi\mid \lambda^\chi\ge\kappa\}$.
$\reg(\kappa)$ denotes the set of all infinite regular cardinal below $\kappa$.
For a set of ordinals $C$, we write $ \acc^+(C):=\{ \alpha<\sup(C) \mid  \sup(C\cap \alpha)=\alpha>0  \} $, 
$ \acc(C):=\{ \alpha\in C \mid  \sup(C\cap \alpha)=\alpha>0  \} $ and $\nacc(C):=C\setminus\acc(C)$.
For ordinals $ \alpha<\gamma$, denote $ E^{\gamma}_\alpha := \{\beta<\gamma \mid \cf(\beta)=\alpha \} $ and define $E^{\gamma}_{\neq\alpha}, E^{\gamma}_{<\alpha}, E^{\gamma}_{\le\alpha}, E^{\gamma}_{>\alpha}, E^{\gamma}_{\ge\alpha}$ similarly. 
For a set $A$ and a cardinal $\theta$, write $[A]^\theta := \{ B\subseteq A \mid |B|=\theta \}$ and define $[A]^{<\theta}$ similarly. 
For two sets $A$ and $B$, we write $A\s^* B$ to express that either $A=\emptyset$ or $A\setminus\alpha\s B$ for some $\alpha\in A$.
For a poset $(P,\lhd)$ and an element $x\in P$, we write $x_\downarrow:=\{ y\in P\mid y\lhd x\}$ and $x^\uparrow:=\{ y\in P\mid x\lhd y\}$.
For a family $\mathcal A$ of subsets of some ordinal, we let $\mup(\mathcal A):=\sup\{ \min(a) \mid a\in \mathcal A,~a\neq \emptyset \}$.

\subsection{Conventions} Throughout the paper, $\kappa$ stands for a regular uncountable cardinal,
and $\lambda$ stands for an infinite cardinal.

\section{A new guessing principle}\label{section2}
We commence by recalling some classic guessing principles.

\begin{definition}\label{principles} For a stationary subset $ S\subseteq \kappa $:
\begin{enumerate}
\item  	$ \diamondsuit^*(S) $ asserts the existence of a sequence $ \langle \mathcal A_\alpha \mid \alpha\in S \rangle $ such that:
	\begin{itemize}
		\item for all $ \alpha\in S $, $ \mathcal A_\alpha \subseteq \mathcal P( \alpha) $ and $ |\mathcal A_\alpha|\le|\alpha|$;
		\item for every $B\s \kappa$, there exists a club $ C\subseteq \kappa $ such that  $C\cap S \subseteq \{\alpha\in S\mid B\cap\alpha\in \mathcal A_\alpha \}$.
	\end{itemize}
\item $ \diamondsuit(S) $ asserts the existence of a sequence $ \langle A_\alpha \mid \alpha\in S \rangle $ such that:
	\begin{itemize}
		\item for all $ \alpha\in S $, $ A_\alpha \subseteq \alpha $;
		\item for every $B\s \kappa$, the set  $\{\alpha\in S\mid B\cap\alpha=A_\alpha \}$ is stationary.
	\end{itemize}
\item 	$ \clubsuit(S) $ asserts the existence of a sequence $ \langle A_\alpha\mid \alpha\in S \rangle $ such that:
	\begin{itemize}
		\item\label{Definiton clubsuit - Clause A_alpha} for all $ \alpha\in S\cap \acc(\kappa) $, $ A_\alpha $ is a cofinal subset of $\alpha$ of order type $\cf(\alpha)$;
		\item\label{Definiton clubsuit - Clause guess} for every cofinal subset $ B\subseteq \kappa$, the set  $\{\alpha\in S \mid A_\alpha \subseteq B \}$ is stationary.
	\end{itemize}	
\item \label{clubsuit_J(S)} 	$\clubsuit_{J}(S)$ asserts the existence of a matrix $\langle A_{\alpha,i}\mid\alpha\in S,~i<\cf(\alpha)\rangle$ such that:
	\begin{itemize}
		\item\label{clubsuits_J_ordertype_omega} For all $ \alpha\in S\cap\acc(\kappa) $,
		$ \langle A_{\alpha,i} \mid i<\cf(\alpha) \rangle $ is a sequence of pairwise disjoint cofinal subsets of $ \alpha$, each of order-type $\cf(\alpha)$;
		\item\label{clubsuits_J_unboundedsubset} For every cofinal subset $B\s\kappa$, the following set is stationary:
		$$\{\alpha\in S\mid\forall i<\cf(\alpha)[\sup(B\cap A_{\alpha,i})= \alpha]\}.$$
	\end{itemize} 
\end{enumerate}
\end{definition}
\begin{remark}\label{diamondsuit iff clubsuit and ch}\label{one cohen implies clubsuit_J} 
The principle $\diamondsuit^*$ was introduced by Kunen and Jensen in \cite{jensen1969some},
the principle $\diamondsuit$ was introduced by Jensen in \cite{Jensen_V=L_Diamond},
the principle $\clubsuit$ was introduced by Ostaszewski in \cite{Ostaszewski_space},
and the principle $\clubsuit_J$ was introduced by Juh\'{a}sz in \cite{Juhasz_clubsuit_ostaszewski} (under the name $(t)$).
It is not hard to see that for stationary $S'\s S\s\kappa$, $\diamondsuit^*(S')\implies\diamondsuit(S)\implies\clubsuit(S)\implies\clubsuit_J(S)$.
Devlin (see \cite[p.~507]{Ostaszewski_space}) proved that $\diamondsuit(S)\iff \clubsuit(S) + \kappa^{<\kappa}=\kappa$.
In \cite{Juhasz_clubsuit_ostaszewski}, Juh\'{a}sz 
proved that $\clubsuit_J(\omega_1)$ is adjoined by the forcing to add a Cohen real, and proved that the former suffices for the construction of an Ostaszewski space.
\end{remark}

To present our new guessing principle, we shall first need the following definition.

\begin{definition}\label{almost-disjoint ladder-system}  For a set of ordinals $S$:
\begin{enumerate}
\item A sequence $\langle A_\alpha\mid \alpha\in S\rangle$ 
is said to be an \emph{$\ad$-ladder system} iff the two hold:
	\begin{itemize}
		\item For all $\alpha\in S\cap\acc(\kappa)$, $A_\alpha$ is a cofinal subset of $\alpha$;
		\item For all two distinct $\alpha,\alpha'\in S$, $\sup(A_\alpha\cap A_{\alpha'})<\alpha$.
		\end{itemize}
\item A sequence $\langle \mathcal A_\alpha\mid \alpha\in S\rangle$ 
is said to be an \emph{$\ad$-multi-ladder system} iff the two hold:
	\begin{itemize}
		\item\label{clubsuit AD, Clause family softer} For all $\alpha\in S\cap\acc(\kappa)$, $\mathcal A_\alpha$ is a nonempty family consisting of pairwise disjoint cofinal subsets of $\alpha$;
		\item\label{clubsuit AD, Clause almost disjoint}  For all two distinct $A,A'\in\bigcup_{\alpha\in S}\mathcal A_\alpha$, $\sup(A\cap A')<\sup(A)$.
		\end{itemize}
\end{enumerate}
\end{definition}

Now, we are ready to present the new guessing principle.

\begin{definition}\label{clubsuit AD definition} For a family $\mathcal S$ of stationary subsets of $\kappa$, $\clubsuit_{\ad}(\mathcal S,{<}\theta)$
	asserts the existence of an $\ad$-multi-ladder system $\vec{\mathcal A}=\langle \mathcal A_{\alpha}\mid\alpha\in \bigcup\mathcal S\rangle$ such that:
	\begin{enumerate}
		\item\label{clubsuit AD, Clause family} For every $\alpha\in\bigcup\mathcal S$, $|\mathcal A_\alpha|=\cf(\alpha)$;
		\item\label{clubsuit AD, Clause cofinal set} For every $\mathcal B\s[\kappa]^{\kappa}$ with $|\mathcal B|<\theta$,
		and every $S\in\mathcal S$,
		the following set is stationary:
		$$G(S,\mathcal B):=\{\alpha\in S\mid  \forall (A,B)\in\mathcal A_\alpha\times\mathcal B~[\sup(A\cap B)= \alpha]\}.$$
	\end{enumerate} 	
\end{definition}

\begin{conv}\label{clubsuit AD simple definition}  We write $\clubsuit_{\ad}(\mathcal S,\theta)$ for $\clubsuit_{\ad}(\mathcal S,{<}(\theta+1))$,
and $\clubsuit_{\ad}(S)$ for $\clubsuit_{\ad}(\{S\},1)$.
\end{conv}

\begin{remark} For any $\chi\in\reg(\kappa)$ 
and any stationary $S\s E^\kappa_\chi$, $\clubsuit_J(S) \implies \clubsuit_{\ad}(S)$.
\end{remark}

\begin{figure}[H]
	\centering
	\begin{tikzcd}[row sep=scriptsize, column sep=scriptsize]
	 \diamondsuit(S) \arrow[d]\arrow[rr]{} &  {} & \diamondsuit(\omega_1)\arrow[dd] \\
	 \clubsuit(S) \arrow[d] & {} \text{Cohen real}\arrow[dl]\arrow[dr] &  {}  \\
	 \clubsuit_J(S) \arrow[d] & {} &  \exists\aleph_1\text{-Souslin Tree}\arrow[d]  \\
	\clubsuit_{\ad}(S) & {} & \forall\mathcal S~\clubsuit_{\ad}(\mathcal S,{<}\omega)\arrow[ll]   \\
	\end{tikzcd}
	\caption{Diagram of implications between the combinatorial principles under discussion, at the level of $\omega_1$.}
\end{figure}
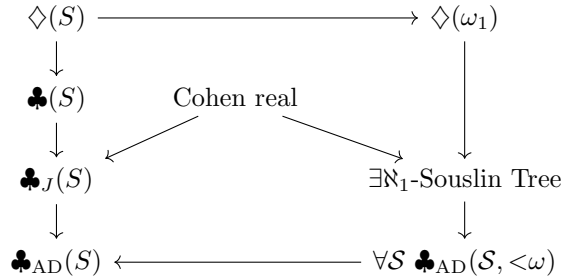

To motivate Definition~\ref{almost-disjoint ladder-system}, let us point out 
two easy facts concerning disjointifying $\ad$ systems.

\begin{prop}\label{Lemma - non-reflecting stat, diagonlization of initial seg}
	Suppose that $S$ is a non-reflecting stationary subset of $\kappa$, and $\vec A=\langle A_\alpha \mid \alpha\in S \rangle$ is an $\ad$-ladder system.
	Then there exists a sequence of functions $\langle f_\xi\mid \xi<\kappa\rangle$ such that, for every $\xi<\kappa$:
	\begin{enumerate}
	\item $f_\xi$ is a regressive function from $S\cap\xi$ to $\xi$;
	\item the sets in $\langle A_\alpha\setminus f_\xi(\alpha) \mid \alpha\in S\cap \xi\rangle$ are pairwise disjoint.
	\end{enumerate}
\end{prop}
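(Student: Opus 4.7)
The plan is to construct the $f_\xi$'s by transfinite induction on $\xi<\kappa$. Writing $\beta(\alpha,\alpha'):=\sup(A_\alpha\cap A_{\alpha'})$ for distinct $\alpha,\alpha'\in S$, the $\ad$-property bounds this strictly below $\min(\alpha,\alpha')$, and the desired disjointness of $\langle A_\alpha\setminus f_\xi(\alpha)\mid \alpha\in S\cap\xi\rangle$ is equivalent to demanding $\max(f_\xi(\alpha),f_\xi(\alpha'))>\beta(\alpha,\alpha')$ for every distinct pair in $S\cap\xi$. Without loss of generality, I would assume $S\s\acc(\kappa)$, since the $\ad$-ladder-system definition imposes no constraint on $A_\alpha$ for $\alpha\in S\setminus\acc(\kappa)$ and one may take those $A_\alpha:=\emptyset$.

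The core case is $\cf(\xi)\geq\omega_1$. Non-reflection supplies a club $D\s\xi$ with $D\cap S=\emptyset$; enumerate $D=\{d_\zeta\mid\zeta<\cf(\xi)\}$ in increasing order. For each $\alpha\in S\cap\xi$, let $\zeta(\alpha)$ denote the unique ordinal with $\alpha\in[d_{\zeta(\alpha)},d_{\zeta(\alpha)+1})$ and, invoking the inductive hypothesis at $d_{\zeta(\alpha)+1}<\xi$, set
\[
f_\xi(\alpha):=\max\bigl(d_{\zeta(\alpha)},\ f_{d_{\zeta(\alpha)+1}}(\alpha)\bigr).
\]
Regressivity holds because $\alpha\notin D$ together with $D$ being closed forces $d_{\zeta(\alpha)}<\alpha$, while $f_{d_{\zeta(\alpha)+1}}(\alpha)<\alpha$ is inductive. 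For disjointness, consider $\alpha<\alpha'$ in $S\cap\xi$: if $\zeta(\alpha)<\zeta(\alpha')$ then $f_\xi(\alpha')\geq d_{\zeta(\alpha)+1}>\alpha>\beta(\alpha,\alpha')$; and if $\zeta(\alpha)=\zeta(\alpha')$ then both $\alpha,\alpha'$ belong to $S\cap d_{\zeta(\alpha)+1}$, and the inductive pairwise-disjointness of the $A_\gamma\setminus f_{d_{\zeta(\alpha)+1}}(\gamma)$'s transfers to $f_\xi$ since $f_\xi(\gamma)\ge f_{d_{\zeta(\alpha)+1}}(\gamma)$ forces $A_\gamma\setminus f_\xi(\gamma)\s A_\gamma\setminus f_{d_{\zeta(\alpha)+1}}(\gamma)$.

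The remaining regimes go analogously. If $\cf(\xi)=\omega$, fix a cofinal $\omega$-sequence $\langle\xi_n\mid n<\omega\rangle$ through successor ordinals below $\xi$ (which necessarily avoid $S\s\acc(\kappa)$) and repeat the above construction using the $\xi_n$'s in place of the $d_\zeta$'s. If $\xi=\eta+1$ is a successor with $\eta\notin S$, simply put $f_\xi:=f_\eta$; and if $\eta\in S$ (so $\eta$ is a limit ordinal) extend $f_\eta$ by selecting $f_\xi(\eta)<\eta$ with the help of non-reflection at $\eta$ itself, which, when $\cf(\eta)\geq\omega_1$, yields a club of $\eta$ disjoint from $S$ and an analogous definition for $f_\xi(\eta)$. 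The main obstacle will be this last extension in the edge case where $\eta\in S$ has countable cofinality and is itself a limit of $S$-points from below: non-reflection offers no direct help there, and the proof must instead maintain a finer invariant along the induction --- for instance, that the ``bad set'' $\{\alpha\in S\cap\eta\mid f_\eta(\alpha)\le\beta(\alpha,\eta)\}$ remains bounded in $\eta$ --- to guarantee that a regressive $f_\xi(\eta)$ satisfying the disjointness requirements with every $A_\alpha\setminus f_\eta(\alpha)$ can be chosen.
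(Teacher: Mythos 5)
Your inductive scheme and your treatment of the limit case coincide with the paper's: its $\alpha^-=\sup(C\cap\alpha)$ and $\alpha^+=\min(C\setminus\alpha)$ are exactly your $d_{\zeta(\alpha)}$ and $d_{\zeta(\alpha)+1}$, and the paper reads ``non-reflecting'' as providing, for \emph{every} limit $\xi<\kappa$, a club of $\xi$ disjoint from $S$, which makes your separate $\cf(\xi)=\omega$ case unnecessary. (Also, you cannot ``take $A_\alpha:=\emptyset$'' for $\alpha\in S\setminus\acc(\kappa)$ --- the system is given, not chosen --- but this is harmless, since the almost-disjointness clause applies to all pairs and the same estimates cover those points.)

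The genuine gap is the successor case $\xi=\eta+1$ with $\eta\in S$, and it is not confined to the countable-cofinality edge case you flag. By keeping $f_\xi\restriction(S\cap\eta)=f_\eta$ and trying to choose a single regressive value $f_\xi(\eta)$, you require $A_\eta\setminus f_\xi(\eta)$ to avoid every $A_\alpha\setminus f_\eta(\alpha)$. But nothing in your limit-stage construction prevents $\sup(A_\alpha\cap A_\eta)$ from lying in the interval $(d_{\zeta(\alpha)},\alpha)$ for cofinally many $\alpha\in S\cap\eta$; in that situation $A_\eta$ meets $A_\alpha\setminus f_\eta(\alpha)$ in a point below $\alpha$ for cofinally many $\alpha$, so no $f_\xi(\eta)<\eta$ can work --- regardless of $\cf(\eta)$. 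A club of $\eta$ disjoint from $S$ is of no help, since the offending intersection points lie inside the ladders $A_\alpha$, not inside $S$. Your proposed repair (maintaining that $\{\alpha\in S\cap\eta\mid f_\eta(\alpha)\le\beta(\alpha,\eta)\}$ stays bounded in $\eta$) would indeed suffice, but you neither verify it nor adjust the construction to secure it, and as written it can fail. The paper resolves this step in the opposite direction: it leaves $A_\eta$ intact, setting $f_{\eta+1}(\eta):=0$, and instead bumps the \emph{other} ladders by putting $f_{\eta+1}(\alpha):=\max\{f_\eta(\alpha),\sup(A_\eta\cap A_\alpha)\}$ for $\alpha\in S\cap\eta$; this is regressive precisely because the $\ad$-property makes $\alpha\mapsto\sup(A_\eta\cap A_\alpha)$ regressive, and it restores pairwise disjointness with no extra invariant. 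Incorporating that one move into your scheme closes the gap.
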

\begin{proof} We recursively construct a sequence $\langle f_{\xi}\mid \xi<\kappa \rangle$ such that for every $\xi<\kappa$, Clauses (1) and (2) above hold true.
	
	$\br$ The cases where either $\xi=0$ or $\xi=\beta+1$ for $\beta\notin S$ are straightforward.

	$\br$ Suppose $\xi=\beta+1$ with $\beta\in S$ for which $f_\beta$ has already been defined. Define $g:S\cap\beta\rightarrow\beta$ via $g(\alpha):=\sup(A_\beta\cap A_\alpha)$.
	As $\vec A$ is an $\ad$-ladder system, $g$ is regressive. In effect, we may define a regressive function $f_\xi:S\cap\xi\rightarrow\xi$ via $f_\xi(\alpha):=\max\{f_\beta(\alpha),g(\alpha)\}$ for $\alpha\in S\cap \beta$,
	and $f_\xi(\beta):=0$. Notice that by the recursive assumption the function $f_\xi$ is as sought.
	
	$\br$ Suppose $\xi\in\acc(\kappa)$ for which $\langle f_\beta\mid\beta<\xi\rangle$ has already been defined. As $S$ is non-reflecting we may fix a club $C\subseteq \xi$ disjoint from $S$.
	For every $\alpha<\xi$, set $\alpha^-:=\sup(C\cap\alpha)$ and $\alpha^+:=\min(C\setminus \alpha)$. Note that, for every nonzero $\alpha\in S\cap\xi$, $\alpha^-<\alpha<\alpha^+$.
	Define a regressive function $f_\xi:S\cap \xi\rightarrow \xi$ via $f_\xi(\alpha):=\max\{f_{\alpha^+}(\alpha),\alpha^-\}$. Notice that by the recursive hypothesis, the function $f_\xi$ is as sought.
\end{proof}

\begin{prop}\label{Proposition - disjointify multi-ladder system} Suppose that  $S$ is a subset of $E^\kappa_{\ge\lambda}$ and $\vec{\mathcal A}=\langle \mathcal A_\alpha\mid \alpha\in S\rangle$ 
is an $\ad$-multi-ladder system. For any $\mathcal B\s\bigcup_{\alpha\in S}\mathcal A_\alpha$ with $|\mathcal B|\le\lambda$,
there exists a function $f:\mathcal B\rightarrow\kappa$ such that:
	\begin{enumerate}
	\item for every $B\in\mathcal B$, $f(B)\in B$;
	\item the sets in $\langle B\setminus f(B) \mid B\in\mathcal B\rangle$ are pairwise disjoint.
	\end{enumerate}
\end{prop}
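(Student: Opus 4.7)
The plan is to enumerate $\mathcal{B}=\langle B_i\mid i<\mu\rangle$ with $\mu:=|\mathcal{B}|\le\lambda$, and then define the values $f(B_i)$ by recursion on $i<\mu$, each choice being made large enough to absorb all previously-recorded ``intersection tops.'' For every $i<\mu$, let $\alpha_i$ be the unique ordinal in $S$ with $B_i\in\mathcal{A}_{\alpha_i}$, so that $\sup(B_i)=\alpha_i$; since $S\s E^{\kappa}_{\ge\lambda}$, we have $\cf(\alpha_i)\ge\lambda$.

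For distinct $i,j<\mu$, set $c(i,j):=\sup(B_i\cap B_j)$. The key preliminary observation is that $c(i,j)<\alpha_i$ in all cases: if $\alpha_i=\alpha_j$, then $B_i\neq B_j$ are both members of $\mathcal{A}_{\alpha_i}$, hence disjoint by Clause~(1) of Definition~\ref{almost-disjoint ladder-system}(2), so $c(i,j)=0$; and if $\alpha_i\neq \alpha_j$, then Clause~(2) of the same definition gives $c(i,j)<\sup(B_i)=\alpha_i$ directly.

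Now I would perform the recursion. At stage $i<\mu$, put $\gamma_i:=\sup\{c(i,j)\mid j<i\}$. Since $|i|<\mu\le\lambda\le\cf(\alpha_i)$ and each $c(i,j)<\alpha_i$, it follows that $\gamma_i<\alpha_i$; using that $B_i$ is cofinal in $\alpha_i$, choose any $f(B_i)\in B_i$ with $f(B_i)>\gamma_i$. To verify pairwise disjointness of the tails, fix $j<i$: every element of $B_i\cap B_j$ is bounded by $c(i,j)\le\gamma_i<f(B_i)$, so $(B_i\setminus f(B_i))\cap B_j=\emptyset$, and a fortiori $(B_i\setminus f(B_i))\cap(B_j\setminus f(B_j))=\emptyset$.

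The only subtle point, and the reason the construction must be carried out recursively rather than in one shot, is that when $\mu=\lambda=\cf(\alpha_i)$ the full supremum $\sup\{c(i,j)\mid j<\mu,\,j\neq i\}$ might attain $\alpha_i$ itself and leave no room for $f(B_i)$. Restricting at stage $i$ to the sup over only the $|i|<\mu$ many predecessors keeps us strictly below $\cf(\alpha_i)$ throughout and sidesteps this obstacle.
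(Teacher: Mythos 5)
Your proof is correct and is essentially the paper's own argument: both enumerate $\mathcal B$ in order type $\le\lambda$, at stage $\xi$ bound only the suprema $\sup(B_\xi\cap B_\zeta)$ for $\zeta<\xi$ (each $<\sup(B_\xi)$ by almost-disjointness), and use $\cf(\sup(B_\xi))\ge\lambda>|\xi|$ to find $f(B_\xi)\in B_\xi$ above their supremum. The only cosmetic difference is that the paper takes $f(B_\xi)$ to be the minimal such element of $B_\xi$ rather than an arbitrary one.
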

\begin{proof} Given $\mathcal B\s\bigcup_{\alpha\in S}\mathcal A_\alpha$ with $|\mathcal B|\le\lambda$,
fix an injective enumeration $\langle B_\xi \mid \xi<|\mathcal B|\rangle$ of $\mathcal B$.
By the hypothesis on $\vec{\mathcal A}$, for every pair $\zeta<\xi<|\mathcal B|$, 
$$\sup(B_\xi\cap B_\zeta)<\sup(B_\xi)\in E^\kappa_{\ge\lambda}\s E^\kappa_{>\xi},$$
so that $\sup_{\zeta<\xi}\sup(B_\xi\cap B_\zeta)<\sup(B_\xi)$.

It follows that we may define a function $f:\mathcal B\rightarrow\kappa$ via:
$$f(B_\xi):=\begin{cases}\min(B_\xi),&\text{if }\xi=0;\\
\min\{\beta\in B_\xi\mid \sup\{\sup(B_\xi\cap B_\zeta)\mid \zeta<\xi\}<\beta\},&\text{otherwise}.
\end{cases}$$
Evidently, $f$ is as sought.
\end{proof}

Our next lemma shows, in particular, that for any $\chi\in\reg(\kappa)$
and any stationary $S\s E^\kappa_\chi$, 
$\clubsuit(S) \implies \clubsuit_{\ad}(\{S\},{<}\omega)$.
The reverse implication does not hold in general, as established by Corollary~\ref{clubadvsclub} below.
The proof of the lemma will make use of the following fact.

\begin{fact}[Brodsky-Rinot, {\cite[\S3]{paper23}}]\label{clubfacts}
For any stationary $S\s\kappa$, all of the following are equivalent:
	\begin{enumerate}
	\item $ \clubsuit(S) $;
	\item \label{many clubsuit from one} there exists a partition $ \langle S_i\mid i<\kappa \rangle $ of $ S $ into pairwise disjoint stationary sets such that	$ \clubsuit(S_i ) $ holds for each $ i<\kappa $;
\item \label{matrix clubsuit from one}
for any (possibly finite) cardinal $\theta$ such that $\kappa^\theta=\kappa$, there exists a matrix $ \langle A_{\alpha,\tau} \mid \alpha\in S,~\tau\le \theta \rangle $ such that, 
for every sequence $\langle A_\tau \mid \tau\leq\theta \rangle$ of cofinal subsets of $\kappa$, the following set is stationary in $\kappa$:
	 $$\{ \alpha\in S \mid \forall \tau\leq \theta~[A_{\alpha,\tau}\subseteq A_\tau\cap \alpha \ \& \ \sup(A_{\alpha,\tau})=\alpha] \} .$$
	\item \label{matrix clubsuit <omega hitting}
	there exists a sequence $\langle X_\alpha \mid \alpha\in S\rangle$ such that:
	\begin{itemize}
		\item for every $\alpha\in S\cap \acc(\kappa)$, $X_\alpha\subseteq [\alpha]^{<\omega} $ with $\mup(X_\alpha)=\alpha$;\footnote{Recall that $\mup$ was defined in Subsection~\ref{notationsubsection}.}
		\item\label{mupclub} for every $X\subseteq [\kappa]^{<\omega}$ with $\mup(X)=\kappa$, the following set is stationary: 
		$$\{ \alpha\in S \mid X_\alpha \subseteq X\}.$$
	\end{itemize}
	\end{enumerate}
\end{fact}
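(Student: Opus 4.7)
Since this fact is cited as \cite[\S3]{paper23}, the plan is to summarize the equivalence strategy rather than reproduce its technical core. The natural cycle is $(1) \Rightarrow (2) \Rightarrow (3) \Rightarrow (4) \Rightarrow (1)$. The direction $(4) \Rightarrow (1)$ is the cleanest: given $\langle X_\alpha \mid \alpha\in S\rangle$ as in (4), for each $\alpha \in S \cap \acc(\kappa)$ extract a cofinal subset $A_\alpha \s \bigcup X_\alpha$ of order-type $\cf(\alpha)$, which is possible because $\mup(X_\alpha) = \alpha$ forces $\sup\bigcup X_\alpha=\alpha$; given any cofinal $B\s\kappa$, the family $X := [B]^{<\omega}\setminus\{\emptyset\}$ has $\mup(X) = \kappa$, so stationarily many $\alpha \in S$ satisfy $X_\alpha \s X$, and for each such $\alpha$ one has $A_\alpha \s \bigcup X_\alpha \s B$. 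The implications $(2) \Rightarrow (1)$ (extend one $\clubsuit(S_i)$-sequence arbitrarily to $S \setminus S_i$) and $(3) \Rightarrow (1)$ (specialize to $\theta = 0$) are trivial.

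The core of the statement is the splitting lemma $(1) \Rightarrow (2)$: refining a single $\clubsuit(S)$-sequence into $\kappa$ pairwise disjoint $\clubsuit$-sequences. The strategy I would follow is to fix for each $\alpha\in S\cap\acc(\kappa)$ an injective enumeration $\{e_\alpha(i)\mid i<\cf(\alpha)\}$ of $A_\alpha$ and then, using an Ulam-matrix-type coloring combined with iterated applications of Fodor's lemma to the regressive maps $\alpha \mapsto e_\alpha(i)$, peel off $\kappa$ many disjoint stationary pieces on which the restricted sequence still guesses every cofinal $B\s\kappa$. The main obstacle -- and the reason the proof occupies an entire section of \cite{paper23} -- is making this decomposition work uniformly for all regular uncountable $\kappa$, especially at inaccessibles, where no classical Ulam matrix exists and a bespoke coding is required; I would rely on that argument rather than re-derive it here.

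Granted $(2)$, the upgrade to $(3)$ is a bookkeeping exercise exploiting $\kappa^\theta = \kappa$: enumerate the stationary partition as $\langle S_h \mid h \in {}^{(\theta+1)}\kappa\rangle$, and on each $S_h$ slice the associated $\clubsuit$-witness $B_\alpha$ into $\theta+1$ disjoint cofinal subsets $A_{\alpha,\tau}$ of order-type $\cf(\alpha)$ in such a way that the label $h$ serves as a decoder. Given a target sequence $\langle A_\tau\mid \tau\le\theta\rangle$ of cofinal subsets of $\kappa$, encode it as a single cofinal $B^*\s\kappa$ via a fixed bijection $\pi\colon(\theta+1)\times\kappa\to\kappa$, apply $\clubsuit(S_h)$ for the right $h$, and unpack to recover the row-by-row guessing. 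Finally, $(3) \Rightarrow (4)$ is obtained by taking $\theta$ finite: any $X \s [\kappa]^{<\omega}$ with $\mup(X) = \kappa$ can be coded by finitely many cofinal subsets of $\kappa$ via selecting first coordinates up to some bound, and packaging the matrix rows at each $\alpha$ into a suitably diagonalized finite-subset family $X_\alpha$ transfers the matrix guessing of $(3)$ to the $\mup$-formulation of $(4)$.
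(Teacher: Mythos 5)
First, a point of order: the paper does not prove this statement at all — it is quoted as a Fact from \cite[\S3]{paper23} — so there is no internal proof to measure your argument against. On its own terms, your $(4)\Rightarrow(1)$ is correct (and the trivial directions are fine, modulo the cosmetic point that $\theta=0$ does not satisfy $\kappa^\theta=\kappa$, so one should specialize $(3)$ to $\theta=1$), and you are explicit that you are outsourcing the core splitting lemma $(1)\Rightarrow(2)$ to the cited reference.

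The two bridges you do sketch, however, contain genuine errors. For $(2)\Rightarrow(3)$: you propose to ``slice the associated $\clubsuit$-witness $B_\alpha$ into $\theta+1$ disjoint cofinal subsets of order-type $\cf(\alpha)$.'' A $\clubsuit$-witness at $\alpha$ has order type, hence cardinality, $\cf(\alpha)$, so it admits at most $\cf(\alpha)$ pairwise disjoint nonempty subsets, whereas clause (3) permits $\theta\ge\cf(\alpha)$ (e.g.\ $\kappa=\omega_2$, $\theta=\omega_1$ with $\kappa^{\omega_1}=\kappa$, and $\alpha\in S\cap E^{\omega_2}_\omega$); so this step fails outright — and note that (3) does not require the $A_{\alpha,\tau}$ to be disjoint, so disjoint slicing is the wrong target anyway. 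Moreover, coding $\langle A_\tau\mid\tau\le\theta\rangle$ as a single cofinal $B^*$ via a bijection $\pi\colon(\theta+1)\times\kappa\to\kappa$ and then guessing $B^*$ does not yield row-by-row guessing: a cofinal $A_\alpha\subseteq B^*$ may concentrate entirely on one row $\tau$, so one must instead code tuples from $\prod_{\tau\le\theta}A_\tau$, with control over how the code relates to the coordinates, to force every row to be met cofinally. For $(3)\Rightarrow(4)$: coding $X\subseteq[\kappa]^{<\omega}$ ``by finitely many cofinal subsets via selecting first coordinates'' destroys exactly the information clause (4) asks you to recover — guessing each coordinate-projection separately produces ordinals that need not reassemble into members of $X$, so you cannot conclude $X_\alpha\subseteq X$. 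There is also an unaddressed uniformity problem: the arity $n$ you would extract from $X$ by pigeonhole depends on $X$, while the witness $\langle X_\alpha\mid\alpha\in S\rangle$ must be fixed in advance; handling this (by reserving, via the partition from (2), separate stationary pieces of $S$ for each arity and each choice of coding parameters) is precisely the content your sketch skips.
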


\begin{lemma}\label{lemma216} Suppose that $\clubsuit(S)$ holds for some stationary $S\s E^\kappa_\chi$ with $\chi\in\reg(\kappa)$.
	Then there exists a partition $\mathcal S$ of $S$ into $\kappa$ many stationary sets
	for which $\clubsuit_{\ad}(\mathcal S,{<}\omega)$ holds as witnessed by a sequence $\vec{\mathcal A}=\langle \mathcal A_\alpha\mid\alpha\in S\rangle$
	with $\otp(A)=\chi$ for all $A\in\bigcup_{\alpha\in S}\mathcal A_\alpha$.	
\end{lemma}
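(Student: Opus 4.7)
My plan proceeds in three stages.

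\emph{Partition.} First, I would apply Clause~(2) of Fact~\ref{clubfacts} to $S$, producing a partition $\langle T_\eta\mid\eta<\kappa\rangle$ of $S$ into $\kappa$ pairwise disjoint stationary sets with $\clubsuit(T_\eta)$ for every $\eta<\kappa$. The family $\mathcal S:=\{T_\eta\mid\eta<\kappa\}$ will be the desired partition of $S$.

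\emph{Construction.} Next, for each $\eta<\kappa$, I would pass to the $[\alpha]^{<\omega}$-form of $\clubsuit(T_\eta)$ supplied by Clause~(4) of Fact~\ref{clubfacts}, fixing a sequence $\langle X^\eta_\alpha\mid\alpha\in T_\eta\rangle$ satisfying $X^\eta_\alpha\s[\alpha]^{<\omega}$, $\mup(X^\eta_\alpha)=\alpha$, and the property that $\{\alpha\in T_\eta\mid X^\eta_\alpha\s X\}$ is stationary whenever $X\s[\kappa]^{<\omega}$ satisfies $\mup(X)=\kappa$. I would also fix, once and for all, a partition $\langle D_i\mid i<\chi\rangle$ of $\chi$ into $\chi$ pairwise disjoint cofinal subsets each of order type $\chi$. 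Then, by recursion on $\alpha\in S$, with $\alpha\in T_\eta$, I pick a cofinal subset $Y^\eta_\alpha\s\bigcup X^\eta_\alpha$ of order type $\chi$, let $y^\eta_\alpha:\chi\to Y^\eta_\alpha$ be the order-preserving enumeration, and set $\mathcal A_\alpha:=\{y^\eta_\alpha[D_i]\mid i<\chi\}$. Clause~(1) of Definition~\ref{clubsuit AD definition}, the pairwise-disjointness within each $\mathcal A_\alpha$, and the order-type requirement $\otp(A)=\chi$ for every $A\in\bigcup_{\alpha\in S}\mathcal A_\alpha$ are then all immediate.

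\emph{Main obstacle and verification.} The main obstacle is to calibrate the recursive choice of $Y^\eta_\alpha$ to simultaneously yield both the almost-disjointness of Definition~\ref{almost-disjoint ladder-system}(2) and the guessing property of Definition~\ref{clubsuit AD definition}(2). For almost-disjointness, the only nontrivial case is $\alpha<\alpha'$ in $S$ with $A\in\mathcal A_\alpha$, $A'\in\mathcal A_{\alpha'}$ and $\sup(A\cap A')=\alpha$, which forces $\alpha\in\acc^+(A')$; since $\otp(A')=\chi$, each such $A'$ contributes at most $\chi$ bad values of $\alpha$, so at stage $\alpha$ only a bounded-by-$\chi$ collection of constraints per earlier $A'$ must be dodged when choosing $Y^\eta_\alpha$, and when $\chi=\omega$ these constraints vanish entirely. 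For guessing, given finite $\mathcal B=\{B_0,\ldots,B_{n-1}\}\s[\kappa]^\kappa$, the set $X^{\mathcal B}:=\{F\in[\kappa]^{<\omega}\mid F\cap B_j\ne\emptyset\text{ for every }j<n\}$ satisfies $\mup(X^{\mathcal B})=\kappa$, so Clause~(4) delivers stationarily many $\alpha\in T_\eta$ with $X^\eta_\alpha\s X^{\mathcal B}$; combined with $\mup(X^\eta_\alpha)=\alpha$, this guarantees that $\bigcup X^\eta_\alpha\cap B_j$ is cofinal in $\alpha$ for every $j<n$. I expect the heart of the proof to be arranging the recursive choice of $Y^\eta_\alpha$ so that each $B_j$-hit is spread uniformly across all $\chi$ rows $y^\eta_\alpha[D_i]$, while the $\ad$-diagonalization proceeds in parallel; this simultaneous balancing act should be the main technical challenge.
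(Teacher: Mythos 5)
Your first two stages match the paper's proof exactly: partition $S$ via Fact~\ref{clubfacts}(2), pass to the $[\alpha]^{<\omega}$-form via Fact~\ref{clubfacts}(4), and guess a finite $\mathcal B$ by feeding in the family of finite sets meeting every member of $\mathcal B$. Your observation that almost-disjointness is essentially free is also correct, though for a cleaner reason than the one you give: once every $A\in\mathcal A_\alpha$ is cofinal in $\alpha$ of order-type $\chi$ and $S\s E^\kappa_\chi$, then for $\alpha<\alpha'$ any $A'\in\mathcal A_{\alpha'}$ satisfies $\otp(A'\cap\alpha)<\chi=\cf(\alpha)$, so $A'\cap\alpha$ is bounded below $\alpha$; no recursive dodging is needed at all.

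The genuine gap is in the step you yourself flag as the ``main technical challenge'', and it is not a balancing act that can be carried out as described. You form $\mathcal A_\alpha$ by choosing one cofinal $Y^\eta_\alpha\s\bigcup X^\eta_\alpha$ of order-type $\chi$ and splitting it \emph{elementwise} by order-position into the rows $y^\eta_\alpha[D_i]$. The hypothesis $X^\eta_\alpha\s X^{\mathcal B}$ only tells you that each finite block $F\in X^\eta_\alpha$ meets each $B_j$; after you discard elements (in passing to $Y^\eta_\alpha$) and distribute the survivors among rows one ordinal at a time, a row can easily meet some $B_j$ only boundedly. For instance, with $\mathcal B=\{B_0,B_1\}$ disjoint and every block of the form $\{b_0,b_1\}$ with $b_j\in B_j$, an order-position splitting can route all the $b_0$'s to one row and all the $b_1$'s to another. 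Crucially, $\mathcal B$ is not available when $\mathcal A_\alpha$ is built, and there are far more than $\chi$ many finite families to guard against, so no ``uniform spreading'' fixed in advance can work at the level of individual ordinals. The missing idea --- and the paper's actual move --- is to keep the blocks atomic: refine $X^\eta_\alpha$ to a block-increasing $\chi$-sequence $\langle x_\zeta\mid\zeta<\chi\rangle$ of elements of $X^\eta_\alpha$ interleaved with a ladder converging to $\alpha$, and assign \emph{whole blocks} to rows via a surjection $h:\chi\rightarrow\chi$ all of whose fibers have size $\chi$, setting $A^j_\alpha:=\biguplus\{x_\zeta\mid h(\zeta)=j\}$. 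Then $X^\eta_\alpha\s X^{\mathcal B}$ forces every block, hence every row, to meet every $B_j$ cofinally, and the interleaving secures $\otp(A^j_\alpha)=\chi$. With that replacement your outline becomes the paper's proof.
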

\begin{proof} By Fact~\ref{clubfacts}\eqref{many clubsuit from one}, fix a partition $ \langle S_i\mid i<\kappa \rangle $ of $S$ into pairwise disjoint stationary sets such that	$ \clubsuit(S_i ) $ holds for each $ i<\kappa $.
Set $\mathcal S:=\{ S_i\mid i<\kappa\}$.
Next, for each $i<\kappa$, let $\langle X_\alpha \mid \alpha\in S_i\rangle$ be a sequence as in Fact~\ref{clubfacts}\eqref{mupclub}.
Fix a surjection $h:\chi\rightarrow\chi$ such that $|h^{-1}\{j\}|=\chi$ for all $j<\chi$.

	To simplify the upcoming argument let us agree to write, for any two nonempty sets of ordinals $a,b$,
	``$a<b$'' iff $\alpha<\beta$ for all $(\alpha,\beta)\in a\times b$.

	Let $\alpha\in S\cap\acc(\kappa)$. 
	Recall that $X_\alpha\subseteq [\alpha]^{<\omega} $ and $\mup(X_\alpha)=\alpha$.
	Fix a strictly increasing sequence of ordinals $\langle \alpha_\zeta\mid \zeta<\chi\rangle$ that converges to $\alpha$.
	Now, by recursion on $\zeta<\chi$, we construct a sequence $\langle x_\zeta \mid \zeta<\chi\rangle$ such that, for every $\zeta<\chi$:
	\begin{enumerate}
		\item $ x_\zeta \in X_\alpha$, and
		\item\label{Clause inc. min} for every $\xi<\zeta$, $(x_\xi\cup\{\alpha_\zeta\})<x_\zeta$.
	\end{enumerate}
	Suppose $\zeta<\chi$ and that $\langle x_\xi \mid \xi<\zeta \rangle$ has already been defined.
	Evidently, $\eta:=\sup(\{ \max(x_\xi) \mid \xi<\zeta \}\cup\{\alpha_\zeta\})$ is $<\alpha$.
	So, as $\mup(X_\alpha)=\alpha$, we may let $x_\zeta:= x$ for some $x\in X_\alpha$ with $\min(x)>\eta$.
	
	This completes the construction. By Clause~(2), $\langle x_\zeta\mid \zeta<\chi\rangle$ is $<$-increasing,
	and 	$\mup\{ x_\zeta \mid \zeta<\chi \} = \alpha$.
	Finally, for every $j<\chi$, let $X^j_{\alpha}:=\{x_\zeta\mid \zeta<\chi, h(\zeta)=j\}$,
	and $A^j_\alpha:=\biguplus X^j_\alpha$.

	\begin{claim} $\langle A^j_\alpha\mid j<\chi\rangle$ is a sequence of pairwise disjoint cofinal subsets of $\alpha$, each of order-type $\chi$.
	\end{claim}
	\begin{proof} Let $j<\chi$.
	As $\mup\{ x_\zeta \mid \zeta<\chi, h(\zeta)=j \} = \alpha$,
	and as $\langle x_\zeta\mid \zeta<\chi, h(\zeta)=j \rangle$ is a $<$-increasing $\cf(\alpha)$-sequence of finite sets,
	we infer that $\sup(A^j_\alpha)=\alpha$,
	and that, for every $\beta<\alpha$,
	$\otp(A^j_\alpha\cap \beta)<\chi$. Altogether, $\otp(A^j_\alpha)=\chi$.
	
	Also, since $\langle x_\zeta\mid \zeta<\chi\rangle$ consists of pairwise disjoint sets,
	the elements of $\langle A^j_\alpha\mid j<\chi\rangle$ are pairwise disjoint.
	\end{proof}

	For every $\alpha\in S$, set $\mathcal A_\alpha:=\{ A_\alpha^j\mid j<\chi \}$.
	It immediately follows from the preceding claim that $\vec{\mathcal A}:=\langle \mathcal A_\alpha \mid \alpha \in S \rangle $ is an $\ad$-multi-ladder system.
	
	\begin{claim}For every finite $\mathcal B\s[\kappa]^{\kappa}$ and every $i<\kappa$,
		the following set is stationary:
		$$G(S_i,\mathcal B):=\{\alpha\in S_i\mid  \forall (A,B)\in\mathcal A_\alpha\times\mathcal B~[\sup(A\cap B)= \alpha]\}.$$
	\end{claim}
	\begin{proof}	
	Suppose that $\langle B_n\mid n<m\rangle$ is a finite sequence of cofinal subsets of $\kappa$.
	For each $n<m$, fix an injective enumeration $\langle b_{n,\iota} \mid \iota<\kappa \rangle$ of $B_n$.
	Set 	$X:=\{ \{ b_{n,\iota} \mid n<m\}  \mid \iota< \kappa \}$ and notice that $X\subseteq [\kappa]^{<\omega}$ with $\mup(X)=\kappa$.
	In effect, for every $i<\kappa$, the set $T_i:=\{ \alpha \in S_i \mid X_\alpha \subseteq X \}$ is stationary.
	Let $i<\kappa$. We claim that $T_i\s G(S_i,\mathcal B)$. To see this, let $\alpha\in T_i$ and $(A,B)\in\mathcal A_\alpha\times\mathcal B$ be arbitrary.
	Fix $j<\chi$ and $n<m$ such that $A=A^j_\alpha$ and $B=B_n$.
	
	As $X^j_{\alpha}\s X_\alpha\s X$, by the definition of $X$, for every $x\in X^j_\alpha$, $x\cap B_n\neq\emptyset$. As $\mup(X^j_\alpha)=\alpha$,
	it follows that $\sup(A_\alpha^j\cap B_n)=\alpha$.
	\end{proof}
	This completes the proof.
\end{proof}

We conclude this subsection by formulating a three-cardinal variant of $\clubsuit_{\ad}$:
\begin{definition}\label{threecardinalsvariant}
$\clubsuit_{\ad}(\mathcal S,\mu,{<}\theta)$ asserts the existence of a system 
$\vec{\mathcal A}=\langle \mathcal A_{\alpha}\mid\alpha\in \bigcup\mathcal S\rangle$ as in Definition~\ref{clubsuit AD definition},
but in which Clause~(1) is replaced by the requirement that, for every $\alpha\in\bigcup S$, $|\mathcal A_\alpha|=\mu$.
We write $\clubsuit_{\ad}(\mathcal S,\mu,\theta)$  for $\clubsuit_{\ad}(\mathcal S,\mu,{<}(\theta+1))$.
\end{definition}

It is clear that $\clubsuit_{\ad}(\mathcal S,\omega,{<}\theta)$ follows from $\clubsuit_{\ad}(\mathcal S,{<}\theta)$.
Also, the following lemma is obvious.
\begin{lemma}\label{adone}  For a family $\mathcal S$ of stationary subsets of $\kappa$, $\clubsuit_{\ad}(\mathcal S,1,2)$ holds
iff there exists an $\ad$-ladder system $\langle A_{\alpha}\mid\alpha\in \bigcup\mathcal S\rangle$ such that,
for all $B_0,B_1\in[\kappa]^{\kappa}$ and $S\in\mathcal S$,
the set $\{\alpha\in S\mid  \sup(A_\alpha\cap B_0)=\sup(A_\alpha\cap B_1)=\alpha\}$ is stationary.\qed
\end{lemma}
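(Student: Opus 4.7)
The plan is a direct unpacking of Definitions~\ref{almost-disjoint ladder-system}, \ref{clubsuit AD definition}, and \ref{threecardinalsvariant}. The single observation to make up front is that $|\mathcal A_\alpha|=1$ forces each $\mathcal A_\alpha$ to be a singleton $\{A_\alpha\}$; in that case, the pairwise-disjointness clause inside $\mathcal A_\alpha$ is vacuous, and the almost-disjointness clause across $\bigcup_\alpha \mathcal A_\alpha$, namely ``$\sup(A\cap A')<\sup(A)$ for distinct $A,A'$'', applied to $A_\alpha$ and $A_{\alpha'}$ in both orderings, collapses to $\sup(A_\alpha\cap A_{\alpha'})<\min(\alpha,\alpha')$, which is precisely the second clause defining an $\ad$-ladder system (after noting that for distinct accumulation points $\alpha,\alpha'$, $\sup(A_\alpha)=\alpha\ne\alpha'=\sup(A_{\alpha'})$ automatically distinguishes $A_\alpha$ from $A_{\alpha'}$). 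At the same time, the quantifier ``$|\mathcal B|<3$'' in Definition~\ref{clubsuit AD definition}(2) reduces to ``$\mathcal B$ of cardinality $0$, $1$, or $2$''.

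For the forward direction, given a witness $\vec{\mathcal A}$ to $\clubsuit_{\ad}(\mathcal S,1,2)$, I take $A_\alpha$ to be the unique member of $\mathcal A_\alpha$ and apply Definition~\ref{clubsuit AD definition}(2) with $\mathcal B:=\{B_0,B_1\}$ to recover exactly the stationarity condition asserted in the lemma. For the converse, I set $\mathcal A_\alpha:=\{A_\alpha\}$; the multi-ladder clauses are handled by the observation above, and the stationarity clause is verified case-by-case: $\mathcal B=\emptyset$ follows from stationarity of $S\in\mathcal S$, $\mathcal B=\{B\}$ is the lemma's hypothesis applied with $B_0=B_1=B$, and $\mathcal B=\{B_0,B_1\}$ is the hypothesis itself. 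The equivalence is essentially a renaming — which is why the authors call it obvious — and the only minor subtlety is to remember to apply the asymmetric almost-disjointness clause of Definition~\ref{almost-disjoint ladder-system} in both orderings when extracting the symmetric bound used in the $\ad$-ladder condition.
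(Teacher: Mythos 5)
Your proposal is correct and is exactly the definition-unpacking the paper has in mind: the lemma is stated with a \qed because the authors regard it as obvious, and your reduction of the multi-ladder clauses to the $\ad$-ladder clauses (via the two orderings of the asymmetric almost-disjointness condition) together with the case split $|\mathcal B|\in\{0,1,2\}$ is the intended argument. Nothing further is needed.
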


\subsection{Interlude on Souslin trees}\label{Guess Construction subsection}

Recall that a poset $\mathbf T=(T,{<_T})$ is a \emph{$\kappa$-Souslin tree} iff all of the following hold:
\begin{itemize}
\item $|T|=\kappa$;
\item $(T,{<_T})$ has no chains or antichains of size $\kappa$;
\item for every $x\in T$, $(x_\downarrow,<_T)$ is well-ordered.
\end{itemize}

For every $x\in T$, denote $\h(x):=\otp(x_\downarrow,<_T)$.
For every $A\s\kappa$, let $T\restriction A:=\{ x\in T\mid \h(x)\in A\}$.
Note that, for every $\alpha<\kappa$, $T_\alpha:=\{ x\in T\mid \h(x)=\alpha\}$ and $T\restriction\alpha$ have size ${<}\kappa$.

The next well-known lemma shows that Souslin trees are similar to Luzin spaces in the sense that every large subset of a Souslin tree is \emph{somewhere dense}.

\begin{lemma}[folklore]\label{Souslin_denseness}
	Suppose $\mathbf T=(T,{<_T})$ is a $\kappa$-Souslin tree and $B\subseteq T $ is a subset with $|B|=\kappa$. 
	Then there exists $w\in T$ such that $w^\uparrow\cap B$ is cofinal in $w^\uparrow$.
\end{lemma}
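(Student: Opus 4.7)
The plan is to argue by contradiction: assume $B \subseteq T$ has $|B| = \kappa$ while no $w \in T$ has $w^\uparrow \cap B$ cofinal in $w^\uparrow$, and extract from this an antichain of size $\kappa$ in $T$, contradicting that $\mathbf{T}$ is $\kappa$-Souslin. Unpacking the assumption, for every $w \in T$ I would fix a witness $y_w \in w^\uparrow$ such that no $b \in B$ satisfies $b \geq_T y_w$; equivalently, $B$ is disjoint from $\{y_w\} \cup (y_w)^\uparrow$.

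I then construct $\langle b_\alpha \mid \alpha < \kappa \rangle$ in $B$ by recursion, choosing at stage $\alpha$ some
\[
b_\alpha \in B \setminus \bigcup_{\beta < \alpha} (y_{b_\beta})_\downarrow.
\]
This choice is available because in a $\kappa$-Souslin tree each element has height $<\kappa$, so every $(y_{b_\beta})_\downarrow$ is a chain of size $<\kappa$; by the regularity of $\kappa$ the forbidden union has size $<\kappa$ while $|B| = \kappa$. Note incidentally that since $b_\beta \in (y_{b_\beta})_\downarrow$ for each $\beta < \alpha$, the choice automatically ensures $b_\alpha \neq b_\beta$.

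The crux, and the step I expect to be the main obstacle, is verifying that $\langle y_{b_\alpha} \mid \alpha < \kappa \rangle$ is then an antichain. Fix $\beta < \alpha$, and suppose toward contradiction that $y_{b_\alpha}$ and $y_{b_\beta}$ are comparable. If $y_{b_\alpha} \leq_T y_{b_\beta}$, then $b_\alpha <_T y_{b_\alpha} \leq_T y_{b_\beta}$ places $b_\alpha$ in $(y_{b_\beta})_\downarrow$, contradicting the recursive choice. Otherwise $y_{b_\beta} <_T y_{b_\alpha}$, so $b_\alpha$ and $y_{b_\beta}$ both lie strictly below the common element $y_{b_\alpha}$ and are hence comparable in the tree; each subcase ($b_\alpha = y_{b_\beta}$, $b_\alpha <_T y_{b_\beta}$, or $y_{b_\beta} <_T b_\alpha$) either reproduces the avoidance contradiction or clashes with the fact that $B$ is disjoint from $\{y_{b_\beta}\} \cup (y_{b_\beta})^\uparrow$ while $b_\alpha \in B$. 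The resulting antichain of size $\kappa$ then contradicts the hypothesis that $\mathbf{T}$ has no antichain of size $\kappa$.
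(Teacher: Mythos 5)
Your proof is correct, but it takes a genuinely different route from the paper's. The paper argues directly rather than by contradiction: it sets $X:=\{x\in T\mid x^\uparrow\cap B=\emptyset\}$, takes a maximal antichain $A$ of $X$, uses Souslinness to bound $A$ below some level $\delta$, picks $b\in B$ of height above $\delta$ (possible since $|T\restriction(\delta+1)|<\kappa=|B|$), and shows that the predecessor $w$ of $b$ at level $\delta$ works: any $x\in X$ above $w$ would, by maximality of $A$, be comparable with some $\bar x\in A$ necessarily lying below $w$, whence $b\in\bar x^\uparrow\cap B$, contradicting $\bar x\in X$. You instead assume the conclusion fails at every node and run a length-$\kappa$ recursion to manufacture a $\kappa$-sized antichain of witnesses $y_{b_\alpha}$; the case analysis you flag as the crux does go through, since $b_\alpha$ and $y_{b_\beta}$ both lie in the linearly ordered set $(y_{b_\alpha})_\downarrow$, and each of the three subcases violates either the recursive choice of $b_\alpha$ or the disjointness of $B$ from $\{y_{b_\beta}\}\cup(y_{b_\beta})^\uparrow$. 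Both arguments rest on the same two consequences of Souslinness and the regularity of $\kappa$ (no $\kappa$-antichains, and every set of the form $x_\downarrow$ or $T\restriction\delta$ has size $<\kappa$); the paper's version is shorter and produces a concrete $w$ from a single small maximal antichain, while yours trades that for a transfinite recursion. One cosmetic point: the cleanest justification for $|(y_{b_\beta})_\downarrow|<\kappa$ is that $(y_{b_\beta})_\downarrow$ is a chain and $\mathbf T$ has no chains of size $\kappa$ (the bound on heights is a consequence of this, not the reason for it).
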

\begin{proof} Let $X$ denote the collection of all $x\in T$ such that $x^\uparrow\cap B$ is empty.
Let $A\s X$ be a maximal antichain in $X$.
As $\mathbf T$ is a $\kappa$-Souslin tree, $|A|<\kappa$, so we may find a large enough $\delta<\kappa$ such that $A\s T\restriction\delta$.
As $|B|=\kappa>|T\restriction(\delta+1)|$, let us fix $b\in B$ with $\h(b)>\delta$. 
Finally, let $w$ denote the unique element of $T_\delta$ with $w<_T b$.

\begin{claim} $w^\uparrow\cap B$ is cofinal in $w^\uparrow$.
\end{claim}
\begin{proof} Suppose not. Then there must exist some $x\in X$ with $w\le_T x$.
As $A$ is a maximal antichain in $X$, we may find $\bar x\in A$ which is comparable with $x$. 
As $\h(\bar x)<\delta\le\h(x)$, it follows that $\bar x<_T x$. As $\h(w)=\delta$ and $w\le_T x$, it  follows that $\bar x<_Tw\le_T x$.
In particular, $\bar x<_Tw<_Tb$, so that $b\in\bar x^\uparrow\cap B$, contradicting the fact that $\bar x\in X$.
\end{proof}
This completes the proof.
\end{proof}

\begin{definition} A $\kappa$-Souslin tree $\mathbf T=(T,{<_T})$ is said to be: 
\begin{itemize}
\item \emph{normal} iff for any $x\in T$ and $\alpha<\kappa$ with $\h(x)<\alpha$, there exists $y\in T_\alpha$ with $x<_T y$;
\item \emph{$\mu$-splitting} iff every node in $T$ admits at least $\mu$-many immediate successors,
that is, for every $x\in T$, $|\{ y\in T\mid x<_T y, \h(y)=\h(x)+1\}|\ge\mu$;
\item \emph{prolific} iff every $x\in T$ admits at least $\h(x)$-many immediate successors;
\item \emph{$\chi$-complete} iff any $<_T$-increasing sequence of elements from $T$, and of length ${<}\chi$, has an upper bound in $T$;
\item \emph{regressive} iff there exists a map $\rho:T\restriction\acc(\kappa)\rightarrow T$ satisfying the following:
\begin{itemize}
\item for every $x\in T\restriction\acc(\kappa)$, $\rho(x)<_T x$;
\item for all $\alpha\in\acc(\kappa)$ and $x,y\in T_\alpha$, 
if $\rho(x)<_T y$ and $\rho(y)<_T x$, then $x=y$;
\end{itemize}
\item \emph{ordinal-based} iff $T=\kappa$ and, for all $x,y\in T$, if $\h(x)<\h(y)$, then $x\in y$.
\end{itemize}

A subset $B\s T$ is said to be an \emph{$\alpha$-branch} iff $(B,<_T)$ is linearly ordered and $\{\h(x)\mid x\in B\}=\alpha$;
it is said to be \emph{vanishing} iff it has no upper bound in $T$.
\end{definition}

\begin{definition} A $\lambda^+$-Souslin tree is said to be \emph{maximally-complete} iff it is $\chi$-complete
for $\chi:=\log_\lambda(\lambda^+)$.
\end{definition}

Note that the existence of a $\cf(\lambda)$-complete $\lambda^+$-Souslin tree
is equivalent to the conjunction of ``$\lambda^{<\cf(\lambda)}=\lambda$'' and ``there is a maximally-complete $\lambda^+$-Souslin tree''.

\begin{prop}[folklore]\label{ordinalsbased} 	For cardinals $\chi,\mu<\cf(\kappa)=\kappa$, if there exists a $\kappa$-Souslin tree which is $\chi$-complete (resp.~regressive), 
then there exists an ordinal-based $\mu$-splitting, normal, prolific 
$\kappa$-Souslin tree which is $\chi$-complete (resp.~regressive).
\end{prop}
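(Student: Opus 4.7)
The plan is to transform $\mathbf{T}$ through three stages---pruning to a normal subtree, inflating to achieve prolificity and $\mu$-splitting, and renaming to get an ordinal-based presentation---each stage preserving Souslin-ness along with whichever of $\chi$-completeness or regressiveness is assumed.

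First, set $T' := \{x \in T : \forall \alpha < \kappa,\ x^\uparrow \cap T_\alpha \ne \emptyset\}$, the downward-closed subtree of nodes extendable to every level. A standard pruning argument---noting that each ``dead'' node in $T \setminus T'$ is bounded above by a maximal element, the maximal elements forming an antichain of size ${<}\kappa$, whence $|T \setminus T'| < \kappa$---shows that $\mathbf{T}' := (T', {<_T}\restriction T')$ is a normal $\kappa$-Souslin tree. Both $\chi$-completeness and regressiveness of $\mathbf{T}$ descend to $\mathbf{T}'$ by restriction.

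Next, build $\mathbf{T}''$ by recursion on levels, together with a tree homomorphism $\pi : T'' \to T'$: at the successor stage $\alpha+1$, for each $x \in T''_\alpha$, install $\max(\mu, \alpha)$-many new immediate successors, each labeled by an index $i < \max(\mu, \alpha)$ and projecting under $\pi$ to an immediate successor of $\pi(x)$ in $\mathbf{T}'$ (repetitions of $\pi$-values allowed when the $T'$-fanout is small); at the limit stage $\alpha$, install nodes that are limits of $\pi$-coherent branches in $T'' \restriction \alpha$, tagged canonically by $0$ for uniqueness. An antichain $A \subseteq T''$ projects to a set $\pi(A) \subseteq T'$ whose distinct elements are pairwise $<_T$-incomparable---hence $|\pi(A)| < \kappa$---while each fiber $A \cap \pi^{-1}(y)$ has size at most $\max(\mu, \h(y)) < \kappa$; by regularity of $\kappa$, $|A| < \kappa$, so $\mathbf{T}''$ is Souslin. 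Normality, prolificity, and $\mu$-splitting are built in; $\chi$-completeness passes because limits in $\mathbf{T}''$ are determined by limits in $\mathbf{T}'$ with the canonical $0$-tag; and a regressive map $\rho$ for $\mathbf{T}'$ lifts to $\mathbf{T}''$ via $\pi$ together with a coherent tag selection.

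Finally, an ordinal-based presentation of $\mathbf{T}''$ is produced by recursively renaming each $x \in T''$ to an ordinal $\bar{x} < \kappa$ coding the set $\{\bar{y} : y <_{T''} x\}$; since each initial segment $T'' \restriction (\h(x)+1)$ has size ${<}\kappa$ and $\kappa$ is regular, all such codes fit into $\kappa$, and after renaming the relation $\bar{x} <_{T''} \bar{y}$ is exactly $\bar{x} \in \bar{y}$. The main obstacle lies in Stage 2: pinning down the labeling scheme so that antichains in $\mathbf{T}''$ have the fibered structure described above, so that the limit tag is forced to be canonical for $\chi$-completeness to lift, and so that the regressive clauses cohere through the inflation; the pruning and renaming stages are essentially formal once the inflation is handled carefully.
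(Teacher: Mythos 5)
There is a genuine gap, and it sits exactly where you flagged it: Stage~2. The central claim --- that an antichain $A\s T''$ projects under $\pi$ to a set of pairwise $<_T$-incomparable elements of $T'$, with fibers of size at most $\max(\mu,\h(y))$ --- is false. Because repetitions of $\pi$-values are allowed, a node $x\in T''$ may have two distinct immediate successors $a\neq b$ with $\pi(a)=\pi(b)=y$; then $a$ and any $b^*$ extending $b$ are incomparable in $T''$ while $\pi(a)=y\le_{T'}\pi(b^*)$, so incomparability is not reflected by $\pi$ and the ``antichain with small fibers'' decomposition collapses. (The fiber bound is also wrong on its own terms: duplication compounds multiplicatively along a branch, so $|\pi^{-1}(y)|$ is on the order of $\prod_{\beta<\h(y)}\max(\mu,|\beta|)$, not $\max(\mu,\h(y))$.) The limit stage is a second, independent problem: $T''\restriction\omega$ already contains a copy of ${}^{<\omega}\mu$, hence has at least $\mu^{\aleph_0}\ge 2^{\aleph_0}$ many $\omega$-branches with pairwise distinct prospective upper bounds. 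If you extend every $\pi$-coherent branch (as $\chi$-completeness for uncountable $\chi$, or even just a uniform recipe for normality, would push you to do), level $\omega$ of $T''$ is an antichain whose size need not be ${<}\kappa$; if you extend only some branches, you must specify which, and normality, completeness and regressivity all become delicate. No choice of ``labeling scheme'' repairs this: the inflation itself is what manufactures the new incomparabilities and the new branches.

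The paper's proof goes in the opposite direction: instead of adding nodes, it deletes levels. One first fixes (by the standard fact cited there) a club $E\s\kappa$ on which $\mathbf T$ is normal and splitting, and then restricts further to the subclub $D:=\{\alpha<\kappa\mid\otp(E\cap\alpha)=\mu^\alpha\}$. Between a node $x\in T\restriction D$ and the next level of $D$ there are so many splitting levels of $E$ that $x$ already has at least $\max\{\mu,|\h(x)|\}$-many pairwise distinct extensions at that next $D$-level; so $(T\restriction D,<_T)$ is prolific and $\mu$-splitting \emph{for free}, and, being the full restriction of $\mathbf T$ to a closed set of levels, it inherits Souslin-ness, $\chi$-completeness and regressiveness with essentially no work. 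Note that restricting to levels also sidesteps a subtlety in your Stage~1: after pruning \emph{nodes}, a short chain of extendable nodes could in principle have only non-extendable upper bounds, so $\chi$-completeness does not obviously descend; with level-restriction it does, simply because $D$ is closed. Your Stage~3 (the recursive ordinal renaming) is fine and matches the paper; replace Stages~1--2 by the thinning argument.
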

\begin{proof} Suppose $\mathbf T=(T,{<_T})$ is a $\kappa$-Souslin tree. 
By a standard fact (see \cite[Lemma~2.4]{rinot20}), we may fix a club $E\subseteq \kappa$ such that $(T\restriction E, <_T)$ is normal and splitting.
Consider the set $D:=\{\alpha<\kappa\mid \otp(E\cap\alpha)=\mu^\alpha\}$ which is a subclub of $E$.
It is clear that $\mathbf T':=(T\restriction D, <_T)$ is a normal $\kappa$-Souslin tree. 
\begin{claim} \begin{enumerate}
\item $\mathbf T'$ is prolific and 
$\mu$-splitting;
\item if $\mathbf T$ is $\chi$-complete, then so is $\mathbf T'$;
\item if $\mathbf T$ is regressive, then so is $\mathbf T'$.
\end{enumerate}
\end{claim}
\begin{proof} (1) Fix an arbitrary node $x$ of $\mathbf T'$, so that $x\in T\restriction D$.
Let $\delta:=\min(D\setminus(\h_{\mathbf T}(x)+1))$.
As $\delta\in D\s E$ and $(T\restriction E, <_T)$ is normal, let us fix $z\in T_\delta$ with $x<_T z$.
Let $e:=\{\varepsilon\in E\mid \h(x)<\varepsilon<\delta\}$.
Note that from $\otp(E\cap\delta)=\mu^\delta$, it follows that $\otp(e)=\delta$ and $|e|\ge\mu$.

For every $\varepsilon\in e$, let $y_\varepsilon$ denote the unique element of $T_\varepsilon$ satisfying $y_\varepsilon <_T z$,
and denote $\varepsilon^+:=\min(e\setminus(\varepsilon+1))$.
Then, using the fact that $(T\restriction E, <_T)$ is normal and splitting, 
for every $\varepsilon\in e$, pick  $\hat y_\varepsilon\in T_{\varepsilon^+}$ such that $y_\varepsilon<_T \hat y_\varepsilon$ and $\hat y_\varepsilon\neq y_{\varepsilon^+}$,
and then pick $z_\varepsilon\in T_\delta$ with $\hat y_\varepsilon<_T z_\varepsilon$.
Then $\{ z_\varepsilon\mid \varepsilon \in e\}$ consists of $|e|$-many immediate successors of $x$ in $\mathbf T'$.

(2) Since $D$ is closed.

(3) Suppose $\rho:T\restriction\acc(\kappa)\rightarrow T$ witnesses that $\mathbf T$ is regressive. Define $\rho':T\restriction\acc(D)\rightarrow T\restriction D$ as follows.
Given $\alpha\in\acc(D)$ and $x\in T_\alpha$, let $\delta:=\min(D\setminus(\h_{\mathbf T}(\rho(x))+1))$,
and then let $\rho'(x)$ be the unique $y<_T x$ with $\h_{\mathbf T}(y)=\delta$.
It is clear that $\rho'$ witnesses that $\mathbf T'$ is regressive.
\end{proof}

Recursively define a sequence of injections $\langle \pi_\alpha:T_\alpha\rightarrow\kappa\mid \alpha\in D\rangle$ such that for, every $\alpha\in D$:
\begin{itemize}
\item For every $\alpha'\in D\cap\alpha$, $\im(\pi_{\alpha'})\cap\im(\pi_\alpha)=\emptyset$;
\item $\biguplus\{\im(\pi_{\alpha'})\mid \alpha'\in D\cap(\alpha+1)\}$ is an ordinal.
\end{itemize}
Evidently, $\pi:=\bigcup_{\alpha\in D}\pi_\alpha$ is an injection from $T\restriction D$ onto $\kappa$.
Let ${\lhd}:=\{ (\pi(x),\pi(y))\mid (x,y)\in{<_T}\}$. Then $(\kappa,{\lhd})$ is an ordinal-based $\kappa$-Souslin tree order-isomorphic to $\mathbf T'$.
	\end{proof}

A richer introduction to abstract transfinite trees, Aronszajn trees and Souslin trees may be found in Section~2 of \cite{paper23},
and a comprehensive treatment of the consistency of existence of Souslin trees may be found in Section~6 of the same paper.
For our purpose, it suffices to mention the following fact:
\begin{fact}\label{fact216} For an infinite cardinal $\lambda$ satisfying $\diamondsuit(\lambda^+)$:\footnote{Note that, by \cite{Sh:922}, for any \emph{uncountable} cardinal $\lambda$, $\diamondsuit(\lambda^+)$ holds iff $2^\lambda=\lambda^+$.}
\begin{enumerate}
\item \cite{Jensen_V=L_Diamond} Assuming $\lambda^{<\lambda}=\lambda$, if $\diamondsuit(E^{\lambda^+}_\lambda)$ holds, then there exists a $\lambda$-complete $\lambda^+$-Souslin tree;
\item \cite{paper37} Assuming $\lambda^{<\lambda}=\lambda$, if $\square(\lambda^+,{<}\lambda)$ holds, then there exists a $\lambda$-complete $\lambda^{+}$-Souslin tree;
\item \cite{paper24,paper51} Assuming $\lambda^{\aleph_0}=\lambda$ or $\lambda\ge\beth_\omega$ or $\mathfrak b\le\lambda<\aleph_\omega$, 
if $\square(\lambda^+)$ holds, then there exists a maximally-complete $\lambda^+$-Souslin tree and there exists a regressive $\lambda^+$-Souslin tree;
\item\cite{paper31} If $\square_{\lambda^+}$ holds, then there exists a $\lambda^+$-complete $\lambda^{++}$-Souslin tree.
\end{enumerate}
\end{fact}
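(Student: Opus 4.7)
Clauses (1)--(4) are compiled from four different papers, so I would not reproduce the full arguments, but they share a common skeleton which I would follow.

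The overall plan is a level-by-level recursion that builds an ordinal-based, normal, prolific Souslin tree $\mathbf T$ on $\lambda^+$ (or $\lambda^{++}$ in Clause~(4)). At successor levels one adds the prescribed number of immediate successors; the cardinal arithmetic hypothesis ($\lambda^{<\lambda}=\lambda$ and its strengthenings) keeps each level of size ${<}\lambda^+$. As always for Souslin constructions, the nontrivial work is at limit heights $\alpha$, where I have to
\begin{enumerate}
\item extend enough $\alpha$-branches through every node of $T\restriction\alpha$ to maintain normality and the desired completeness, while
\item ``sealing'' each maximal antichain of $T\restriction\alpha$ so that no new node at height $\alpha$ is incomparable with it; this is what forces the tree to be Souslin.
\end{enumerate}

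For Clause~(1), at $\alpha<\lambda^+$ with $\cf(\alpha)<\lambda$ the hypothesis $\lambda^{<\lambda}=\lambda$ lets one extend every node through $\alpha$, yielding $\lambda$-completeness for free; at $\alpha\in E^{\lambda^+}_\lambda$, one uses the $\diamondsuit(E^{\lambda^+}_\lambda)$-sequence to predict a set $A_\alpha\s T\restriction\alpha$, and if $A_\alpha$ is a maximal antichain one extends only along $\alpha$-branches that meet $A_\alpha$. A standard density argument then shows no antichain of size $\lambda^+$ survives. Clause~(2) replaces the $\diamondsuit$ on $E^{\lambda^+}_\lambda$ by a $\square(\lambda^+,{<}\lambda)$-sequence that coherently selects cofinal sequences at limit levels, combined with a $\diamondsuit(\lambda^+)$ obtained from $\lambda^{<\lambda}=\lambda$ to seal antichains. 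Clause~(3) uses a $\square(\lambda^+)$-sequence as the guide; the stronger cardinal arithmetic upgrades completeness to $\log_\lambda(\lambda^+)$, and the regressive witness $\rho$ is defined in parallel with the tree by letting $\rho(x)$ sit on the square-prescribed cofinal approach to $\h(x)$, so that coherence of $\square$ yields the required compatibility of $\rho$ on same-level pairs. Clause~(4) runs the same scheme one cardinal up: $\square_{\lambda^+}$ handles limits of cofinality $\lambda^+$ in $\lambda^{++}$, while all other limit cofinalities are ${\le}\lambda$, giving $\lambda^+$-completeness automatically.

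The main obstacle throughout is coordinating completeness with antichain-sealing at the critical limit cofinality: extending through every node is required for completeness/normality, yet one must funnel the extensions through the predicted maximal antichains without introducing an unwanted cofinal branch. In the regressive case (Clause~(3)) the additional difficulty is that $\rho$ must agree with whichever cofinal approach was used at each limit; this is exactly why a $\square$-type (coherent) guide is necessary there, rather than a mere $\diamondsuit$-type (pointwise) one.
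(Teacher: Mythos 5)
The paper offers no proof of this statement: it is labelled a \emph{Fact} and each clause is simply cited from the literature (Jensen for (1), and the Brodsky--Rinot/Rinot/Lambie-Hanson papers for (2)--(4)), so there is nothing internal to compare your argument against. Judged on its own terms, your outline correctly identifies the standard skeleton of a Souslin-tree construction (recursion on levels, cardinal arithmetic to bound level sizes, extending branches for normality and completeness at limits, sealing predicted maximal antichains at the critical cofinality), and Clause~(1) as you describe it is essentially Jensen's argument. But for Clauses (2)--(4) the sketch stops short of the actual content of the cited theorems. Most tellingly, in Clause~(3) your argument never uses the hypotheses ``$\lambda^{\aleph_0}=\lambda$ or $\lambda\ge\beth_\omega$ or $\mathfrak b\le\lambda<\aleph_\omega$'' --- a construction that works without them would be a substantially stronger theorem than the one being cited. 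Those side conditions are precisely where the difficulty lies: $\square(\lambda^+)$ (as opposed to $\square_\lambda$) gives a coherent sequence with no control over the order-types $\otp(C_\alpha)$, and the cited papers spend their effort deriving from these arithmetic hypotheses an appropriate instance of the proxy principle $\p^-(\dots)$ (via the ideal $J[\lambda^+]$ in the third case) before any tree is built. A proof that does not engage with this step has a genuine gap, not merely an omitted routine verification.

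Two smaller points. For the regressive tree in Clause~(3), your idea of letting $\rho(x)$ lie on the square-prescribed approach to $\h(x)$ is the right instinct, but the required property is a two-sided compatibility condition on same-level pairs ($\rho(x)<_T y$ and $\rho(y)<_T x$ imply $x=y$), and verifying it needs the coherence of the $C$-sequence to be threaded through the recursion explicitly; asserting that ``coherence yields the required compatibility'' is where the work is, not where it ends. And in Clause~(2), $\diamondsuit(\lambda^+)$ is part of the blanket hypothesis of the Fact, not something ``obtained from $\lambda^{<\lambda}=\lambda$''; for $\lambda$ uncountable it is equivalent to $2^\lambda=\lambda^+$ by Shelah's theorem, which does not follow from $\lambda^{<\lambda}=\lambda$.
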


We now introduce a new characteristic of Souslin trees.

\begin{definition}[The levels of vanishing branches] For a $\kappa$-Souslin tree $\mathbf T=(T,{<_T})$, let $V(\mathbf T)$ denote the set of all $\alpha\in\acc(\kappa)$
such that, for every $x\in T\restriction\alpha$, there exists a vanishing $\alpha$-branch containing $x$.
\end{definition}

It follows from a theorem of Shelah \cite{Sh:624} that it is consistent that for some Mahlo cardinal $\kappa$, there exists a $\kappa$-Souslin tree $\mathbf T$ for which $V(\mathbf T)=\emptyset$.

\begin{lemma}\label{regressivecharacteristic} Suppose that $\mathbf T=(T,{<_T})$ is a normal $2$-splitting regressive $\kappa$-Souslin tree.
Then $V(\mathbf T)\supseteq E^\kappa_\omega$.
\end{lemma}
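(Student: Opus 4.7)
The plan is to argue by contradiction. Fix $\alpha \in E^\kappa_\omega$ and $x \in T\restriction\alpha$, and suppose every $\alpha$-branch containing $x$ has an upper bound in $T_\alpha$. Let $\langle \alpha_n \mid n < \omega\rangle$ be a strictly increasing sequence cofinal in $\alpha$ with $\alpha_0 > \h(x)$, put $Y := \{y \in T_\alpha \mid x <_T y\}$, and for $y \in T_\alpha$ and $\beta \le \alpha$ write $y\restriction\beta$ for the unique predecessor of $y$ at level $\beta$. Normality ensures $Y \ne \emptyset$.

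\emph{Lower bound on $|Y|$.} Since $\mathbf T$ is normal and $2$-splitting, iterating $2$-splitting between the $\alpha_n$'s embeds a copy of $2^{<\omega}$ into the subtree of $\mathbf T$ above $x$. Each of its $2^{\aleph_0}$ many $\omega$-branches determines a distinct $\alpha$-branch through $x$, which under our assumption extends to a distinct element of $Y$; hence $|Y|\ge 2^{\aleph_0}$.

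\emph{Regressive constraint.} Let $\rho$ witness regressivity. I would first rephrase the regressive condition: for distinct $y, y' \in T_\alpha$ with first level of divergence $\beta$, one has $\rho(y) <_T y'$ iff $\h(\rho(y)) < \beta$ and symmetrically for $\rho(y') <_T y$, so the regressive axiom becomes the inequality $\max(\h(\rho(y)),\h(\rho(y'))) \ge \beta$. In particular $\rho\restriction T_\alpha$ is injective. For each $y \in Y$ set $h_y := \h(\rho(y))$ and $n_y := \min\{n \mid h_y < \alpha_n\}$, and partition $Y = \bigsqcup_n Y_n$ with $Y_n := \{y \in Y \mid n_y = n\}$. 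On each $Y_n$ any two distinct members diverge below $\alpha_n$, so $y \mapsto y\restriction\alpha_n$ injects $Y_n$ into the antichain $A_n := \{z \in T_{\alpha_n} \mid x <_T z\}$ of $\mathbf T$.

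\emph{Contradiction.} Since $|Y| = \sum_n |Y_n| \ge 2^{\aleph_0}$, some $Y_n$ has cardinality at least $2^{\aleph_0}$, producing an antichain $A_n$ of $\mathbf T$ of that size. For $\kappa=\omega_1$ this already contradicts Souslinity. The main obstacle for the general case is to promote the size-$2^{\aleph_0}$ antichain to one of size $\kappa$: I would recursively apply the same regressive-and-$2$-splitting analysis above each $z \in A_n$ at progressively deeper levels $\alpha_m$, amalgamating the resulting subantichains (which remain antichains of $\mathbf T$ since their base nodes are pairwise incomparable) across $\omega$ many stages. Organizing this bookkeeping so that the accumulated antichain actually attains size $\kappa$, rather than stabilizing at $2^{\aleph_0}$, is where I expect the bulk of the work to lie.
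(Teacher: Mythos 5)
Your setup (the embedded copy of ${}^{<\omega}2$ above $x$, the $2^{\aleph_0}$ distinct upper bounds in $T_\alpha$, and the rephrasing of regressivity as ``$\max(\h(\rho(y)),\h(\rho(y')))\ge\beta$ where $\beta$ is the divergence level'') is correct and matches the paper's opening moves. The gap is in the endgame. Your plan to ``promote the size-$2^{\aleph_0}$ antichain to one of size $\kappa$'' cannot succeed: every object in your construction --- the set $Y\s T_\alpha$, the antichains $A_n\s T_{\alpha_n}$, and anything obtained by recursing at deeper levels $\alpha_m<\alpha$ --- lives inside $T\restriction(\alpha+1)$, which has cardinality ${<}\kappa$ in any $\kappa$-Souslin tree. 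So the accumulated antichain is bounded by $|T\restriction(\alpha+1)|<\kappa$ no matter how the bookkeeping is organized, and for $2^{\aleph_0}<\kappa$ no contradiction with Souslinity is ever reached this way.

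The fix is that no large antichain is needed: the regressive axiom already yields a contradiction from a single \emph{pair} of branches. This is what the paper does: for each $t\in{}^{\omega}2$ choose $n_t$ with $\h(\rho(b_t))<\alpha_{n_t}$, so that $\rho(b_t)<_T x_{t\restriction n_t}$; since ${}^{<\omega}2$ is countable, two distinct $t,t'$ share the same stem $s=t\restriction n_t=t'\restriction n_{t'}$, whence $\rho(b_t)<_T x_s<_T b_{t'}$ and $\rho(b_{t'})<_T x_s<_T b_t$, forcing $b_t=b_{t'}$ --- contradiction. In fact your own partition $Y=\bigsqcup_n Y_n$ is two lines away from this: the branches $b_t$ with $t\restriction n$ fixed all pass through the single node $x_{t\restriction n}\in T_{\alpha_n}$, so they do not diverge below $\alpha_n$ and hence at most one of them lies in $Y_n$; thus $|\{t\mid b_t\in Y_n\}|\le 2^n$ and $|\{b_t\mid t\in{}^\omega2\}|\le\sum_n 2^n=\aleph_0$, contradicting $|\{b_t\mid t\in{}^{\omega}2\}|=2^{\aleph_0}$. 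Replace the antichain-amplification plan with either of these counting arguments and the proof is complete.
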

\begin{proof} Towards a contradiction, suppose that $\alpha\in E^\kappa_\omega\setminus V(\mathbf T)$. 
Fix $x\in T\restriction\alpha$ such that every $\alpha$-branch $B$ with $x\in B$ has an upper bound in $T$.
Fix a strictly increasing sequence of ordinals $\langle \alpha_n\mid n<\omega\rangle$ that converges to $\alpha$, and $\alpha_0:=\h(x)$.
We shall recursively construct an array $\langle x_t\mid t\in{}^{<\omega}2\rangle$ in such a way that $x_t\in T_{\alpha_{|t|}}$.
Set $x_\emptyset:=x$. Now, for every $t\in{}^{<\omega}2$ such that $x_t$ has already been defined, since $\mathbf T$ is $2$-splitting and normal,
we may find $y\neq z$ in $T_{\alpha_{|t|+1}}$ with $x<_T y,z$; then, let $x_{t{}^\smallfrown\langle 0\rangle}:=y$ and  $x_{t{}^\smallfrown\langle 1\rangle}:=z$.
Next, given $t\in{}^{\omega}2$, let $B_t:=\{ y\in T\restriction\alpha\mid \exists n<\omega(y<_T x_{t\restriction n})\}$.
As $B_t$ is an $\alpha$-branch containing $x$, it must have a bound $b_t\in T$.
Clearly, $\h(b_t)\ge\alpha$, and we may moreover assume that $\h(b_t)=\alpha$.
Note that the construction secures that, for all $t\neq t'$ in ${}^\omega2$, $b_t\neq b_{t'}$.

Let $\rho:T\restriction\acc(\kappa)\rightarrow T$ be a witness to the fact that $\mathbf T$ is regressive.
Next, for every $t\in{}^\omega2$, fix a large enough $n_t<\omega$ such that $\rho(b_t)<_T x_{t\restriction n_t}$.
By the pigeonhole principle, we may now fix $s\in{}^{<\omega}2$ such that $\{t\in{}^\omega2\mid t\restriction n_t=s\}$ is uncountable.
Pick $t\neq t'$ in ${}^\omega2$ such that $t\restriction n_{t}=s=t'\restriction n_{t'}$.
Then, $\rho(b_t)<_T x_s<_Tb_{t'}$ and $\rho(b_{t'})<_T x_s<_T b_t$, 
contradicting the fact that $b_t\neq b_{t'}$.
\end{proof}
\begin{remark} It follows that if $\mathbf T$ is a normal $2$-splitting coherent $\kappa$-Souslin tree, then $V(\mathbf T)=E^\kappa_\omega$.
A consistent construction of such a tree may be found in \cite[Proposition~2.5]{paper22}.
\end{remark}

\begin{lemma}\label{vanishcomplete} 
Suppose that $\mathbf T=(T,{<_T})$ is a normal maximally-complete $\lambda$-splitting $\lambda^+$-Souslin tree.
Then $V(\mathbf T)\supseteq E^{\lambda^+}_{\chi}$ for the (regular) cardinal $\chi:=\log_\lambda(\lambda^+)$.
\end{lemma}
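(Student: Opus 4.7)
The plan is to adapt the strategy of Lemma~\ref{regressivecharacteristic} to the higher-cardinal setting: replace $\omega$ by $\chi$, use $\chi$-completeness at limit levels and $\lambda$-splitting at successor levels, and derive a contradiction by producing so many $\alpha$-level bounds that the level $T_\alpha$ is forced to have size ${\ge}\lambda^+$.

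First, suppose toward a contradiction that some $\alpha\in E^{\lambda^+}_\chi$ and some $x\in T\restriction\alpha$ are such that every $\alpha$-branch $B$ containing $x$ admits an upper bound in $T$. I would fix a strictly increasing continuous sequence $\langle\alpha_i\mid i<\chi\rangle$ cofinal in $\alpha$ with $\alpha_0:=\h(x)$, and recursively construct an array $\langle x_t\mid t\in{}^{<\chi}\lambda\rangle$ with $x_t\in T_{\alpha_{|t|}}$, and with $x_s<_T x_t$ whenever $s$ is a proper initial segment of $t$. Set $x_\emptyset:=x$. At a successor stage, given $x_t$, invoke $\lambda$-splitting to pick $\lambda$-many distinct immediate successors of $x_t$, extend each by normality to level $\alpha_{|t|+1}$, and declare these to be $\langle x_{t{}^\smallfrown\langle\xi\rangle}\mid\xi<\lambda\rangle$. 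At a limit stage $i<\chi$ for $t\in{}^{i}\lambda$, the chain $\langle x_{t\restriction j}\mid j<i\rangle$ has length $i<\chi$, so $\chi$-completeness provides an upper bound $u\in T$, and the unique predecessor of $u$ at level $\alpha_i$ (available by the well-ordering of $u_\downarrow$ together with normality) then serves as $x_t$.

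Next, for each $t\in{}^\chi\lambda$, the set $B_t:=\{y\in T\mid \exists i<\chi\,(y\le_T x_{t\restriction i})\}$ is an $\alpha$-branch containing $x$, so by the standing hypothesis it admits an upper bound $b_t$, which by normality I may take with $\h(b_t)=\alpha$. Distinct $t\ne t'$ in ${}^\chi\lambda$ yield distinct $b_t\ne b_{t'}$: letting $i$ be least with $t(i)\ne t'(i)$, the elements $x_{t\restriction(i+1)}$ and $x_{t'\restriction(i+1)}$ are already ${<_T}$-incomparable by construction, so the same is true of $b_t$ and $b_{t'}$. This produces an injection ${}^\chi\lambda\hookrightarrow T_\alpha$, yielding $|T_\alpha|\ge\lambda^\chi\ge\lambda^+$, which contradicts the fact that every level of a $\lambda^+$-Souslin tree has size ${\le}\lambda$.

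The only real obstacle is the limit step of the recursion, which is exactly where the maximal-completeness assumption ($\chi$-completeness) is consumed; $\lambda$-splitting is used only at successor stages, while regularity of $\chi$ ensures the recursion actually reaches height $\chi$ for every branch index $t\in{}^\chi\lambda$.
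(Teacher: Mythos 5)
Your proposal is correct and follows essentially the same route as the paper: the same array $\langle x_t\mid t\in{}^{<\chi}\lambda\rangle$ built by $\lambda$-splitting at successors and $\chi$-completeness at limits, the same passage to bounds $b_t$ of the branches $B_t$, and the same contradiction via $\lambda^{\chi}\ge\lambda^+$ many pairwise distinct elements of the antichain $T_\alpha$. The paper leaves the recursion details implicit (referring back to the proof of the regressive case), whereas you spell out the limit step where maximal completeness is consumed; this is a faithful elaboration rather than a different argument.
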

\begin{proof} Towards a contradiction, suppose that $\alpha\in E^{\lambda^+}_{\chi}\setminus V(\mathbf T)$. 
Fix $x\in T\restriction\alpha$ such that every $\alpha$-branch $B$ with $x\in B$ has an upper bound in $T$.
Fix a strictly increasing and continuous sequence of ordinals $\langle \alpha_\epsilon\mid \epsilon<\chi\rangle$ that converges to $\alpha$, and $\alpha_0:=\h(x)$.
Very much like the proof of Lemma~\ref{regressivecharacteristic}, we may recursively construct an array $\langle x_t\mid t\in{}^{<\chi}\lambda\rangle$ in such a way that:
\begin{itemize}
\item $x_\emptyset=x$;
\item for all $t\in{}^{<\chi}\lambda$, $x_t\in T_{\alpha_{\dom(t)}}$;
\item for all $t,s\in {}^{<\chi}\lambda$, if $t\s s$, then $x_t<_T x_s$;
\item for all $t\in{}^{<\chi}\lambda$ and $i<j<\lambda$, $x_{t{}^\smallfrown\langle i\rangle}\neq x_{t{}^\smallfrown\langle j\rangle}$.
\end{itemize}

For each $t\in{}^{\chi}\lambda$, find $b_t\in T_\alpha$ such that, for every $\epsilon<\chi$, $x_{t\restriction\epsilon}<_T b_t$.
Then, $\{ b_t\mid t\in{}^{\chi}\lambda\}$ is an antichain of size $\ge\lambda^+$ in $\mathbf T$. This is a contradiction.
\end{proof}
\begin{remark} It follows that if $\mathbf T$ is a normal $\lambda$-complete $\lambda$-splitting $\lambda^+$-Souslin tree, then 
$V(\mathbf T)=E^{\lambda^+}_{\lambda}$.
\end{remark}

\subsection{Deriving our guessing principle from a Souslin tree}
\begin{theorem}\label{thm111}  
Suppose that $\mathbf T=(T,{<_T})$ is an ordinal-based $\chi$-splitting $\kappa$-Souslin tree,
and $\theta\le\chi$ is a cardinal satisfying $\kappa^{<\theta}=\kappa$.
Then, for every collection $\mathcal S$ of pairwise disjoint stationary subsets of $V(\mathbf T)\cap E^\kappa_{\chi}$,
there exists an $\ad$-multi-ladder system $\langle \mathcal A_{\alpha}\mid\alpha\in E^\kappa_{\chi}\rangle$ 
satisfying the following.
For every $\mathcal B\s[\kappa]^{\kappa}$ with $|\mathcal B|<\theta$,
		every $S\in\mathcal S$,
		and every cardinal $\mu<\kappa$,
		the following set is stationary:
		$$G_{\ge\mu}(S,\mathcal B):=\{\alpha\in S\mid |\mathcal A_\alpha|\ge\mu\ \&\ \forall (A,B)\in\mathcal A_\alpha\times\mathcal B~[\sup(A\cap B)= \alpha]\}.$$
\end{theorem}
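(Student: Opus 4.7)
The plan is to build $\mathcal A_\alpha$ out of vanishing $\alpha$-branches of $\mathbf T$, exploiting the ordinal-based coding to convert branches into cofinal subsets of $\alpha$. First, since $\mathbf T$ is ordinal-based with $T=\kappa$, the set $C := \{\alpha \in \acc(\kappa) \mid T\restriction\alpha = \alpha \text{ and } \sup(T\restriction\beta) < \alpha \text{ for every } \beta<\alpha\}$ is a club in $\kappa$. I construct nontrivial $\mathcal A_\alpha$ only for $\alpha \in V(\mathbf T)\cap E^\kappa_\chi \cap C$, assigning a trivial cofinal singleton elsewhere in $E^\kappa_\chi$; this is harmless because $\bigcup\mathcal S \subseteq V(\mathbf T)\cap E^\kappa_\chi$ by hypothesis.

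For each $\alpha \in V(\mathbf T)\cap E^\kappa_\chi \cap C$, I use $\chi$-splitting and the equality $|T\restriction\alpha|=|\alpha|$ to choose an antichain $X_\alpha \subseteq T\restriction\alpha$ of size $|\alpha|$ whose heights are cofinal in $\alpha$. For each $x \in X_\alpha$, I use $\alpha \in V(\mathbf T)$ to pick a vanishing $\alpha$-branch $b^\alpha_x$ through $x$, and set $A^\alpha_x := \{y \in b^\alpha_x \mid x \le_T y\}$ and $\mathcal A_\alpha := \{A^\alpha_x \mid x \in X_\alpha\}$. Because $|\mathcal A_\alpha|=|\alpha|$, for every cardinal $\mu<\kappa$ the size condition $|\mathcal A_\alpha|\ge\mu$ holds on a tail of ordinals, so the size clause of $G_{\ge\mu}$ never obstructs stationarity.

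Verifying the $\ad$-multi-ladder axioms is then routine. Pairwise disjointness within $\mathcal A_\alpha$: if $y \in A^\alpha_x \cap A^\alpha_{x'}$ with $x\ne x'$, then $x,x' \le_T y$, contradicting that $X_\alpha$ is an antichain. Almost disjointness across $\alpha<\alpha'$: if the heights of $b^\alpha_x \cap b^{\alpha'}_{x'}$ were cofinal in $\alpha$, then by downward closure of chains $b^\alpha_x$ would coincide with the $\alpha$-truncation of $b^{\alpha'}_{x'}$, and the element of $b^{\alpha'}_{x'}$ at height $\alpha$ would then be an upper bound for $b^\alpha_x$, contradicting vanishing. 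Hence those heights are bounded by some $\beta_0 < \alpha$, and the defining property of $C$ forces $\sup(A^\alpha_x \cap A^{\alpha'}_{x'}) \le \sup(T\restriction\beta_0) < \alpha$.

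The main obstacle is the guessing clause. Fix $\mathcal B$ with $|\mathcal B|<\theta$ and $S\in\mathcal S$; by Lemma~\ref{Souslin_denseness}, for each $B\in\mathcal B$ there is $w_B\in T$ with $w_B^\uparrow\cap B$ cofinal in $w_B^\uparrow$ (so $y^\uparrow\cap B$ is in fact cofinal in $y^\uparrow$ for every $y\ge_T w_B$), and $\delta^*:=\sup_{B\in\mathcal B}\h(w_B)+1$ lies below $\kappa$. Given a club $D$, I would argue by contradiction: if no $\alpha\in S\cap D\cap C$ above $\delta^*$ were in $G_{\ge\mu}$, for each such $\alpha$ there are witnesses $x_\alpha\in X_\alpha$, $B_\alpha\in\mathcal B$, and $\gamma_\alpha<\alpha$ with $A^\alpha_{x_\alpha}\cap B_\alpha \subseteq \gamma_\alpha$. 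Pigeonhole on $\mathcal B$ (of size $<\theta\le\chi<\kappa$) together with Fodor on $\gamma_\alpha$ would extract a stationary $T^{**}$ with constant $B^*$ and $\gamma^*$. Because the branches were chosen before $\mathcal B$ was known, to reach a contradiction Step~2 must be refined so that the stationary family $\{A^\alpha_{x_\alpha}\setminus\gamma^* \mid \alpha\in T^{**}\}$ — sitting inside $w_{B^*}^\uparrow$ yet disjoint from $B^*$ — yields a $\kappa$-sized antichain in $\mathbf T$, violating its Souslin property via Lemma~\ref{Souslin_denseness}. I expect the hypothesis $\kappa^{<\theta}=\kappa$ to enter precisely here, through a pre-enumeration $\langle W_\beta \mid \beta<\kappa\rangle$ of $[T]^{<\theta}$ guiding the selection of $X_\alpha$ and of the branches $b^\alpha_x$, so that stationarily many $\alpha$ have their branches forced by construction to thread through the relevant witness sets.
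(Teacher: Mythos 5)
Your construction of the $\ad$-multi-ladder system (antichains $X_\alpha$ plus vanishing branches, a.d.\ verified via the club $C$) matches the spirit of the paper's Claims \ref{claim1112}--\ref{Claim - Guess almost disjoint}, but the heart of the theorem is the guessing clause, and there your proposal has a genuine gap that you yourself flag: the Fodor/pigeonhole contradiction does not close, and the refinement you say ``must'' be made is in fact the entire content of the proof. Concretely, if the branches at $\alpha$ are chosen through an \emph{arbitrary} antichain $X_\alpha$, the guessing clause simply fails: take $B:=w^\uparrow$ for a node $w$ with $|w^\uparrow|=\kappa$; then $A^\alpha_x\cap B=\emptyset$ whenever $x\perp w$, so $\alpha\in G(S,\{B\})$ forces every $x\in X_\alpha$ to sit above $w$ --- something your construction does nothing to arrange. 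Your contradiction sketch cannot rescue this: the failing witnesses $x_\alpha$ need not lie in $w_{B^*}^\uparrow$ at all; the family $\{A^\alpha_{x_\alpha}\setminus\gamma^*\mid\alpha\in T^{**}\}$ is a union of chains, not an antichain; and even granting a $\kappa$-sized subset of $w_{B^*}^\uparrow$ disjoint from $B^*$, that contradicts nothing (Lemma~\ref{Souslin_denseness} only produces \emph{some} node below which $B^*$ is dense, and $w_{B^*}^\uparrow\setminus B^*$ can perfectly well have size $\kappa$).

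What the paper actually does --- and what your last paragraph correctly anticipates but does not execute --- is to reverse the quantifiers by a $\kappa$-enumeration $\langle W_\eta\mid\eta<\kappa\rangle$ of all sets of fewer than $\theta$ nodes lying on a common level $\epsilon_\eta\ge\chi$ (this is where $\kappa^{<\theta}=\kappa$ enters), reserving for each $\eta$ a stationary $S_\eta\s E^\kappa_\chi\cap\acc(C)$ with $S_\eta\cap S$ stationary for \emph{every} $S\in\mathcal S$, and, for $\alpha\in S_\eta$, building $\mathcal A_\alpha=\{A_{\alpha,i}\mid i<\epsilon_\eta\}$ where $A_{\alpha,i}$ is the union over $w\in W_\eta$ of vanishing $\alpha$-branches rooted at the $i$-th immediate successor $w_i$. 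Given $\mathcal B=\{B_\tau\mid\tau<\theta'\}$, one extends the nodes from Lemma~\ref{Souslin_denseness} to a common level to land exactly on some $W_\eta$, and then --- this is the second missing ingredient --- one needs a \emph{sparse club} $C'$ such that between consecutive points $\gamma<\beta$ of $C'$ the set $B_\tau\cap(w^\tau)^\uparrow\setminus(T\restriction\gamma)$ contains a maximal antichain in itself inside $T\restriction\beta$; this traps an element of $B_\tau$ below every node of the branch at each level of $C'$, which is what makes every ladder at every $\alpha\in S_\eta\cap\acc(C')$ meet every $B_\tau$ cofinally. Without both devices, nothing ties your branches to $\mathcal B$. (A secondary point: your ``trivial'' assignment on $E^\kappa_\chi\setminus\bigcup_\eta S_\eta$ must be a cofinal \emph{antichain} of order-type $\chi$, not an arbitrary cofinal set, or the almost-disjointness with the branch-based ladders at smaller ordinals can fail; the paper uses that an antichain meets a union of fewer than $\theta\le\chi$ chains in a set of size less than $\chi$.)
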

\begin{proof} 
	As $\mathbf T$ is $\chi$-splitting and prolific, for each $w\in T$, we may fix an injective sequence $\langle w_i\mid i<\max\{\chi,\h(w)\}\rangle$ consisting of immediate successors of $w$.

As $\kappa^{<\theta}=\kappa$, we may fix an injective enumeration $\langle W_\eta\mid \eta<\kappa\rangle$ of all subsets $W$ of $T$ such that:
\begin{itemize}
\item $0<|W|<\theta$, and
\item $\h\restriction W$ is a constant function whose sole value is in $[\chi,\kappa)$. 
\end{itemize}

	\begin{claim}\label{claim1112} For every $\alpha\in E^\kappa_{\chi}$,
		there exists a cofinal subset $A_\alpha$ of $\alpha$ which is an antichain in $\mathbf T$.
	\end{claim}
	\begin{proof}  Fix an arbitrary $\alpha\in E^\kappa_{\chi}$,
		and let $X$ be an arbitrary cofinal subset of $\alpha$ of order-type $\chi$.
		
		If there exists some $\epsilon<\kappa$ such that $|X\cap T_\epsilon|=\chi$,
		then we are done by letting $A_\alpha:=X\cap T_\epsilon$. Thus, hereafter assume this is not the case,
		and pick a sequence $\langle x^j\mid j<\chi\rangle$
		of elements of $X$ for which $\langle \h(x^j)\mid j<\chi\rangle$ is strictly increasing.
		For notational simplicity, let us assume that $\{ x^j\mid j<\chi\}=X$.
		Now, there are two cases to consider:
		
		$\br$ Suppose that there exists a node $w\in T$ such that, for every $i<\chi$, there exists $j_i<\chi$ such that $x^{j_i}$ extends $w_i$.
		Recalling that $\langle w_i\mid i<\chi\rangle$ is an injective sequence of immediate successors of $w$,
		we infer that $A_\alpha:=\{ x^{j_i}\mid i<\chi\}$ is an antichain as sought.
		
		$\br$ Suppose not. In particular, for each $j<\chi$ (using $w:=x^j$), we may fix $i_j<\chi$
		such that $(x^j)_{i_j}$ is not extended by any element of $X$.
		We claim that $A_\alpha:=\{ (x^j)_{i_j}\mid j<\chi\}$ is as sought.
		
		For every $j<\chi$, we have $\h(x^j)<\h((x^j)_{i_j})<\h(x^{j+2})$,
		so that $x^j\in (x^j)_{i_j}\in x^{j+2}$, and hence $A_\alpha$ is yet another cofinal subset of $\alpha$.
		To see that $A_\alpha$ is an antichain, suppose that there exists a pair $j<j'$ 
		such that $(x^j)_{i_j}$ is comparable with $(x^{j'})_{i_{j'}}$.
		As $\h((x^{j})_{i_j})=\h(x^j)+1<\h(x^{j'})+1=\h((x^{j'})_{i_{j'}})$,
		it follows that $(x^j)_{i_j}$ is extended by $(x^{j'})_{i_{j'}}$,
		and in particular, $(x^j)_{i_j}$ is extended by $x^{j'}$ which is an element of $X$, contradicting the choice of $i_j$.
	\end{proof}
	
	Next, suppose that we are given a collection $\mathcal S$ of pairwise disjoint stationary subsets of $V(\mathbf T)\cap E^\kappa_{\chi}$.
	As $T=\kappa$ and $|T\restriction\alpha|<\kappa$ for all $\alpha<\kappa$, $C:=\{ \alpha<\kappa\mid \alpha=T\restriction\alpha\}$ is a club in $\kappa$.
	Let $\langle S_\eta\mid \eta<\kappa\rangle$ be a sequence of pairwise disjoint subsets of $E^{\kappa}_{\chi}\cap\acc(C)$ satisfying:
	\begin{itemize}
		\item  For every $S\in\mathcal S$, $S_\eta\cap S$ is stationary;
		\item  For every $\eta<\kappa$, $\min(S_\eta)$ is greater than the unique element of $\{\h(w)\mid w\in W_\eta\}$,
		which we hereafter denote by $\epsilon_\eta$. 
	\end{itemize}
	Let $R:=E^\kappa_{\chi}\setminus\biguplus_{\eta<\kappa}S_\eta$.
	
	For every $\alpha\in R$,  we appeal to Claim~\ref{claim1112} and 
	pick a cofinal subset $A_\alpha\s\alpha$ which is an antichain in $\mathbf T$. 
	By possibly thinning out, we may also assume that $\otp(A_\alpha)=\cf(\alpha)$.
	Then, we set $\mathcal A_\alpha:=\{A_\alpha\}$.
	
	Next, let $\alpha\in \biguplus_{\eta<\kappa}S_\eta$ be arbitrary.
	Let $\eta<\kappa$ be the unique ordinal such that $\alpha\in S_\eta$.
	As $\alpha\in V(\mathbf T)$,
	for every $w\in W_\eta$ and every $i<\epsilon_\eta$, we may find a subset $A^w_{\alpha,i}$ of $T$ such that:
	\begin{enumerate}
		\item $A^w_{\alpha,i}$ is a chain with minimal element $w_i$;
		\item $\{ \h(x)\mid x\in A^w_{\alpha,i}\}=\alpha\setminus\epsilon_\eta+1$;
		\item there exists no $z\in T_\alpha$ such that, for all $x\in A^w_{\alpha,i}$, $x<_T z$.
	\end{enumerate}
	
	Finally, let $\mathcal A_\alpha:=\{ A_{\alpha,i}\mid i<\epsilon_\eta\}$, where $A_{\alpha,i}:=\bigcup_{w\in W_\eta}A^w_{\alpha,i}$.
	
	\begin{claim}\label{Claim - Guess disjoint cofinal}
		$ \langle A_{\alpha,i} \mid i<\epsilon_\eta\rangle $ is a sequence of pairwise disjoint cofinal subsets of $\alpha$.
	\end{claim}
	\begin{proof} Let $w,u\in W_\eta$ and $i,j<\epsilon_\eta$ and suppose that $x\in A^w_{\alpha,i}\cap A^{u}_{\alpha,j}$.
		By Clause~(1) above, $x$ extends both $w_i$ and $u_{j}$.
		But $w_i$ and $u_{j}$ are predecessors of $x$ at the same level $T_{\epsilon_\eta+1}$, so that $w_i=u_j$ and it easily follows that $(w,i)=(u,j)$.
		
		For all $w\in W_\eta$ and $i<\epsilon_\eta$, it follows from $\alpha\in C$ and Clause~(2) above that $A^w_{\alpha,i}\s\alpha$,
		so that $A_{\alpha,i}\s\alpha$.
		In addition, as $\alpha\in\acc(C)$, if we pick any $w\in W_\eta$, then it follows from Clause~(2) above that $\sup\{ \beta\in C\mid A^w_{\alpha,i}\cap T_\beta\neq\emptyset\}=\alpha$,
		and hence $A_{\alpha,i}$ is cofinal in $\alpha$.
	\end{proof}
	
	\begin{claim}\label{Claim - Guess almost disjoint} Let $A,A'\in\bigcup_{\alpha\in E^{\kappa}_{\chi}}\mathcal A_\alpha$ with $A\neq A'$. Then $\sup(A\cap A')<\sup(A)$.
	\end{claim}
	\begin{proof} Let $\alpha,\alpha'$ be such that $A\in\mathcal A_\alpha$ and $A'\in\mathcal A_{\alpha'}$.
		As the elements of $\mathcal A_\alpha$ are pairwise disjoint, we may assume that $\alpha\neq\alpha'$.
		If $\alpha'<\alpha$, then $\sup(A\cap A')\le\alpha'<\alpha$, so assume that $\alpha'>\alpha$.
		
		$\br$ If $\alpha'\in R$, then $\otp(A\cap A')\le\otp(\alpha\cap A')<\otp(A')=\cf(\alpha)$, so that $\sup(A\cap A')<\alpha$.
		
		$\br$ If $\alpha'\notin R$ and $\alpha\in R$,
		then $A$ is antichain, while $A'$ is the union of ${<}\theta$ many chains,
		so that $|A\cap A'|<\theta\le\chi=\cf(\alpha)$,
		and again $\sup(A\cap A')<\alpha$.
		
		$\br$ If $\alpha,\alpha'\notin R$, then let $\eta,\zeta,i,j$ be such that $A=\bigcup_{w\in W_\eta}A^w_{\alpha,i}$
		and  $A'=\bigcup_{u\in W_\zeta}A^u_{\alpha,j}$.
		Since $\max\{|W_\eta|,|W_\zeta|\}<\theta\le\chi=\cf(\alpha)$, it suffices to show that for each $w\in W_\eta$ and $u\in W_\zeta$,
		$\sup(A^w_{\alpha,i}\cap A^u_{\beta,j})<\alpha$.
		But the latter follows from Clause~(3) above together with the fact that $\alpha\in C$.
	\end{proof}
 	
	Thus, we are left with verifying the following.
	
	\begin{claim}\label{Claim - Guess hitting} 
		Suppose $\mathcal B=\{B_\tau\mid \tau<\theta'\}$ is a family of cofinal subsets of $\kappa$, with $\theta'<\theta$.
		Suppose $S\in\mathcal S$ and $\mu<\kappa$ is some cardinal.
		Then the set $G_{\ge\mu}(S,\mathcal B)$ is stationary.
	\end{claim}
	\begin{proof} Recalling Lemma~\ref{Souslin_denseness}, for each $\tau<\theta'$, we may fix $w^\tau\in T$ such that $(w^\tau)^\uparrow\cap B_\tau$ is cofinal in $(w^\tau)^\uparrow$.
		Since $\mathbf T$ is normal, we may extend the said elements to ensure that $\h\restriction\{ w^\tau\mid \tau<\theta'\}$ is a constant function 
		whose sole value is some $\epsilon<\kappa$ with $\epsilon\ge\max\{\chi,\mu\}$.
		It follows that there exists (a unique) $\eta<\kappa$ such that $W_\eta=\{ w^\tau\mid \tau<\theta'\}$.
		Next, since $\mathbf T$ has no antichains of size $\kappa$, we may fix a sparse enough club $C'\s C$ with $\min(C')>\epsilon_\eta$ such that,
		for every pair of ordinals $\gamma<\beta$ from $C'$ and every $w^\tau\in W_\eta$,
		the set $B_\tau\cap((w^\tau)^\uparrow)\setminus(T\restriction\gamma)$ contains a maximal antichain in itself which is a subset of $T\restriction\beta$.
		
		Consider the stationary set $\Gamma:=S_\eta\cap\acc(C')$.
		Now, let $\alpha\in\Gamma$ be arbitrary. By Claim~\ref{Claim - Guess disjoint cofinal}, $|\mathcal A_\alpha|=|\epsilon_\eta|\ge\mu$.
		Next, let $\tau<\theta'$ and $A\in\mathcal A_\alpha$ be arbitrary.
		Find $i<\epsilon_\eta$ such that $A=A_{\alpha,i}=\bigcup_{w\in W_\eta}A^w_{\alpha,i}$. In particular, $A\supseteq A^{w_\tau}_{\alpha,i}$.
		As $\sup(C'\cap\alpha)=\alpha$, it thus suffices to show that for every $\gamma\in C'\cap\alpha$,  $\sup(A^{w^\tau}_{\alpha,i}\cap B_\tau)\ge\gamma$.
		For this, let $\gamma\in C'\cap\alpha$ be arbitrary. Let $\beta:=\min(C'\setminus(\gamma+1))$.
		Let $x$ denote the unique element of $A^{w^\tau}_{\alpha,i}\cap T_\beta$. As $w^\tau<_T (w^\tau)_i\le_T x$, we may find $a\in B_\tau$ with $x\le_T a$.
		As $\gamma<\beta$ is a pair of elements of $C'$, it follows that there exists $a'\in B_\tau$ with $a'\le_T a$ such that $\gamma\le \h(a')<\beta$.
		As $x,a'\le_T a$ and $\h(a')>\epsilon_\eta=\h(w^\tau)$, it follows that $a'\in A^{w^\tau}_{\alpha,i}\setminus (T\restriction\gamma)$, as sought.
	\end{proof}
	This completes the proof of Theorem~\ref{thm111}.
\end{proof}

\begin{cor}\label{Cor - regressive Souslin implies almost disjoint clubsuit omega} Suppose that $\mathbf T$ is a $\kappa$-Souslin tree. For every $\chi\in\reg(\kappa)$ and 
	every collection $\mathcal S$ of pairwise disjoint stationary subsets of $E^{\kappa}_{\chi}$,
	any of the following implies that  $\clubsuit_{\ad}(\mathcal S, {<}\chi)$ holds:
	\begin{enumerate}
	\item $\kappa=\lambda^+$, $\lambda^{\aleph_0}>\lambda$ and $\chi=\aleph_0$;
	\item $\kappa=\lambda^+$, $\chi=\log_\lambda(\lambda^+)$ and $\mathbf T$ is maximally-complete;
	\item $\chi=\aleph_0$ and $\mathbf T$ is regressive.
	\end{enumerate}
\end{cor}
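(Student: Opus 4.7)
The plan is to derive each clause from Theorem~\ref{thm111}, using Proposition~\ref{ordinalsbased} as a preprocessor to massage the given tree into ordinal-based form with enough splitting, and the lemmas of the preceding subsection to verify the vanishing-branch hypothesis $V(\mathbf{T}')\supseteq E^\kappa_\chi$. In each case, I invoke Proposition~\ref{ordinalsbased} with an appropriate splitting parameter $\mu$ (namely $\mu:=\lambda$ in Clauses (1)--(2), noting that $\chi\le\lambda$ there, and $\mu:=\aleph_0$ in Clause (3)), producing an ordinal-based, normal, prolific, $\chi$-splitting $\kappa$-Souslin tree $\mathbf{T}'$ that inherits maximal completeness in Clause (2) and regressivity in Clause (3).

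The vanishing-branch condition is then handled clause by clause: Clause (3) is an immediate application of Lemma~\ref{regressivecharacteristic}, giving $V(\mathbf{T}')\supseteq E^\kappa_\omega$; Clause (2) is Lemma~\ref{vanishcomplete}, giving $V(\mathbf{T}')\supseteq E^{\lambda^+}_\chi$; and Clause (1) is handled by an argument parallel to Lemma~\ref{vanishcomplete} that exploits the hypothesis $\lambda^{\aleph_0}>\lambda$. Specifically, if some $\alpha\in E^{\lambda^+}_\omega$ lay outside $V(\mathbf{T}')$ with witness $x\in T\restriction\alpha$ through which every $\alpha$-branch admits an upper bound, then normality together with $\lambda$-splitting would permit the construction of an array $\langle x_t\mid t\in{}^{<\omega}\lambda\rangle$ with $x_\emptyset=x$ and $x_t\in T_{\alpha_{|t|}}$ along a prefixed strictly increasing sequence $\langle \alpha_n\mid n<\omega\rangle$ cofinal in $\alpha$; each $t\in{}^{\omega}\lambda$ would then furnish a distinct element $b_t\in T_\alpha$, producing $\lambda^{\aleph_0}$ many elements at level $\alpha$, contradicting $|T_\alpha|\le\lambda$.

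With $V(\mathbf{T}')\supseteq E^\kappa_\chi$ secured, I verify the cardinal arithmetic hypothesis $\kappa^{<\chi}=\kappa$ of Theorem~\ref{thm111} (taken with $\theta:=\chi$): Clauses (1) and (3) give $\chi=\aleph_0$ and the assertion is automatic, while for Clause (2), any $\theta<\chi=\log_\lambda(\lambda^+)$ satisfies $\lambda^\theta<\lambda^+$ by definition of $\log_\lambda$, whence $(\lambda^+)^\theta=\lambda^+$ and thus $\kappa^{<\chi}=\lambda^+$. Theorem~\ref{thm111} then delivers an $\ad$-multi-ladder system $\langle \mathcal{A}_\alpha\mid \alpha\in E^\kappa_\chi\rangle$ whose guess set $G_{\ge\chi}(S,\mathcal{B})$ is stationary for every $S\in\mathcal{S}$ and every $\mathcal{B}\s[\kappa]^{\kappa}$ of size less than $\chi$. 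Inside that proof, the auxiliary partition $\langle S_\eta\mid\eta<\kappa\rangle$ may be arranged---by Solovay-splitting each $S\in\mathcal{S}$ into $\kappa$ pairwise disjoint stationary pieces and distributing them among the $S_\eta$---so that $\bigcup\mathcal{S}\s\biguplus_\eta S_\eta$; then $|\mathcal{A}_\alpha|\ge\chi$ for every $\alpha\in\bigcup\mathcal{S}$, and restricting each such $\mathcal{A}_\alpha$ to a subfamily of size exactly $\chi=\cf(\alpha)$ yields the required witness to $\clubsuit_{\ad}(\mathcal{S},{<}\chi)$. The only non-routine step I anticipate is the Clause (1) vanishing argument, which is not a direct citation of the preceding lemmas but a straightforward adaptation of Lemma~\ref{vanishcomplete} exploiting $\lambda^{\aleph_0}>\lambda$.
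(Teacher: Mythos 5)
Your proposal is essentially correct and follows the same skeleton as the paper (Proposition~\ref{ordinalsbased} to normalize the tree, a vanishing-branch lemma to get $V(\mathbf T')\supseteq E^\kappa_\chi$, then Theorem~\ref{thm111} with $\theta:=\chi$, plus a final adjustment to force $|\mathcal A_\alpha|=\chi$), and you correctly supply the cardinal-arithmetic check $\kappa^{<\chi}=\kappa$ that the paper leaves implicit. You diverge in two places. First, you prove Clause~(1) by a bespoke vanishing-branch argument; the paper instead observes that Clause~(1) is literally an instance of Clause~(2), since $\lambda^{\aleph_0}>\lambda$ says exactly that $\log_\lambda(\lambda^+)=\aleph_0$, and every tree is vacuously $\aleph_0$-complete, hence maximally-complete in that case. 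Your direct argument is correct (it is Lemma~\ref{vanishcomplete} specialized to $\chi=\aleph_0$, with the antichain $\{b_t\mid t\in{}^\omega\lambda\}$ of size $\lambda^{\aleph_0}\ge\lambda^+$ violating $|T_\alpha|\le\lambda$), but it rederives work the paper reuses. Second, for the final adjustment you propose to reopen the proof of Theorem~\ref{thm111} and rearrange the auxiliary sets $S_\eta$ so that $\bigcup\mathcal S\s\biguplus_\eta S_\eta$. This cannot quite be achieved verbatim: the $S_\eta$ must live inside $\acc(C)$ and satisfy $\min(S_\eta)>\epsilon_\eta\ge\chi$, so the (non-stationary, but possibly nonempty) set of $\alpha\in\bigcup\mathcal S$ outside $\acc(C)$, or with $\alpha\le\chi$, is left with $|\mathcal A_\alpha|=1$, while Definition~\ref{clubsuit AD definition} demands $|\mathcal A_\alpha|=\cf(\alpha)$ for \emph{every} $\alpha\in\bigcup\mathcal S$. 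The paper's post-processing handles this uniformly and without touching Theorem~\ref{thm111}: when $|\mathcal A_\alpha|<\chi$, pick one $A\in\mathcal A_\alpha$ and partition it into $\chi$ many cofinal pieces (this preserves almost-disjointness and does not affect the guessing clause, which is witnessed on the stationary set where $|\mathcal A_\alpha|\ge\chi$). You should append that trivial fix to cover the residual points; with it, your argument is complete.
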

\begin{proof} (1) This follows from Clause~(2).

(2) Appeal to Proposition~\ref{ordinalsbased}, 
Lemma~\ref{vanishcomplete} and Theorem~\ref{thm111} using $(\kappa,\mu,\theta):=(\lambda^+,\chi,\chi)$
to get a sequence $\langle \mathcal A_{\alpha}\mid\alpha\in E^{\lambda^+}_{\chi}\rangle$.
The only thing that possibly does not fit is that there may be $\alpha\in\bigcup\mathcal S$ for which $|\mathcal A_\alpha|\neq\chi$.
But this is easy to fix:
\begin{itemize}
\item[$\br$] For any $\alpha\in \bigcup\mathcal S$ such that $|\mathcal A_\alpha|>\chi$,
replace $\mathcal A_\alpha$ by some subset of it of size $\chi$.

\item[$\br$] For any $\alpha\in \bigcup\mathcal S$ such that $|\mathcal A_\alpha|<\chi$,
pick $A\in\mathcal A_\alpha$ and replace $\mathcal A_\alpha$ by some partition of $A$ into $\chi$ many sets.
\end{itemize}
(3) Appeal to Proposition~\ref{ordinalsbased}, 
	Lemma~\ref{regressivecharacteristic} and Theorem~\ref{thm111} using $(\mu,\theta):=(\omega,\omega)$
	to get a sequence $\langle \mathcal A_{\alpha}\mid\alpha\in E^{\kappa}_{\omega}\rangle$.
	The only thing that possibly does not fit is that there may be $\alpha\in\bigcup\mathcal S$ for which $|\mathcal A_\alpha|\neq\omega$.
	But we may fix it as in the previous case.
\end{proof}

Theorem~A now follows immediately.

\begin{cor}\label{Souslin implies almost disjoint clubsuit}  \begin{enumerate}
\item If there exists a $\cf(\lambda)$-complete $\lambda^+$-Souslin tree,
then for every partition $\mathcal S$ of $E^{\lambda^+}_{\cf(\lambda)}$ into stationary sets,
$\clubsuit_{\ad}(\mathcal S,{<}{\cf(\lambda)})$ holds.
\item If there exists a regressive $\kappa$-Souslin tree,
then for every partition $\mathcal S$ of $E^\kappa_\omega$ into stationary sets,
$\clubsuit_{\ad}(\mathcal S,{<}\omega)$ holds.\qed
\end{enumerate}
\end{cor}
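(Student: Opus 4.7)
The plan is to observe that this corollary is essentially a packaging of Corollary~\ref{Cor - regressive Souslin implies almost disjoint clubsuit omega}, which does the real work of translating a Souslin tree into an $\ad$-multi-ladder system via Theorem~\ref{thm111}. Thus the proof will have little to do beyond matching notation and parameters.

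For Clause~(2), I would just invoke Clause~(3) of Corollary~\ref{Cor - regressive Souslin implies almost disjoint clubsuit omega} verbatim: a regressive $\kappa$-Souslin tree yields $\clubsuit_{\ad}(\mathcal S,{<}\omega)$ for any partition $\mathcal S$ of $E^\kappa_\omega$ into stationary sets, taking $\chi:=\omega$.

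For Clause~(1), I would invoke Clause~(2) of the same corollary, once I verify that the two hypotheses match. First, by the remark following Definition~2.15 (on maximally-complete trees), the existence of a $\cf(\lambda)$-complete $\lambda^+$-Souslin tree gives us both $\lambda^{<\cf(\lambda)}=\lambda$ and a maximally-complete $\lambda^+$-Souslin tree $\mathbf T$. Second, I need to confirm that the parameter $\chi:=\log_\lambda(\lambda^+)$ in Corollary~\ref{Cor - regressive Souslin implies almost disjoint clubsuit omega}(2) coincides with $\cf(\lambda)$ so that the conclusion applies to partitions of $E^{\lambda^+}_{\cf(\lambda)}$. On one hand, $\lambda^{<\cf(\lambda)}=\lambda$ forces $\lambda^\chi=\lambda<\lambda^+$ for every $\chi<\cf(\lambda)$, hence $\log_\lambda(\lambda^+)\geq\cf(\lambda)$; on the other hand, König's theorem gives $\lambda^{\cf(\lambda)}>\lambda$, hence $\log_\lambda(\lambda^+)\leq\cf(\lambda)$. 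Thus $\log_\lambda(\lambda^+)=\cf(\lambda)$ and Clause~(1) follows.

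There is no serious obstacle here; the only thing to be careful about is the cardinal-arithmetic identification $\log_\lambda(\lambda^+)=\cf(\lambda)$, which is the one line that is not entirely bookkeeping.
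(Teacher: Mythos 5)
Your proposal is correct and follows the paper's own route: the paper leaves this corollary proofless (\verb|\qed|) as an immediate consequence of Corollary~\ref{Cor - regressive Souslin implies almost disjoint clubsuit omega}, with Clause~(1) read off from its second case and Clause~(2) from its third. The one substantive step you supply --- that $\lambda^{<\cf(\lambda)}=\lambda$ together with K\"onig's theorem gives $\log_\lambda(\lambda^+)=\cf(\lambda)$, so that ``maximally-complete'' specializes correctly --- is exactly the bookkeeping the paper implicitly relies on (via the note following the definition of maximally-complete trees), and your verification of it is sound.
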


\begin{cor}\label{w1souslin} If there exists an $\omega_1$-Souslin tree, then $\clubsuit_{\ad}(\{\omega_1\},{<}\omega)$ holds.\qed
\end{cor}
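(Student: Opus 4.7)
The plan is to derive this as an immediate instance of Corollary~\ref{Souslin implies almost disjoint clubsuit}(1). Set $\lambda:=\omega$, so that $\lambda^+=\omega_1$ and $\cf(\lambda)=\omega$. The key trivial observation is that any $\omega_1$-Souslin tree $\mathbf T$ is automatically $\omega$-complete: an $<_T$-increasing sequence of length ${<}\omega$ is finite and hence has its maximum as an upper bound. So the hypothesis ``there exists a $\cf(\lambda)$-complete $\lambda^+$-Souslin tree'' of Corollary~\ref{Souslin implies almost disjoint clubsuit}(1) reduces simply to the existence of an $\omega_1$-Souslin tree.

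Next, I would apply Corollary~\ref{Souslin implies almost disjoint clubsuit}(1) to the trivial partition $\mathcal S:=\{E^{\omega_1}_\omega\}$ of $E^{\omega_1}_{\cf(\lambda)}=E^{\omega_1}_\omega$ into a single stationary set. This yields an $\ad$-multi-ladder system $\langle \mathcal A_\alpha\mid\alpha\in E^{\omega_1}_\omega\rangle$ witnessing $\clubsuit_{\ad}(\{E^{\omega_1}_\omega\},{<}\omega)$.

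Finally, to match the target notation $\clubsuit_{\ad}(\{\omega_1\},{<}\omega)$, I would simply extend the sequence to all of $\omega_1$ by setting $\mathcal A_0:=\emptyset$ and $\mathcal A_{\alpha+1}:=\{\{\alpha\}\}$ for every successor ordinal $\alpha+1<\omega_1$. Since each added set is a singleton, clause~(1) of Definition~\ref{clubsuit AD definition} is preserved ($|\mathcal A_{\alpha+1}|=1=\cf(\alpha+1)$) and the $\ad$-multi-ladder requirement $\sup(A\cap A')<\sup(A)$ cannot be violated by singletons. Moreover, $E^{\omega_1}_\omega$ is a club in $\omega_1$, so for any $\mathcal B\in[[\omega_1]^{\omega_1}]^{<\omega}$, the set $G(\omega_1,\mathcal B)\supseteq G(E^{\omega_1}_\omega,\mathcal B)$ remains stationary. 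No real obstacle is expected here; the whole corollary is bookkeeping that extracts the countable case from the already-proven general statement.
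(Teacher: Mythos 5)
Your route is exactly the one the paper intends: the corollary is stated with no proof because it is just the instance $\lambda=\omega$ of Corollary~\ref{Souslin implies almost disjoint clubsuit}(1) applied to the one-element partition $\{E^{\omega_1}_\omega\}$, and your observation that $\cf(\omega)$-completeness is vacuous for any tree is precisely the point. The only step that fails as literally written is the padding at successor levels. If you set $\mathcal A_{\alpha+1}:=\{\{\alpha\}\}$ and the ordinal $\alpha$ happens to lie in some ladder $A'\in\mathcal A_\delta$ for a limit $\delta>\alpha$ --- which cannot in general be avoided, since the ladders may jointly cover every nonzero countable ordinal --- then $\sup(\{\alpha\}\cap A')=\alpha=\sup(\{\alpha\})$, so the pair $(\{\alpha\},A')$ violates the almost-disjointness clause of Definition~\ref{almost-disjoint ladder-system}, which is required of \emph{every} ordered pair of distinct sets in $\bigcup_{\alpha<\omega_1}\mathcal A_\alpha$ (and this clause is genuinely used downstream, e.g.\ in Proposition~\ref{Proposition - disjointify multi-ladder system}). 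The repair is a one-liner: at successor levels reuse a set already present, e.g.\ fix $A_0\in\mathcal A_\omega$ and put $\mathcal A_{\alpha+1}:=\{A_0\}$ for every $\alpha$, and $\mathcal A_0:=\emptyset$. Then $\bigcup_{\alpha<\omega_1}\mathcal A_\alpha$ is unchanged, so the $\ad$-multi-ladder requirements still hold; Clause~(1) of Definition~\ref{clubsuit AD definition} holds since $|\mathcal A_{\alpha+1}|=1=\cf(\alpha+1)$; and, as you say, $G(\omega_1,\mathcal B)\supseteq G(E^{\omega_1}_\omega,\mathcal B)$ is stationary because $E^{\omega_1}_\omega=\acc(\omega_1)$ and the system is unchanged there.
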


\begin{cor}\label{clubadvsclub} It is consistent with $\ch$ that $\clubsuit_{\ad}(\{\omega_1\},{<}\omega)$ holds, but $\clubsuit(\omega_1)$ fails.
\end{cor}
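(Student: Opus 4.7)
The plan is to derive the consistency statement from Corollary~\ref{w1souslin} together with the (known) consistency of $\ch + \neg\clubsuit(\omega_1) + \exists\,\omega_1\text{-Souslin tree}$: by Corollary~\ref{w1souslin}, the existence of an $\omega_1$-Souslin tree yields $\clubsuit_{\ad}(\{\omega_1\},{<}\omega)$, so a single model realizing all three assertions automatically realizes $\ch + \neg\clubsuit(\omega_1) + \clubsuit_{\ad}(\{\omega_1\},{<}\omega)$, which is exactly what the corollary demands.

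To obtain such a model, I would start from Jensen's classical model $V$ of $\ch$ together with Souslin's Hypothesis. Since $\diamondsuit(\omega_1)$ implies the existence of an $\omega_1$-Souslin tree (via Fact~\ref{fact216}(1) applied to $\lambda=\omega$, together with the standard passage from $\diamondsuit(\omega_1)$ to $\diamondsuit(E^{\omega_1}_\omega)$), Souslin's Hypothesis forces $\neg\diamondsuit(\omega_1)$; combined with $\ch$ and Devlin's equivalence $\diamondsuit(\omega_1)\iff\clubsuit(\omega_1)+\ch$ recalled in Remark~\ref{diamondsuit iff clubsuit and ch}, this gives $V\models\neg\clubsuit(\omega_1)$. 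Next I would force over $V$ with the standard $\sigma$-closed poset $\mathbb S$ whose conditions are countable normal trees ordered by end-extension. Under $\ch$, $\mathbb S$ is $\aleph_2$-cc and $\sigma$-closed, hence preserves cofinalities and cardinals, adds no reals, and preserves $\ch$; a routine density argument confirms that the generic $\mathbf T$ is an $\omega_1$-Souslin tree in $V^{\mathbb S}$, so Corollary~\ref{w1souslin} delivers $\clubsuit_{\ad}(\{\omega_1\},{<}\omega)$ in $V^{\mathbb S}$.

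The main obstacle will be verifying that $\mathbb S$ preserves $\neg\clubsuit(\omega_1)$: although $\mathbb S$ adds no reals, it does add new cofinal subsets of $\omega_1$ together with new candidate ladder systems, so preservation is not automatic. My strategy would be a fusion/bookkeeping argument leveraging the $\sigma$-closure of $\mathbb S$. Given an $\mathbb S$-name $\dot{\vec A}=\langle\dot A_\alpha\mid \alpha\in\acc(\omega_1)\rangle$ forced to be a candidate $\clubsuit(\omega_1)$-sequence, one constructs a decreasing sequence of conditions deciding successively longer initial segments of $\dot{\vec A}$, while simultaneously building in $V^{\mathbb S}$ a cofinal $B\s\omega_1$ that diagonalizes against $\dot{\vec A}$; one then verifies that $B$ is forced to witness that $\{\alpha\in\acc(\omega_1)\mid\dot A_\alpha\s B\}$ is non-stationary, refuting $\dot{\vec A}$ as a $\clubsuit$-sequence. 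As an alternative route, I would simply cite an explicit model from the literature simultaneously realizing $\ch + \neg\clubsuit(\omega_1) + \exists\,\omega_1\text{-Souslin tree}$, and then invoke Corollary~\ref{w1souslin} to finish.
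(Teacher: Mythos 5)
Your opening reduction---produce a model of $\ch$ together with $\neg\clubsuit(\omega_1)$ and an $\omega_1$-Souslin tree, then invoke Corollary~\ref{w1souslin}---is exactly the paper's strategy, and your verification that Jensen's model of $\gch$ with Souslin's Hypothesis satisfies $\neg\clubsuit(\omega_1)$ (via Devlin's equivalence recalled in Remark~\ref{diamondsuit iff clubsuit and ch}) is correct. The fatal problem is the forcing you choose for the second step. Under $\ch$, the poset $\mathbb S$ of countable normal trees ordered by end-extension is a $\sigma$-closed, separative, atomless forcing with a dense subset of size $\aleph_1$, and any such poset is forcing-equivalent to $\mathrm{Add}(\omega_1,1)$; the latter forces $\diamondsuit(\omega_1)$. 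Hence $V^{\mathbb S}\models\diamondsuit(\omega_1)$, and a fortiori $V^{\mathbb S}\models\clubsuit(\omega_1)$. The ``main obstacle'' you flag---showing that $\mathbb S$ preserves $\neg\clubsuit(\omega_1)$---is therefore not an obstacle to be overcome by a fusion argument; it is a false statement, and the proposed diagonalization cannot succeed: $\sigma$-closure is precisely what allows the generic object to anticipate, on a stationary set, every subset of $\omega_1$ of the extension.

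The repair is to add the Souslin tree by a \emph{small} forcing instead. The paper forces with a single Cohen real over Jensen's model: being a countable forcing, it preserves $\ch$ and preserves $\neg\diamondsuit(\omega_1)$ (from a name for a $\diamondsuit$-sequence one extracts a $\diamondsuit^-$-sequence in the ground model, and $\diamondsuit^-$ is equivalent to $\diamondsuit$), so $\clubsuit(\omega_1)$ still fails by Devlin's theorem; and by Shelah's theorem \cite{MR768264}, adding a Cohen real introduces an $\omega_1$-Souslin tree, so Corollary~\ref{w1souslin} applies in the extension. Your fallback of ``citing an explicit model from the literature'' is in effect a request for exactly this construction; the specific ingredient missing from your write-up is Shelah's theorem that a Cohen real adds a Souslin tree.
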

\begin{proof} Start with a model of $\gch+\neg\diamondsuit(\omega_1)$ (e.g., Jensen's model \cite{Jensen_The_Souslin_Problem} of $\gch$ with no $\aleph_1$-Souslin trees).
Now, force to add a single Cohen real and work in the corresponding extension.
As this is a countable forcing, $\ch$ still holds and $\diamondsuit(\omega_1)$ still fails,
so that by Devlin's theorem (see Remark~\ref{diamondsuit iff clubsuit and ch}), $\clubsuit(\omega_1)$ fails as well.
Finally, by a theorem of Shelah \cite{MR768264}, the forcing to add a Cohen real introduces an $\omega_1$-Souslin tree,
so that, by the preceding corollary, $\clubsuit_{\ad}(\{\omega_1\},{<}\omega)$ holds.
\end{proof}

\begin{cor} The assertion that $\clubsuit_{\ad}(\mathcal S,{<}\omega)$ holds
for every partition $\mathcal S$ of $\omega_1$ into stationary sets is consistent with any of the two:
\begin{enumerate}
\item $\clubsuit_J(\omega_1)$ fails;
\item $\clubsuit_J(S)$ fails for some stationary $S\s\omega_1$, and $\ch$ holds.
\end{enumerate}
\end{cor}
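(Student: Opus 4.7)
The plan is to invoke Corollary~\ref{Souslin implies almost disjoint clubsuit}(1) at $\lambda=\aleph_0$: since every $\omega_1$-Souslin tree is vacuously $\aleph_0$-complete, the mere existence of such a tree yields $\clubsuit_{\ad}(\mathcal{S},{<}\omega)$ for every partition $\mathcal{S}$ of $E^{\omega_1}_\omega$ into stationary sets. Any partition of $\omega_1$ into stationary sets restricts, via intersection with the club $E^{\omega_1}_\omega$, to such a partition; the successor ordinals left out can be accommodated by the trivial assignment $\mathcal A_\alpha:=\{\{\alpha-1\}\}$, and they do not affect Clause~(2) of Definition~\ref{clubsuit AD definition} since successor ordinals are non-stationary in $\omega_1$. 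So both parts reduce to producing a model containing an $\omega_1$-Souslin tree in which, respectively, $\clubsuit_J(\omega_1)$ fails, or $\ch$ holds and $\clubsuit_J(S)$ fails for some stationary $S\s\omega_1$.

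For part~(1), I would start in a ground model of $\ma_{\aleph_1}+\neg\ch$ --- where no $\omega_1$-Souslin tree exists and each instance of $\clubsuit_J(\omega_1)$ is defeated by Martin-style genericity --- and then force with a carefully tailored $\mathbb P$ that adjoins an $\omega_1$-Souslin tree while preserving $\neg\clubsuit_J(\omega_1)$. For part~(2), the procedure is parallel but over a ground model of $\ch+\neg\clubsuit_J(S)$ for a designated stationary $S\s\omega_1$ (obtainable, for instance, via a $\sigma$-closed $\omega_2$-iteration defeating all candidate $\clubsuit_J(S)$-matrices), using a $\sigma$-closed variant of Tennenbaum's Souslin-tree forcing, which also preserves $\ch$.

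The principal obstacle lies in the engineering of $\mathbb P$. As Juh\'asz's theorem noted in Remark~\ref{diamondsuit iff clubsuit and ch} attests, Cohen forcing adjoins both an $\omega_1$-Souslin tree \emph{and} $\clubsuit_J(\omega_1)$, so it cannot serve here. A Shelah-style modification is required, whose generic codes a Souslin tree without simultaneously coding a universal $\clubsuit_J$-matrix; the preservation of $\neg\clubsuit_J$ is then verified by a ``no-new-guessing-sequence'' lemma combining appropriate chain-condition or closure properties of $\mathbb P$ with the ground-model witness to $\neg\clubsuit_J$. Once such a $\mathbb P$ is in hand, Corollary~\ref{Souslin implies almost disjoint clubsuit}(1) delivers the desired $\clubsuit_{\ad}$ statement in the extension, completing the proof of both parts.
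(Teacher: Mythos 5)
Your reduction of both clauses to the consistency of ``there is an $\omega_1$-Souslin tree'' together with the respective failure of $\clubsuit_J$ is correct and is also how the paper begins (the handling of successor ordinals is routine). But that consistency statement \emph{is} the entire content of the corollary, and your proposal never establishes it: the ``carefully tailored $\mathbb P$'' is not constructed, and the ``no-new-guessing-sequence'' lemma you defer to is precisely the hard part, since $\neg\clubsuit_J$ is a universal statement about all matrices, including the ones $\mathbb P$ adds, so no soft chain-condition or closure argument will preserve it. Worse, your chosen direction --- fix $\neg\clubsuit_J$ in the ground model and then \emph{add} a Souslin tree --- points at a dead end with the standard tools: the natural ccc way to add an $\omega_1$-Souslin tree is a single Cohen real, which by Juh\'asz's theorem (Remark~\ref{one cohen implies clubsuit_J}) simultaneously adjoins $\clubsuit_J(\omega_1)$; and the $\sigma$-closed Jech/Tennenbaum-style tree forcing you invoke for part~(2) forces $\diamondsuit$ (under $\ch$ it is forcing equivalent to adding a subset of $\omega_1$ by countable conditions), so it reinstates $\clubsuit_J(S)$ on the very stationary set you prepared in the ground model.

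The paper goes in the opposite direction: keep a ground-model Souslin tree and \emph{kill} $\clubsuit_J$ over it. It isolates the uniformization principle $\unif(S,2)$ for ladder systems on $S$ and proves directly that $\clubsuit_J(S)$ refutes $\unif(S,2)$ (split each $A_\alpha$ into $A_{\alpha,0}\uplus A_{\alpha,1}$, colour accordingly, and derive a contradiction from any uniformizing $f$). For part~(1) it then quotes Devlin--Shelah: $\unif(\omega_1,2)$ can be forced by a finite support iteration of Knaster posets, and Knaster forcing preserves Souslin trees, so the extension has a Souslin tree (whence $\clubsuit_{\ad}(\mathcal S,{<}\omega)$ for every partition $\mathcal S$, by Corollary~\ref{Cor - regressive Souslin implies almost disjoint clubsuit omega}(1)) together with $\neg\clubsuit_J(\omega_1)$. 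For part~(2) it quotes Shelah's theorem that $\diamondsuit(\omega_1)$ is consistent with $\unif(S,2)$ for some stationary $S\s\omega_1$; then $\diamondsuit(\omega_1)$ yields $\ch$ and an $\aleph_1$-Souslin tree (Fact~\ref{fact216}(1)), while $\unif(S,2)$ yields $\neg\clubsuit_J(S)$. To repair your write-up you should replace the unconstructed $\mathbb P$ by these two citations and supply the implication $\clubsuit_J(S)\Rightarrow\neg\unif(S,2)$.
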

\begin{proof} For a stationary subset $S$ of $\omega_1$, let $\unif(S,2)$ assert
that for every sequence of functions $\vec f=\langle f_\alpha:A_\alpha\rightarrow2\mid \alpha\in \acc(\omega_1)\cap S\rangle$ where 
each $A_\alpha$ is a cofinal subset of $\alpha$ of order-type $\omega$,
there exists a function $f:\omega_1\rightarrow2$ that uniformizes $\vec f$, i.e., for every $\alpha\in S\cap\acc(\omega_1)$, $\Delta(f,f_\alpha):=\{ \beta\in A_\alpha\mid f(\beta)\neq f_\alpha(\beta)\}$ is finite.
\begin{claim} $\clubsuit_J(S)$ refutes $\unif(S,2)$.
\end{claim}
\begin{proof} Suppose that $\langle A_{\alpha,i}\mid\alpha\in S,~i<\omega\rangle$ is as in Definition~\ref{principles}(4).
In particular, for every $\alpha\in S\cap\acc(\omega_1)$, $A_\alpha:= A_{\alpha,0}\uplus A_{\alpha,1}$ is a cofinal subset of $\alpha$ of order-type $\omega$,
and we may define a function $f_\alpha:A_\alpha\rightarrow2$ via $f_\alpha(\beta):=0$ iff $\beta\in A_{\alpha,0}$.
Towards a contradiction, suppose that there exists a function $f:\omega_1\rightarrow 2$ that uniformizes $\vec f:=\langle f_\alpha\mid \alpha\in S\cap\acc(\omega_1)\rangle$.
By the pigeonhole principle, pick $j<2$ for which $B:=\{\beta<\omega_1\mid f(\beta)=j\}$ is uncountable.
Now, fix $\alpha\in S\cap\acc(\omega_1)$ such that $\sup(B\cap A_{\alpha,i})=\alpha$ for all $i<\omega$.
Let $i:=1-j$. Pick $\beta\in B\cap A_{\alpha,i}\setminus \Delta(f,f_\alpha)$.
Then $j=f(\beta)=f_\alpha(\beta)=i$. This is a contradiction.
\end{proof}

(1) As made clear by the proof of \cite[Theorem~5.2]{DvSh:65}, 
it is possible to force $\unif(\omega_1,2)$ via a finite support iteration of Knaster posets. In particular, 
if we start with a ground model with an $\aleph_1$-Souslin tree, then we can force 
$\unif(\omega_1,2)$ without killing the tree. 
Now appeal to Corollary~\ref{Cor - regressive Souslin implies almost disjoint clubsuit omega}(1).

(2) In \cite[Theorems 2.1 and 2.4]{Sh:64}, $\diamondsuit(\omega_1)$ is shown to be consistent with the existence of a 
stationary subset $S\s\omega_1$ on which $\unif(S,2)$ holds.
Now, appeal to Fact~\ref{fact216}(1) and Corollary~\ref{Cor - regressive Souslin implies almost disjoint clubsuit omega}(1).
\end{proof}

We conclude this subsection by stating an additional result, this time concerning the three-cardinal variant of $\clubsuit_{\ad}$ (recall Definition~\ref{threecardinalsvariant}):

\begin{theorem} Suppose that there exists a $\kappa$-Souslin tree.
Then $\clubsuit_{\ad}(\mathcal S,1,1)$ holds for some collection $\mathcal S$ of $\kappa$-many pairwise disjoint stationary subsets of $\kappa$.
If $\kappa$ is a successor cardinal, then moreover $\bigcup\mathcal S=E^\kappa_\chi$ for some cardinal $\chi\in\reg(\kappa)$.\qed
\end{theorem}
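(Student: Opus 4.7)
The plan is to reduce to Theorem~\ref{thm111}. Given a $\kappa$-Souslin tree $\mathbf T$, I would first use Proposition~\ref{ordinalsbased} to replace $\mathbf T$ by an ordinal-based, normal, prolific, $\omega$-splitting $\kappa$-Souslin tree (and, when $\kappa=\lambda^+$ is successor, also $\lambda$-splitting). The heart of the argument is then to locate a cardinal $\chi\in\reg(\kappa)$ such that $V(\mathbf T)\cap E^\kappa_\chi$ is stationary in $\kappa$; for the successor case, one moreover requires $E^\kappa_\chi\setminus V(\mathbf T)$ to be nonstationary, so as to secure $\bigcup\mathcal S=E^\kappa_\chi$.

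For the successor case $\kappa=\lambda^+$, the natural candidate is $\chi:=\log_\lambda(\lambda^+)$, motivated by the proof of Lemma~\ref{vanishcomplete}: a Fodor-type reduction applied to a hypothetically stationary set $E^{\lambda^+}_\chi\setminus V(\mathbf T)$ would pin the witnessing ``bounded-branch'' nodes to a single $x\in T$, above which one could attempt to construct at least $\lambda^{\chi}\ge\lambda^+$ many $\alpha$-branches by iterating the $\lambda$-splitting structure, contradicting the bound $|T_\alpha|\le\lambda$. Once $\chi$ is identified, I would invoke Solovay's theorem to partition $V(\mathbf T)\cap E^\kappa_\chi$ into $\kappa$-many pairwise disjoint stationary sets, and in the successor case absorb the nonstationary remainder $E^\kappa_\chi\setminus V(\mathbf T)$ into one piece, obtaining the desired $\mathcal S$. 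Applying Theorem~\ref{thm111} with $\theta:=2$ (whose hypothesis $\kappa^{<2}=\kappa$ is trivial) yields an $\ad$-multi-ladder system $\langle\mathcal A_\alpha\mid\alpha\in E^\kappa_\chi\rangle$ with the full hitting property for singletons $\mathcal B=\{B\}$; selecting a single representative $A_\alpha\in\mathcal A_\alpha$ for each $\alpha\in\bigcup\mathcal S$ gives the sought witness to $\clubsuit_{\ad}(\mathcal S,1,1)$.

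The main obstacle is securing $V(\mathbf T)\cap E^\kappa_\chi$ stationary for an arbitrary Souslin tree, since the counting argument above tacitly requires some level of completeness of $\mathbf T$ to extend chains through intermediate limits. At inaccessible $\kappa$, Shelah's construction \cite{Sh:624} produces $\kappa$-Souslin trees at Mahlo $\kappa$ with $V(\mathbf T)=\emptyset$, so the given $\mathbf T$ cannot always be used directly; one would instead either derive a second, better-behaved $\kappa$-Souslin tree from the hypotheses (exploiting, for example, that the existence of a $\kappa$-Souslin tree typically forces $\diamondsuit(\kappa)$), or bypass Theorem~\ref{thm111} and build the $\ad$-ladder system directly from the antichains supplied by Claim~\ref{claim1112}, using a partition scheme driven by Lemma~\ref{Souslin_denseness} to route each antichain through an appropriate density node. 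This extra care needed at inaccessible $\kappa$ is presumably why only the successor case enjoys the stronger conclusion $\bigcup\mathcal S=E^\kappa_\chi$.
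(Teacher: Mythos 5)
First, a point of order: the paper explicitly omits the proof of this theorem (``due to the lack of applications''), so there is no argument of record to measure you against; I can only assess your proposal on its own terms. The mechanical parts are fine: \emph{if} one has $\chi\in\reg(\kappa)$ with $V(\mathbf T)\cap E^\kappa_\chi$ stationary (and, for the successor case, $E^\kappa_\chi\setminus V(\mathbf T)$ nonstationary), then Solovay's partition theorem, Theorem~\ref{thm111} with $\theta:=2$, and the selection of a single member of each $\mathcal A_\alpha$ do combine exactly as you say to give $\clubsuit_{\ad}(\mathcal S,1,1)$.

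The genuine gap is that you never establish that input, and it is the entire content of the theorem. The paper only proves $V(\mathbf T)\cap E^\kappa_\chi\supseteq E^\kappa_\chi$ under \emph{extra} hypotheses on the tree --- regressivity (Lemma~\ref{regressivecharacteristic}) or maximal completeness (Lemma~\ref{vanishcomplete}) --- which an arbitrary $\kappa$-Souslin tree need not satisfy, and neither of your escape routes works as stated. For (a): the existence of a $\kappa$-Souslin tree does \emph{not} imply $\diamondsuit(\kappa)$; adding a Cohen real to a model of $\neg\ch$ yields an $\omega_1$-Souslin tree with $\neg\diamondsuit(\omega_1)$, and the paper's own Corollary~\ref{clubadvsclub} gives an $\omega_1$-Souslin tree with even $\clubsuit(\omega_1)$ failing. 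For (b): the antichains of Claim~\ref{claim1112} cannot carry the guessing clause --- in the proof of Theorem~\ref{thm111} they are used only on the residual set $R$ where no guessing is demanded, while the guessing in Claim~\ref{Claim - Guess hitting} is carried entirely by the chains $A^w_{\alpha,i}$ via the maximal-antichain refinement; and for chains the almost-disjointness requirement $\sup(A\cap A')<\sup(A)$ is essentially equivalent to the vanishing of the generated $\alpha$-branch, since a cofinal-in-$\alpha$ chain with an upper bound at level $\ge\alpha$ is an initial segment of chains living at higher levels. So one cannot trade chains for antichains without losing one of the two clauses of Definition~\ref{clubsuit AD definition}. Finally, your successor-case identification of $\chi=\log_\lambda(\lambda^+)$ is precisely the argument of Lemma~\ref{vanishcomplete}, which --- as you yourself concede --- needs $\chi$-completeness to push the array $\langle x_t\mid t\in{}^{<\chi}\lambda\rangle$ through intermediate limit levels whenever $\chi>\omega$; when $\lambda^{\aleph_0}>\lambda$ one may take $\chi=\omega$ and the issue disappears (this is Corollary~\ref{Cor - regressive Souslin implies almost disjoint clubsuit omega}(1)), but when $\lambda^{\aleph_0}=\lambda$ nothing in your write-up supplies the needed completeness. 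In short, the proposal reduces the theorem to an unproved statement about $V(\mathbf T)$, and at inaccessible $\kappa$ --- where \cite{Sh:624} shows $V(\mathbf T)=\emptyset$ is consistent --- an idea genuinely different from Theorem~\ref{thm111} is required.
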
 
We omit the proof due to the lack of applications of $\clubsuit_{\ad}(\mathcal S,1,1)$, at present.

\subsection{The consistency of the negation of our guessing principle}
By Lemma~\ref{lemma216}, for a stationary set $S$ consisting of points of some fixed cofinality,
$\clubsuit(S)$ entails $\clubsuit_{\ad}(\{S\},{<}\omega)$. The next theorem shows that the restriction to a fixed cofinality is crucial.
\begin{theorem} If $\kappa$ is weakly compact, then for any $S$ with $\reg(\kappa)\s S\s\kappa$, $\clubsuit_{\ad}(\{S\},1,1)$ fails.
\end{theorem}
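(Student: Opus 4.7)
The plan is to suppose, toward a contradiction, that some $\ad$-ladder system $\vec A = \langle A_\alpha \mid \alpha\in S\rangle$ witnesses $\clubsuit_{\ad}(\{S\},1,1)$, and then to use the elementary embedding characterization of weak compactness to construct a single cofinal $B\in[\kappa]^\kappa$ whose guessing set $G(S,\{B\}) = \{\alpha\in S \mid \sup(A_\alpha\cap B)=\alpha\}$ is \emph{empty}, contradicting the stationarity clause of Definition~\ref{clubsuit AD definition}. The key idea is to let ``$\kappa$ itself'' play the role of a new index of the $\ad$-system via reflection, so that a single application of Clause~(\ref{clubsuit AD, Clause almost disjoint}) of Definition~\ref{almost-disjoint ladder-system} yields almost-disjointness of $B$ against every $A_\alpha$ simultaneously.

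More concretely, I would first choose a transitive $\kappa$-model $M$ (that is, $|M|=\kappa$, ${}^{<\kappa}M\s M$, $\kappa\in M$, and $\kappa$ inaccessible in $M$) with $\vec A, S\in M$, and invoke the weak compactness of $\kappa$ to obtain a transitive $N$ together with an elementary embedding $j:M\to N$ with $\mathrm{crit}(j) = \kappa$. By a standard fact about such embeddings, $N$ may be arranged so that $\kappa$ is inaccessible in $N$ as well. Elementarity then lifts ``$\reg(\kappa)\s S$'' to ``$\reg(j(\kappa))^N \s j(S)$''; combined with the inaccessibility of $\kappa$ in $N$, this forces $\kappa\in j(S)$, so I may define $B := j(\vec A)(\kappa)$. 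In $N$, $B$ is a cofinal subset of $\kappa$, and the regularity of $\kappa$ in $V$ gives $|B|=\kappa$, so $B\in[\kappa]^\kappa$ from the viewpoint of $V$.

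For the conclusion, I would apply Clause~(\ref{clubsuit AD, Clause almost disjoint}) in $N$ to the pair of distinct elements $A_\alpha, B$ of $\bigcup_{\gamma\in j(S)}\{j(\vec A)(\gamma)\}$, where $\alpha\in S$ is arbitrary. Since $\mathrm{crit}(j)=\kappa$ entails $j(\vec A)(\alpha)=A_\alpha$ for every $\alpha\in S$, one reads off in $N$ that $\sup(A_\alpha\cap B)<\min(\alpha,\kappa)=\alpha$. Hence $G(S,\{B\})=\emptyset$, and the desired contradiction has arrived.

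The main obstacle, and the only genuinely nontrivial verification, is arranging that $\kappa\in j(S)$, which reduces to $\kappa$ being inaccessible in $N$; this is precisely where the full strength of weak compactness (and not merely Mahloness) is consumed, and where the hypothesis $\reg(\kappa)\s S$ is exploited. All remaining ingredients---that $\vec A$ fits into a $\kappa$-model, that $j(\vec A)\restriction \kappa = \vec A$ by critical-point considerations, that $B$ is a set in $V$ of size $\kappa$, and that the $\ad$-clause transfers through $j$---amount to routine elementarity and absoluteness, and require no computation.
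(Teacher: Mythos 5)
Your argument is correct, but it takes a genuinely different route from the paper's. The paper never invokes an elementary embedding directly: it converts the $\ad$-ladder system into a $C$-sequence $\langle C_\alpha\mid\alpha<\kappa\rangle$ (putting $C_\alpha:=\nacc(A_\alpha)\uplus\acc^+(\nacc(A_\alpha))$ at regular $\alpha$, and arranging $\otp(C_\alpha)<\min(C_\alpha)$ at singular limits so that the two kinds of indices can be told apart), applies Todor\v{c}evi\'c's theorem \cite[Theorem~1.8]{TodActa} that over a weakly compact cardinal every $C$-sequence is trivial on a club $C$, takes $B:=\nacc(C)$, and reaches a contradiction by evaluating the guessing property at \emph{two} points $\delta_0<\delta_1$ of $G(S,\{B\})$ and violating almost-disjointness of $A_{\delta_0}$ and $A_{\delta_1}$. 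You instead reflect the whole system to obtain a ``ladder at $\kappa$'' inside the target model $N$ and read off, from the $\ad$-clause applied in $N$ to the pair $(A_\alpha, j(\vec A)(\kappa))$, that $B:=j(\vec A)(\kappa)$ satisfies $\sup(A_\alpha\cap B)<\alpha$ for every $\alpha\in S\cap\acc(\kappa)$, so that $G(S,\{B\})$ misses a club outright. Your route is shorter and more conceptual, and it yields the stronger conclusion that a single cofinal set is guessed essentially nowhere; the paper's route isolates the large-cardinal input in the $C$-sequence characterization and makes the role of the hypothesis $\reg(\kappa)\s S$ visible combinatorially (it is what lets one recognize that the reflected $C_{\alpha(\delta)}$ comes from a ladder). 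Two small remarks on your write-up: the hypothesis $\reg(\kappa)\s S$ is where \emph{both} proofs use it, namely to guarantee $\kappa\in j(S)$ in yours; but the ``inaccessibility of $\kappa$ in $N$'' you invoke is more than you need and is in any case automatic --- being an infinite regular cardinal is $\Pi_1$, hence downward absolute to the transitive $N\s V$, so $\kappa\in\reg(j(\kappa))^N$ for free (you do need ${}^{<\kappa}M\s M$ so that $M$ computes $\reg(\kappa)$ correctly and the statement ``$\reg(\kappa)\s S$'' actually holds in $M$ before you apply elementarity).
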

\begin{proof} Suppose that $\vec A=\langle A_\alpha\mid\alpha\in S\rangle$ is a $\clubsuit_{\ad}(\{S\},1,1)$-sequence,
with $\reg(\kappa)\s S\s\kappa$.
We define a $C$-sequence $\langle C_\alpha\mid\alpha<\kappa\rangle$, as follows:

$\br$ Let $C_0:=\emptyset$.

$\br$ For every $\alpha<\kappa$, let $C_{\alpha+1}:=\{\alpha\}$.

$\br$ For every $\alpha\in\acc(\kappa)\setminus\reg(\kappa)$, 
fix a closed and cofinal $C_\alpha\s\alpha$ with $\otp(C_\alpha)=\cf(\alpha)<\min(C_\alpha)$.

$\br$ For every $\alpha\in\reg(\kappa)$, let $B_\alpha:=\nacc(A_\alpha)$ and finally let $C_\alpha:={B_\alpha}\uplus{\acc^+(B_\alpha)}$.
Note that $B_\alpha$ is a cofinal subset of $A_\alpha$, and that $C_\alpha$ is a closed and cofinal subset of $\alpha$.

Towards a contradiction, suppose that $\kappa$ is weakly compact.
So, by \cite[Theorem~1.8]{TodActa}, we may fix a club $C\s\kappa$ such that, for every $\delta<\kappa$,
for some $\alpha(\delta)<\kappa$, $C\cap\delta=C_{\alpha(\delta)}\cap\delta$.
Consider the club $D:=\{\delta\in\acc(\kappa)\mid \otp(C\cap\delta)=\delta\}$.
\begin{claim} For every $\delta\in D$, $\alpha(\delta)\in \reg(\kappa)$.
\end{claim}
\begin{proof} Let $\delta\in D$. Since $C_{\alpha(\delta)}\cap\delta=C\cap\delta$ is infinite, $\alpha(\delta)\in\acc(\kappa)$.
Now, if $\alpha(\delta)\in\acc(\kappa)\setminus\reg(\kappa)$, then $\delta=\otp(C_{\alpha(\delta)}\cap\delta)\le\otp(C_{\alpha(\delta)})<\min(C_{\alpha(\delta)})$,
which is an absurd.
\end{proof}

Evidently, $B:=\nacc(C)$ is a cofinal subset of $\kappa$.
So, by the choice of $\vec A$,
$G:=\{\delta\in S\cap D\mid \sup(B\cap A_\delta)=\delta\}$ is stationary.
Pick a pair of ordinals $\delta_0<\delta_1$ from $G$.
For each $i<2$, since $\alpha(\delta_i)\in\reg(\kappa)$,
$$B\cap\delta_i=\nacc(C)\cap\delta_i=\nacc(C_{\alpha(\delta_i)})\cap\delta_i= B_{\alpha(\delta_i)}\cap\delta_i\s A_{\alpha(\delta_i)}.$$

As $\sup(B\cap A_{\delta_i})=\delta_i$ and $B\cap\delta_i\s A_{\alpha(\delta_i)}$, $\sup(A_{\alpha(\delta_i)}\cap A_{\delta_i})=\delta_i$, 
so, since $\vec A$ is an AD-ladder system, it must be the case that $\alpha(\delta_i)=\delta_i$.
Altogether, $B\cap\delta_0\s A_{\delta_0}\cap A_{\delta_1}$, so that $\delta_0>\sup( A_{\delta_0}\cap A_{\delta_1})\ge\sup(B\cap\delta_0)=\delta_0$. This is a contradiction.
\end{proof} 
An ideal $\mathcal I$ consisting of countable sets is said to be a \emph{P-ideal}
iff every countable family of sets in the ideal admits a pseudo-union in the ideal. That is,
for every sequence $\langle X_n \mid n<\omega\rangle$ of elements of $\mathcal I$,
there exists $X\in\mathcal I$ such that $X_n\setminus X$ is finite for all $n<\omega$.

\begin{definition}[Todor\v{c}evi\'{c}, \cite{MR1809418}] The P-ideal dichotomy ($\pid$) asserts that for every P-ideal $\mathcal I$ consisting of countable subsets of some set $Z$, either:
\begin{enumerate}
\item there is an uncountable $B\s Z$ such that $[B]^{\aleph_0}\s\mathcal I$, or
\item there is a sequence $\langle B_n\mid n<\omega\rangle$ such that $\bigcup_{n<\omega}B_n=Z$ and, for each $n<\omega$, $[B_n]^{\aleph_0}\cap\mathcal I=\emptyset$.
\end{enumerate}
\end{definition}

We denote by $\pid_{\aleph_1}$ the restriction of the above principle to $Z:=\aleph_1$.
This special case was first introduced and studied by Abraham and Todor\v{c}evi\'{c} in \cite{MR1441232}, and was denoted there by $({}^\ast)$.

\begin{lemma}\label{pfa implies clubsuit_AD(omega_1) fails} Suppose that $\pid_{\aleph_1}$ holds and $\mathfrak b>\omega_1$.
Then, for any stationary $S\s \omega_1$, $\clubsuit_{\ad}(\{S\},1,1)$ fails.
\end{lemma}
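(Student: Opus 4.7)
The plan is to apply $\pid_{\aleph_1}$ to a natural P-ideal attached to a hypothetical witness $\vec A = \langle A_\alpha \mid \alpha \in S\rangle$, with $\mathfrak b > \omega_1$ doing double duty: it will drive both the verification of the P-ideal property and the refutation of the second alternative in the PID dichotomy.

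Assuming for contradiction that $\clubsuit_{\ad}(\{S\},1,1)$ is witnessed by $\vec A$, I would first reduce to the case $S \subseteq \acc(\omega_1)$, so that each $A_\alpha$ may be taken to be a cofinal $\omega$-sequence $\{a_\alpha(n) \mid n < \omega\}$ in $\alpha$. The ideal to use is
\[
\mathcal I := \{X \in [\omega_1]^{\le \aleph_0} \mid \forall \alpha \in S,~X \cap A_\alpha \text{ is finite}\},
\]
which is manifestly closed under subsets and finite unions and contains all finite subsets of $\omega_1$. To show $\mathcal I$ is a P-ideal, given an increasing sequence $\{X_n\}_{n<\omega} \subseteq \mathcal I$, I would attach to each $\alpha \in S$ the function $f_\alpha \in \omega^\omega$ defined by $f_\alpha(n) := 1 + \max\{k \mid a_\alpha(k) \in X_n\}$ (or $0$ if empty). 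Since $|S| \le \aleph_1 < \mathfrak b$, I would fix a strictly increasing $g \in \omega^\omega$ with $f_\alpha \le^* g$ for every $\alpha \in S$, and then use $g$ together with the almost-disjointness of $\vec A$ (which sharply constrains how many pairs $(\alpha,k)$ a single $x \in \omega_1$ can realize as $x = a_\alpha(k)$) to produce a pseudo-union of the form $X = \bigcup_n (X_n \setminus F_n) \in \mathcal I$ with each $F_n \subseteq X_n$ finite.

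With $\mathcal I$ established as a P-ideal, I would apply $\pid_{\aleph_1}$. In the first alternative, there exists an uncountable $B \subseteq \omega_1$ with $[B]^{\aleph_0} \subseteq \mathcal I$, so in particular $B \cap A_\alpha$ is finite for every $\alpha \in S$; but then $B$ is cofinal in $\omega_1$, so the guessing clause of $\clubsuit_{\ad}(\{S\},1,1)$ produces $\alpha \in S$ with $\sup(A_\alpha \cap B) = \alpha$, making $A_\alpha \cap B$ cofinal in the $\omega$-ladder $A_\alpha$ and hence infinite, the desired contradiction. In the second alternative, $\omega_1 = \bigcup_n B_n$ with $[B_n]^{\aleph_0} \cap \mathcal I = \emptyset$ for each $n$; picking $n$ with $B := B_n$ uncountable, I would invoke $\mathfrak b > \omega_1$ once more to inductively construct a countably infinite $Y \subseteq B$ such that $Y \cap A_\alpha$ is finite for every $\alpha \in S$ (at stage $k$, the dominating function is used to diagonalize against the almost-disjoint family of traces $B \cap A_\alpha$ that have been accessed so far), contradicting the defining property of $B_n$.

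The principal obstacle is the P-ideal verification: a single element $x \in X_n$ can lie in many $A_\alpha$'s at widely varying ranks, so the naive recipe of truncating the $X_n$'s at depth $g(n)$ does not immediately yield a pseudo-union. Making the construction work requires carefully interleaving the uniform dominant $g$ supplied by $\mathfrak b > \omega_1$ with the almost-disjointness inequality $\sup(A_\alpha \cap A_{\alpha'}) < \sup(A_\alpha)$, which limits the simultaneous rank profile of $x$ across distinct $A_\alpha$'s and forces the discarded sets $F_n$ to be finite at each stage.
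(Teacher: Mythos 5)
Your overall architecture coincides with the paper's: the same ideal $\mathcal I$, an application of $\pid_{\aleph_1}$ to it, the first alternative refuted by the guessing clause, and the second alternative refuted by a $\mathfrak b>\omega_1$ diagonalization. The first-alternative argument is complete. But there is a genuine gap in the P-ideal verification, and you have named the obstacle yourself without closing it. Defining $f_\alpha(n)$ as the maximal \emph{rank inside $A_\alpha$} of an element of $X_n\cap A_\alpha$ gives, after dominating, control over truncations of the sets $A_\alpha$ --- but the pseudo-union must be obtained by truncating the sets $X_n$, and a single point of $X_n$ can occur at unrelated ranks in uncountably many different $A_\alpha$'s. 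The inequality $\sup(A_\alpha\cap A_{\alpha'})<\sup(A_\alpha)$ does not repair this: it bounds intersections of ladders in the ordinal sense and says nothing about the rank profile of a fixed countable set across the family; in fact no use of almost-disjointness is needed at this step. The missing idea is to measure $X_n\cap A_\alpha$ against a single \emph{global} enumeration: after disjointifying, fix a bijection $e:\omega\leftrightarrow\biguplus_{n<\omega}X_n$, set $f_\alpha(n):=\min\{m<\omega\mid X_n\cap A_\alpha\s e``m\}$ --- well defined precisely because each $X_n$ lies in $\mathcal I$ --- dominate all $f_\alpha$ by a single $f$, and take $X:=\biguplus_{n<\omega}(X_n\setminus e``f(n))$. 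Then $X_n\setminus X\s e``f(n)$ is finite, while for each $\alpha$ and all large $n$ one gets $X_n\cap A_\alpha\s e``f_\alpha(n)\s e``f(n)$, so $A_\alpha\cap X$ is finite.

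Two smaller points. First, you may not ``take each $A_\alpha$ to be a cofinal $\omega$-sequence'': replacing a ladder by a cofinal subset of order type $\omega$ can destroy the guessing clause $\sup(A_\alpha\cap B)=\alpha$. All you need, and all you may assume, is a bijective enumeration of each (countable) $A_\alpha$. Second, your treatment of the second alternative is only a sketch; the actual construction picks an increasing sequence $\langle\alpha_n\mid n<\omega\rangle$ of ordinals at which $B$ is guessed (so that each trace $A_{\alpha_n}\cap B$ is infinite), sets $\beta:=\sup_{n<\omega}\alpha_n$, uses almost-disjointness to define the relevant $f_\alpha$ only for $\alpha\ge\beta$, and disposes of $\alpha<\beta$ by arranging the diagonal set to be cofinal in $\beta$ of order type $\omega$. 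Those details are recoverable from your outline; the P-ideal step, as written, is not.
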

\begin{proof} Towards a contradiction, suppose that $S\s\omega_1$ is stationary, and that $\vec A=\langle A_\alpha \mid\alpha\in S\rangle$ is a $\clubsuit_{\ad}(\{S\},1,1)$-sequence.
Let $$\mathcal I:=\{ X\in[\omega_1]^{\le\aleph_0}\mid \forall\alpha\in\acc(\omega_1)\cap S[A_\alpha\cap X\text{ is finite}]\}.$$
It is clear that $\mathcal I$ is an ideal.

\begin{claim} $\mathcal I$ is a P-ideal.
\end{claim}
\begin{proof} Let $\vec X=\langle X_n \mid n<\omega\rangle$ be a sequence of elements of $\mathcal I$. 
We need to find a pseudo-union of $\vec X$ that lies in $\mathcal I$.
As $\mathcal I$ is downward closed, we may assume that $\langle X_n\mid n<\omega\rangle$ consists of pairwise disjoint sets.

Fix a bijection $e:\omega\leftrightarrow\biguplus_{n<\omega}X_n$. Then, for all $\alpha\in S$, define a function $f_\alpha:\omega\rightarrow\omega$ via
$$f_\alpha(n):=\min\{ m<\omega\mid X_n\cap A_\alpha\s e``m\}.$$
As $\mathfrak b>\omega_1$, let us fix a function $f:\omega\rightarrow\omega$ such that $f_\alpha<^* f$ for all $\alpha\in S$.
Set $X:=\biguplus\{ X_n\setminus e[f(n)]\mid n<\omega\}$. Clearly, for every $n<\omega$, $X_n\setminus X$ is a subset of $e[f(n)]$, and, in particular, it is finite.

Towards a contradiction, suppose that $X\notin\mathcal I$. Fix $\alpha\in\acc(\omega_1)\cap S$ such that $A_\alpha\cap X$ is infinite.
Since $X\s\biguplus_{n<\omega}X_n$, but $A_\alpha\cap X_n$ is finite for all $n<\omega$, we may find a large enough $n<\omega$ 
such that $A_\alpha\cap X\cap X_n\neq\emptyset$ and $f_\alpha(n)<f(n)$.
Pick $\beta\in A_\alpha\cap X\cap X_n$. 
By the definition of $f_\alpha$, $\beta\in e[f_\alpha(n)]$. But $f_\alpha(n)<f(n)$,
so that $\beta\in e[f(n)]$, contradicting the fact that $\beta\in X$.
\end{proof}

\begin{claim} Let $B\s\omega_1$ be uncountable.
\begin{enumerate}
\item There exists $X\in[B]^{\aleph_0}$ with $X\notin\mathcal I$;
\item There exists $X\in[B]^{\aleph_0}$ with $X\in\mathcal I$.
\end{enumerate}
\end{claim}
\begin{proof} As $B$ is uncountable, $G:=\{\alpha\in \acc(\omega_1)\cap S\mid \sup(A_\alpha\cap B)=\alpha\}$ is stationary.

(1) Fix arbitrary $\alpha\in G$.
Then $X:=A_\alpha\cap B$ is an element of $[B]^{\aleph_0}\setminus\mathcal I$. 

(2) Let $\langle \alpha_n\mid n<\omega\rangle$ be some increasing sequence of elements of $G$. 
For every $n<\omega$, let $\langle \alpha^m_n\mid m<\omega\rangle$ be the increasing enumeration of some cofinal subset of $A_{\alpha_n}\cap B$.
Furthermore, we require that, for all $n<\omega$, $\alpha_n<\alpha_{n+1}^0$.

Set $\beta:=\sup_{n<\omega}\alpha_n$.
As $\vec A$ is an $\ad$-ladder system, for every $\alpha\in S\cap\acc(\omega_1)\setminus\beta$, 
we may define a function $f_\alpha:\omega\rightarrow\omega$ via:
$$f_\alpha(n):=\min\{ m<\omega\mid A_{\alpha_n}\cap A_\alpha\s \alpha^m_n\}.$$

As $\mathfrak b>\omega_1$, let us fix a function $f:\omega\rightarrow\omega$ such that $f_\alpha<^* f$ for all $\alpha\in S$.
Set $X:=\{ \alpha_n^{f(n)}\mid 0<n<\omega\}$. For every $n<\omega$, the interval $(\alpha_n,\alpha_{n+1})$ contains a single element of $X$,
so that $X$ is a cofinal subset of $\beta$ with $\otp(X)=\omega$.
In particular, $X\in[B]^{\aleph_0}$. 

Towards a contradiction, suppose that $X\notin\mathcal I$. Fix $\alpha\in\acc(\omega_1)\cap S$ such that $A_\alpha\cap X$ is infinite.
Clearly, $\alpha\ge\beta$. So, we may find $k<\omega$ such that, for every integer $n\ge k$, $f_\alpha(n)<f(n)$.
As $A_\alpha\cap X$ is infinite, let us now pick a positive integer $n\ge k$ such that $\alpha_n^{f(n)}\in A_\alpha$.
Recalling that $\{\alpha_n^m\mid m<\omega\}\s A_{\alpha_n}$,
we altogether infer that $\alpha_n^{f(n)}\in A_{\alpha_n}\cap A_\alpha\s \alpha_n^{f_\alpha(n)}$.
In particular, $\alpha_n^{f(n)}<\alpha_n^{f_\alpha(n)}$,
contradicting the fact that $f_\alpha(n)<f(n)$.
\end{proof}

Altogether, $\mathcal I$ is a P-ideal for which the two alternatives of $\pid_{\aleph_1}$ fail. This is a contradiction.
\end{proof}

\begin{cor} \begin{enumerate}
\item  $\pfa$ implies that $\clubsuit_{\ad}(\{\omega_1\},1,1)$ fails;
\item  $\clubsuit_{\ad}(\{\omega_1\},1,1)$ does not follow from the existence of an almost Souslin tree.
\end{enumerate}
\end{cor}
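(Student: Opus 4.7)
For Clause~(1), the plan is to invoke Lemma~\ref{pfa implies clubsuit_AD(omega_1) fails} directly. One only needs the two standard consequences of $\pfa$: Todor\v{c}evi\'{c}'s theorem that $\pfa$ implies the $P$-ideal dichotomy (hence in particular $\pid_{\aleph_1}$), and the fact that $\pfa$ implies $\mathfrak b = \aleph_2 > \omega_1$. Instantiating Lemma~\ref{pfa implies clubsuit_AD(omega_1) fails} with $S:=\omega_1$ then yields the failure of $\clubsuit_{\ad}(\{\omega_1\},1,1)$.

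For Clause~(2), the plan is to exhibit a single model which simultaneously contains an almost Souslin tree and satisfies the hypotheses $\pid_{\aleph_1}+\mathfrak b>\omega_1$ of Lemma~\ref{pfa implies clubsuit_AD(omega_1) fails}. The natural route is to begin in a ground model of $\diamondsuit$ (or $V=L$) in which a distinguished almost Souslin $\omega_1$-tree $T$ can be constructed outright, and then perform a countable-support proper iteration of length $\omega_2$ that alternates between forcings witnessing instances of $\pid_{\aleph_1}$ and forcings dominating the ground-model reals, targeting $\pid_{\aleph_1}+\mathfrak b=\aleph_2$ in the final extension.

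The main obstacle will be arranging that the almost Souslinity of $T$ is preserved throughout the iteration, i.e., that no antichain whose set of levels is stationary is introduced at any stage. This calls for an iteration-theoretic preservation argument in the spirit of the ``$T$-preserving'' preservation theorems used for Souslin trees, exploiting that the individual iterands can be chosen not to add stationary antichains through $T$. Once such a model has been produced, Clause~(1)'s use of Lemma~\ref{pfa implies clubsuit_AD(omega_1) fails} (or rather a direct application of that lemma) delivers the failure of $\clubsuit_{\ad}(\{\omega_1\},1,1)$ in the presence of $T$, establishing that the existence of an almost Souslin tree does not suffice to derive the principle.
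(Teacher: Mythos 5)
Your Clause~(1) is exactly the paper's argument: $\pfa$ gives $\pid$ (hence $\pid_{\aleph_1}$) and $\mathfrak b>\omega_1$, and Lemma~\ref{pfa implies clubsuit_AD(omega_1) fails} applied with $S:=\omega_1$ finishes. No issues there.

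For Clause~(2), however, your proposal has a genuine gap. The paper does not construct the required model; it simply cites the literature (\cite{MR4128470} and \cite{MR4105604}), where models of $\pid$ with $\mathfrak p>\omega_1$ containing an almost Souslin Aronszajn tree have already been built, and then applies Lemma~\ref{pfa implies clubsuit_AD(omega_1) fails} as in Clause~(1). You instead propose to build such a model from scratch via a countable-support proper iteration, and you correctly identify the crux --- that every iterand (both the posets forcing instances of $\pid_{\aleph_1}$ and whatever posets you use to push up $\mathfrak b$) must avoid introducing an antichain of $T$ whose set of levels is stationary, and that this property must be preserved at limits --- but you then merely assert that ``this calls for an iteration-theoretic preservation argument'' without supplying or citing one. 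That preservation theorem is precisely the technical core of the problem and is the content of the cited papers (e.g., Krueger's forcing axiom for a non-special Aronszajn tree); it is not a routine adaptation of the standard ``$T$-preserving'' machinery for Souslin trees, since almost-Souslinity is a different (stationary-antichain) condition and the dominating-real iterands in particular need a bespoke verification. As written, your plan names the obstacle but does not overcome it, so Clause~(2) is not established. The fix is either to carry out that preservation argument in full or, as the paper does, to invoke the existing constructions directly.
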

\begin{proof} (1) It is well-known that $\pfa$ implies $\pid+\ma_{\aleph_1}$. In particular, it implies $\pid_{\aleph_1}$ together with $\mathfrak b>\omega_1$.

(2) Almost Souslin trees were defined in \cite[\S3]{MR548979}.
In \cite{MR4128470} and \cite{MR4105604} one can find models of $\pid$ with $\mathfrak p>\omega_1$ (in particular,  $\mathfrak b>\omega_1$),
in which there exists an Aronszajn tree which is almost Souslin. 
\end{proof}
\begin{question}\label{mavsad}
	Does $\ma_{\aleph_1}$ imply that $\clubsuit_{\ad}(\omega_1)$ fails?
\end{question}
\begin{question} Is $\ch$ consistent with the failure of $\clubsuit_{\ad}(\omega_1)$?
\end{question}
Note that a combination of the main results of \cite{Juhasz_clubsuit_ostaszewski,CH_with_no_Ostaszweski_spaces}
implies that  $\ch$ is consistent with the failure of $\clubsuit_J(\omega_1)$.

\section{A Ladder-system Dowker space}\label{sectionladdersystemspace}

In \cite{Good_Dowker_large_cardinals}, Good constructed a Dowker space of size $\kappa^+$ using $\clubsuit(S,2),$ where $S$ is a non-reflecting stationary subset of $E^{\kappa^+}_{\omega}$.
We won't define the principle $\clubsuit(S,2)$,
but only mention that, by Fact~\ref{clubfacts}\eqref{matrix clubsuit from one}, it is no stronger than $\clubsuit(S)$. 
In this section, a ladder-system Dowker space of size $\kappa $ is constructed under the assumption of $\clubsuit_{\ad}(\mathcal S,1,2)$, where $\mathcal S$ is an infinite partition of some non-reflecting stationary subset of $\kappa$. 
By Lemma~\ref{lemma216}, $\clubsuit(S)$ implies $\clubsuit_{\ad}(\mathcal S,{<}\omega)$, which surely implies $\clubsuit_{\ad}(\mathcal S,1,2)$,
so, our construction in particular gives a ladder-system Dowker space in Good's scenario.
It also gives ladder-system Dowker spaces in scenarios considered by Rudin \cite{kappa_Dowker_from_souslin_Rudin} and Weiss \cite{MR628595}, 
as explained at the end of this section.

\medskip

The constructions in this and in the next section are motivated by the following lemma.

\begin{lemma}\label{Lemma - Dowker general argument}
	Suppose that $\mathbb X=( X,\tau)$ is a normal Hausdorff topological space of size $\kappa$,
	having no two disjoint closed sets of size $\kappa$.
	If there exists a $\s$-decreasing sequence $\langle D_n \mid n<\omega \rangle$ of closed sets of cardinality $\kappa$ such that $\bigcap_{n<\omega} D_n = \emptyset$,
	then $\mathbb X$ is Dowker.
\end{lemma}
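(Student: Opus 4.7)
The plan is to prove the contrapositive-style: assume $\mathbb X$ is \emph{not} Dowker and derive a pair of disjoint closed sets of size $\kappa$, contradicting the hypothesis. Since $\mathbb X$ is already normal Hausdorff, failing to be Dowker means (by Dowker's theorem) that $\mathbb X$ is countably paracompact. Thus for the given $\s$-decreasing sequence $\langle D_n\mid n<\omega\rangle$ of closed sets with $\bigcap_{n<\omega}D_n=\emptyset$, there exist open sets $U_n\supseteq D_n$ with $\bigcap_{n<\omega}\overline{U_n}=\emptyset$. This is the only nontrivial input; everything else is bookkeeping.

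Set $V_n:=X\setminus\overline{U_n}$. Each $V_n$ is open, and from $\bigcap_{n<\omega}\overline{U_n}=\emptyset$ we get $X=\bigcup_{n<\omega}V_n$. Since $|X|=\kappa$ and $\kappa$ is regular uncountable (per the standing convention), the pigeonhole principle yields some $n_0<\omega$ with $|V_{n_0}|=\kappa$.

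Now consider $F:=\overline{V_{n_0}}$ and $D_{n_0}$. Both are closed. For disjointness, note that $V_{n_0}=X\setminus \overline{U_{n_0}}$, so $\overline{V_{n_0}}\s X\setminus U_{n_0}$ (since $X\setminus U_{n_0}$ is closed and contains $V_{n_0}$), and hence $F\cap U_{n_0}=\emptyset$; as $D_{n_0}\s U_{n_0}$, we conclude $F\cap D_{n_0}=\emptyset$. For sizes, $|F|\ge|V_{n_0}|=\kappa$ and $|D_{n_0}|=\kappa$ by hypothesis. So $F$ and $D_{n_0}$ are two disjoint closed sets of cardinality $\kappa$, contradicting the assumption on $\mathbb X$.

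I do not anticipate a serious obstacle: the only point where one must be a little careful is citing the correct formulation of countable paracompactness in terms of shrinking a decreasing null sequence of closed sets to open sets whose closures still have empty intersection, and invoking the regularity of $\kappa$ at the pigeonhole step. Normality and Hausdorffness are used only implicitly through Dowker's characterization.
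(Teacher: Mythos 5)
Your proof is correct and is essentially the paper's argument run in the contrapositive direction: in both, the key point is that the complement of any open set containing $D_n$ is a closed set disjoint from the $\kappa$-sized closed set $D_n$ and hence has size $<\kappa$, combined with the regularity of $\kappa$ at the countable-union/pigeonhole step. The only cosmetic difference is that the paper argues directly that every open expansion $\langle U_n\mid n<\omega\rangle$ of $\langle D_n\mid n<\omega\rangle$ satisfies $\bigcap_{n<\omega}U_n\neq\emptyset$ (so the given sequence witnesses Dowker's characterization), whereas you assume countable paracompactness and pass through the $\bigcap_{n<\omega}\overline{U_n}=\emptyset$ formulation; both are fine.
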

\begin{proof}
Recall that, by \cite{Dowker_C.H.}, a space is Dowker iff it is Hausdorff, normal and 
there is a $\s$-decreasing sequence $\langle D_n \mid n<\omega \rangle$ of closed sets with $ \bigcap_{n<\omega}D_n=\emptyset $,
such that, for every sequence $\langle U_n \mid  n<\omega \rangle$ of open sets, if $ D_n\subseteq U_n $ for every $n<\omega$, 
then $ \bigcap_{n<\omega}U_n\neq\emptyset $.

Now suppose that there exists a $\s$-decreasing sequence $\langle D_n \mid n<\omega \rangle$ of closed sets of cardinality $\kappa$ such that $\bigcap_{n<\omega} D_n = \emptyset$.
Suppose that $\langle U_n \mid n<\omega\rangle $ is a sequence of open sets such that $ D_n\subseteq U_n $ for every $n<\omega$.
	
For every $n<\omega$, $F_n:=X\setminus U_n$ is a closed set disjoint from $D_n$, and hence of cardinality $<\kappa$.
So, as $\omega<\cf(\kappa)=\kappa$, $\bigcup_{n<\omega}F_n$ has size less then $\kappa$.
In particular, $X\setminus \bigcup_{n<\omega}F_n\neq\emptyset$.
Altogether, $ \bigcap_{n<\omega}U_n=\bigcap_{n<\omega} X\setminus F_n=X\setminus \bigcup_{n<\omega} F_n\neq\emptyset$.
Recalling that $\mathbb X$ is normal, we altogether infer that $\mathbb X$ is Dowker.
\end{proof}

\medskip\noindent\textbf{The space.} Suppose that $S$ is a stationary subset of $\acc(\kappa)$,
$\mathcal S$ is a partition of $S$ with $|\mathcal S|=\aleph_0$,
and $\clubsuit_{\ad}(\mathcal S,1,2)$ holds.
Fix an $\ad$-ladder system $\langle A_{\alpha}\mid\alpha\in S\rangle$ as in Lemma~\ref{adone}.
Fix an injective enumeration $\langle S_{n+1} \mid n<\omega \rangle $ of $\mathcal S$, and let $S_0:=\kappa\setminus S$. 
As $\langle S_n\mid n<\omega\rangle$ is a partition of $\kappa$,
for each $\alpha<\kappa$, we may let $n(\alpha)$ denote the unique $n<\omega$ such that $\alpha\in S_n$.
For each $n<\omega$, let $ W_n:=\bigcup_{i\leq n}S_i $. 
Finally, define a sequence $\vec L=\langle L_\alpha\mid\alpha<\kappa\rangle$ via:
$$L_\alpha:=\begin{cases}
W_{n(\alpha)-1}\cap A_\alpha,&\text{if }n(\alpha)>0\ \&\ \sup(W_{n(\alpha)-1}\cap A_\alpha)=\alpha;\\
\emptyset,&\text{otherwise.}
\end{cases}$$

\begin{lemma}\label{lemma61}
	\begin{enumerate}
		\item\label{lemma - ladder-system club sequence - Clause W_n is open} For all $n<\omega$ and $\alpha\in S_{n+1}$, $L_\alpha\s W_n$;
			\item $\bar S:=\{ \alpha\in\acc(\kappa)\mid \sup(L_\alpha)=\alpha\}$ is a stationary subset of $S$;
		\item\label{adofladders} For all $\alpha\neq\alpha'$ from $\bar S$, $\sup(L_\alpha\cap L_{\alpha'})<\alpha$;
		\item\label{lemma - ladder-system club sequence - Clause hitting} For all $B_0,B_1\in[\kappa]^\kappa$, there exists  $m<\omega$ such that, for every $n\in\omega\setminus m$, the following set is stationary:
			\[ \{ \alpha\in S_n \mid \sup(L_\alpha\cap B_0)=\sup(L_\alpha\cap B_1)=\alpha]  \}.\]
	\end{enumerate}
\end{lemma}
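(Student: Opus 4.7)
My approach is to handle the four clauses in order, using only an unpacking of the definition of $\vec L$ together with the single nontrivial tool at our disposal, namely the hitting property of $\vec A$ supplied by Lemma~\ref{adone}.

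Clause~(1) is immediate from the definition: if $\alpha\in S_{n+1}$, then $n(\alpha)=n+1>0$, so $L_\alpha$ is either empty or equals $W_n\cap A_\alpha\s W_n$.

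For Clause~(2), note first that the containment $\bar S\s S$ is automatic: if $\alpha\in S_0=\kappa\setminus S$, then $n(\alpha)=0$, so $L_\alpha=\emptyset$ and $\alpha\notin\bar S$. To see that $\bar S$ is stationary, the plan is to show that $\bar S\cap S_n$ is stationary for every integer $n\ge 2$. Fix such an $n$ and apply Lemma~\ref{adone} to $S_n\in\mathcal S$ with $B_0=B_1:=S_1$; since $S_1$ is stationary, it belongs to $[\kappa]^\kappa$. Any $\alpha$ in the resulting stationary set satisfies $\sup(A_\alpha\cap S_1)=\alpha$, and since $n\ge 2$ gives $S_1\s W_{n-1}$, this forces $\sup(A_\alpha\cap W_{n-1})=\alpha$, so that the first branch of the definition of $L_\alpha$ is triggered and $\sup(L_\alpha)=\alpha$. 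Clause~(3) then drops out of the fact that $L_\alpha\s A_\alpha$ for every $\alpha$, combined with the $\ad$-property of $\langle A_\alpha\mid\alpha\in S\rangle$ which yields $\sup(A_\alpha\cap A_{\alpha'})<\min\{\alpha,\alpha'\}$ for distinct $\alpha,\alpha'\in S$.

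The main step is Clause~(4), whose key observation is that the $\aleph_0$-partition $\langle S_n\mid n<\omega\rangle$ of $\kappa$ interacts well with any cofinal set $B\in[\kappa]^\kappa$: by the pigeonhole principle, there exists some $m(B)<\omega$ such that $|B\cap S_{m(B)}|=\kappa$. Given $B_0,B_1\in[\kappa]^\kappa$, set $m_i:=m(B_i)$ and $m:=\max\{m_0,m_1\}+1$. For each $n\ge m$, apply Lemma~\ref{adone} to $S_n\in\mathcal S$ with the cofinal sets $B_0':=B_0\cap S_{m_0}$ and $B_1':=B_1\cap S_{m_1}$ to obtain a stationary $T_n\s S_n$ such that $\sup(A_\alpha\cap B_i')=\alpha$ for all $\alpha\in T_n$ and $i<2$. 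Since $m_i<n$ implies $S_{m_i}\s W_{n-1}$, we have $B_i'\s W_{n-1}$, so for any $\alpha\in T_n$ the condition $\sup(A_\alpha\cap W_{n-1})=\alpha$ is met; hence $L_\alpha=A_\alpha\cap W_{n-1}$ and $L_\alpha\cap B_i\supseteq A_\alpha\cap B_i'$, giving $\sup(L_\alpha\cap B_i)=\alpha$ for $i<2$, as required.

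No serious obstacle is expected; the only mild technicality is matching the countable partition structure of $\mathcal S$ against the hypothesis that $B_0$ and $B_1$ have size $\kappa$, and this is exactly what the above pigeonhole step handles.
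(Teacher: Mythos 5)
Your proof is correct and follows essentially the same route as the paper's: the pigeonhole step producing $m_0,m_1$ with $|B_i\cap S_{m_i}|=\kappa$ and then $m:=\max\{m_0,m_1\}+1$ is exactly the paper's argument for Clause~(4), and Clauses (1) and (3) are handled identically. The only cosmetic difference is that the paper deduces Clause~(2) directly from Clause~(4) (taking $B_0=B_1=\kappa$), whereas you reprove it by hitting $S_1$; both are fine.
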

\begin{proof} (2) This follows from Clause~(4).

(3) For all $\alpha\neq\alpha'$ from $\bar S$, $\sup(L_\alpha\cap L_{\alpha'})\le\sup(A_\alpha\cap A_{\alpha'})<\alpha$;

(4) Given two cofinal subsets $B_0,B_1$ of $\kappa$, 
find $m_0,m_1<\omega$ be such that $|B_0\cap S_{m_0}|=|B_1\cap S_{m_1}|=\kappa$. Evidently, $m:=\max\{n_0,n_1\}+1$ is as sought.
\end{proof}

Now, consider the ladder-system space $\mathbb X=(\kappa,\tau)$ which is determined by $\vec L$ (equivalently, determined by $\vec L\restriction\bar S$).
This means that a set $U\s\kappa$ is $\tau$-open iff, for every $\alpha\in U$, $L_\alpha\s^* U$.
Put differently, if we denote $N^\epsilon_\alpha:=(L_\alpha\setminus\epsilon)\cup\{\alpha\}$,
then, for every $\alpha<\kappa$, every neighborhood of $\alpha$ covers some element from
$\mathcal N_\alpha:=\{ N^{\epsilon}_\alpha\mid \epsilon<\alpha \}$.
\begin{definition}For any set of ordinals $N$, denote $N^-:=N\cap\sup(N)$.
\end{definition}
Note that, for all $\alpha<\kappa$ and $N\in\mathcal N_\alpha$, $N^-$ is either empty or a cofinal subset of $\alpha$.

\begin{lemma}\label{t1lemma}
	The space $\mathbb X$ is $ T_1 $.
\end{lemma}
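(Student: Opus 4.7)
The plan is to show that every singleton $\{\alpha\}$ is $\tau$-closed, which is equivalent to $T_1$ for the space $\mathbb{X}=(\kappa,\tau)$. So I fix an arbitrary $\alpha<\kappa$ and verify that $\kappa\setminus\{\alpha\}$ satisfies the openness condition: for every $\beta\in\kappa\setminus\{\alpha\}$, we must have $L_\beta\subseteq^*\kappa\setminus\{\alpha\}$.

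The verification splits naturally into cases. First, if $L_\beta=\emptyset$, the condition is trivial by the definition of $\subseteq^*$ given in Subsection~\ref{notationsubsection}. Otherwise $\beta\in\bar S$, and in particular $L_\beta$ is a cofinal subset of $\beta$ (this is the key observation and follows from the definition of $\vec L$, since $L_\beta\ne\emptyset$ forces $\sup(L_\beta)=\sup(W_{n(\beta)-1}\cap A_\beta)=\beta$). Crucially, in all cases $L_\beta\s A_\beta\s\beta$, so the elements of $L_\beta$ are bounded by $\beta$.

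Now I would split on whether $\beta<\alpha$ or $\beta>\alpha$. If $\beta<\alpha$, then $L_\beta\s\beta\le\alpha$, so $\alpha\notin L_\beta$, and taking any $\epsilon\in L_\beta$ gives $L_\beta\setminus\epsilon\s\kappa\setminus\{\alpha\}$. If instead $\beta>\alpha$, then using cofinality of $L_\beta$ in $\beta$, I pick $\epsilon\in L_\beta$ with $\epsilon>\alpha$, so that every member of $L_\beta\setminus\epsilon$ is $\ge\epsilon>\alpha$ and hence $L_\beta\setminus\epsilon\s\kappa\setminus\{\alpha\}$, as required.

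There is essentially no obstacle here; the statement is a direct consequence of the fact that $L_\beta$ is bounded in $\beta$ (so large $\alpha$ are automatically avoided) together with cofinality of $L_\beta$ in $\beta$ (so small $\alpha$ can be shaved off by a tail). The only subtle point worth flagging is the justification that $L_\beta$ is either empty or cofinal in $\beta$, which one reads off directly from the definition of $\vec L$.
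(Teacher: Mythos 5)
Your proof is correct and takes essentially the same route as the paper: the paper's proof simply observes that each $\mathcal N_\beta$ is a chain with $\bigcap\mathcal N_\beta=\{\beta\}$, so every $\beta\neq\alpha$ has a basic neighborhood avoiding $\alpha$, and your case split on $\beta<\alpha$ versus $\beta>\alpha$ (using that $L_\beta$ is bounded by, and if nonempty cofinal in, $\beta$) is exactly the verification of that fact.
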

\begin{proof}
	Let $x$ be an element of the space $\mathbb X$.
	To see that $\kappa\setminus \{x\}$ is $\tau$-open,
	notice that for every $y\in X$, $\mathcal N_y$ is a chain such that $\bigcap\mathcal N_y=\{y\}$.
	In particular, for every $y\in X\setminus\{x\}$,
	there exists $N_y\in\mathcal N_y$ with $N_y\s\kappa\setminus\{x\}$.
\end{proof}

\begin{lemma}\label{lemma35}
\begin{enumerate}
\item $\kappa\setminus\bar S$ is a discrete subspace of size $\kappa$;
\item For every $\xi<\kappa$, $\xi$ is $\tau$-open;
\item For every $\xi\in\kappa\setminus\bar S$, $\xi$ is $\tau$-closed;
\item For every $n<\omega$, $W_n$ is $\tau$-open.
\end{enumerate}
\end{lemma}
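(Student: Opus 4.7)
The plan hinges on a single structural observation about the sequence $\vec L$: inspecting both branches of the definition of $L_\alpha$ shows that $L_\alpha$ is either empty or a cofinal subset of $\alpha$, and, more precisely, $L_\alpha = \emptyset$ iff $\alpha \notin \bar S$. Indeed, for $\alpha \notin S$ we have $n(\alpha)=0$, so the ``otherwise'' clause of the definition fires; for $\alpha \in S$ with $\sup(W_{n(\alpha)-1}\cap A_\alpha)<\alpha$, the same clause fires; and the first clause of the definition already forces $\sup(L_\alpha)=\alpha$, i.e.\ $\alpha \in \bar S$. With this dichotomy in hand, each of the four clauses reduces to a short direct check, so I do not expect any real obstacle; the only mild subtlety is in unpacking $\s^*$ against the correct choice of $\eta \in L_\alpha$.

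For Clause~(1), $\xi \in \kappa \setminus \bar S$ gives $L_\xi = \emptyset$, whence $N^{0}_\xi = \{\xi\}$ is a basic open neighborhood of $\xi$, so $\{\xi\}$ is in fact $\tau$-open; in particular the subspace $\kappa \setminus \bar S$ is discrete. Since $\bar S \s \acc(\kappa)$, the set $\kappa \setminus \bar S$ contains every successor ordinal below $\kappa$, hence has cardinality $\kappa$. For Clause~(2), fix $\xi<\kappa$ and $\alpha\in\xi$; then $L_\alpha \s \alpha \s \xi$, so either $L_\alpha = \emptyset$ or $L_\alpha\setminus\eta \s \xi$ for any $\eta \in L_\alpha$, giving $L_\alpha \s^* \xi$ in every case.

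For Clause~(3), it suffices to verify that $\kappa\setminus\xi$ is $\tau$-open whenever $\xi \in \kappa \setminus \bar S$. At $\alpha = \xi$, the dichotomy gives $L_\xi = \emptyset$; at $\alpha > \xi$ with $L_\alpha \neq \emptyset$, cofinality of $L_\alpha$ in $\alpha > \xi$ produces $\eta \in L_\alpha$ with $\eta \ge \xi$, and then $L_\alpha \setminus \eta \s [\xi,\kappa) = \kappa \setminus \xi$. For Clause~(4), given $\alpha\in W_n$, if $L_\alpha=\emptyset$ we are done, and otherwise $n(\alpha)\ge 1$, so Lemma~\ref{lemma61}(1) yields $L_\alpha \s W_{n(\alpha)-1}$; since $\alpha \in W_n$ forces $n(\alpha)\le n$, we conclude $L_\alpha \s W_n$, and picking any $\eta \in L_\alpha$ confirms $L_\alpha \s^* W_n$.
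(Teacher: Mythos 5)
Your proof is correct and follows the same route the paper intends: the paper's own proof consists of the single line ``(4) By Lemma~\ref{lemma61}(1)'' and treats Clauses (1)--(3) as routine, and your dichotomy $L_\alpha\neq\emptyset\iff\alpha\in\bar S$ together with the direct $\s^*$ checks is exactly the omitted verification. The only cosmetic nitpick is that for $\xi=0$ the set $\mathcal N_0$ is empty, so one should argue openness of $\{0\}$ directly from the $\s^*$ definition rather than via $N^0_0$ --- which your argument in fact already does.
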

\begin{proof} (4) By Lemma~\ref{lemma61}\eqref{lemma - ladder-system club sequence - Clause W_n is open}.
\end{proof}

\begin{lemma}\label{another lambda^+ dowker - no two disjoint big closed sets clubsuit dowker}
	There are no two disjoint $\tau$-closed subsets of cardinality $\kappa$.
\end{lemma}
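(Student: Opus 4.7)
The plan is to derive a contradiction from the assumption that $F_0,F_1$ are disjoint $\tau$-closed subsets of $\kappa$ each of cardinality $\kappa$, by locating a single point forced to belong to both of them.

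First, I would record the basic dual reformulation of $\tau$-closedness in terms of the neighborhood base $\mathcal N_\alpha$: for any closed set $F \s \kappa$ and any $\alpha \notin F$, the open set $\kappa \setminus F$ is a neighborhood of $\alpha$, so some $N^\epsilon_\alpha = (L_\alpha \setminus \epsilon) \cup \{\alpha\}$ is disjoint from $F$, and hence $\sup(L_\alpha \cap F) < \alpha$. Contrapositively, if $\sup(L_\alpha \cap F) = \alpha$, then $\alpha \in F$.

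Next, I would apply Lemma~\ref{lemma61}\eqref{lemma - ladder-system club sequence - Clause hitting} to the pair $B_0 := F_0$, $B_1 := F_1$, which are cofinal in $\kappa$ since $|F_0| = |F_1| = \kappa = \cf(\kappa)$. This yields some $m<\omega$ such that, for every $n \in \omega \setminus m$, the set
\[
G_n = \{\alpha \in S_n \mid \sup(L_\alpha \cap F_0) = \sup(L_\alpha \cap F_1) = \alpha\}
\]
is stationary, hence in particular nonempty. Pick any $\alpha \in G_m$. By the contrapositive from the previous step applied to $F_0$ and to $F_1$, the two suprema being equal to $\alpha$ force $\alpha \in F_0 \cap F_1$, contradicting $F_0 \cap F_1 = \emptyset$.

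The argument is essentially a direct unpacking of the neighborhood base together with the guessing clause from Lemma~\ref{lemma61}, so I do not anticipate any substantive obstacle; the only point requiring care is to ensure that $F_0$ and $F_1$ are legitimate inputs to Lemma~\ref{lemma61}\eqref{lemma - ladder-system club sequence - Clause hitting}, i.e., that they are cofinal in $\kappa$, which is immediate from $|F_i| = \kappa$ and the regularity of $\kappa$.
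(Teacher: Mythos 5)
Your proposal is correct and follows essentially the same route as the paper: apply Lemma~\ref{lemma61}(4) to the two closed sets, pick a point $\alpha$ in one of the resulting stationary sets, and use the reformulation of $\tau$-closedness (if $\sup(L_\alpha\cap F)=\alpha$ then $\alpha\in F$) to place $\alpha$ in both sets. Your write-up is in fact slightly more careful than the paper's, which elides the explicit choice of $\alpha$ and the unpacking of the neighborhood base.
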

\begin{proof} Towards a contradiction,
	suppose that $K_0$ and $K_1$ are two disjoint $\tau$-closed subsets of cardinality $\kappa$. 
	Using Lemma~\ref{lemma61}\eqref{lemma - ladder-system club sequence - Clause hitting},
	let us fix $n<\omega$ such that  $\sup(L_\alpha\cap K_0)=\sup(L_\alpha\cap K_1)=\alpha$.
	As both $K_0$ and $K_1$ are $\tau$-closed, this implies that $\alpha\in K_0$ and $\alpha\in K_1$, contradicting the fact $K_0$ and $K_1$ are disjoint.
\end{proof}

Following the terminology coined in \cite{MR44519} and \cite{MR2099600}, we introduce the following.

\begin{definition} The sequence $\vec L$ is said to be \emph{almost $\mathcal P_0$} iff, for every $\xi<\kappa$ and every function $c:\bar S\cap\xi\rightarrow\omega$,
there exists a function $c^*:\xi\rightarrow\omega$ such that, for every $\alpha\in\bar S\cap\xi$, $c^*\restriction L_\alpha$ is eventually constant with value $c(\alpha)$.
\end{definition}

\begin{lemma}\label{normal clubsuit dowker} If $\vec L$ is almost $\mathcal P_0$, then $\mathbb X$ is normal and Hausdorff.
\end{lemma}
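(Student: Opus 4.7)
The space $\mathbb X$ is already known to be $T_1$ by Lemma~\ref{t1lemma}, and any $T_1$ normal space is automatically Hausdorff; so it suffices to prove normality, and the almost $\mathcal P_0$ hypothesis is what I would use to supply the separation. Take disjoint $\tau$-closed sets $K_0,K_1\s\kappa$. By Lemma~\ref{another lambda^+ dowker - no two disjoint big closed sets clubsuit dowker}, at most one of them has cardinality $\kappa$, so assume without loss of generality that $|K_1|<\kappa$, and pick $\xi<\kappa$ such that $K_1\s\xi$.

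Define $c\colon \bar S\cap\xi\to 2$ by $c(\alpha):=1$ if $\alpha\in K_1$ and $c(\alpha):=0$ otherwise. Applying the almost $\mathcal P_0$ hypothesis to $(\xi,c)$ supplies a function $c^*\colon \xi\to\omega$ such that, for every $\alpha\in\bar S\cap\xi$, $c^*\restriction L_\alpha$ is eventually constant with value $c(\alpha)$. From $c^*$ I would assemble a function $\tilde c\colon\xi\to\{0,1\}$ satisfying $\tilde c(\alpha)=c(\alpha)$ for every $\alpha\in\bar S\cap\xi$ while retaining the eventual-constancy property on each $L_\alpha$, and then set $U_1:=\tilde c^{-1}(\{1\})\cup(K_1\setminus\bar S)$ and $U_0:=(\kappa\setminus\xi)\cup(\tilde c^{-1}(\{0\})\setminus K_1)$. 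A routine case analysis, using that $L_\beta\cap K_i$ is bounded whenever $\beta\notin K_i$ (which follows from $K_i$ being $\tau$-closed together with the fact that the $N^\epsilon_\alpha$ form a neighborhood base), would then show that $K_i\s U_i$, that $U_0\cap U_1=\emptyset$, and that each $U_i$ is $\tau$-open.

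The principal obstacle is the construction of $\tilde c$ with the prescribed boundary values on $\bar S\cap\xi$. The almost $\mathcal P_0$ property only asserts eventual constancy of $c^*\restriction L_\alpha$, leaving the value $c^*(\alpha)$ itself unspecified, so in general $c^*(\alpha)\neq c(\alpha)$ at limit points $\alpha\in\bar S\cap\xi$. The naive modification that overrides $c^*$ on all of $\bar S\cap\xi$ to agree with $c$ may a priori destroy eventual constancy on some $L_\beta$ whose intersection with $\bar S$ is cofinal in $\beta$. The key insight should be that the closedness of $K_1$ bounds the relevant interaction: for $\beta\notin K_1$ the set $L_\beta\cap K_1$ is bounded in $\beta$, so the $1$s we impose cannot accumulate in $L_\beta$; symmetrically, for $\beta\in K_1$ one uses that $L_\beta\cap K_0$ is bounded to control the $0$s coming from points of $K_0\cap\bar S\cap L_\beta$. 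Executing this bookkeeping carefully — and possibly iterating the almost $\mathcal P_0$ construction a second time to handle the delicate case $\beta\in K_1\cap\bar S$ with $L_\beta\cap(\bar S\setminus K_1)$ cofinal in $\beta$ — is the technical crux of the argument.
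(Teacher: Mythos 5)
Your reduction to normality (Hausdorffness then coming for free from $T_1$) and your use of Lemma~\ref{another lambda^+ dowker - no two disjoint big closed sets clubsuit dowker} to bound one of the two closed sets both match the paper. The gap is that you never actually produce the separating open sets: everything rests on the existence of a single corrected coloring $\tilde c\colon\xi\to 2$ agreeing with $c$ on $\bar S\cap\xi$ while keeping $\tilde c\restriction L_\alpha$ eventually constant with value $c(\alpha)$, and you correctly note that almost $\mathcal P_0$ does not supply this, but then defer its construction as ``the technical crux''. The difficulty is real and your proposed repairs do not close it. Your coloring forces the value $0$ at \emph{every} point of $\bar S\cap\xi\setminus K_1$, not only at points of $K_0$; so for $\beta\in K_1\cap\bar S$ with $L_\beta\cap(\bar S\setminus K_1)$ cofinal in $\beta$ (which does occur: for $\beta\in S_n$ with $n\ge2$, $L_\beta\s W_{n-1}$ may meet $S_1,\dots,S_{n-1}$, hence $\bar S$, cofinally), the closedness of $K_0$ is irrelevant and the override destroys eventual constancy on $L_\beta$. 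Iterating ``a second time'' does not help either: each round of absorbing tails of ladders adds new points of $\bar S$ into the would-be open set, and those points' ladders must in turn have tails absorbed at the next round, so no fixed finite number of rounds terminates.

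The paper resolves exactly this with an $\omega$-length recursion. Starting from $K_i^0:=K_i$, at each stage $n$ it applies almost $\mathcal P_0$ to the characteristic functions $c_0,c_1$ of $K_0^n,K_1^n$ on $\bar S\cap\xi$ and sets $K_i^{n+1}:=K_i^n\cup\{\gamma\in\xi\setminus K^n_{1-i}\mid c_i^*(\gamma)=1\ \&\ c^*_{1-i}(\gamma)=0\}$, verifying that these stay closed, disjoint, and that $L_\alpha\s^* K_i^{n+1}$ for every $\alpha\in K_i^n\cap\xi$. Only the union $U_i:=\bigcup_{n<\omega}K_i^n$ (with $\kappa\setminus\xi$ attached on the unbounded side) is open, because every point of it enters at some finite stage and a tail of its ladder enters at the next. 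If you replace your single corrected coloring by this increasing $\omega$-chain of closed sets, the rest of your outline (disjointness, $K_i\s U_i$, openness) goes through.
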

\begin{proof} Suppose that $\vec L$ is almost $\mathcal P_0$.
	Let $K_0$ and $K_1$ be disjoint $\tau$-closed subsets of $\kappa$.
	By Lemma~\ref{another lambda^+ dowker - no two disjoint big closed sets clubsuit dowker}, at least one of them is bounded, say $K_0$.
	Using Lemma~\ref{lemma35}(1), fix a large enough $\xi\in\kappa\setminus\bar S$ such that $K_0\subseteq \xi$.
	Note that by Clauses (2) and (3) of Lemma~\ref{lemma35}, $\xi$ is clopen.
	Now, set $K^0_0:=K_0$ and $K^0_1:=K_1$.
	
	\begin{claim} Suppose $n<\omega$ and that $K^n_0$ and $K^n_1$ are disjoint $\tau$-closed sets with $K^n_0\s\xi$. 
	Then there exist disjoint $\tau$-closed sets $K^{n+1}_0$ and $K^{n+1}_1$ with $K^{n+1}_0\s\xi$ such that for all $i<2$:
	\begin{enumerate}
	\item $K^n_i\s K^{n+1}_i$;
	\item for every $\alpha\in K^n_i\cap\xi$, $L_\alpha\s^* K^{n+1}_i$.
	\end{enumerate}
	\end{claim}
	\begin{proof} 
	For every $i<2$, define $c_i:\bar S\cap\xi\rightarrow2$ via $c_i(\alpha):=1$ iff $\alpha\in K^n_i$.
	Now, as $\vec L$ is almost $\mathcal P_0$, for each $i<2$, we may fix a function $c^*_i:\xi\rightarrow2$ 
	such that, for every $\alpha\in\bar S\cap \xi$, $c_i^*\restriction L_\alpha$ is eventually constant with value $c_i(\alpha)$.
	For each $i<2$, set $$K^{n+1}_i:=K^n_i\cup\{\gamma\in\xi\setminus K^n_{1-i}\mid c^*_i(\gamma)=1\ \&\ c^*_{1-i}(\gamma)=0\}.$$
	Evidently $K_i^n\s K_i^{n+1}$. It is also easy to see that $K^{n+1}_0\cap K^{n+1}_1=\emptyset$.

	Let $i<2$. To see that $K_i^{n+1}$ is $\tau$-closed,
	fix an arbitrary nonzero $\alpha<\kappa$ such that $\sup(K_i^{n+1}\cap L_\alpha)=\alpha$,
	and we shall prove that $\alpha\in K_i^{n+1}$. As $K_i^n\s K_i^{n+1}$, we may avoid trivialities, and assume that $\alpha\notin K_i^{n}$,
	so that $\alpha$ belongs to the set
	$$\{\gamma\in\xi\setminus K^n_{1-i}\mid c^*_i(\gamma)=1\ \&\ c^*_{1-i}(\gamma)=0\}.$$
	Altogether, $\alpha\in(\bar S\cap \xi)\setminus K_i^n$, which must mean that $c_i(\alpha)=0$.
	Fix a large enough $\epsilon<\alpha$ such that $c_i^*\restriction(L_\alpha\setminus\epsilon)$ is eventually constant with value $0$.
	Then $\sup(K_i^{n+1}\cap L_\alpha)\le\epsilon<\alpha$, contradicting the choice of $\alpha$.
	
	Finally, to verify Clause~(2), fix arbitrary $i<2$ and $\alpha\in K_i^n\cap\xi\cap\bar S$.
	By the definition of the two functions, $c_i(\alpha)=1$ and $c_{1-i}(\alpha)=0$.
	So, there exists a large enough $\epsilon<\alpha$ such that $c_i^*\restriction(L_\alpha\setminus\epsilon)$ 
	is a constant function with value $1$,
	and $c_{1-i}^*\restriction(L_\alpha\setminus\epsilon)$ 
	is a constant function with value $0$. In effect, $L_\alpha\setminus\epsilon\s K_i^{n+1}$.	
	\end{proof}

	By an iterative application of the preceding claim, we obtain a sequence of pairs $\langle (K_0^n,K_1^n)\mid n<\omega\rangle$.
	Set $U_0:=\bigcup_{n<\omega}K_0^n$ and $U_1:=(\kappa\setminus \xi)\cup\bigcup_{n<\omega}K_1^n$.
	By Clause~(2) of the preceding claim and the fact that $\xi$ is clopen, $U_i$ is open for each $i<2$.
	Thus, we are left with verifying the following.

\begin{claim} $U_0\cap U_1=\emptyset$.
\end{claim}
\begin{proof} Suppose not, and pick $\alpha\in U_0\cap U_1$.
Notice that $U_0\subseteq \xi$, hence we can find $n_0,n_1<\omega$ such that $\alpha\in K_0^{n_0}\cap K^{n_1}_1$. 
Then, for $n:=\max\{n_0,n_1\}$, we get that $\alpha\in K_0^n\cap K_1^n$, contradicting the fact that $K_0^n$ and $K_1^n$ are disjoint.
\end{proof}

This completes the proof of normality. Since $\mathbb X$ is $T_1$, it also follows that it is Hausdorff.
\end{proof}

\begin{cor}\label{clubsuit_AD_space_not_countably_paracompact}
	If $\vec L$ is almost $\mathcal P_0$, then $\mathbb X$ is Dowker. 
\end{cor}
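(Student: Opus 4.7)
The plan is to directly invoke Lemma~\ref{Lemma - Dowker general argument}. The hypotheses ``normal and Hausdorff'' and ``of cardinality $\kappa$'' are already at hand: normality and the Hausdorff property come from Lemma~\ref{normal clubsuit dowker} under the almost $\mathcal P_0$ assumption, and the underlying set of $\mathbb X$ is $\kappa$ by construction. The clause ``no two disjoint closed sets of size $\kappa$'' is precisely Lemma~\ref{another lambda^+ dowker - no two disjoint big closed sets clubsuit dowker}.

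Thus the only remaining task is to exhibit a $\s$-decreasing sequence $\langle D_n \mid n<\omega\rangle$ of closed subsets of $\mathbb X$, each of cardinality $\kappa$, whose intersection is empty. The natural candidate is
\[
D_n := \kappa \setminus W_n,
\]
where $W_n = \bigcup_{i\le n} S_i$ is the increasing sequence already defined in the construction of the space. I would then verify the three required properties in turn. First, each $D_n$ is $\tau$-closed, since by Lemma~\ref{lemma35}(4) each $W_n$ is $\tau$-open. Second, since $\langle W_n\mid n<\omega\rangle$ is $\s$-increasing, $\langle D_n\mid n<\omega\rangle$ is $\s$-decreasing, and since $\{S_n\mid n<\omega\}$ partitions $\kappa$ we have $\bigcup_{n<\omega} W_n = \kappa$, hence $\bigcap_{n<\omega}D_n = \emptyset$. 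Third, for every $n<\omega$ the set $D_n$ contains $S_{n+2}\in\mathcal S$, which is stationary in $\kappa$, so $|D_n|=\kappa$.

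With all the hypotheses of Lemma~\ref{Lemma - Dowker general argument} verified, the conclusion that $\mathbb X$ is Dowker follows. There is no real obstacle here: the entire work of the section has already been carried out in Lemmas~\ref{lemma61}--\ref{normal clubsuit dowker}, and this corollary is simply the collation step that feeds those outputs into Dowker's characterization via Lemma~\ref{Lemma - Dowker general argument}.
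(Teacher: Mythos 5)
Your proposal is correct and follows exactly the paper's own argument: the paper likewise takes $D_n:=\kappa\setminus W_n$ and feeds Lemmas~\ref{normal clubsuit dowker} and \ref{another lambda^+ dowker - no two disjoint big closed sets clubsuit dowker} into Lemma~\ref{Lemma - Dowker general argument}. Your verification of the three properties of $\langle D_n\mid n<\omega\rangle$ is slightly more explicit than the paper's (which states them without proof), but the route is identical.
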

\begin{proof} 	
	For every $n<\omega$, set $D_n:=\kappa\setminus W_n$. 
	Then $\langle D_n \mid n<\omega \rangle$ is a  $\s$-decreasing sequence of closed sets of cardinality $\kappa$ such that $\bigcap_{n<\omega} D_n = \emptyset$.
	So, by Lemmas \ref{normal clubsuit dowker}, \ref{another lambda^+ dowker - no two disjoint big closed sets clubsuit dowker} 
	and \ref{Lemma - Dowker general argument}, $\mathbb X$ is Dowker.
\end{proof}

\begin{lemma}\label{lemma62} Each of the following two imply that $\vec L$ is almost $\mathcal P_0$:
\begin{enumerate}
\item $\ma({<}\kappa)$ holds and $\otp(L_\alpha)=\omega$ for all $\alpha\in\bar S$;
\item $\bar S$ is a non-reflecting stationary set.
\end{enumerate}
\end{lemma}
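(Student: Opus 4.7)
The plan handles the two parts separately, the second being the technical core.

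For (2): By Lemma~\ref{lemma61}(3), $\vec L\restriction\bar S$ is an $\ad$-ladder system, and by hypothesis $\bar S$ is non-reflecting. Applying Proposition~\ref{Lemma - non-reflecting stat, diagonlization of initial seg} to this data, I obtain for every $\xi<\kappa$ a regressive function $f_\xi:\bar S\cap\xi\to\xi$ such that the family $\langle L_\alpha\setminus f_\xi(\alpha)\mid\alpha\in\bar S\cap\xi\rangle$ is pairwise disjoint. Given $c:\bar S\cap\xi\to\omega$, define $c^*:\xi\to\omega$ by setting $c^*(\gamma):=c(\alpha)$ whenever $\gamma$ lies in the (unique) $L_\alpha\setminus f_\xi(\alpha)$ for some $\alpha\in\bar S\cap\xi$, and $c^*(\gamma):=0$ otherwise. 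Since $L_\alpha\setminus f_\xi(\alpha)$ is a tail of $L_\alpha$, $c^*\restriction L_\alpha$ is eventually constant with value $c(\alpha)$, as required.

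For (1): Under $\ma({<}\kappa)$ and $\otp(L_\alpha)=\omega$ for every $\alpha\in\bar S$, I use a ccc forcing $\mathbb P$ to produce $c^*$. Fix $\xi<\kappa$ and $c:\bar S\cap\xi\to\omega$. A condition in $\mathbb P$ is a pair $(s,F)$, where $s:\xi\to\omega$ is a finite partial function, $F\in[\bar S\cap\xi]^{<\omega}$, and $s(\beta)=c(\alpha)$ for every $\alpha\in F$ and $\beta\in\dom(s)\cap L_\alpha$; the order is $(s',F')\le(s,F)$ iff $s\s s'$ and $F\s F'$. Density of $D_\gamma:=\{(s,F):\gamma\in\dom s\}$ (for $\gamma<\xi$) and $E_\alpha:=\{(s,F):\alpha\in F\}$ (for $\alpha\in\bar S\cap\xi$) is routine: adding $\gamma$ amounts to choosing a value consistent with the finitely many commitments from $F$ (possible because by the $\ad$ property, the commitments from distinct $\alpha\in F$ at a given $\gamma$ must agree); adding $\alpha$ requires first extending $s$ to cover $\bigcup\{L_\alpha\cap L_{\alpha'}:\alpha'\in F\}$, which is finite by the $\ad$ property combined with $\otp(L_\alpha)=\omega$. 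The total number of dense sets is $<\kappa$, and any generic filter $G$ yields $c^*:=\bigcup\{s:(s,F)\in G\}$ for which, for each $\alpha\in\bar S\cap\xi$, $c^*\restriction L_\alpha$ disagrees with $c(\alpha)$ on at most the finitely many points of $\dom s$ at the stage $\alpha$ was added to $F$, hence is eventually constant with value $c(\alpha)$.

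The main obstacle is showing $\mathbb P$ is ccc. Given an uncountable family of conditions $\langle(s_\eta,F_\eta)\mid\eta<\omega_1\rangle$, apply the $\Delta$-system lemma to both $\{\dom s_\eta\}$ and $\{F_\eta\}$ to extract roots $d_0$ and $F_0$, stabilize by pigeonhole $s_\eta\restriction d_0$ and the combinatorial type of each condition (including the interaction pattern between $d_\eta, F_\eta$ and the $L_\alpha$'s), and well-order the surviving $\eta$'s so that, writing $a_\eta:=\dom s_\eta\setminus d_0$ and $b_\eta:=F_\eta\setminus F_0$, $\max(a_\eta\cup b_\eta)<\min(a_{\eta'}\cup b_{\eta'})$ for $\eta<\eta'$. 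To see $(s_\eta\cup s_{\eta'},F_\eta\cup F_{\eta'})$ is a valid condition, one checks cross-compatibility: root commitments $\alpha\in F_0$ are respected because $\alpha\in F_0\s F_{\eta'}$, so validity of $(s_{\eta'},F_{\eta'})$ applies; for $\alpha\in b_\eta$ and $\beta\in a_{\eta'}$, the ordering gives $\beta\ge\min a_{\eta'}>\alpha$, contradicting $\beta\in L_\alpha\s\alpha$. The symmetric case $\alpha\in b_{\eta'}$, $\beta\in a_\eta$ is the crux: here $\otp(L_\alpha)=\omega$ renders $L_\alpha$ countable, and the $\ad$ property of $\vec L\restriction\bar S$ makes each $L_\alpha\cap L_{\alpha'}$ finite; a further pigeonhole refinement of the $\Delta$-system absorbs the finitely many relevant intersections into $d_0$, which combined with the validity of $(s_\eta,F_\eta)$ forces $s_\eta(\beta)=c(\alpha)$. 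This last step is where the hypothesis $\otp(L_\alpha)=\omega$ is essential; without it, the thinning and pigeonhole compressions would not fit below $\omega_1$.
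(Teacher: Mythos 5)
Your proof of Clause~(2) is correct and is essentially the paper's own argument: restrict $\vec L$ to $\bar S$, note via Lemma~\ref{lemma61}(3) that this is an $\ad$-ladder system on a non-reflecting stationary set, invoke Proposition~\ref{Lemma - non-reflecting stat, diagonlization of initial seg}, and read off $c^*$ from the resulting pairwise disjoint tails. Nothing to add there.

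Clause~(1) is where the proposal diverges: the paper disposes of it in one line by citing \cite[{\S}II, Theorem~4.3]{Proper_and_improper_forcing_Shelah_Book}, whereas you attempt a self-contained forcing argument. The general strategy (a ccc uniformization poset plus $\ma({<}\kappa)$) is the right one, but as written the argument has genuine gaps. First, with your definition of a condition --- requiring $s(\beta)=c(\alpha)$ for \emph{every} $\beta\in\dom(s)\cap L_\alpha$ with $\alpha\in F$ --- the set $E_\alpha$ is not dense: a condition whose $s$ already takes a value $\neq c(\alpha)$ at some point of $L_\alpha$ has no extension with $\alpha\in F$, and extending $\dom(s)$ cannot repair this since $s\s s'$. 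Second, your justification for the density of $D_\gamma$ is false: the $\ad$ property gives that $L_\alpha\cap L_{\alpha'}$ is bounded below $\min\{\alpha,\alpha'\}$ (hence finite when the order types are $\omega$), not that it is empty, so two distinct $\alpha,\alpha'\in F$ with $c(\alpha)\neq c(\alpha')$ can impose conflicting commitments at a common point $\gamma\in L_\alpha\cap L_{\alpha'}$. Third, the ``crux'' of your ccc argument is a non sequitur: for $\alpha\in F_{\eta'}\setminus F_\eta$ and $\beta\in\dom(s_\eta)\cap L_\alpha$, the validity of $(s_\eta,F_\eta)$ constrains $s_\eta$ only on the ladders indexed by $F_\eta$, so it cannot ``force $s_\eta(\beta)=c(\alpha)$''; moreover, the intersections $L_\alpha\cap\dom(s_\eta)$ range over uncountably many pairs $(\eta,\eta')$ and cannot be absorbed into a fixed $\Delta$-system root by a pigeonhole applied in advance. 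The standard repair is to let each condition carry, for every $\alpha\in F$, an ordinal $\epsilon_\alpha<\alpha$ such that the promise applies only to $L_\alpha\setminus\epsilon_\alpha$ and such that these tails are pairwise disjoint within the condition (possible precisely because the pairwise intersections are finite); this restores density of both families of dense sets, but the ccc verification still requires more care than your sketch provides. Given the paper's reliance on Shelah's theorem here, the cleanest fix is simply to cite it.
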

\begin{proof}
(1) This follows immediately from \cite[{\S}II, Theorem~4.3]{Proper_and_improper_forcing_Shelah_Book}.

(2) By Lemma~\ref{lemma61}\eqref{adofladders} and Proposition~\ref{Lemma - non-reflecting stat, diagonlization of initial seg},
we may fix a sequence $\langle f_\xi\mid \xi<\kappa\rangle$ such that, for every $\xi<\kappa$:
	\begin{itemize}
	\item $f_\xi$ is a regressive function from $\bar S\cap\xi$ to $\xi$;
	\item the sets in $\langle L_\alpha\setminus f_\xi(\alpha) \mid \alpha\in\bar S\cap \xi\rangle$ are pairwise disjoint.
	\end{itemize}
	
	Now, given a nonzero $\xi<\kappa$, let $f^+_\xi:\xi\rightarrow\xi$ denote the function
	such that $f^+_\xi(\alpha)=f_\xi(\alpha)$ for all $\alpha\in\bar S\cap\xi$,
	and $f^+_\xi(\alpha)=0$ for all $\alpha\in\xi\setminus\bar S$. 
	Evidently, for every $\beta<\xi$, $\{ \alpha<\xi\mid \beta\in L_\alpha\setminus f^+_\xi(\alpha)\}$ is a subset of $\bar S$ containing at most one element.
	So, for any function $c:\bar S\cap\xi\rightarrow\omega$, we may define a corresponding function $c^*:\xi\rightarrow\omega$ via:
	$$c^*(\beta):=\begin{cases}
	c(\alpha),&\text{if }\beta\in L_\alpha\setminus f^+_\xi(\alpha);\\
	0,&\text{otherwise}.
	\end{cases}$$
	A moment's reflection makes it clear that $c^*$ is as sought.
\end{proof}

\begin{cor} Suppose that $\mathcal S$ is an infinite partition of some non-reflecting stationary subset of a regular uncountable cardinal $\kappa$.
If $\clubsuit_{\ad}(\mathcal S,1,2)$ holds,
then there exists a ladder-system Dowker space of cardinality $\kappa$.\qed
\end{cor}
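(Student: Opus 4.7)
\medskip

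The plan is to simply assemble the machinery already developed in this section, observing that the hypothesis of non-reflection is exactly what is needed to invoke Clause~(2) of Lemma~\ref{lemma62}. Let $S := \bigcup \mathcal S$, so that by hypothesis $S$ is a non-reflecting stationary subset of $\kappa$, and $\mathcal S$ is an infinite partition of $S$ witnessing $\clubsuit_{\ad}(\mathcal S,1,2)$. First I would appeal to Lemma~\ref{adone} to fix an $\ad$-ladder system $\langle A_\alpha \mid \alpha \in S\rangle$ with the property that for all $B_0,B_1\in[\kappa]^\kappa$ and every $T\in\mathcal S$, the set $\{\alpha\in T\mid \sup(A_\alpha\cap B_0)=\sup(A_\alpha\cap B_1)=\alpha\}$ is stationary. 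Using this, carry out the setup preceding Lemma~\ref{lemma61}: enumerate $\mathcal S$ as $\langle S_{n+1}\mid n<\omega\rangle$, set $S_0:=\kappa\setminus S$ and $W_n:=\bigcup_{i\le n}S_i$, and define $\vec L=\langle L_\alpha\mid\alpha<\kappa\rangle$ by the displayed formula in the setup. Let $\mathbb X=(\kappa,\tau)$ be the corresponding ladder-system space, which has cardinality $\kappa$.

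The bulk of the work has already been done: Lemma~\ref{t1lemma} gives that $\mathbb X$ is $T_1$, Lemma~\ref{another lambda^+ dowker - no two disjoint big closed sets clubsuit dowker} (which uses only Lemma~\ref{lemma61}(4)) gives that no two disjoint closed sets of size $\kappa$ exist, and Corollary~\ref{clubsuit_AD_space_not_countably_paracompact} reduces the Dowker property to the assertion that $\vec L$ is almost $\mathcal P_0$. Thus the only thing left to verify is that Lemma~\ref{lemma62}(2) applies, i.e., that $\bar S:=\{\alpha\in\acc(\kappa)\mid \sup(L_\alpha)=\alpha\}$ is a non-reflecting stationary subset of $\kappa$.

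Stationarity of $\bar S$ is Lemma~\ref{lemma61}(2). For non-reflection, I would argue as follows: by construction $\bar S\s S$, so if $\bar S$ reflected at some $\delta\in\acc(\kappa)$ with $\cf(\delta)>\omega$, then $S\cap\delta\supseteq\bar S\cap\delta$ would be stationary in $\delta$, contradicting the assumed non-reflection of $S$. Hence $\bar S$ is non-reflecting, Lemma~\ref{lemma62}(2) applies, $\vec L$ is almost $\mathcal P_0$, and Corollary~\ref{clubsuit_AD_space_not_countably_paracompact} then delivers that $\mathbb X$ is a Dowker space. Since $\mathbb X$ is a ladder-system space of cardinality $\kappa$ by construction, this completes the proof.

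I do not foresee a genuine obstacle: the whole point of the preceding lemmas is that they have been stated in exactly the generality needed for this corollary, so the only substantive check is the one-line observation that non-reflection is inherited from $S$ to its subset $\bar S$. If anything warrants care, it is confirming that the partition being infinite is indeed what permits the appeal to Lemma~\ref{lemma61}(4) (which in turn is used in the normality/Dowker reduction), but this is already built into the standing setup of the section.
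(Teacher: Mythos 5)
Your proposal is correct and follows exactly the route the paper intends: the corollary is stated with \verb|\qed| because it is precisely the assembly of Lemma~\ref{adone}, the construction of $\vec L$, Lemma~\ref{lemma61}, Corollary~\ref{clubsuit_AD_space_not_countably_paracompact}, and Lemma~\ref{lemma62}(2), with the only substantive observation being that non-reflection passes from $S$ to the stationary subset $\bar S$ — which you verify correctly. (If one wants to be fully pedantic, an infinite partition of size $>\aleph_0$ should first be thinned to a countably infinite subfamily so as to match the section's standing setup, but this is immediate since $\clubsuit_{\ad}$ and non-reflection are preserved under passing to such a subfamily.)
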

\begin{remark} The preceding is the Introduction's Theorem~B.
\end{remark}

\begin{cor} If ${\ma}+  \clubsuit(E^{\mathfrak c}_\omega)$ holds, then there exists a ladder-system Dowker space over $\mathfrak c$.

In particular, if $\ma$ holds and $\mathfrak c$ is the successor of a cardinal of uncountable cofinality,
then there exists a ladder-system Dowker space over $\mathfrak c$.
\end{cor}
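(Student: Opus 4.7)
The plan is to package the hypothesis into a valid input for the preceding corollary (the restatement of Theorem~B) together with the hypothesis branch of Lemma~\ref{lemma62}(1), and then to reduce the ``in particular'' clause to the first part via an absoluteness-style/diamond argument under $\ma$. Note first that $\ma$ implies $\mathfrak c$ is regular (indeed $\mathfrak c=2^\kappa$ for all $\aleph_0\le\kappa<\mathfrak c$), so $\mathfrak c$ is a regular uncountable cardinal.

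Starting from $\clubsuit(E^{\mathfrak c}_\omega)$, I would apply Lemma~\ref{lemma216} with $\kappa:=\mathfrak c$, $\chi:=\omega$ and $S:=E^{\mathfrak c}_{\omega}$, to obtain a partition $\mathcal S_0$ of $E^{\mathfrak c}_{\omega}$ into $\mathfrak c$ many stationary sets, witnessed by an $\ad$-multi-ladder system $\langle \mathcal A_\alpha\mid\alpha\in E^{\mathfrak c}_\omega\rangle$ with $\otp(A)=\omega$ for every $A\in\bigcup_{\alpha}\mathcal A_\alpha$. Next I would extract a countably infinite sub-partition: pick a countable subfamily $\{S_n\mid 0<n<\omega\}\s\mathcal S_0$, and lump the remaining sets together into one block $S_\infty:=E^{\mathfrak c}_\omega\setminus\bigcup_{0<n<\omega}S_n$. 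The new family $\mathcal S:=\{S_\infty\}\cup\{S_n\mid 0<n<\omega\}$ is an infinite partition of $E^{\mathfrak c}_\omega$. Since $\clubsuit_{\ad}(\mathcal S_0,{<}\omega)$ is preserved when passing to any coarser partition (the guessing set for $S_\infty$ contains the guessing set for any fixed $S\in\mathcal S_0$ with $S\s S_\infty$, which is stationary), $\clubsuit_{\ad}(\mathcal S,{<}\omega)$ holds. Selecting one $A_\alpha\in\mathcal A_\alpha$ for each $\alpha$ then yields an $\ad$-ladder system witnessing $\clubsuit_{\ad}(\mathcal S,1,2)$ in the sense of Lemma~\ref{adone}, and by construction $\otp(A_\alpha)=\omega$.

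Now I would run the section's ladder-system space construction with this data. The resulting sequence $\vec L$ satisfies $L_\alpha\s A_\alpha$, so $\otp(L_\alpha)\le\omega$, and for each $\alpha\in\bar S$ we have $\sup(L_\alpha)=\alpha$ with $\cf(\alpha)=\omega$, forcing $\otp(L_\alpha)=\omega$. This is precisely clause (1) of Lemma~\ref{lemma62} (using that $\ma$ implies $\ma({<}\mathfrak c)$), whence $\vec L$ is almost $\mathcal P_0$. Corollary~\ref{clubsuit_AD_space_not_countably_paracompact} then gives that the corresponding $\mathbb X$ is a ladder-system Dowker space on $\mathfrak c$.

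For the ``in particular'' assertion, suppose $\mathfrak c=\lambda^+$ with $\cf(\lambda)>\omega$. Under $\ma$ we have $2^\lambda=\mathfrak c=\lambda^+$, and by Shelah's theorem \cite{Sh:922} (in the strengthened form cited in the footnote of Fact~\ref{fact216}, which gives $\diamondsuit(S)$ for every stationary $S\s\lambda^+$ that is stationary outside $E^{\lambda^+}_{\cf(\lambda)}$), we deduce $\diamondsuit(E^{\lambda^+}_\omega)$, and in particular $\clubsuit(E^{\mathfrak c}_\omega)$. The first part of the corollary then applies. The main subtlety, which I expect to require the most care, is the bookkeeping in step two: the section's construction is written for a \emph{countable} partition, so one must verify that coarsening $\mathcal S_0$ and selecting a single ladder from each $\mathcal A_\alpha$ preserves the $\clubsuit_{\ad}(\mathcal S,1,2)$ property while keeping $\otp(L_\alpha)=\omega$, so that Lemma~\ref{lemma62}(1) is genuinely applicable.
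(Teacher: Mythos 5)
Your proposal is correct and follows essentially the same route as the paper: the first part is Lemma~\ref{lemma216} combined with Lemma~\ref{lemma62}(1) (the coarsening to a countable partition and the selection of a single ladder per point being exactly the implicit bookkeeping the paper elides), and the second part derives $\diamondsuit(E^{\lambda^+}_\omega)$ from $2^\lambda=\lambda^+$ via Shelah's theorem \cite{Sh:922}, just as in the paper.
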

\begin{proof} The first part follows from Lemmas \ref{lemma216} and \ref{lemma62}(1).
For the second part, note that if $\ma$ holds and $\mathfrak c=\lambda^+$,
then $2^\lambda=\lambda^+$, so  if, moreover, $\lambda$ is a cardinal of uncountable cofinality,
then by the main result of \cite{Sh:922}, $\diamondsuit(E^{\lambda^+}_\omega)$ holds. In particular, in this case, $\clubsuit(E^{\mathfrak c}_\omega)$ holds.
\end{proof}
\begin{remark} In \cite{MR628595}, Weiss proved that if $\ma$ and $\diamondsuit(E^{\mathfrak c}_\omega)$ both hold, then there exists a locally compact, first countable, separable, real compact, Dowker space of size $\mathfrak c$.
\end{remark}

\begin{cor}
	If there exists a $\lambda$-complete $\lambda^+$-Souslin tree,
	then there exists a ladder-system Dowker space over $\lambda^+$.
\end{cor}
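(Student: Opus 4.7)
The plan is to chain Corollary~\ref{Souslin implies almost disjoint clubsuit}(1) with the immediately preceding corollary (the Introduction's Theorem~B). Since a $\lambda$-complete tree is a fortiori $\cf(\lambda)$-complete, the former supplies, for every partition $\mathcal S$ of $E^{\lambda^+}_{\cf(\lambda)}$ into stationary sets, a witness to $\clubsuit_{\ad}(\mathcal S,{<}\cf(\lambda))$.

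I focus first on the principal case where $\lambda$ is regular, so that $\cf(\lambda)=\lambda$. By the remark following Theorem~B, the set $E^{\lambda^+}_{\lambda}$ is itself non-reflecting. I would appeal to Solovay's splitting theorem to fix an infinite partition $\mathcal S$ of $E^{\lambda^+}_\lambda$ into pairwise disjoint stationary sets, and then extract a single-ladder witness from the $\clubsuit_{\ad}(\mathcal S,{<}\lambda)$-sequence $\langle \mathcal A_\alpha \mid \alpha \in \bigcup\mathcal S\rangle$ by selecting, for each $\alpha$, an arbitrary $A_\alpha\in\mathcal A_\alpha$. The almost-disjointness of the multi-ladder system immediately ensures that $\langle A_\alpha\mid\alpha\in\bigcup\mathcal S\rangle$ is an $\ad$-ladder system. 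Moreover, since any pair $\{B_0,B_1\}\s[\lambda^+]^{\lambda^+}$ has size at most $2<\lambda$, the hitting property enjoyed by the multi-ladder sequence at such a pair applies to every member of $\mathcal A_\alpha$, and in particular to the chosen $A_\alpha$. Hence, by Lemma~\ref{adone}, the resulting sequence witnesses $\clubsuit_{\ad}(\mathcal S,1,2)$, and the preceding corollary then produces a ladder-system Dowker space of cardinality $\lambda^+$.

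The singular case $\cf(\lambda)<\lambda$ is not covered verbatim by the chain above, since $E^{\lambda^+}_{\cf(\lambda)}$ may reflect at points of higher cofinality; however, under any of the hypotheses listed in Fact~\ref{fact216} that actually produce a $\lambda$-complete $\lambda^+$-Souslin tree, a non-reflecting stationary subset of $E^{\lambda^+}_{\cf(\lambda)}$ is available, and the same partition-and-shrink recipe applied to such a subset yields the conclusion. The only step with nonzero content is the inheritance of the hitting property under shrinking, which is immediate from the definitions; accordingly, the main---and essentially only---obstacle is locating a suitable non-reflecting set in the singular case.
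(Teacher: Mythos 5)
Your proof is correct and takes essentially the same route as the paper's, which likewise combines Corollary~\ref{Souslin implies almost disjoint clubsuit} with the non-reflection of $E^{\lambda^+}_\lambda$ and Lemma~\ref{lemma62}(2) (the ingredients of Theorem~B); in particular, your passage from $\clubsuit_{\ad}(\mathcal S,{<}\lambda)$ to $\clubsuit_{\ad}(\mathcal S,1,2)$ by selecting a single $A_\alpha\in\mathcal A_\alpha$ is the intended (and valid) reduction via Lemma~\ref{adone}. The hedge about singular $\lambda$ is unnecessary: for singular $\lambda$, a $\lambda$-complete $\lambda^+$-Souslin tree would in fact be $\lambda^+$-complete (every chain of length ${<}\lambda^+$ has a cofinal subchain of regular length ${<}\lambda$) and would therefore admit a $\lambda^+$-chain, so the hypothesis already forces $\lambda$ to be regular---which is how the paper reads the statement (cf.\ the accompanying remark citing Rudin's result ``for $\lambda$ regular'').
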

\begin{proof}
	By Corollary~\ref{Souslin implies almost disjoint clubsuit}, Lemma~\ref{lemma62}(2), and
	the fact that $E^{\lambda^+}_\lambda$ is a non-reflecting stationary subset of $\lambda^+$.
\end{proof}
\begin{remark} In \cite{kappa_Dowker_from_souslin_Rudin}, Rudin constructed a Dowker space of size $\lambda^+$ from a $\lambda^+$-Souslin tree, for $\lambda$ regular.
\end{remark}

\begin{cor}
If there exist a regressive $\kappa$-Souslin tree and a non-reflecting stationary subset of $E^\kappa_\omega$,
then there exists a ladder-system Dowker space over $\kappa$.
\end{cor}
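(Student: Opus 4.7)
The plan is to produce an $\ad$-multi-ladder system on $E^\kappa_\omega$ from the regressive Souslin tree, extract from it an $\ad$-ladder system concentrated on the given non-reflecting stationary set, and then feed this into the ladder-system Dowker space construction of Section~\ref{sectionladdersystemspace}.

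First, fix a non-reflecting stationary subset $T\s E^\kappa_\omega$. By Solovay's theorem, partition $T$ into $\omega$ many pairwise disjoint stationary sets $\langle T_n\mid n<\omega\rangle$, and then extend this to a partition $\mathcal S'$ of $E^\kappa_\omega$ by distributing $E^\kappa_\omega\setminus T$ arbitrarily among the pieces (or by adjoining an extra piece). Apply Corollary~\ref{Cor - regressive Souslin implies almost disjoint clubsuit omega}(3) to the regressive $\kappa$-Souslin tree and the partition $\mathcal S'$ to obtain a witness $\langle \mathcal A_\alpha\mid\alpha\in E^\kappa_\omega\rangle$ to $\clubsuit_{\ad}(\mathcal S',{<}\omega)$.

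Next, set $\mathcal S:=\{T_n\mid n<\omega\}$ and, for each $\alpha\in T$, select a single $A_\alpha\in\mathcal A_\alpha$ (possible since $\mathcal A_\alpha\neq\emptyset$). Since the elements of $\mathcal A_\alpha$ are pairwise disjoint cofinal subsets of $\alpha$, the almost-disjointness of $\langle A_\alpha\mid\alpha\in T\rangle$ is inherited from that of $\langle \mathcal A_\alpha\mid\alpha\in E^\kappa_\omega\rangle$. Moreover, the hitting clause of $\clubsuit_{\ad}(\mathcal S',{<}\omega)$ is uniform over all pairs $(A,B)\in\mathcal A_\alpha\times\mathcal B$, so in particular it holds for the chosen $A_\alpha$; applied to pairs $\mathcal B=\{B_0,B_1\}$ of cofinal subsets of $\kappa$ and to each $T_n\in\mathcal S$, this is exactly the characterization of $\clubsuit_{\ad}(\mathcal S,1,2)$ provided by Lemma~\ref{adone}.

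Finally, feed $\mathcal S$ and $\langle A_\alpha\mid\alpha\in T\rangle$ into the construction of Section~\ref{sectionladdersystemspace} to obtain a ladder-system space $\mathbb X=(\kappa,\tau)$ together with the derived sequence $\vec L$. The set $\bar S$ produced there is a subset of $T$, hence itself non-reflecting, so Lemma~\ref{lemma62}(2) yields that $\vec L$ is almost $\mathcal P_0$, and then Corollary~\ref{clubsuit_AD_space_not_countably_paracompact} delivers that $\mathbb X$ is Dowker. The only point requiring care is the single-pick reduction from $\clubsuit_{\ad}(\mathcal S',{<}\omega)$ to $\clubsuit_{\ad}(\mathcal S,1,2)$; this step rests on the fact that every $\alpha\in T$ has countable cofinality, so a single $A_\alpha\in\mathcal A_\alpha$ already witnesses the required hitting property uniformly, and on the triviality that restricting the partition from $\mathcal S'$ to its subfamily $\mathcal S$ preserves the stationarity of the guessing set for each $T_n\in\mathcal S$.
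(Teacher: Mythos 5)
Your proposal is correct and follows essentially the same route as the paper, which proves this corollary simply by combining Corollary~\ref{Cor - regressive Souslin implies almost disjoint clubsuit omega} (applied to a partition of the non-reflecting stationary set into countably many stationary pieces) with Lemma~\ref{lemma62}(2) and the Section~\ref{sectionladdersystemspace} construction. The only cosmetic differences are your detour through a partition of all of $E^\kappa_\omega$ (unnecessary, since Corollary~\ref{Cor - regressive Souslin implies almost disjoint clubsuit omega} already accepts any collection of pairwise disjoint stationary subsets of $E^\kappa_\omega$) and your explicit single-pick reduction from $\clubsuit_{\ad}(\mathcal S,{<}\omega)$ to $\clubsuit_{\ad}(\mathcal S,1,2)$, which the paper treats as immediate; note that the reduction works because the guessing clause is universally quantified over $A\in\mathcal A_\alpha$, not because the points of $T$ have countable cofinality.
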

\begin{proof}
	By Corollary~\ref{Cor - regressive Souslin implies almost disjoint clubsuit omega} and Lemma~\ref{lemma62}(2).
\end{proof}
\begin{remark} It is well-known (see \cite{Jensen_V=L_Diamond} or \cite{paper22}) that if $\kappa>\aleph_0$, $\square_\kappa$ holds and $2^\kappa=\kappa^+$,
then there exists a regressive $\kappa$-Souslin tree and there exists a non-reflecting stationary subset of $E^{\kappa^+}_\omega$.
In \cite{Good_Dowker_large_cardinals}, 
Good proved that if $\kappa>\aleph_0$, $\square_\kappa$ holds and $2^\kappa=\kappa^+$,
then there exists a Dowker space over $\kappa^+$ which is first countable, locally countable, locally compact, zero-dimensional, and collectionwise normal that is of scattered length $\omega$.
\end{remark}

\section{de Caux type spaces}\label{Collectionwisesection}
\subsection{Collectionwise normality}
In this short subsection, we present a sufficient condition for a certain type of topological space to be collectionwise normal.
This will be used in verifying that the spaces constructed later in this section are indeed collectionwise normal.

\begin{definition} Let $\mathbb X=(X,\tau)$ be a topological space.
\begin{itemize}
\item A sequence $\langle K_i\mid i<\theta\rangle$ of subsets of $ X $ is said to be \emph{discrete} iff for every $ x\in X $, there is an open neighborhood $U$ of $x$ such that $\{ i<\theta\mid U\cap K_i\neq\emptyset\}$ contains at most one element.
	
\item The space $ \mathbb X $ is said to be \emph{collectionwise normal} iff for every discrete sequence $ \langle K_i \mid i<\theta\rangle $ of closed sets, 
there exists a sequence $ \langle U_i\mid i<\theta\rangle$ of pairwise disjoint open sets such that $ K_i\s U_i $ for all $i<\theta$.
\end{itemize}
\end{definition}
\begin{remark} Note that any collectionwise normal space is normal.
\end{remark}

Let $\mathbb X=(X,\tau)$ be some topological space determined by a sequence of weak neighborhoods, $\langle \mathcal N_x \mid x\in X \rangle$.
This means that a subset $ U \s X$ is $\tau$-open iff for any $x\in U $, there is $N\in\mathcal N_x$ with $N\s U$.

\begin{lemma}\label{general normal lemma}
	Suppose that $\theta$ is some nonzero cardinal and that $\langle K_{i} \mid {i}<\theta\rangle$ is a discrete sequence of $\tau$-closed sets,
	 $O$ is a $\tau$-open set covering $\bigcup_{0<i<\theta}K_i$,
	 and there exists a transversal $\langle N_x\mid x\in X\rangle\in\prod_{x\in X}\mathcal N_x$ such that:
	\begin{enumerate}[label=(\alph*)]
		\item for all $x\in O$, $N_{x}\subseteq O$;
		\item\label{disjoint tails general} for all $x\in O$ and $x'\in X\setminus\{x\}$, $N_{x}\cap N_{x'}\s\{x,x'\}$;
		\item\label{tails outside H,K general} for all $x\in X$ and ${i}<\theta$, if $N_x\cap K_{i}\neq \emptyset$, then $x\in K_{i}$.
	\end{enumerate}
	Then there exists a sequence $\langle U_{i} \mid {i}<\theta \rangle$ of pairwise disjoint $\tau$-open sets such that $K_0\subseteq U_0$,
	and $K_i\subseteq U_i\subseteq O$ for all nonzero $i<\theta$.
\end{lemma}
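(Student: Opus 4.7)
The plan is to set, for each nonzero $i<\theta$, $U_i := \bigcup_{x \in K_i} N_x$, and to define $U_0 := X \setminus \bigcup_{0<j<\theta} U_j$. The easy containments come first: $K_i \subseteq U_i$ because $x \in N_x$ for every $x \in K_i$, and $U_i \subseteq O$ by hypothesis~(a). To obtain $K_0 \subseteq U_0$ I would verify $K_0 \cap U_j = \emptyset$ for each $j>0$; any point $y \in K_0 \cap N_x$ with $x \in K_j$ would force $N_x \cap K_0 \neq \emptyset$, hence by~(c) $x \in K_0$, violating the pairwise disjointness of the $K_\ell$'s, which is a consequence of discreteness.

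Pairwise disjointness among the $U_i$'s comes next. For distinct nonzero $i,j$, any $y \in U_i \cap U_j$ would lie in $N_x \cap N_{x'}$ for some $x \in K_i$, $x' \in K_j$ with $x \neq x'$; since $x \in O$, condition~(b) forces $y \in \{x,x'\}$, and each subcase yields a contradiction via~(c) together with discreteness. Disjointness of $U_0$ from the others is immediate from its definition.

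The main obstacle will be verifying $\tau$-openness of each $U_i$ for $i>0$ (and consequently of $U_0$, via complementation). For $y \in U_i$ with $y \in N_x$ and $x \in K_i$, the case $y=x$ is trivial since $N_y = N_x \subseteq U_i$. For $y \neq x$, one first notes using~(c) that $y \notin K_\ell$ for any $\ell$, so $y \in O\setminus \bigcup_\ell K_\ell$ and by~(a) $N_y \subseteq O$. The key claim is that $N_y \subseteq U_i$: for any $z \in N_y \setminus \{y\}$, condition~(b) applied with $x \in O$ gives $N_x \cap N_y \subseteq \{x,y\}$, so either $z \notin N_x$, or $z = x \in K_i \subseteq U_i$. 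The crux is then to argue, via a combined use of~(b) and~(c), that the remaining points of $N_y$ still land inside $U_i$; failing a direct verification, I would replace $U_i$ by the $\tau$-open kernel obtained by iteratively closing $\bigcup_{x\in K_i} N_x$ under the operation $y \mapsto N_y$, using~(a) to keep each stage inside $O$ and~(c) to keep each stage disjoint from every $K_j$ with $j \neq i$, which combined with~(b) prevents collision with any $U_j$. I expect the combinatorial bookkeeping of this closure argument to be the main technical hurdle.
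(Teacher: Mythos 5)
Your fallback construction for the nonzero indices --- iteratively closing under $y\mapsto N_y$ --- is exactly what the paper does ($U_i^0:=K_i$, $U_i^{n+1}:=\bigcup\{N_x\mid x\in U_i^n\}$, $U_i:=\bigcup_n U_i^n$), but two points in your plan are genuine problems. First, the definition $U_0:=X\setminus\bigcup_{0<j<\theta}U_j$ cannot work as stated: the complement of a union of open sets is closed, not open, and nothing in the hypotheses makes $\bigcup_{0<j<\theta}U_j$ clopen. Indeed, a point $x\notin\bigcup_{j>0}U_j$ may have every member of $\mathcal N_x$ meeting some $U_j$ at points added at stage $\ge 1$, where clause~(c) gives no leverage. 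The paper instead builds $U_0$ by the very same recursion starting from $K_0$; note that the conclusion only demands $U_i\subseteq O$ for \emph{nonzero} $i$, so $U_0$ is allowed to escape $O$, and clause~(b) remains usable in the disjointness argument because for any two distinct indices at least one is nonzero, whence at least one of the two relevant points lies in $O$.

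Second, the pairwise disjointness of the iterated closures is the actual content of the lemma and does not follow from your one-step argument by ``bookkeeping.'' Once $y\in U_i^n\cap U_{i'}^{n'}$ with $n,n'\ge1$, the witnesses $x,x'$ with $y\in N_x\cap N_{x'}$ lie in $U_i^{n-1}$ and $U_{i'}^{n'-1}$ rather than in $K_i$ and $K_{i'}$, so clause~(c) no longer applies to them directly; moreover the conclusion $y\in\{x,x'\}$ of clause~(b) is not an outright contradiction but must be fed back into an induction. The paper handles this by choosing $n$ minimal with $U_i^n\cap U_{i'}\neq\emptyset$ and then $n'$ minimal with $U_i^n\cap U_{i'}^{n'}\neq\emptyset$, showing via~(c) that $\min\{n,n'\}>0$, and then ruling out four cases, each by contradicting one of the two minimality choices; clause~(b) enters only in the case $x\neq x'$, where one of $x,x'$ is known to lie in $O$ because one of $U_i,U_{i'}$ is contained in $O$. (A small additional slip: from $y\in N_x\setminus\{x\}$ with $x\in K_i$ you cannot conclude $y\notin K_\ell$ for all $\ell$; clause~(c) only excludes $\ell\neq i$, though the case $y\in K_i$ is harmless.) Until you supply the minimality argument and repair the definition of $U_0$, the proof is incomplete.
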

\begin{proof}
	By recursion on $n<\omega$, we construct a matrix $\langle U_i^n \mid {i}<\theta,~n<\omega \rangle $, as follows:
	\begin{itemize}
		\item[$\br$] For each ${i}<\theta$, set $U^0_{i}:=K_{i}$.
		\item[$\br$] For every $n<\omega$ such that $\langle U_i^n \mid {i}<\theta\rangle$ has already been defined, set
			$U^{n+1}_{{i}}:= \bigcup\{N_x \mid x  \in U_i^n\}$ for each $i<\theta$.
	\end{itemize}		
	
	Evidently, $U_{i}:=\bigcup_{n<\omega}U_i^n$ is an open set covering $K_{i}$.
	
	\begin{claim}\label{U is bounded by xi general} Let $i$ with $0<i<\theta$. Then $U_{i}\subseteq O$.
	\end{claim}
	\begin{proof} We have $U_{i,0}=K_{i}\s O$.
		In addition, for every $n<\omega$ such that $U_i^n\s O$,
		Clause~(a) implies that $N_x\subseteq O$ for every $x \in U_i^n$,
		and hence $U_{i}^{n+1}\s O$.
	\end{proof}
	\begin{claim}
		The sets in the sequence $\langle U_{i} \mid {i}<\theta \rangle$ are pairwise disjoint.
	\end{claim}	 
	\begin{proof} Suppose not. Fix ${i}\neq{i}'$ in $\theta$ such that $U_{{i}}\cap U_{i'}\neq \emptyset $.
		Let $n:=\min \{ k<\omega \mid U_{{i}}^k\cap U_{{i}'} \neq \emptyset \}$,
		and then let $n':=\min\{ k<\omega \mid U_i^n\cap U_{{i}'}^k\neq \emptyset \}$.
		
		\begin{subclaim} $\min\{n,n'\}>0$.
		\end{subclaim}
		\begin{proof}
			First, as $U_{{i}}^0=K_{i}$ is disjoint from $U_{{i}'}^0=K_{{i}'}$, we infer that $(n,n')\neq(0,0)$.
			
			$\br$ If $n=0$ and $n'>0$, then let $y\in K_{i}\cap U_{{i}'}^{n'}$.
			It follows that there exists some $x \in U_{{i}'}^{n'-1}$ such that $y\in K_{i}\cap N_x $. 
			By Clause~\ref{tails outside H,K general}, then, $x \in K_{i}$.
			So $x \in U_{{i}}^0\cap U_{{i}'}^{n'-1}$, contradicting the minimality of $n'$.
			
			$\br$ If $n>0$ and $n'=0$, then let $y\in U_i^n\cap K_{{i}'}$.
			It follows that there exists some $x \in U_{{i}}^{n-1}$ such that $N_x\cap K_{{i}'}\neq\emptyset$. 
			By Clause~\ref{tails outside H,K general}, then, $x \in K_{{i}'}$.
			So $x \in U_{{i}}^{n-1}\cap U_{{i}'}^0$, contradicting the minimality of $n$.
		\end{proof}

		It follows that, for all $y\in U_i^n$, either $y\in U_{{i}}^{n-1}$ or $y\in N_x $ for some $x \in U_{{i}}^{n-1}\setminus\{y\}$.
		Likewise, for all $y\in U_{i'}^{n'}$, either $y\in U_{i'}^{n'-1}$ or $y\in N_x $ for some $x \in U_{i'}^{n'-1}\setminus\{y\}$.
		Now, by the choice of the pair $(n,n')$, let us fix $y\in U_i^n\cap U_{{i}'}^{n'}$. There are four cases to consider:
			
			\begin{enumerate}
				\item $y\in U_{{i}}^{n-1}\cap U_{{i}'}^{n'-1}$.
				In this case, $U_{{i}}^{n-1}\cap U_{{i}'} \neq \emptyset$, contradicting the minimality of $n$.
				\item $y\in N_{x}\cap U_{{i}'}^{n'-1}$ for some $x \in U_{{i}}^{n-1}\setminus\{y\}$.			
				In this case, $N_{x}\subseteq U_i^n$, so $U_i^n\cap U_{{i}'}^{n'-1} \neq \emptyset$, contradicting the minimality of $n'$.
				\item $y\in U_{{i}}^{n-1}\cap N_{x}$ for some $x\in U_{{i}'}^{n'-1}\setminus\{y\}$.
				In this case, $N_{x}\subseteq U_{{i}'}^{n'}$, so $U_{i}^{n-1}\cap U_{{i}'} \neq \emptyset$, contradicting the minimality of $n$.

				\item There exist $x \in U_{{i}}^{n-1}\setminus\{y\}$ and $x'\in U_{{i}'}^{n'-1}\setminus\{y\}$ such that $y \in N_{x}\cap N_{x'}$.				
				Equivalently, there exist $x \in U_{{i}}^{n-1}$ and $x'\in U_{{i}'}^{n'-1}$ such that $y \in(N_{x}\cap N_{x'})\setminus\{x,x'\}$.				
				In this case, there are two subcases:
				\begin{enumerate}
					\item[$\br$] If $x =x '$, then $x \in U_{i}^{n-1}\cap U_{{i}'}^{n'-1}$, contradicting the minimality of $n$.
					
					\item[$\br$] If $x \neq x '$, then, by Claim~\ref{U is bounded by xi general}, either $x  \in O$ or $x'\in O$. 
This is in contradiction with Clause~\ref{disjoint tails general}.
				\end{enumerate}
			\end{enumerate}

		Altogether, $\{U_{i}\mid {i}<\theta\}$ is a family of pairwise disjoint sets as sought.
	\end{proof}	
	This completes the proof.
\end{proof}

\subsection{$O$-space}\label{higherSspaceSection}
This subsection is dedicated to proving Theorem~C:

\begin{theorem}\label{ospace}
Suppose that $\clubsuit_{\ad}(\{\omega_1\},\omega,1)$ holds. Then there exists a collectionwise normal, 
non-Lindel\"of $O$-space.
\end{theorem}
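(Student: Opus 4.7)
The plan is to construct a de Caux type topology $\tau$ on $X := \omega_1$ using the $\clubsuit_{\ad}(\{\omega_1\},\omega,1)$-sequence $\langle \mathcal A_\alpha \mid \alpha \in \acc(\omega_1)\rangle$ with $\mathcal A_\alpha = \{A^n_\alpha \mid n < \omega\}$. I would declare basic weak neighborhoods at a limit $\alpha$ to be the tails
$N^{k,\epsilon}_\alpha := \{\alpha\} \cup \bigcup_{n \geq k}(A^n_\alpha \setminus \epsilon)$
for $k < \omega$ and $\epsilon < \alpha$, and handle non-limit $\alpha$ by associating them with the ladders of a chosen nearby limit point, in order to ensure that no point is isolated (an isolated point would yield a non-cocountable clopen singleton and immediately spoil the $O$-space property). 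The resulting topology $\tau$ is first countable and Hausdorff: the $\ad$-property across distinct $\alpha$'s guarantees that, for $\alpha \neq \alpha'$, sufficiently deep basic neighborhoods $N_\alpha, N_{\alpha'}$ are disjoint except possibly at $\{\alpha, \alpha'\}$.

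For collectionwise normality, I would invoke Lemma~\ref{general normal lemma}. Given a discrete family $\langle K_i \mid i < \theta\rangle$ of $\tau$-closed sets and an open $O \supseteq \bigcup_{0 < i < \theta} K_i$, I construct a transversal $\langle N_x \mid x \in X\rangle \in \prod_{x \in X} \mathcal N_x$ verifying hypotheses (a)--(c) of that lemma. Hypothesis (b) follows from the $\ad$-property as above; (a) follows by shrinking $N_x \subseteq O$ whenever $x \in O$; and (c) follows from discreteness of $\langle K_i\rangle$, which permits further shrinking of $N_x$ to avoid any $K_i$ not containing $x$. Non-Lindel\"ofness is immediate: the open cover $\{\{\beta \mid \beta < \alpha\} \mid \alpha < \omega_1\}$ by initial segments (each open in $\tau$ since tails $N^{k,\epsilon}_\alpha \subseteq \alpha + 1$) has no countable subcover, as $\omega_1$ has uncountable cofinality.

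The main obstacle is the $O$-space property: every uncountable open $U$ has countable complement $B := \omega_1 \setminus U$. If $B$ is bounded in $\omega_1$ this is trivial, so assume $B$ is cofinal. Applying $\clubsuit_{\ad}(\{\omega_1\},\omega,1)$ with $\mathcal B = \{B\}$ yields a stationary set
$G := \{\alpha \in \omega_1 \mid \forall n < \omega,~\sup(A^n_\alpha \cap B) = \alpha\}.$
For $\alpha \in G$: were $\alpha \in U$, openness of $U$ would produce $k < \omega$ and $\epsilon < \alpha$ with $\bigcup_{n \geq k}(A^n_\alpha \setminus \epsilon) \subseteq U$, contradicting the unboundedness of $A^n_\alpha \cap B$ in $\alpha$ for each $n \geq k$. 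Hence $G \subseteq B$, so $B$ is stationary.

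The hard part will be bridging the gap from ``$B$ stationary'' to ``$B$ countable''. I expect this to require upgrading the definition of $\tau$ from a direct specification to a transfinite recursion in the style of Ostaszewski's original $\clubsuit$-based $O$-space construction: at each limit stage $\alpha$, exploit the $\omega$-many ladders in $\mathcal A_\alpha$ to simultaneously interleave with countably many pending candidate obstructions (suitably enumerated by a coding of previously defined open sets), so that in the end no $\tau$-closed cofinal subset of $\omega_1$ can have uncountable complement. The almost-disjointness of the multi-ladder system, combined with the fact that we have one ladder per candidate obstruction, should make this recursive interleaving possible without destroying Hausdorffness or collectionwise normality.
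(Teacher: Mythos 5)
There is a genuine gap, and you have located it yourself: your construction only yields that the complement of an uncountable open set is stationary, and the recursion you propose to bridge the remaining distance is not carried out. More importantly, the fix the paper uses is not a recursion at all, and your placement of the ladders is what creates the problem in the first place. You attach all $\omega$-many ladders of $\mathcal A_\alpha$ to the single limit point $\alpha$, so guessing an uncountable $B$ at $\alpha$ only tells you that $\alpha$ itself lies in $\cl(B)$; this is why you get no more than stationarity. The paper instead distributes the ladders of $\mathcal A_\alpha$ over the whole block $[\alpha,\alpha+\omega)$: it fixes an injective enumeration $\{A_{\alpha+i}\mid i<\omega\}$ of $\mathcal A_\alpha$ and gives the point $\beta=\alpha+i$ the single weak neighborhood filter $\{(A_\beta\setminus\epsilon)\cup\{\beta\}\mid\epsilon<\alpha\}$, so that \emph{every} point of the block has its neighborhoods converging to the limit $\alpha$ below it. Two consequences follow. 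First, $\cl([\alpha,\alpha+\omega))=\omega_1\setminus\alpha$ for every limit $\alpha$ (an easy induction: each point of the next block has its ladder cofinal in the sup of the current block). Second, when the guessing clause of Definition~\ref{clubsuit AD definition} produces an $\alpha$ with $\sup(A_{\alpha+i}\cap B)=\alpha$ for all $i<\omega$, the \emph{entire} block $[\alpha,\alpha+\omega)$ lies in $\cl(B\cap\alpha)$, and hence so does the whole tail $\omega_1\setminus\alpha$. This single application of the guessing property already shows that every uncountable closed set is co-countable (Corollary~\ref{closed sets are countable or co countable}), with no diagonal or recursive argument needed; it also gives hereditary separability for free.

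I would also caution against the proposed Ostaszewski-style repair. That recursion relies on guessing of the form ``$A_\alpha\subseteq B$'' to realize points as limits of sequences drawn from prescribed sets, whereas $\clubsuit_{\ad}$ only provides ``$\sup(A\cap B)=\alpha$''; it is precisely to compensate for this weaker form of guessing that the block-distribution trick and the closure-propagation lemma are introduced. Your remaining items (Hausdorffness and collectionwise normality via Lemma~\ref{general normal lemma}, using Proposition~\ref{Proposition - disjointify multi-ladder system} to disjointify the relevant countable subfamily of ladders below a bound $\xi$, and non-Lindel\"ofness from the open cover by initial segments) are in line with the paper, but they do not rescue the central $O$-space claim as your topology stands.
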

\begin{remark}
Note that unlike Dahroug's construction that defines a topology over the $\aleph_1$-Souslin tree,
here the topology will be defined over $\omega_1$. Thus, when taken together with Corollary~\ref{w1souslin}, this appears to yield the first ``linear'' construction of an $O$-space from an $\aleph_1$-Souslin tree.
\end{remark}
\begin{remark} The arguments of this subsection immediately generalize to yield a collectionwise normal higher $O$-space from $\clubsuit_{\ad}(\{E^{\lambda^+}_\lambda\},\lambda,1)$.
The focus on the case $\lambda=\omega$ is just for simplicity.
\end{remark}
Let $\langle \mathcal A_{\alpha}\mid\alpha<\omega_1\rangle$ be a $\clubsuit_{\ad}(\{\omega_1\},\omega,1)$-sequence.
For each $\alpha\in\acc(\omega_1)$, fix an injective enumeration $\{ A_{\alpha+i}\mid i<\omega\}$ of the elements of $\mathcal A_\alpha$.
For every infinite $\xi<\omega_1$, as $\mathcal B_\xi:=\{ A_\beta\mid \omega\le\beta<\xi+\omega\}$ is a countable subset of $\bigcup_{\alpha\in E^{\omega_1}_\omega}\mathcal A_\alpha$,
we may appeal to Proposition~\ref{Proposition - disjointify multi-ladder system} to fix a function $f_\xi:\mathcal B_\xi\rightarrow\omega_1$ such that:
\begin{enumerate}
	\item for every $B\in\mathcal B_\xi$, $f_\xi(B)\in B$;
	\item the sets in $\langle B\setminus f_\xi(B) \mid B\in\mathcal B_\xi\rangle$ are pairwise disjoint.
\end{enumerate}		

\begin{definition} For every $\beta<\omega_1$, let $\alpha_\beta:=\min\{\alpha\le\beta\mid \exists i<\omega(\beta=\alpha+i)\}$.
\end{definition}

We are now ready to define our topological space $\mathbb X=(\omega_1,\tau) $.
For all $\beta<\omega$, let $\mathcal N_\beta:=\{\{\beta\}\}$.
For all infinite $\beta<\omega_1$ and $\epsilon<\alpha_\beta$, denote $N_\beta^\epsilon:=(A_{\beta}\setminus\epsilon)\cup\{\beta\}$,
and then set  $\mathcal N_\beta:=\{ N_\beta^\epsilon\mid \epsilon<\alpha_\beta\}$.
Now, a subset $ U \s \omega_1$ is $\tau$-open iff for any $\beta\in U $, 
there is $N\in\mathcal N_\beta$ with $N\s U$.	
It is easy to check that $\mathbb X $ is a $ T_1 $ topological space
and that, for every $\xi<\omega_1$, $\xi$ is $\tau$-open.

\begin{definition}
	For any set of ordinals $N$, denote $N^-:=N\cap\sup(N)$.
\end{definition}
Note that for all $\beta<\omega_1$ and $N\in\mathcal N_\beta$, $N^-$ is a cofinal subset of $\alpha_\beta$.

\begin{remark} The topology of the space from the previous section 
is such that a set $U$ is open iff, for every $\beta\in U$, $L_\beta\s^* U$,
and the topology of the space here is such that a set $U$ is open iff, for every $\beta\in U$, $A_\beta\s^* U$.
The approach seems identical, but there is a subtle difference:
in the previous section, for every ordinal $\beta$, we had $\sup(L_\beta)\in\{0,\beta\}$,
whereas here, for every ordinal $\beta$, we have $\sup(A_\beta)=\alpha_\beta\in\beta+1$.
\end{remark}

\begin{lemma}
For every $\alpha\in\acc(\omega_1)$, $\cl([\alpha,\alpha+\omega))=\omega_1\setminus\alpha$.
\end{lemma}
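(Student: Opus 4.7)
\emph{Plan.} I aim to prove the two inclusions separately. The inclusion $\cl([\alpha,\alpha+\omega))\s\omega_1\setminus\alpha$ is immediate from the fact already noted in the text that every ordinal $\xi<\omega_1$ is $\tau$-open: taking $\xi=\alpha$ yields that $\omega_1\setminus\alpha$ is $\tau$-closed, and it contains $[\alpha,\alpha+\omega)$.

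For the reverse inclusion $\omega_1\setminus\alpha\s\cl([\alpha,\alpha+\omega))$, I would argue by transfinite induction on $\beta\in\omega_1\setminus\alpha$. The base case $\beta\in[\alpha,\alpha+\omega)$ is trivial. So suppose $\beta\ge\alpha+\omega$ and, inductively, $\beta'\in\cl([\alpha,\alpha+\omega))$ for every $\beta'\in[\alpha,\beta)$. A useful preliminary observation is that $\alpha_\beta\ge\alpha+\omega$: indeed, $\alpha_\beta$ is a limit ordinal with $\beta=\alpha_\beta+n$ for some $n<\omega$, and since $(\alpha,\alpha+\omega)$ contains no limit ordinals, the alternative $\alpha_\beta\le\alpha$ would force $\beta\le\alpha+n<\alpha+\omega$.

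Now let $U$ be an arbitrary $\tau$-open neighborhood of $\beta$, and fix $\epsilon<\alpha_\beta$ with $N_\beta^\epsilon\s U$. As $A_\beta$ is cofinal in $\alpha_\beta>\max\{\epsilon,\alpha\}$, I may pick some $\beta'\in A_\beta\setminus\epsilon$ with $\beta'>\alpha$; then $\beta'<\alpha_\beta\le\beta$, so that $\beta'\in U\cap[\alpha,\beta)$. If $\beta'<\alpha+\omega$, then $\beta'\in U\cap[\alpha,\alpha+\omega)$ and we are done. Otherwise $\beta'\ge\alpha+\omega$, and the induction hypothesis gives $\beta'\in\cl([\alpha,\alpha+\omega))$; since $\beta'\in U$ and $U$ is open, this yields $U\cap[\alpha,\alpha+\omega)\neq\emptyset$. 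No substantial obstacle is anticipated; the only delicate point is the preliminary observation $\alpha_\beta\ge\alpha+\omega$, which is what allows the descent through the basic neighborhoods to propagate all the way down into $[\alpha,\alpha+\omega)$.
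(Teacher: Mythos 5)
Your proof is correct and follows essentially the same route as the paper's: a transfinite induction up $\omega_1\setminus\alpha$ driven by the fact that $A_\beta$ is cofinal in $\alpha_\beta$ and that $\alpha_\beta\ge\alpha+\omega$ once $\beta\ge\alpha+\omega$. The only difference is organizational: the paper inducts block-by-block over limit ordinals $\delta$, splitting into the cases $\delta=\gamma+\omega$ and $\delta$ a limit of limit ordinals, whereas your pointwise strong induction --- picking a single witness $\beta'\in A_\beta\cap U$ above $\max\{\epsilon,\alpha\}$ and using that an open set containing a point of $\cl([\alpha,\alpha+\omega))$ must meet $[\alpha,\alpha+\omega)$ --- handles both cases uniformly.
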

\begin{proof}
	
	Let $A:=\cl([\alpha,\alpha+\omega))$.
	We prove by induction on $ \delta\in\acc(\omega_1)\setminus\alpha $ that $ [\delta,\delta+\omega)\subseteq A$.
	
	$\br$ For $\delta=\alpha$, trivially $ [\alpha,\alpha+\omega) \subseteq A$.
	
	$\br$ Suppose $\delta\in\acc(\omega_1)\setminus\alpha$ and $[\delta,\delta+\omega)\s A$. 
	Put $\delta':=\delta+\omega$ and we shall prove that $[\delta',\delta'+\omega)\s A$.
	Let $\beta \in  [\delta',\delta'+\omega)$ be arbitrary.
	For each $N\in \mathcal N_\beta$, we have $\sup(N\cap [\delta,\delta+\omega))=\delta+\omega$, and hence $\beta\in\cl([\delta,\delta+\omega))\s A$.
	
	$\br$ Suppose that $\delta\in\acc(\acc(\omega_1)\setminus \alpha)$ and, for every $\gamma\in\acc(\delta)\setminus\alpha$, $[\gamma,\gamma+\omega)\s A$.
	Therefore, $[\alpha,\delta) \subseteq A$. 
	For every $\beta \in  [\delta,\delta+\omega)$ and $N\in \mathcal N_\beta$, $\sup(N\cap [\alpha,\delta))=\delta$,
	and hence $\beta\in\cl([\alpha,\delta))\s A$.
\end{proof}

\begin{cor}\label{closed sets are countable or co countable}
	\begin{enumerate}
		\item $(\omega_1,\tau)$ is hereditary separable;
		\item\label{Clause - S-space closed sets countable or co count.} Every $ \tau $-closed set is either countable or co-countable.
	\end{enumerate}
\end{cor}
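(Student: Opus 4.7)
The plan is to derive both clauses from a single key observation: for any uncountable $B \s \omega_1$, there exists $\alpha \in \acc(\omega_1)$ with $[\alpha, \alpha+\omega) \s \cl(B)$, and hence (by the preceding lemma together with idempotence of closure) $\omega_1 \setminus \alpha \s \cl(B)$. To establish the observation, apply $\clubsuit_{\ad}(\{\omega_1\}, \omega, 1)$ with $\mathcal B := \{B\}$ to obtain a stationary set
\[
G = \{\alpha < \omega_1 \mid \forall A \in \mathcal A_\alpha\,[\sup(A \cap B) = \alpha]\}.
\]
Fix any $\alpha \in G \cap \acc(\omega_1)$. Recalling the enumeration $\mathcal A_\alpha = \{A_{\alpha+i} \mid i < \omega\}$, for every $i < \omega$ the set $A_{\alpha+i} \cap B$ is cofinal in $\alpha_{\alpha+i} = \alpha$; since the family $\{N_{\alpha+i}^\epsilon \mid \epsilon < \alpha\}$ is a neighborhood base at $\alpha + i$, it follows that $\alpha + i \in \cl(B)$, giving $[\alpha, \alpha+\omega) \s \cl(B)$ as desired.

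Clause~(2) is then immediate: if $K$ is $\tau$-closed and uncountable, then $K = \cl(K) \supseteq \omega_1 \setminus \alpha$ for some countable $\alpha$, whence $K$ is co-countable.

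For Clause~(1), countable subspaces are trivially separable, so consider an uncountable $Y \s \omega_1$. Apply the observation with $B := Y$ to produce a suitable $\alpha \in \acc(\omega_1)$, and set
\[
D := (Y \cap \alpha) \cup \bigcup_{i<\omega}(A_{\alpha+i} \cap Y).
\]
Each $A_{\alpha+i} \cap Y$ is countable (being a subset of $\alpha$), so $D$ is a countable subset of $Y$. Because $A_{\alpha+i} \cap D \supseteq A_{\alpha+i} \cap Y$ remains cofinal in $\alpha = \alpha_{\alpha+i}$ for every $i < \omega$, the same neighborhood-base argument shows $\alpha + i \in \cl(D)$, and hence $[\alpha, \alpha+\omega) \s \cl(D)$. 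The preceding lemma then yields $\omega_1 \setminus \alpha \s \cl(D)$. Combined with $Y \cap \alpha \s D \s \cl(D)$, we conclude $Y \s \cl(D)$, so $D$ is a countable dense subset of $Y$.

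No step here is genuinely hard; the proof is a clean combination of the preceding closure lemma with a single-set application of $\clubsuit_{\ad}(\{\omega_1\}, \omega, 1)$, and the only mild subtlety is choosing $\alpha$ inside the club $\acc(\omega_1)$ so that $\mathcal A_\alpha$ is well-defined.
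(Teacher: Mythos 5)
Your proof is correct and follows essentially the same route as the paper's: apply the guessing property to the single set $B$ to find $\alpha\in\acc(\omega_1)$ with $\sup(A_{\alpha+i}\cap B)=\alpha$ for all $i<\omega$, deduce $[\alpha,\alpha+\omega)\s\cl(B\cap\alpha)$ and hence $\omega_1\setminus\alpha\s\cl(B\cap\alpha)$ via the preceding closure lemma, and read off both clauses. (Your set $D$ in Clause~(1) is just $Y\cap\alpha$, since each $A_{\alpha+i}$ is a subset of $\alpha$; this is exactly the dense set the paper uses.)
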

\begin{proof}
	Let $B$ be an arbitrary uncountable subset of $\omega_1$.
	By Clause~(\ref{clubsuit AD, Clause cofinal set}) of Definition~\ref{clubsuit AD definition}, 
	we can find an $ \alpha\in\acc(\omega_1)$ such that $  \sup(A_{\alpha+i}\cap B)=\alpha $
	for all $ i<\omega$.
	Let $D:=B\cap\alpha$.
	If follows that, for every $\beta\in[\alpha,\alpha+\omega)$ and every $N\in\mathcal N_\beta$, 
	$\sup(N\cap D)=\alpha$. So, $ [\alpha,\alpha+\omega) \subseteq \cl(D)$
	and hence $\omega_1\setminus\alpha=\cl([\alpha,\alpha+\omega))\subseteq \cl(D)$.
	
	\begin{enumerate}
		\item As $B\setminus(\omega_1\setminus\alpha)=D$, it follows that $\cl(B)=\cl(D)$,
		so that $D$ is a countable dense subset of the subspace $B$.
		
		\item If $B$ is moreover closed, then $(\omega_1\setminus\alpha)\s B$, so that $B$ is co-countable.	\qedhere
	\end{enumerate}
\end{proof}

\begin{lemma} $\mathbb X$ is Hausdorff and collectionwise normal.
\end{lemma}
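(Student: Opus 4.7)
My plan is to deduce Hausdorffness from collectionwise normality: since $\mathbb{X}$ is already known to be $T_1$, any collectionwise normal $T_1$ space is Hausdorff. The content of the proof is thus to verify collectionwise normality, and I will do so by applying Lemma~\ref{general normal lemma} to an arbitrary discrete sequence $\langle K_i \mid i<\theta\rangle$ of nonempty $\tau$-closed sets.

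First I trim the problem. Picking one representative from each $K_i$ yields a discrete subspace; by the hereditary separability of $\mathbb{X}$ (Corollary~\ref{closed sets are countable or co countable}(1)) such a subspace must be countable, so $\theta \le \omega$. By Corollary~\ref{closed sets are countable or co countable}(2) every $K_i$ is either countable or co-countable, and pairwise disjointness forces at most one of them to be co-countable; after relabeling I may assume that $K_0$ is the unique (possibly) co-countable member. Then $C := \bigcup_{0<i<\theta} K_i$ is countable, so I fix $\xi_0 \in \acc(\omega_1)$ with $C \subseteq \xi_0$ and put $O := \xi_0 \setminus K_0$, which is $\tau$-open and covers $\bigcup_{0<i<\theta} K_i$.

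I now construct a transversal $\langle N_x \mid x<\omega_1\rangle$. For $x<\omega$ take $N_x := \{x\}$. For $\omega \le x < \xi_0 + \omega$, note that $A_x \in \mathcal{B}_{\xi_0}$, and choose $\epsilon_x \ge f_{\xi_0}(A_x)$ large enough that $N_x^{\epsilon_x} \cap \bigcup\{K_j : j<\theta,\, x \notin K_j\} = \emptyset$; this is possible because the displayed union is closed (union of a discrete subcollection of closed sets) and by discreteness $x$ lies in its open complement. For $x \ge \xi_0 + \omega$ we have $\alpha_x > \xi_0$, so $A_x$ is cofinal in a point strictly above $\xi_0$; choose $\epsilon_x \ge \xi_0$ large enough to satisfy the same avoidance condition, and set $N_x := N_x^{\epsilon_x}$.

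Finally I verify the three clauses of Lemma~\ref{general normal lemma}. Clause~(c) follows immediately from the choice of $\epsilon_x$. For Clause~(a), any $x \in O$ satisfies $N_x \subseteq \alpha_x \cup \{x\} \subseteq \xi_0$ and $N_x \cap K_0 = \emptyset$, so $N_x \subseteq O$. Clause~(b) is the main content: for $x \in O$ and $x' \ne x$, if $x'<\omega$ then $N_{x'} = \{x'\}$ and the inclusion is trivial; if $\omega \le x' < \xi_0 + \omega$, then $A_x, A_{x'} \in \mathcal{B}_{\xi_0}$ are distinct, and the disjointification furnished by $f_{\xi_0}$ makes the tails of $N_x$ and $N_{x'}$ disjoint, whence $N_x \cap N_{x'} \subseteq \{x,x'\}$; finally, if $x' \ge \xi_0 + \omega$, then $N_{x'} \subseteq [\xi_0, x']$ while $N_x \subseteq \xi_0$, giving $N_x \cap N_{x'} = \emptyset$. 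The main obstacle is coordinating the three requirements on $\epsilon_x$---compatibility with $f_{\xi_0}$, containment inside the correct initial segment, and avoidance of the relevant closed union---which is precisely where the choice of $\xi_0 \in \acc(\omega_1)$ and the use of $\mathcal{B}_{\xi_0}$ pay off. Invoking Lemma~\ref{general normal lemma} then produces the desired pairwise disjoint open sets separating the $K_i$.
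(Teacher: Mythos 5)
Your proof is correct and follows essentially the same route as the paper's: bound $\theta$ by $\omega$, use the countable/co-countable dichotomy to isolate at most one large $K_0$, bound the remaining $K_i$ below some $\xi\in\acc(\omega_1)$, build a transversal of almost-disjoint neighborhoods via the disjointifying function $f_\xi$ on $\mathcal B_\xi$, and invoke Lemma~\ref{general normal lemma}. The only (harmless) variations are that you derive $\theta\le\omega$ from hereditary separability rather than directly from the guessing clause, and you take $O:=\xi_0\setminus K_0$ rather than $O:=\xi_0$, absorbing the avoidance of $K_0$ into clause~(a) instead of clause~(c).
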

\begin{proof} As $\mathbb X$ is $T_1$, it suffices to verify that it is collectionwise normal.
	Let $\vec K=\langle K_i \mid i<\theta \rangle$ be an arbitrary discrete sequence of closed sets,
	for some cardinal $\theta$. To avoid trivialities, assume that $\theta\ge 2$.

	\begin{claim} $\theta\le\omega$.
	\end{claim}
	\begin{proof} Otherwise, set $B:=\{\beta_i\mid i<\omega_1\}$ for some transversal $\langle \beta_i\ \mid i<\omega_1\rangle\in\prod_{i<\omega_1}K_i$.
	As $B$ is necessarily uncountable, using Clause~(\ref{clubsuit AD, Clause cofinal set}) of Definition~\ref{clubsuit AD definition}, 
	we may fix $\alpha\in\acc(\omega_1)$ such that $\sup(A_\alpha\cap B)=\alpha$.
	Then any open neighborhood of $\alpha$ meets infinitely many elements of $\vec K$,
	contradicting its discreteness.
	\end{proof}

	By Corollary~\ref{closed sets are countable or co countable}\eqref{Clause - S-space closed sets countable or co count.}, 
	the sequence $\vec K$ contains at most one uncountable set.
	By possibly re-indexing, we may assume that $\{ i<\theta\mid |K_i|=\aleph_1\}\s\{0\}$.
	Now, as $\theta$ is countable,
	we may find a large enough $\xi\in\acc(\omega_1)$ such that $K_i\subseteq \xi$ for all nonzero $i<\theta$.
	
	\begin{claim}\label{ostaszewski tails claim}
		There exists a sequence $\langle N_\beta\mid \beta<\omega_1\rangle\in\prod_{\beta<\omega_1}\mathcal N_\beta$
		such that:
		\begin{enumerate}[label=(\alph*)]
			\item for all $\beta<\xi$, $N_\beta\subseteq\xi$;
			\item for all $\beta<\xi$ and $\beta'\in\omega_1\setminus\{\beta\}$, $N_{\beta}^-\cap N_{\beta'}^-=\emptyset$;
			\item for all $\beta<\omega_1$ and $i<\theta$, if $N_\beta\cap K_i\neq \emptyset$ then $\beta \in K_i$.
		\end{enumerate}
	\end{claim}
	\begin{proof} There are three cases to consider:
		
		$\br$ For every $\beta<\omega$, just set $N_\beta :=\{\beta\}$.
		
		$\br$ For every $\beta\in \omega_1\setminus(\xi+\omega)$, as $\alpha_\beta>\xi$, we may let
		$$\epsilon:=\max(\{\xi,\sup(A_\beta \cap K_0)\}\cap\alpha_\beta),$$
		and then set $N_{\beta} := N_{\beta}^{\epsilon+1}$.

		$\br$ Suppose $\beta$ is not of the above form. In particular, $A_\beta\in\mathcal B_\xi$
		and $f_\xi(A_\beta)<\alpha_\beta$.

		As $\vec K$ is discrete, let us pick an open neighborhood $U$ of $\beta$ which 
		for which $I:=\{ i<\theta\mid U\cap K_i\neq\emptyset\}$ contains at most one element.
		Find $\varepsilon<\alpha_\beta$ such that $A_{\beta}\setminus \varepsilon\subseteq U$,
		and then let
		$$\epsilon:=\begin{cases}
		\max(\{f_\xi(A_\beta),\varepsilon\}),&\text{if }I=\emptyset;\\
		\max(\{f_\xi(A_\beta),\varepsilon,\sup(A_{\beta}\cap K_i)\}\cap\alpha_\beta),&\text{if }I=\{i\}.
		\end{cases}$$
		Finally, set $N_{\beta} := N_{\beta}^{\epsilon+1}$.
		
		We omit the proof that $\langle N_\beta\mid\beta<\omega_1\rangle$ is as sought,
		because a similar verification is given in details in the proof of Claim~\ref{Dowker tails claim} below.
	\end{proof}	
	
	It now follows from Lemma~\ref{general normal lemma} that there exists a sequence $\langle U_i \mid i<\theta \rangle$ 
	of pairwise open sets 	such that $K_i\subseteq U_i$ for all $i<\theta$.
\end{proof}

\subsection{A Dowker space with small hereditary density}\label{Subsection - Dowker Space of size lambda^+ }

In \cite{Rudin_souslin_line_dowker_space}, Rudin constructed a Dowker space of size $\aleph_1$ from a Souslin tree.
In \cite{rudin_S_space_Souslin}, she constructed an $S$-space of size $\aleph_1$ from a Souslin tree. 
In \cite{rudin_separable_dowker_space}, she constructed an $S$-space of size $\aleph_1$ which is Dowker from a Souslin tree,
and in \cite{kappa_Dowker_from_souslin_Rudin}, she constructed a Dowker space of size $\lambda^+$ from a $\lambda^+$-Souslin tree, for $\lambda$ regular.

In \cite{de_Caux_space}, de Caux constructed a Dowker space of size $\aleph_1$ assuming $\clubsuit(\omega_1)$. Here we roughly follow de Caux's general approach,
but using $\clubsuit_{\ad}$, instead. So this gives a simultaneous generalization of de Caux's result and Rudin's result.

\begin{theorem} Suppose that $\clubsuit_{\ad}(\{E^{\lambda^+}_{\lambda}\},\lambda,1)$ holds for an infinite regular cardinal $\lambda$. 
Then there exists a collectionwise normal Dowker space $\mathbb X$ of cardinality $\lambda^+$
such that $\hd(\mathbb X)=\lambda$ and $\Lin(\mathbb X)=\lambda^+$.
\end{theorem}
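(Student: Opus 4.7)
The plan is to adapt de Caux's construction of a Dowker space of size $\aleph_1$ from $\clubsuit(\omega_1)$, using the multi-ladder structure supplied by $\clubsuit_{\ad}(\{E^{\lambda^+}_\lambda\},\lambda,1)$ to simultaneously enforce $\hd(\mathbb{X})=\lambda$. In de Caux's approach one partitions the ambient set into $\omega$-many stationary classes and chooses ladders at each point so that the union of the lower levels forms a $\tau$-open set; here, the single ladder per point is replaced by $\lambda$-many ladders from $\mathcal{A}_\alpha$, and it is this redundancy that allows the hereditary density to drop to $\lambda$.

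Fix a $\clubsuit_{\ad}$-sequence $\vec{\mathcal{A}}=\langle\mathcal{A}_\alpha\mid\alpha\in E^{\lambda^+}_\lambda\rangle$ with $|\mathcal{A}_\alpha|=\lambda$. Partition $E^{\lambda^+}_\lambda$ into stationary pieces $\langle T_n\mid n<\omega\rangle$, put $S_0:=\lambda^+\setminus E^{\lambda^+}_\lambda$, $S_{n+1}:=T_n$, and $W_n:=\bigcup_{k\le n}S_k$. Since $E^{\lambda^+}_\lambda$ does not reflect at its own points, for each $\alpha\in T_n$ there is a club $C_\alpha\s\alpha$ avoiding $T_n\cup T_{n+1}\cup\cdots$, whence $C_\alpha\s W_{n}$; thin each $A\in\mathcal{A}_\alpha$ to a cofinal subset of $C_\alpha$, further disjointifying within $\mathcal{A}_\alpha$ by Proposition~\ref{Proposition - disjointify multi-ladder system}. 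Define the topology $\tau$ as in the $O$-space of Theorem~C: for $\beta<\lambda$, declare $\beta$ isolated; for $\beta\ge\lambda$, write $\beta=\alpha_\beta+i$ with $\alpha_\beta:=\max(E^{\lambda^+}_\lambda\cap(\beta+1))$ and $i<\lambda$, and take $\mathcal{N}_\beta:=\{(A^{\alpha_\beta}_i\setminus\epsilon)\cup\{\beta\}\mid\epsilon<\alpha_\beta\}$.

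I would then verify, in order: (i) each $W_n$ is $\tau$-open, because every tail of every $A^{\alpha_\beta}_i$ at $\beta\in W_n$ lies in $W_n$ by construction, so $D_n:=\lambda^+\setminus W_n$ is $\tau$-closed, $\s$-decreasing, of cardinality $\lambda^+$, and $\bigcap_nD_n=\emptyset$; (ii) the inductive closure argument of the lemma preceding Corollary~\ref{closed sets are countable or co countable} adapts verbatim (with $\omega$ replaced by $\lambda$) to yield $\cl([\alpha,\alpha+\lambda))=\lambda^+\setminus\alpha$ for every $\alpha\in E^{\lambda^+}_\lambda$, so that combining with the preserved $\clubsuit_{\ad}$-guessing one finds, for any $B\s\lambda^+$ with $|B|>\lambda$, an $\alpha\in E^{\lambda^+}_\lambda$ with $\lambda^+\setminus\alpha\s\cl(B\cap\alpha)$, giving $\hd(\mathbb{X})=\lambda$ and also ruling out two disjoint closed sets of cardinality $\lambda^+$; (iii) $\Lin(\mathbb{X})=\lambda^+$ because each initial segment $\xi$ is $\tau$-clopen and the cover $\{\xi\mid\xi<\lambda^+\}$ admits no small subcover; (iv) collectionwise normality via Lemma~\ref{general normal lemma}, after proving a ``Dowker tails claim'' analogous to Claim~\ref{ostaszewski tails claim} that produces the required transversal $\langle N_\beta\rangle\in\prod_\beta\mathcal{N}_\beta$ using Proposition~\ref{Proposition - disjointify multi-ladder system} to guarantee pairwise-disjoint tails after a common cutoff. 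With these in place, Lemma~\ref{Lemma - Dowker general argument} applied to $\langle D_n\rangle$ yields that $\mathbb{X}$ is Dowker.

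The principal obstacle is the preliminary thinning of $\vec{\mathcal{A}}$: each ladder must be pushed below its own partition level (to make $W_n$ $\tau$-open), while simultaneously continuing to guess every cofinal $B\s\lambda^+$ (to power the closure argument in step (ii)). Since $\clubsuit_{\ad}(\{E^{\lambda^+}_\lambda\},\lambda,1)$ only supplies guessing against one $B$ at a time, a direct ``pass to a cofinal sub-ladder'' cannot be iterated; the resolution is to lift each putative target $B$ through the snapping map $\gamma\mapsto\min(C_\alpha\setminus\gamma)$ to an auxiliary cofinal $\widetilde B$, apply the original guessing to $\widetilde B$, and push the resulting stationary set of guess-witnesses back through the snapping map. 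A secondary delicate point is the Dowker tails claim, which must jointly respect the discreteness of the arbitrary family $\langle K_i\rangle$, the almost-disjointness of $\bigcup_\alpha\mathcal{A}_\alpha$, and the level-constraint dictated by $W_n$-openness; this is where the redundancy provided by the parameter $\mu=\lambda$ in $\clubsuit_{\ad}$ (as opposed to $\mu=1$) is essential.
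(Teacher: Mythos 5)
Your overall strategy diverges from the paper's and, as written, contains a gap that I do not see how to repair from the stated hypothesis. You build the space on the underlying set $\lambda^+$, partition $E^{\lambda^+}_\lambda$ into $\omega$-many stationary pieces $T_n$, and thin every ladder at a point of $T_n$ so that it lies inside $W_n$ (a union of the lower classes), so that the sets $D_n:=\lambda^+\setminus W_n$ can serve as the decreasing sequence of closed sets. The problem is your step (ii): after this thinning you still need, for every $B\in[\lambda^+]^{\lambda^+}$, some $\alpha$ at which \emph{all} $\lambda$-many modified ladders meet $B$ cofinally. The principle $\clubsuit_{\ad}(\{E^{\lambda^+}_\lambda\},\lambda,1)$ only produces a stationary set of witnesses $\alpha$ in $E^{\lambda^+}_\lambda$ for the \emph{original} ladders, and gives no control over which class $T_n$ these witnesses land in. If $B$ concentrates on $S_m$ and the witnesses all lie in classes below $m$, the thinned ladders at those witnesses are disjoint from $S_m$ and miss $B$ entirely. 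This is exactly why the ladder-system construction of Section~\ref{sectionladdersystemspace} takes as hypothesis $\clubsuit_{\ad}(\mathcal S,1,2)$ for an \emph{infinite} partition $\mathcal S$ --- there the guessing is applied separately to each class --- and that hypothesis is not available here. Your proposed repair via the ``snapping map'' $\gamma\mapsto\min(C_\alpha\setminus\gamma)$ is not well-defined as a global operation: the club $C_\alpha$ depends on the witness $\alpha$ being sought, so there is no single auxiliary target $\widetilde B$ to feed to the guessing principle; and an Ulam-style attempt to split $E^{\lambda^+}_\lambda$ into classes each retaining the guessing also breaks down when only one target set may be guessed at a time, since the relevant collection of bad sets is not obviously an ideal.

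The paper avoids all of this by changing the underlying set to $X:=\lambda^+\times\omega$ and encoding the $\omega$-many ``levels'' in the second coordinate rather than in a partition of $E^{\lambda^+}_\lambda$. Each $\mathcal A_\alpha$ is enumerated as $\{A^{j,n}_{\alpha+i}\mid i<\lambda,\ j\le n<\omega\}$, a basic neighborhood of $(\beta,n)$ is $\{(\beta,n)\}\cup\bigcup_{j\le n}((A^{j,n}_\beta\setminus\epsilon)\times\{j\})$, and the decreasing closed sets are $D_n:=\lambda^+\times(\omega\setminus n)$. Since a neighborhood of a point at level $n$ reaches down into \emph{every} level $j\le n$, guessing a single target (namely the domain of $B$ restricted to whichever level carries $\lambda^+$-many of its points) already forces $\cl(B)$ to contain a set of the form $(\lambda^+\setminus\beta)\times(\omega\setminus k)$; no thinning of ladders and no partition of the stationary set is needed, which is precisely what makes $\clubsuit_{\ad}(\{E^{\lambda^+}_\lambda\},\lambda,1)$ --- one target at a time, trivial partition --- sufficient. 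Your steps (iii) and (iv) (Lindel\"of degree via clopen initial segments, collectionwise normality via Lemma~\ref{general normal lemma} together with a tails claim powered by Proposition~\ref{Proposition - disjointify multi-ladder system}) do match the paper's treatment in spirit, but they rest on a construction whose guessing clause is the unresolved point above.
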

\begin{remark} The preceding is the Introduction's Theorem~D.
\end{remark}

Let $\langle \mathcal A_{\alpha}\mid\alpha\in  E^{\lambda^+}_{\lambda}\rangle$ be a $\clubsuit_{\ad}(\{E^{\lambda^+}_{\lambda}\},\lambda,1)$-sequence.
For each $\alpha\in E^{\lambda^+}_{\lambda}$, fix an injective enumeration $\{ A^{j,n}_{\alpha+i}\mid i<\lambda, j\leq n<\omega\}$ of the elements of $\mathcal A_\alpha$.
For every $\xi\in\lambda^+\setminus\lambda$, as $\mathcal B_\xi:=\{ A^{j,n}_\beta\mid \lambda\le\beta<\xi+\lambda, j\le n<\omega\}$ is a subset of $\bigcup_{\alpha\in E^{\lambda^+}_\lambda}\mathcal A_\alpha$ of size $\lambda$,
we may appeal to Proposition~\ref{Proposition - disjointify multi-ladder system} to fix a function $f_\xi:\mathcal B_\xi\rightarrow\xi$ such that:
	\begin{enumerate}
	\item For every $B\in\mathcal B_\xi$, $f_\xi(B)\in B$;
	\item The sets in $\langle B\setminus f_\xi(B) \mid B\in\mathcal B_\xi\rangle$ are pairwise disjoint.
	\end{enumerate}		

\begin{definition} For every $\beta<\lambda^+$, let $\alpha_\beta:=\min\{\alpha\le\beta\mid \exists i<\lambda(\beta=\alpha+i)\}$.
\end{definition}

We are now ready to define a topology $\tau$ on the set $X:=\lambda^+\times\omega$.
For all $x\in \lambda\times \omega$, just let $\mathcal N_{x}:=\{\{x\}\}$.
For all $x=(\beta,n)$ in $X$ with $\beta\ge\lambda$, denote 
$N_{x}^\epsilon:=\{x\}\cup\bigcup_{j\leq n}((A^{j,n}_{\beta}\setminus\epsilon)\times\{j\})$, 
and then set  $\mathcal N_x:=\{ N_x^\epsilon\mid \epsilon<\alpha_\beta\}$.
Finally, a subset $ U \s \lambda^+\times\omega$ is $\tau$-open iff for any $x\in U $, 
there is $N\in\mathcal N_x$ with $N\s U$.	
It is easy to check that $\mathbb X:=(X, \tau) $ is a $ T_1 $ topological space.

\begin{definition}
	For any subset $N\s \lambda^+\times\omega$, denote:
	\begin{itemize}
\item	$N^-:=\{(\gamma,j)\in N\mid \exists (\beta,n)\in N\,(\gamma<\beta)\}$;
\item $N^j:=\{ \gamma\mid (\gamma,j)\in N^-\}$ for any $j<\omega$.
\end{itemize}
\end{definition}

The following is obvious.
\begin{lemma}\label{Fact - neighborhoods behave as expected}
	For all $x=(\beta,n)$ in $X$, $N\in\mathcal N_x$, and $j\leq n$, $N^j$ is a cofinal subset of $\alpha_\beta$ and $N\subseteq (\beta+1)\times(n+1)$.
In particular, for all $\delta<\lambda^+$ and $n<\omega$:
\begin{itemize}
\item $\delta\times n$ is $\tau$-open;
\item $\lambda^+\times(\omega\setminus n)$ is $\tau$-closed.\qed
\end{itemize}
\end{lemma}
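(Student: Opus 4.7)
The plan is to verify each assertion by directly unwinding the definitions, which is why the authors label this as obvious. First I would dispatch the trivial case $\beta<\lambda$: then $\alpha_\beta=0$ and $\mathcal N_x=\{\{x\}\}$, so $N=\{x\}$, $N^-=\emptyset$, $N^j=\emptyset$ is vacuously cofinal in $0$, and $N\s(\beta+1)\times(n+1)$ trivially.

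For the main case $\beta\ge\lambda$, fix $N=N_x^\epsilon=\{x\}\cup\bigcup_{j\le n}((A^{j,n}_\beta\setminus\epsilon)\times\{j\})$ for some $\epsilon<\alpha_\beta$. Every point of $A^{j,n}_\beta$ lies in $\alpha_\beta\le\beta$, so the first coordinate of any non-$x$ element of $N$ is strictly less than $\beta$; consequently $N^-=N\setminus\{x\}$ and $N^j=A^{j,n}_\beta\setminus\epsilon$ for each $j\le n$. Since $A^{j,n}_\beta\in\mathcal A_{\alpha_\beta}$ is a cofinal subset of $\alpha_\beta$ by clause~(2) of Definition~\ref{almost-disjoint ladder-system}, and $\epsilon<\alpha_\beta$, the truncation $A^{j,n}_\beta\setminus\epsilon$ is again cofinal in $\alpha_\beta$. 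The inclusion $N\s(\beta+1)\times(n+1)$ then follows, since first coordinates are bounded by $\beta$ and second coordinates lie in $\{n\}\cup\{j:j\le n\}=n+1$.

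The two bulleted consequences fall out of the inclusion just established. To show $\delta\times n$ is $\tau$-open, let $x=(\beta,m)\in\delta\times n$ and pick any $N\in\mathcal N_x$; then $N\s(\beta+1)\times(m+1)\s\delta\times n$ since $\beta<\delta$ and $m+1\le n$, witnessing that $x$ is an interior point. For the second bullet, the complement $X\setminus(\lambda^+\times(\omega\setminus n))$ equals $\lambda^+\times n$, and the same computation applied to any $x=(\beta,m)$ with $m<n$ yields $N\s(\beta+1)\times(m+1)\s\lambda^+\times n$ for every $N\in\mathcal N_x$, so the complement is open. I anticipate no obstacle here; the entire argument is bookkeeping against the definitions of $\mathcal N_x$, $N^-$, and $\tau$.
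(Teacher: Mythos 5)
Your verification is correct and is exactly the routine unwinding of the definitions of $\mathcal N_x$, $N^-$, $N^j$ and $\tau$ that the paper has in mind when it declares the lemma obvious and gives no proof. Nothing is missing: the key observations (that $N^-=N\setminus\{x\}$ because all non-$x$ first coordinates lie in $\alpha_\beta\le\beta$, that tails of cofinal sets remain cofinal, and that the containment $N\s(\beta+1)\times(n+1)$ immediately yields both bullets) are precisely what is needed.
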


So $\{ \delta\times\omega\mid \delta<\lambda^+\}$ witnesses that $\Lin(\mathbb X)=\lambda^+$.

\begin{lemma}\label{Dowker closure ost}
	For every $\alpha\in E^{\lambda^+}_{\lambda}$ and $k<\omega$, $$(\lambda^+\setminus(\alpha+\lambda))\times (\omega\setminus k)\s \cl([\alpha,\alpha+\lambda)\times\{k\}).$$
\end{lemma}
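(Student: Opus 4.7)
The plan is to set $A:=\cl([\alpha,\alpha+\lambda)\times\{k\})$ and prove by transfinite induction on $\beta\in[\alpha+\lambda,\lambda^+)$ that $\{\beta\}\times(\omega\setminus k)\subseteq A$. Since $A$ is $\tau$-closed, it suffices at each stage to exhibit, for every neighborhood-base element $N^{\epsilon}_{(\beta,n)}$ of $(\beta,n)$, some point of $A$ inside it. The content of the argument concentrates in the level-$k$ slice $(A^{k,n}_{\beta}\setminus\epsilon)\times\{k\}$, which is a legitimate subset of $N^{\epsilon}_{(\beta,n)}$ precisely because $k\le n$.

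The crucial preliminary observation is that for every $\beta\ge\alpha+\lambda$, the anchor $\alpha_\beta$ is itself at least $\alpha+\lambda$. Indeed, were $\alpha\le\alpha_\beta<\alpha+\lambda$, one could write $\alpha_\beta=\alpha+j$ with $j<\lambda$, and then from $\beta=\alpha_\beta+i$ with $i<\lambda$, the regularity of $\lambda$ would give $\beta=\alpha+(j+i)<\alpha+\lambda$; while $\alpha_\beta<\alpha$ would force $\beta-\alpha_\beta\ge(\alpha-\alpha_\beta)+\lambda\ge\lambda$, contradicting $i<\lambda$. Consequently, only two cases arise in the inductive step: $\alpha_\beta=\alpha+\lambda$ or $\alpha_\beta>\alpha+\lambda$.

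In the inductive step, fix $\beta\ge\alpha+\lambda$, $n\ge k$ and $\epsilon<\alpha_\beta$, and invoke the earlier fact that $A^{k,n}_\beta$ is cofinal in $\alpha_\beta$. In the first case, pick any $\gamma\in A^{k,n}_\beta\setminus\max\{\epsilon,\alpha\}$; then $\gamma\in[\alpha,\alpha+\lambda)$, whence
$$(\gamma,k)\in N^{\epsilon}_{(\beta,n)}\cap([\alpha,\alpha+\lambda)\times\{k\})\subseteq N^{\epsilon}_{(\beta,n)}\cap A.$$
In the second case, pick $\gamma\in A^{k,n}_\beta\setminus\max\{\epsilon,\alpha+\lambda\}$; then $\alpha+\lambda\le\gamma<\alpha_\beta\le\beta$, so the inductive hypothesis gives $(\gamma,k)\in A$, and again $(\gamma,k)\in N^{\epsilon}_{(\beta,n)}\cap A$. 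Either way $N^{\epsilon}_{(\beta,n)}$ meets $A$, and since $\epsilon$ was arbitrary and $\{N^{\epsilon}_{(\beta,n)}\mid\epsilon<\alpha_\beta\}$ is a local base at $(\beta,n)$, we conclude $(\beta,n)\in\cl(A)=A$.

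I do not anticipate a serious obstacle: the only moderately delicate step is the ordinal calculation establishing $\alpha_\beta\ge\alpha+\lambda$, after which the induction is essentially forced; the level-$k$ slice provides either a direct witness from the original strip (when $\alpha_\beta$ equals $\alpha+\lambda$) or, once $\alpha_\beta$ outstrips $\alpha+\lambda$, a witness produced by the inductive hypothesis as one climbs through the blocks of the $\alpha_\beta$-stratification of $\lambda^+$.
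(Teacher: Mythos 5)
Your proof is correct and rests on the same mechanism as the paper's: one climbs upward through the $\lambda$-blocks, using at each point $(\beta,n)$ with $n\ge k$ that the level-$k$ slice $A^{k,n}_{\beta}$ of every basic neighborhood is cofinal in $\alpha_\beta$. The paper organizes this as a same-level claim ($(\lambda^+\setminus\alpha)\times\{j\}\subseteq\cl([\alpha,\alpha+\lambda)\times\{j\})$, proved by induction over $E^{\lambda^+}_{\lambda}\setminus\alpha$) followed by a single drop from level $j$ to level $k$ at the block $[\alpha+\lambda,\alpha+2\lambda)$, whereas you perform the drop to level $k$ at every step of a single induction over all $\beta\ge\alpha+\lambda$; the difference is purely organizational.
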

\begin{proof} Denote $I_{\alpha,j}:=[\alpha,\alpha+\lambda)\times\{j	\}$ and $F_{\alpha,j}:=\cl(I_{\alpha,j})$.
	\begin{claim}\label{Dowker closure ost - claim inner induction}
	Let $\alpha\in E^{\lambda^+}_{\lambda}$ and $j<\omega$. Then
		$(\lambda^+\setminus\alpha)\times \{j\}\s F_{\alpha,j}$.
	\end{claim}
	\begin{proof}
		We prove by induction on $ \delta\in E^{\lambda^+}_{\lambda}\setminus\alpha $ that $ I_{\delta,j}\subseteq F_{\alpha,j}$.
		
		$\br$ For $\delta=\alpha$, trivially $I_{\delta,j} \subseteq F_{\alpha,j}$.
			
		$\br$ Suppose that $\delta\in  E^{\lambda^+}_{\lambda}\setminus(\alpha+\lambda)$ is an ordinal such that, for every $\gamma\in E^\delta_\lambda\setminus\alpha$, $I_{\gamma,j}\s F_{\alpha,j}$.
		Therefore, $[\alpha,\delta)\times\{j\} \subseteq F_{\alpha,j}$. 
		Let $x\in  I_{\delta,j}$. Since, for all $N\in \mathcal N_x$, 
		$N^j$ is a cofinal subset of $\delta$, $N\cap( [\alpha,\delta)\times\{j\})\neq\emptyset$.
		Therefore, $x\in\cl([\alpha,\delta)\times\{j\})\s F_{\alpha,j}$.
	\end{proof}
	
	Let $\alpha\in E^{\lambda^+}_{\lambda}$ and $k<\omega$. Let $j\ge\kappa$ be some integer;
	we need to prove that $(\lambda^+\setminus(\alpha+\lambda))\times\{j\}\s F_{\alpha,k}$.
	By the preceding claim, it suffices to prove that $F_{\alpha+\lambda,j}\s F_{\alpha,k}$. 
	But the latter is a closed set, so it suffices to prove that $I_{\alpha+\lambda,j}\s F_{\alpha,k}$. 
	Let $x\in I_{\alpha+\lambda,j}$ be arbitrary. 
	For each $N\in \mathcal N_x$, as $k\le j$, $N^k$ is a cofinal subset of $\alpha+\lambda$,
	and then $N\cap I_{\alpha,k}\neq\emptyset$.
	Therefore, $x\in\cl(I_{\alpha,k})=F_{\alpha,k}$, as sought.
\end{proof}

\begin{cor}\label{corollary - Dowker space clubsuit_AD set of big size} For any $B\s X$ of size $\lambda^+$:
\begin{enumerate}
\item There exists $D\s B$ with $|D|=\lambda$ such that $B\s \cl(D)$;
\item\label{Clause - B closed contain a tail of the space} If $B$ is $\tau$-closed, then there is $(\beta,k)\in\lambda^+\times\omega$ such that $(\lambda^+\setminus\beta)\times(\omega\setminus k)\s B$.
\end{enumerate}
\end{cor}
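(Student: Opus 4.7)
The plan is to handle both parts through one construction driven by Lemma~\ref{Dowker closure ost}. For $n<\omega$, write $B_n:=\{\beta<\lambda^+\mid (\beta,n)\in B\}$. Since $\omega<\cf(\lambda^+)$, a pigeonhole argument produces the minimal $n_0<\omega$ with $|B_{n_0}|=\lambda^+$; minimality ensures $|B_n|\le\lambda$ for every $n<n_0$, so $|B\cap(\lambda^+\times n_0)|\le\lambda$. As $B_{n_0}$ is a cofinal subset of $\lambda^+$, applying the $\clubsuit_{\ad}(\{E^{\lambda^+}_\lambda\},\lambda,1)$-hypothesis to $\mathcal B:=\{B_{n_0}\}$ supplies some $\alpha_0\in E^{\lambda^+}_\lambda$ such that $\sup(A\cap B_{n_0})=\alpha_0$ for every $A\in\mathcal A_{\alpha_0}$.

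The pivotal sub-claim I would prove next is that $[\alpha_0,\alpha_0+\lambda)\times(\omega\setminus n_0)\subseteq\cl(D_0)$, where $D_0:=(B_{n_0}\cap\alpha_0)\times\{n_0\}\subseteq B$. Since $\cf(\alpha_0)=\lambda$ forces $\alpha_\gamma=\alpha_0$ for every $\gamma\in[\alpha_0,\alpha_0+\lambda)$, any basic neighborhood $N^\epsilon_{(\gamma,m)}$ with $m\ge n_0$ contains the strand $(A^{n_0,m}_\gamma\setminus\epsilon)\times\{n_0\}$, and the defining property of $\alpha_0$ produces a point of $B_{n_0}\cap\alpha_0$ inside $A^{n_0,m}_\gamma\setminus\epsilon$; hence $(\gamma,m)\in\cl(D_0)$. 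In particular $[\alpha_0,\alpha_0+\lambda)\times\{n_0\}\subseteq\cl(D_0)$, so by Lemma~\ref{Dowker closure ost} and the closedness of $\cl(D_0)$, also $(\lambda^+\setminus(\alpha_0+\lambda))\times(\omega\setminus n_0)\subseteq\cl([\alpha_0,\alpha_0+\lambda)\times\{n_0\})\subseteq\cl(D_0)$.

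With the sub-claim in hand, part~(1) falls out by taking $D:=(B\cap((\alpha_0+\lambda)\times\omega))\cup(B\cap(\lambda^+\times n_0))$: both pieces have size at most $\lambda$, while $|D|\ge|B_{n_0}\cap\alpha_0|\ge\lambda$, and the only $x=(\beta,n)\in B$ not already in $D$ satisfies $\beta\ge\alpha_0+\lambda$ and $n\ge n_0$, which puts it in $\cl(D_0)\subseteq\cl(D)$. For part~(2), if $B$ is $\tau$-closed then $D_0\subseteq B$ gives $\cl(D_0)\subseteq B$, and the sub-claim immediately yields $(\lambda^+\setminus(\alpha_0+\lambda))\times(\omega\setminus n_0)\subseteq B$; we conclude with $\beta:=\alpha_0+\lambda$ and $k:=n_0$. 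I do not foresee a genuine obstacle: the main care-requiring step is the verification that $\alpha_\gamma=\alpha_0$ for $\gamma\in[\alpha_0,\alpha_0+\lambda)$, which is what ensures the neighborhoods at $\gamma$ really do reach down into $D_0$; everything else is orchestration of pieces already developed in the section.
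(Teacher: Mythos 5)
Your proof is correct and follows essentially the same route as the paper's: pick a level of $\lambda^+\times\omega$ on which $B$ has size $\lambda^+$, apply the guessing clause of $\clubsuit_{\ad}$ to land on some $\alpha_0\in E^{\lambda^+}_\lambda$, and propagate the closure of $D_0=(B_{n_0}\cap\alpha_0)\times\{n_0\}$ up the tail via Lemma~\ref{Dowker closure ost}. Your extra care in taking $n_0$ \emph{minimal} is in fact needed for Clause~(1) — points on levels below $n_0$ can never lie in $\cl(D_0)$, since basic neighbourhoods only reach downward in the second coordinate — and it quietly repairs the paper's too-quick intermediate assertion that $|X\setminus\cl(D)|\le\lambda$.
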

\begin{proof} Given $B$ as above,
	fix some $k<\omega$ such that $|B\cap (\lambda^+\times \{k\})|=\lambda^+$. 
	By Clause~(\ref{clubsuit AD, Clause cofinal set}) of Definition~\ref{clubsuit AD definition}, 
	we can find an $ \alpha\in E^{\lambda^+}_{\lambda}$ such that $\dom((A^{j,n}_{\alpha+i}\times \{k \})\cap B)$ is cofinal in $\alpha$
	for all $ i<\lambda$ and $j\leq n<\omega$.
	Let $D:=B\cap(\alpha\times \{k\})$.
	If follows that, for every $x\in[\alpha,\alpha+\lambda)\times\{k\}$ and every $N\in\mathcal N_x$, 
	$\dom(N\cap D)$ is cofinal in $\alpha$. 
	So, $ [\alpha,\alpha+\lambda)\times\{k\} \subseteq \cl(D)$
	and hence $(\lambda^+\setminus(\alpha+\lambda))\times(\omega\setminus k) \s	\cl([\alpha,\alpha+\lambda)\times\{k\})\subseteq \cl(D)$.
	
	\begin{enumerate}
	\item As $|B\setminus\cl(D)|\le|X\setminus\cl(D)|\le\lambda$, we see that $D\cup(B\setminus\cl(D))$ is a dense subset of $B$ of cardinality $\lambda$.
	\item If $B$ is $\tau$-closed, then for $\beta:=\alpha+\lambda$,
	$(\lambda^+\setminus\beta)\times(\omega\setminus k)\s\cl(D)\s B$.\qedhere 
	\end{enumerate}
\end{proof}

\begin{cor}\label{lambda^+ Dowker space - no two disjoint closed uncountable sets}
	$\hd(\mathbb X)\le\lambda$ and there are no two disjoint closed subspaces of $\mathbb X$ of cardinality $\lambda^+$.
\end{cor}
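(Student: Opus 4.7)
The plan is to derive both assertions directly from Corollary~\ref{corollary - Dowker space clubsuit_AD set of big size}; there is essentially no extra combinatorics to do once that result is in hand.

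For the bound on the hereditary density, I would take an arbitrary subspace $Y\s X$ and split into cases on $|Y|$. If $|Y|\le\lambda$, then $Y$ is itself a dense subset of $Y$ of size $\le\lambda$. If $|Y|=\lambda^+$, then since $|X|=\lambda^+$, I apply Corollary~\ref{corollary - Dowker space clubsuit_AD set of big size}(1) to $B:=Y$ to obtain $D\s Y$ with $|D|=\lambda$ and $Y\s \cl(D)$. Then $D$ is a dense subset of $Y$ of size $\lambda$, as required.

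For the second assertion, suppose toward a contradiction that $B_0,B_1$ are disjoint $\tau$-closed subsets of $X$, each of cardinality $\lambda^+$. By Corollary~\ref{corollary - Dowker space clubsuit_AD set of big size}(\ref{Clause - B closed contain a tail of the space}), for each $i<2$ there are $\beta_i<\lambda^+$ and $k_i<\omega$ such that
$$(\lambda^+\setminus\beta_i)\times(\omega\setminus k_i)\s B_i.$$
Setting $\beta:=\max\{\beta_0,\beta_1\}$ and $k:=\max\{k_0,k_1\}$, the set $(\lambda^+\setminus\beta)\times(\omega\setminus k)$ is a nonempty subset of $B_0\cap B_1$, contradicting their disjointness.

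I do not anticipate any real obstacle here; both parts are immediate consequences of Corollary~\ref{corollary - Dowker space clubsuit_AD set of big size}, and the only content is in setting up the case split for hereditary density and observing that the two ``tails'' from Clause~(2) must overlap.
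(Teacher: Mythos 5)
Your proof is correct and follows essentially the same route as the paper, which simply cites Corollary~\ref{corollary - Dowker space clubsuit_AD set of big size}: clause~(1) gives the dense subset of size $\lambda$ for any $\lambda^+$-sized subspace (small subspaces being trivially dense in themselves), and clause~(2) forces the two tails of any pair of $\lambda^+$-sized closed sets to overlap. No gaps; you have merely spelled out the details the paper leaves implicit.
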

\begin{proof}
By Corollary~\ref{corollary - Dowker space clubsuit_AD set of big size}\eqref{Clause - B closed contain a tail of the space}.
\end{proof}
\begin{lemma}\label{normality429} The space $\mathbb X$ is Hausdorff and collectionwise normal.
\end{lemma}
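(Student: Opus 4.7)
The plan is to apply Lemma~\ref{general normal lemma} to an arbitrary discrete family $\langle K_i\mid i<\theta\rangle$ of $\tau$-closed sets with $\theta\ge 2$; since $\mathbb X$ is $T_1$, collectionwise normality will automatically yield Hausdorffness.

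First I will bound $\theta$. Were $\theta\ge\lambda^+$, a transversal from the $K_i$ could be pigeon-holed on the $\omega$-coordinate to produce $B\s\lambda^+$ of size $\lambda^+$ lying in a single horizontal slice $\lambda^+\times\{n\}$, and Clause~(\ref{clubsuit AD, Clause cofinal set}) of Definition~\ref{clubsuit AD definition} would then furnish $\alpha\in E^{\lambda^+}_\lambda$ for which every neighborhood of $(\alpha,n)$ meets infinitely many $K_i$, contradicting discreteness. Hence $\theta\le\lambda$. By Corollary~\ref{lambda^+ Dowker space - no two disjoint closed uncountable sets}, at most one $K_i$ has cardinality $\lambda^+$; reindex so that this is possibly $K_0$, and fix $\xi\in E^{\lambda^+}_\lambda$ with $\bigcup_{0<i<\theta}K_i\s\xi\times\omega=:O$. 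By Lemma~\ref{Fact - neighborhoods behave as expected}, $O$ is $\tau$-open.

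Next I will construct a transversal $\langle N_x\mid x\in X\rangle$ in three regimes, in the spirit of Claim~\ref{ostaszewski tails claim}. For $x=(\beta,n)$ with $\beta<\lambda$, take $N_x:=\{x\}$. For $\lambda\le\beta<\xi+\lambda$, each $A^{j,n}_\beta$ with $j\le n$ lies in $\mathcal B_\xi$, so the disjointification function $f_\xi$ is available; discreteness of $\vec K$ supplies $\varepsilon^{\mathrm{disc}}_x<\alpha_\beta$ with $N_x^{\varepsilon^{\mathrm{disc}}_x}$ meeting at most the unique $K_i$ (if any) containing $x$, so I will set $\epsilon_x:=\max(\{\varepsilon^{\mathrm{disc}}_x\}\cup\{f_\xi(A^{j,n}_\beta)\mid j\le n\})+1$ and $N_x:=N_x^{\epsilon_x}$. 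For $\beta\ge\xi+\lambda$, closedness of $K_0$ supplies, when $x\notin K_0$, some $\varepsilon^0_x<\alpha_\beta$ with $N_x^{\varepsilon^0_x}\cap K_0=\emptyset$; I will set $\epsilon_x:=\max(\xi,\varepsilon^0_x)+1$ (or simply $\xi+1$ if $x\in K_0$) and $N_x:=N_x^{\epsilon_x}$.

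Conditions~(a) and~(c) of Lemma~\ref{general normal lemma} are then essentially built in: (a) holds because $x\in O$ forces $N_x\s(\beta+1)\times(n+1)\s O$, and (c) follows by inspecting the three regimes separately. The hard part will be condition~(b): for $x\in O$ and $x'\neq x$ in $X$, one must show $N_x\cap N_{x'}\s\{x,x'\}$. When $\beta'<\xi+\lambda$, the tails of $N_x$ and $N_{x'}$ are governed by distinct elements of $\mathcal B_\xi$ (using that the triple $(j,n,\beta)$ indexes $A^{j,n}_\beta$ injectively), so Proposition~\ref{Proposition - disjointify multi-ladder system} forces their post-$f_\xi$-tails to be disjoint; when $\beta'\ge\xi+\lambda$, the choice $\epsilon_{x'}\ge\xi$ drives $N_{x'}^-$ out of $O$ entirely, disjoint from $N_x\s O$. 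The main obstacle will be to verify (b) case-by-case across the cross-regime pairings without accidentally re-admitting intersections through a shared $\omega$-coordinate track; once (b) is in hand, Lemma~\ref{general normal lemma} delivers pairwise disjoint open sets separating the $K_i$, finishing the proof.
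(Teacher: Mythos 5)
Your proposal is correct and follows essentially the same route as the paper: reduce to Lemma~\ref{general normal lemma}, bound $\theta\le\lambda$ via Clause~(\ref{clubsuit AD, Clause cofinal set}) of Definition~\ref{clubsuit AD definition}, isolate the single possibly unbounded $K_0$, and build the transversal in the same three regimes, using $f_\xi$ on $\mathcal B_\xi$ for the middle band and pushing tails past $\xi$ for the top band, with Proposition~\ref{Proposition - disjointify multi-ladder system} plus injectivity of the enumeration $(j,n,\beta)\mapsto A^{j,n}_\beta$ giving condition~(b). This matches the paper's Claim~\ref{Dowker tails claim} in all essentials.
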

\begin{proof} As $\mathbb X$ is $T_1$, it suffices to verify that it is collectionwise normal.
	Fix a nonzero cardinal $\theta$ and a discrete sequence $\vec K=\langle K_i \mid i<\theta \rangle$ of closed sets. 
	It follows from Clause~(\ref{clubsuit AD, Clause cofinal set}) of Definition~\ref{clubsuit AD definition}
	that $\theta\le\lambda$, so, using Corollary~\ref{lambda^+ Dowker space - no two disjoint closed uncountable sets} and by possibly re-indexing, we 
	may find a large enough $\xi\in E^{\lambda^+}_\lambda$ such that $K_i\subseteq \xi\times\omega$ for all nonzero $i<\theta$.
	Recall that $O:=\xi \times \omega$ is an open set.
	
	\begin{claim}\label{Dowker tails claim}
		There exists a sequence $\langle N_x\mid x\in X\rangle\in\prod_{x\in X}\mathcal N_x$
		such that:
		\begin{enumerate}[label=(\alph*)]
			\item for all $x\in O$, $N_{x}\subseteq O$;
			\item\label{dowker disjoint tails} for all $x\in O$ and $x'\in X\setminus\{x\}$, $N_{x}^-\cap N_{x'}^-=\emptyset$;
			\item\label{dowker tails outside H,K} for all $x\in X$ and $i<\theta$, if $N_x\cap K_i\neq \emptyset$ then $x \in K_i$.
		\end{enumerate}
	\end{claim}	
	\begin{proof}  Recalling Lemma~\ref{Fact - neighborhoods behave as expected},
		we should only worry about requirements (b) and (c).
		Let $x=(\beta,n)$ in $X$.	There are three cases to consider:
	
		$\br$ If $\beta<\lambda$, then set $N_x :=\{x\}$. Evidently, requirement~(c) is satisfied.
		
		$\br$ If $\beta\ge\xi+\lambda$, then $\alpha_\beta>\xi$, so we let
		$$\epsilon:=\max(\{\xi,\sup(\dom[(\bigcup\nolimits_{j\leq n}( A^{j,n}_{\beta}\times \{j\}))\cap K_0])\}\cap\alpha_\beta),$$
		and then set $N_x := N_x^{\epsilon+1}$. Evidently, $N_x\cap O=\emptyset$.
		In particular, for all $i<\theta$, if $N_x\cap K_i\neq\emptyset$, then $i=0$ and $\sup(\dom[(\bigcup\nolimits_{j\leq n}( A^{j,n}_{\beta}\times \{j\}))\cap K_0])=\alpha_\beta$,
		which means that $x\in K_0$, since the latter is closed.
		So, requirement~(c) is satisfied.
		
		$\br$ If $\lambda\le\beta<\xi+\lambda$, then $A_\beta^{j,n}\in\mathcal B_\xi$ for all $j\le n$,
		so that $\phi_x:=\max\{ f_\xi(A^{j,n}_\beta)\mid j\le n\}$ is $<\alpha_\beta\le\xi$.
		As $\vec K$ is discrete, let us pick an open neighborhood $U$ of $x$ 
		for which $I:=\{ i<\theta\mid U\cap K_i\neq\emptyset\}$ contains at most one element,
		and then find a large enough $\varepsilon\in\alpha_\beta\setminus\phi_x$ such that $N^{\varepsilon}_x\subseteq U$.
		Let
		$$\epsilon:=\begin{cases}
		\varepsilon,&\text{if }I=\emptyset;\\
		\max(\{\varepsilon,\sup(\dom[(\bigcup_{j\leq n}( A^{j,n}_{\beta}\times \{j\}))\cap K_i])\}\cap\alpha_\beta),&\text{if }I=\{i\},
		\end{cases}$$
		and then set $N_x:=N_x^{\epsilon+1}$.
		Evidently, $N_x\s U$. So, for all $i<\theta$, if $N_x\cap K_i\neq\emptyset$, then $I=\{i\}$ and $\sup(\dom[(\bigcup\nolimits_{j\leq n}( A^{j,n}_{\beta}\times \{j\}))\cap K_i])=\alpha_\beta$,
		which means that $x\in K_i$. So, requirement~(c) is satisfied.
	
		We are left with verifying that $\langle N_x\mid x\in X\rangle$ satisfies requirement~(b).
		Fix arbitrary $x\in O$ and $x'\in X\setminus \{x\}$.
		Say, $x=(\beta,n)$ and $x'=(\beta',n')$.
		Note that if $\beta<\lambda$, then $N_x^-$ is empty,
		and if $\beta'\ge\xi+\lambda$, then $\min(\dom(N_{x'}))>\xi>\beta=\sup(\dom(N_x))$.
		Therefore, if $N_x^-\cap N_{x'}^-\neq\emptyset$, then $\lambda \le \beta,\beta'<\xi+\lambda$.
		Thus, assume that $\lambda \le \beta,\beta'<\xi+\lambda$, and fix $\epsilon\ge\phi_x$ and $\epsilon'\ge\phi_{x'}$ such that 		
		\begin{itemize}
\item		$N_{x}^-=\bigcup_{j\leq n}((A^{j,n}_{\beta}\setminus(\epsilon+1))\times\{j\})$, and
\item		$N_{x'}^-=\bigcup_{j\leq n'}((A^{j,n'}_{\beta'}\setminus(\epsilon'+1))\times\{j\})$.
\end{itemize}
So, if $N_x^-\cap N_{x'}^-\neq\emptyset$, then there exists $j\le\min\{n,n'\}$ such that $(A^{j,n}_{\beta}\setminus\phi_x)\cap (A^{j,n'}_{\beta'}\setminus\phi_{x'})\neq\emptyset$.
In particular, $(A^{j,n}_{\beta}\setminus f_\xi(A^{j,n}_{\beta}))\cap (A^{j,n'}_{\beta'}\setminus f_\xi(A^{j,n'}_{\beta'}))\neq\emptyset$,
contradicting the fact that $(\beta,n)\neq (\beta',n')$.
	\end{proof}	
	
	It now follows from Lemma~\ref{general normal lemma} that there exists a sequence $\langle U_i \mid i<\theta \rangle$
	of pairwise open sets such that $K_i\subseteq U_i$ for all $i<\theta$.
\end{proof}

\begin{lemma}
	The space $\mathbb X$ is Dowker.
\end{lemma}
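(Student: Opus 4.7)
The plan is to invoke Lemma~\ref{Lemma - Dowker general argument} with $\kappa:=\lambda^+$. Most of its hypotheses are already in hand: the space $\mathbb X$ has underlying set of cardinality $\lambda^+\cdot\omega=\lambda^+=\kappa$, it is Hausdorff and (collectionwise) normal by Lemma~\ref{normality429}, and Corollary~\ref{lambda^+ Dowker space - no two disjoint closed uncountable sets} provides exactly the ``no two disjoint closed sets of size $\kappa$'' requirement. So all that remains is to exhibit a $\subseteq$-decreasing sequence $\langle D_n\mid n<\omega\rangle$ of $\tau$-closed sets of size $\lambda^+$ with empty intersection.

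The natural candidate is simply $D_n:=\lambda^+\times(\omega\setminus n)$ for each $n<\omega$. By Lemma~\ref{Fact - neighborhoods behave as expected}, each $D_n$ is $\tau$-closed; evidently $|D_n|=\lambda^+$, the sequence is $\subseteq$-decreasing, and $\bigcap_{n<\omega}D_n=\lambda^+\times\bigcap_{n<\omega}(\omega\setminus n)=\emptyset$. Plugging this into Lemma~\ref{Lemma - Dowker general argument} immediately yields that $\mathbb X$ is Dowker.

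There is essentially no obstacle here: all the substantive work has already been done in establishing normality (Lemma~\ref{normality429}), the hereditary density bound, and the absence of two disjoint closed sets of full cardinality (Corollary~\ref{lambda^+ Dowker space - no two disjoint closed uncountable sets}, which in turn rested on Corollary~\ref{corollary - Dowker space clubsuit_AD set of big size}\eqref{Clause - B closed contain a tail of the space} — the crucial consequence of the $\clubsuit_{\ad}$ guessing hypothesis applied to the ``extra $\omega$ coordinate''). The only thing worth double-checking is that the cardinal arithmetic behind Lemma~\ref{Lemma - Dowker general argument} goes through: for any sequence $\langle U_n\mid n<\omega\rangle$ of open sets with $D_n\subseteq U_n$, each complement $F_n:=X\setminus U_n$ is closed and disjoint from $D_n$, hence of cardinality strictly less than $\lambda^+$, so $|\bigcup_{n<\omega}F_n|\le\omega\cdot\lambda=\lambda<\lambda^+=|X|$ and therefore $\bigcap_{n<\omega}U_n\neq\emptyset$. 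This, together with Dowker's characterization, completes the argument.
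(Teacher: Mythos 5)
Your proposal is correct and matches the paper's own proof essentially verbatim: both take $D_n:=\lambda^+\times(\omega\setminus n)$, note these are $\lambda^+$-sized $\tau$-closed sets decreasing to $\emptyset$, and conclude via Lemma~\ref{normality429}, Corollary~\ref{lambda^+ Dowker space - no two disjoint closed uncountable sets} and Lemma~\ref{Lemma - Dowker general argument}. Nothing to add.
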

\begin{proof}	
	Denote $D_n:=\lambda^+\times (\omega\setminus n)$. Notice that $\langle D_n \mid n<\omega \rangle $ is a $\subseteq$-decreasing sequence of $\lambda^+$-sized $\tau$-closed sets such that $\bigcap_{n<\omega} D_n = \emptyset$.
	By Corollary~\ref{lambda^+ Dowker space - no two disjoint closed uncountable sets}, there are no two disjoint closed sets of cardinality $\lambda^+$.
	So, by Corollary~\ref{normality429} and Lemma~\ref{Lemma - Dowker general argument}, the space $(X,\tau)$ is Dowker.
\end{proof}

\section{A Dowker space with a normal square}\label{normalsquare}

In \cite{diamond_omega_2_LS_doweker}, Szeptycki proved that, assuming $ \diamondsuit^*(S) $ for a stationary $S\s E^{\omega_2}_{\omega_1}$,
there exists a ladder-system over a subset of $S$ whose corresponding ladder-system space is a Dowker space having a normal square.
As seen in Section~\ref{sectionladdersystemspace}, the hypothesis may be reduced to $ \clubsuit_{\ad}(\{E^{\omega_2}_{\omega_1}\},1,2) $
and still give a ladder-system whose corresponding space is Dowker,\footnote{Recall that 
by Remark~\ref{diamondsuit iff clubsuit and ch} and Lemma~\ref{lemma216},
for $S\s E^{\omega_2}_{\omega_1}$, $\diamondsuit^*(S)\implies\diamondsuit(E^{\omega_2}_{\omega_1})\implies \clubsuit(E^{\omega_2}_{\omega_1})\implies \clubsuit_{\ad}(E^{\omega_2}_{\omega_1},\omega_1,{<}\omega)\implies \clubsuit_{\ad}(E^{\omega_2}_{\omega_1},1,2) $.}
since $E^{\omega_2}_{\omega_1}$ is a non-reflecting stationary subset of $\omega_2$.
But what about the normal square?

In this short section, we point out that the $\diamondsuit^*$ hypothesis may be reduced to an assumption in the language of the Brodsky-Rinot proxy principle (see Definition~\ref{proxyprinciple} below).
For the rest of this section, let $\lambda$ denote an infinite regular cardinal.
\begin{prop}\label{prop61} $\p_{\lambda}^-(\lambda^+,2,\sqleft{\lambda^+},\lambda,\{E^{\lambda^+}_{\lambda}\})$
entails the existence of a ladder-system over a subset of $E^{\lambda^+}_{\lambda}$ 
whose corresponding ladder-system space $(\lambda^+,\tau)$ is a Dowker space having a normal square.
\end{prop}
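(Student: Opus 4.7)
The plan is to unpack $\p_{\lambda}^-(\lambda^+,2,\sqleft{\lambda^+},\lambda,\{E^{\lambda^+}_{\lambda}\})$ into a $\sqleft{\lambda^+}$-coherent $C$-sequence whose restriction to $E^{\lambda^+}_\lambda$ is an $\ad$-ladder system witnessing $\clubsuit_{\ad}(\mathcal S,1,2)$ for some countable partition $\mathcal S$ of $E^{\lambda^+}_\lambda$; run the construction of Section~\ref{sectionladdersystemspace} to land inside the hypotheses of Corollary~\ref{clubsuit_AD_space_not_countably_paracompact}, so that the resulting ladder-system space $(\lambda^+,\tau)$ is Dowker; and then use the additional rigidity provided by $\sqleft{\lambda^+}$-coherence to upgrade normality of $(\lambda^+,\tau)$ to normality of its square.

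First I would extract from the proxy principle a $\sqleft{\lambda^+}$-coherent sequence $\vec C=\langle C_\alpha\mid\alpha<\lambda^+\rangle$ with $\otp(C_\alpha)=\lambda$ for all $\alpha\in E^{\lambda^+}_\lambda$, enjoying the pair-guessing clause that for every two cofinal subsets $B_0,B_1$ of $\lambda^+$, the set $\{\alpha\in E^{\lambda^+}_\lambda\mid \sup(C_\alpha\cap B_0)=\sup(C_\alpha\cap B_1)=\alpha\}$ is stationary. Setting $L_\alpha:=C_\alpha$ on $E^{\lambda^+}_\lambda$, the $\sqleft{\lambda^+}$-coherence together with the fixed order-type $\lambda$ forces $\sup(L_\alpha\cap L_{\alpha'})<\alpha$ for $\alpha<\alpha'$ in $E^{\lambda^+}_\lambda$: either $\alpha\notin\acc(C_{\alpha'})$, so the intersection is bounded below $\alpha$ directly; or $C_\alpha=C_{\alpha'}\cap\alpha$, contradicting $\otp(C_{\alpha'}\cap\alpha)<\otp(C_{\alpha'})=\lambda=\otp(C_\alpha)$. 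Thus $\vec L$ is an $\ad$-ladder system, and by Lemma~\ref{adone} it witnesses $\clubsuit_{\ad}(\mathcal S,1,2)$ for any countable partition $\mathcal S$ of $E^{\lambda^+}_\lambda$ into stationary sets. Forming the ladder-system topology $\tau$ on $\lambda^+$ as in Section~\ref{sectionladdersystemspace}, Lemma~\ref{lemma62}(2) (applicable because $E^{\lambda^+}_\lambda$ is non-reflecting) tells us that $\vec L$ is almost $\mathcal P_0$, and then Corollary~\ref{clubsuit_AD_space_not_countably_paracompact} yields that $(\lambda^+,\tau)$ is Dowker.

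For the normality of $(\lambda^+,\tau)^2$ I would follow Szeptycki's blueprint from \cite{diamond_omega_2_LS_doweker}. Given disjoint $\tau\times\tau$-closed $H,K\s(\lambda^+)^2$, an analog of Lemma~\ref{another lambda^+ dowker - no two disjoint big closed sets clubsuit dowker} in the square (proved via the pair-guessing clause applied to the two coordinate projections of $H\cup K$) localizes at least one of them below some $\xi\in\lambda^+\setminus\bar S$ in each coordinate. One then iterates the one-dimensional fattening procedure from Lemma~\ref{normal clubsuit dowker} in alternating coordinates, producing a matrix $\langle (H^n,K^n)\mid n<\omega\rangle$ of disjoint closed supersets of $(H,K)$ whose unions are open and disjoint. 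The only nontrivial step is the two-dimensional uniformization needed to produce, at each stage, a single coloring $c^*:\xi\to 2$ that is eventually constant along $L_\alpha$ in \emph{both} coordinates at once; this is where the $\sqleft{\lambda^+}$-coherence is used: the coherent regressive disjointifier supplied by (a coherent refinement of) Proposition~\ref{Lemma - non-reflecting stat, diagonlization of initial seg} allows a single transversal $f_\xi$ to serve both coordinates simultaneously. The pair-guessing clause then closes the argument: any putative leak in the separation produces a pair $(B_0,B_1)$ of cofinal subsets of $\lambda^+$ and a common $L_\alpha$-accumulator $\alpha$, forcing $(\alpha,\alpha)\in\overline H\cap\overline K$ and contradicting $H\cap K=\emptyset$.

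The hard part, as I see it, is exactly this two-dimensional uniformization. Szeptycki's original $\diamondsuit^*$ argument is fundamentally \emph{anticipatory}: at each $\alpha\in E^{\lambda^+}_\lambda$ the small guess-list $\mathcal A_\alpha$ eventually catches every pair $(B_0\cap\alpha,B_1\cap\alpha)$, and the ladder $L_\alpha$ is tailored in advance so as to rule out bad local configurations near the diagonal. Under our weaker hypothesis the ladder is handed to us by the proxy principle and cannot be chosen with such foresight; the anticipation feature must instead be emulated, step by step, by combining the pair-guessing clause with the coherent disjointification furnished by $\sqleft{\lambda^+}$. Verifying that this substitution really suffices to support the alternating fattening construction--and in particular that the single coloring $c^*$ produced from $f_\xi$ genuinely uniformizes in both coordinates against any $\tau\times\tau$-closed configuration--is the technical heart of the proof.
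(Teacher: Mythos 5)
There is a genuine gap, and it lies exactly where you locate ``the technical heart.'' Your plan hinges on extracting ``additional rigidity'' from $\sqleft{\lambda^+}$-coherence and using a coherent disjointifier to perform a two-dimensional uniformization. But $\sqleft{\lambda^+}$ is the \emph{trivial} coherence relation at $\kappa=\lambda^+$: the paper's Definition~\ref{proxyprinciple} of $\p_\xi^-(\kappa,2,\sqleft{\kappa},\theta,\mathcal S)$ imposes no coherence clause whatsoever on $\langle C_\alpha\mid\alpha<\kappa\rangle$, only the order-type bound and the hitting clause. So the case split ``either $\alpha\notin\acc(C_{\alpha'})$ or $C_\alpha=C_{\alpha'}\cap\alpha$'' is unavailable (almost-disjointness does hold, but simply because $\otp(C_{\alpha'}\cap\alpha)<\lambda=\cf(\alpha)$), and, more seriously, the ``coherent refinement of Proposition~\ref{Lemma - non-reflecting stat, diagonlization of initial seg}'' that your square-normality argument rests on has no foundation in the hypothesis. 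A smaller but real gap of the same kind: a $\p^-$-sequence for $\{E^{\lambda^+}_\lambda\}$ does not witness $\clubsuit_{\ad}(\mathcal S,1,2)$ for an \emph{arbitrary} countable partition $\mathcal S$; the paper must first apply Ulam's theorem to the $\lambda^+$-complete ideal of sets on which the sequence fails to guess, in order to manufacture a partition each of whose cells still carries the guessing.

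The idea your plan is missing --- and which is the actual content of the paper's proof --- is that the proxy principle self-improves from guessing cofinal sets to guessing families of \emph{finite} sets (Fact~\ref{Fact Good proxy}: for $\mathcal B_i\s[\kappa]^{<\omega}$ with $\mup(\mathcal B_i)=\kappa$, stationarily many $\alpha$ satisfy $\mup\{x\in\mathcal B_i\mid x\s C_\alpha\}=\alpha$). Applying this to $4$-element sets $\{\xi^0_\epsilon,\zeta^0_\epsilon,\xi^1_\epsilon,\zeta^1_\epsilon\}$ drawn from two dominating subsets $B_0,B_1$ of $\lambda^+\times\lambda^+$ yields Clause~(\ref{lemma - ladder-system club sequence with square - Clause square hitting}) of Lemma~\ref{lemma - ladder-system club sequence with square}: stationarily many $\alpha$ (in all but finitely many cells $S_n$) with $(s_\alpha\times s_\alpha)\cap B_i$ dominating $(\alpha,\alpha)$ for both $i$. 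It is this two-dimensional \emph{domination guessing} by the ladders, not any uniformization phenomenon, that replaces the ``anticipatory'' role of $\diamondsuit^*$ in Szeptycki's normal-square argument; once Lemma~\ref{lemma - ladder-system club sequence with square} is in hand, the paper simply runs \cite[Theorem~4]{diamond_omega_2_LS_doweker} verbatim. Your projection-based analog of Lemma~\ref{another lambda^+ dowker - no two disjoint big closed sets clubsuit dowker} would also fail as stated: two disjoint closed subsets of the square can both have unbounded coordinate projections, and only the domination clause controls such configurations.
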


The point is that the hypothesis of the preceding already follows
from $\diamondsuit(E^{\lambda^+}_{\lambda})$, but it is also consistent with its failure (see Clauses (1) and (11) of \cite[Theorem~6.1]{paper23}).

\begin{definition}[Brodsky-Rinot, \cite{paper23}]\label{proxyprinciple}  For a family $\mathcal S\s\mathcal P(\kappa)$,
	and a cardinal $\theta<\kappa$,
	$\p_\xi^-(\kappa,2,\sqleft{\kappa},\theta,\mathcal S)$ asserts the existence of a sequence $\langle C_\alpha\mid\alpha<\kappa\rangle$
	such that:
	\begin{itemize}
		\item for every $\alpha\in\acc(\kappa)$, $C_\alpha$ is a club in $\alpha$ of order-type $\le\xi$;
		\item for every $S\in\mathcal S$ and every sequence $\langle B_i \mid i < \theta \rangle$ of cofinal subsets of $\kappa$,
		there exist stationarily many $\alpha \in S$ such that, for all $i < \theta$,
		$$\sup\{\delta\in B_i\cap\alpha\mid \min(C_\alpha\setminus(\delta+1))\in B_i\}=\alpha.$$
	\end{itemize}
\end{definition}

Note that for every $\mathcal S\s\mathcal P(E^{\lambda^+}_{\lambda})$, any $\p_{\lambda}^-(\lambda^+,2,\sqleft{\lambda^+},\theta,\mathcal S)$-sequence
witnesses the validity of $\clubsuit_{\ad}(\mathcal S,1,\theta)$.

\begin{fact}[Brodsky-Rinot, {\cite[Theorem~4.15]{paper23}}]\label{Fact Good proxy}  For a family $\mathcal S\s\mathcal P(\kappa)$,
	and a cardinal $\theta<\kappa$,
	$\p_\xi^-(\kappa,2,\sqleft{\kappa},\theta,\mathcal S)$ entails the existence of a sequence $\langle C_\alpha\mid\alpha<\kappa\rangle$
	such that:
	\begin{enumerate}
		\item for every $\alpha\in\acc(\kappa)$, $C_\alpha$ is a club in $\alpha$ of order-type $\le\xi$;
		\item\label{Fact Good proxy - Clause hitting}  for every $S\in\mathcal S$ and every sequence $\langle \mathcal B_i \mid i < \theta \rangle$ with $\mathcal B_i\s[\kappa]^{<\omega}$ and $\mup(\mathcal B_i)=\kappa$ for all $i<\theta$,
		there exist stationarily many $\alpha \in S$ such that, for all $i<\theta$,
		$$\mup\{ x\in\mathcal B_i\mid x\s C_\alpha\}=\alpha.$$ 
	\end{enumerate}
\end{fact}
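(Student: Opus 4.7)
The plan is to run the Section~\ref{sectionladdersystemspace} construction on the $C$-sequence extracted from the hypothesis via Fact~\ref{Fact Good proxy}, obtaining the Dowker property automatically, and then to upgrade to normality of the square via a two-dimensional lift of the almost $\mathcal{P}_0$ machinery.

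To start, apply Fact~\ref{Fact Good proxy} to the hypothesized $\p_\lambda^-$-sequence, refined along a fixed countable partition $\{S_{n+1} \mid n<\omega\}$ of $E^{\lambda^+}_\lambda$ into stationary pieces, to obtain a $C$-sequence $\vec C$ such that each $C_\alpha$ is a club of order-type $\le\lambda$ in $\alpha$ and satisfying the finite-set hitting property against each piece of the partition, with $\theta = 2$. Feed $\vec C \restriction \biguplus_n S_{n+1}$ into the construction of Section~\ref{sectionladdersystemspace} to produce a ladder system $\vec L$ and its space $(\lambda^+, \tau)$; since $E^{\lambda^+}_\lambda$ is a non-reflecting stationary set, Lemma~\ref{lemma62}(2) yields that $\vec L$ is almost $\mathcal{P}_0$, and Corollary~\ref{clubsuit_AD_space_not_countably_paracompact} yields that $(\lambda^+, \tau)$ is Dowker.

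The heart of the proof is a two-dimensional enhancement of the almost $\mathcal{P}_0$ property: for every $\xi<\lambda^+$ and every $c : (\bar S \cap \xi)^2 \to 2$, there is $c^* : \xi^2 \to 2$ such that, for every $(\alpha, \beta) \in (\bar S \cap \xi)^2$, there exist $\varepsilon<\alpha$ and $\delta<\beta$ with $c^*(\gamma_0, \gamma_1) = c(\alpha, \beta)$ for all $(\gamma_0, \gamma_1) \in (L_\alpha \setminus \varepsilon) \times (L_\beta \setminus \delta)$. To obtain $c^*$, I would apply Proposition~\ref{Lemma - non-reflecting stat, diagonlization of initial seg} to $\vec L$ (using the non-reflecting nature of $E^{\lambda^+}_\lambda$) to produce regressive $\langle f_\xi \mid \xi<\lambda^+\rangle$ making $\langle L_\alpha \setminus f_\xi(\alpha) \mid \alpha \in \bar S \cap \xi\rangle$ pairwise disjoint. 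The rectangles $\{(L_\alpha \setminus f_\xi(\alpha)) \times (L_\beta \setminus f_\xi(\beta)) \mid (\alpha, \beta) \in (\bar S \cap \xi)^2\}$ are then pairwise disjoint as well: a common point $(\gamma_0, \gamma_1)$ to rectangles indexed by distinct $(\alpha, \beta) \ne (\alpha', \beta')$ would furnish a common point for two members of the 1D pairwise-disjoint family, in whichever coordinate the indices disagree. Define $c^*(\gamma_0, \gamma_1) := c(\alpha, \beta)$ when $(\gamma_0, \gamma_1)$ lies in the unique such rectangle, and $c^*(\gamma_0,\gamma_1):=0$ otherwise.

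Complementing the 2D uniformization, I verify that $(\lambda^+, \tau)^2$ has no two disjoint closed sets both of size $\lambda^+$: given such $H, K$, form $\mathcal B_0 := \{\{h_0,h_1\}\mid (h_0,h_1) \in H\}\cap[\lambda^+]^{<\omega}$ and $\mathcal B_1 := \{\{k_0,k_1\}\mid (k_0,k_1) \in K\}\cap[\lambda^+]^{<\omega}$, and apply Fact~\ref{Fact Good proxy}\eqref{Fact Good proxy - Clause hitting} with $\theta = 2$ to produce $\alpha \in \bar S$ for which, via the hitting, $(\alpha,\alpha)$ is a common closure point of $H$ and $K$ in the product, contradicting disjointness. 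With this fact and the 2D uniformization in hand, normality of the square follows by replicating in two dimensions the iteration argument of Lemma~\ref{normal clubsuit dowker}: given disjoint closed $H, K$ in the square, at least one (say $H$) is bounded in both coordinates by some $\xi \in \lambda^+\setminus\bar S$, so $\xi^2$ is clopen; recursively expand $H^n, K^n$ inside $\xi^2$ using the 2D uniformizers of their indicator colorings, and take unions to produce the separating open sets. The main obstacle is the rectangular-disjointness step for $c^*$, which routes cleanly through the 1D disjointness of Proposition~\ref{Lemma - non-reflecting stat, diagonlization of initial seg}; a secondary subtlety is ensuring that the closure witness $(\alpha,\alpha)$ genuinely lies in $L_\alpha \times L_\alpha$ rather than merely $C_\alpha \times C_\alpha$, which is achieved by a pigeonhole choice of partition piece so that $H$ and $K$ are concentrated below the relevant level, exactly as in the proof of Lemma~\ref{lemma61}(4).
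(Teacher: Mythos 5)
Your proposal does not address the statement at hand. Fact~\ref{Fact Good proxy} is a purely combinatorial assertion: starting from a $\p_\xi^-(\kappa,2,\sqleft{\kappa},\theta,\mathcal S)$-sequence, which guesses cofinal subsets $B_i$ of $\kappa$ only in the sense that $\sup\{\delta\in B_i\cap\alpha\mid \min(C_\alpha\setminus(\delta+1))\in B_i\}=\alpha$, one must produce a $C$-sequence of the same shape that guesses families $\mathcal B_i\s[\kappa]^{<\omega}$ of finite sets in the $\mup$ sense, i.e., so that members of $\mathcal B_i$ with arbitrarily large minimum are \emph{entirely contained} in $C_\alpha$. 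No topology enters into this. What you have written is instead an outline of a proof of Proposition~\ref{prop61} (the ladder-system Dowker space with a normal square), and --- fatally for present purposes --- your very first step is to ``apply Fact~\ref{Fact Good proxy} to the hypothesized $\p_\lambda^-$-sequence'' in order to obtain the finite-set hitting property. As a proof of Fact~\ref{Fact Good proxy} this is circular: the conclusion is assumed at the outset. (For the record, the paper does not prove this statement either; it is quoted without proof from Brodsky--Rinot \cite[Theorem~4.15]{paper23}.)

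The missing mathematical content is the upgrade from ``hitting a cofinal set in adjacent pairs $\delta,\min(C_\alpha\setminus(\delta+1))$'' to ``containing whole finite sets from $\mathcal B_i$ cofinally often,'' which requires a coding argument along the following lines: for each $\delta<\kappa$ and each $i<\theta$ select some $x\in\mathcal B_i$ with $\min(x)>\delta$, use these selections to manufacture cofinal sets $B_i\s\kappa$ to which the given proxy sequence is applied, and then replace each $C_\alpha$ by a club $C_\alpha^\bullet\s\alpha$ of order-type still at most $\xi$ obtained by interpolating the selected finite sets into the gaps of $C_\alpha$ flagged by the successful guesses. Nothing in your write-up performs, or even gestures at, this conversion; the two-dimensional almost $\mathcal P_0$ machinery and the product-space normality argument you develop are simply answering a different question.
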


For an ordinal $\alpha$, let us say that a subset $x$ of the product $\alpha\times\alpha$ is \emph{dominating}
iff for every $(\beta,\gamma)\in(\alpha,\alpha)$, there exists $(\beta',\gamma')\in x$ with $\beta\le\beta'$ and $\gamma\le\gamma'$.
Now, we are ready to prove the main lemma.

\begin{lemma}\label{lemma - ladder-system club sequence with square}
Suppose that $\p_{\lambda}^-(\lambda^+,2,\sqleft{\lambda^+},\lambda,\{E^{\lambda^+}_{\lambda}\})$ holds.
Then there exist a partition $\langle S_n\mid n<\omega\rangle$ of $\lambda^+$ into stationary sets and a sequence $\langle s_\alpha\mid \alpha\in E^{\lambda^+}_\lambda \rangle $ such that:
	\begin{enumerate}
		\item\label{lemma - ladder-system club sequence with square - Clause type} For each $\alpha\in E^{\lambda^+}_\lambda$, $s_\alpha$ is either empty or a cofinal subset of $\alpha$ of order-type $\lambda$;
		\item\label{lemma - ladder-system club sequence with square - Clause bigcup S_i is open} For all $n<\omega$ and $\alpha\in S_{n+1}$,  $s_\alpha\subseteq \bigcup_{i\leq n} S_i$;
		\item\label{lemma - ladder-system club sequence with square - Clause hitting} For every $k<\omega$, every $\lambda$-sized subfamily $\mathcal F \subseteq [\bigcup_{i\leq k} S_i]^{\lambda^+}$, 
		and every $n\in\omega\setminus(k+1)$, the following set is stationary:
		$$ \{ \alpha\in S_n \mid \forall F\in\mathcal F~[ \sup(s_\alpha\cap F)=\alpha]  \};$$
		\item\label{lemma - ladder-system club sequence with square - Clause square hitting} 
		For every two dominating subsets $B_0,B_1$ of  $\lambda^+\times\lambda^+$, there exists $m<\omega$ such that for every $n\in\omega\setminus m$ the following set is stationary:
		$$ \{ \alpha\in S_n \mid \forall i<2~[(s_\alpha\times s_\alpha)\cap B_i\text{ dominates }(\alpha,\alpha)] \}.$$
	\end{enumerate}
\end{lemma}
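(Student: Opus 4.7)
My plan is to first apply Fact~\ref{Fact Good proxy} to the hypothesis, extracting a $C$-sequence $\langle C_\alpha\mid\alpha<\lambda^+\rangle$ with $\otp(C_\alpha)\le\lambda$ enjoying the following finite-subsets hitting: for every sequence $\langle\mathcal B_i\mid i<\lambda\rangle$ of families in $[\lambda^+]^{<\omega}$ with $\mup(\mathcal B_i)=\lambda^+$, stationarily many $\alpha\in E^{\lambda^+}_\lambda$ satisfy $\mup\{x\in\mathcal B_i\mid x\s C_\alpha\}=\alpha$ for every $i<\lambda$. Next, by a standard partition lemma for the Brodsky--Rinot proxy principle (in the style of Fact~\ref{clubfacts}\eqref{many clubsuit from one}; see \cite{paper23}), partition $E^{\lambda^+}_\lambda$ into stationary sets $\langle T_k\mid 1\le k<\omega\rangle$ while arranging the $C$-sequence to enjoy the same hitting property with $T_k$ in place of $E^{\lambda^+}_\lambda$ for each $k\ge 1$. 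Let $S_0:=\lambda^+\setminus E^{\lambda^+}_\lambda$ and $S_k:=T_k$ for $k\ge 1$, and for $\alpha\in E^{\lambda^+}_\lambda$ with $\alpha\in S_{n+1}$, define $s_\alpha:=C_\alpha\cap\bigcup_{i\le n}S_i$ when this set has order-type $\lambda$, and $s_\alpha:=\emptyset$ otherwise. Clauses~(1) and (2) are then immediate.

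For Clause~(3), given $k<\omega$, $n\ge k+1$, and $\mathcal F\s[\bigcup_{i\le k}S_i]^{\lambda^+}$ of size $\lambda$, I encode each $F\in\mathcal F$ via singletons $\mathcal B_F:=\{\{\beta\}\mid\beta\in F\}$, noting $\mup(\mathcal B_F)=\lambda^+$. The finite-subsets hitting on $T_n$ (with $\theta=\lambda$) produces stationarily many $\alpha\in S_n$ with $\sup(C_\alpha\cap F)=\alpha$ for every $F\in\mathcal F$; since $F\s\bigcup_{i\le n-1}S_i$, we have $C_\alpha\cap F\s s_\alpha$, which forces $s_\alpha$ to be cofinal in $\alpha$ of order-type $\lambda$ and $\sup(s_\alpha\cap F)=\alpha$.

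For Clause~(4), given dominating $B_0,B_1\s\lambda^+\times\lambda^+$, I would recursively extract from each $B_i$ an injective sequence $\langle(\beta^i_\eta,\gamma^i_\eta)\mid\eta<\lambda^+\rangle$ of pairs in $B_i$ with pairwise disjoint supports $\{\beta^i_\eta,\gamma^i_\eta\}$ and with $\min(\beta^i_\eta,\gamma^i_\eta)$ cofinal in $\lambda^+$; this sub-family still dominates $\lambda^+\times\lambda^+$, and each support uniquely recovers its ordered pair. Since $\cf(\lambda^+)>\omega$, there is an $m<\omega$ such that for both $i<2$, $\mathcal B_i:=\{\{\beta^i_\eta,\gamma^i_\eta\}\mid\{\beta^i_\eta,\gamma^i_\eta\}\s\bigcup_{j\le m}S_j\}$ has $\mup(\mathcal B_i)=\lambda^+$. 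For $n>m$, the hitting on $T_n$ (with $\theta=2$) yields stationarily many $\alpha\in S_n$ with $\mup\{x\in\mathcal B_i\mid x\s C_\alpha\}=\alpha$ for both $i$. Given such an $\alpha$ and $(\beta,\gamma)\in\alpha\times\alpha$, applying the $\mup$ condition at $\eta:=\max(\beta,\gamma)$ produces $x=\{\beta^i_{\eta_0},\gamma^i_{\eta_0}\}\in\mathcal B_i$ with $x\s C_\alpha\cap\bigcup_{j\le m}S_j\s s_\alpha$ and $\min(x)>\eta$; the recovered ordered pair $(\beta^i_{\eta_0},\gamma^i_{\eta_0})\in B_i$ then has both coordinates in $s_\alpha$ and exceeds $\max(\beta,\gamma)$, so it dominates $(\beta,\gamma)$ and witnesses that $(s_\alpha\times s_\alpha)\cap B_i$ dominates $(\alpha,\alpha)$.

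The main obstacle I anticipate is the partition step: splitting $E^{\lambda^+}_\lambda$ into $\omega$ stationary pieces while preserving the finite-subsets hitting of the $C$-sequence on each piece. The analogous partition for $\clubsuit$ is immediate (Fact~\ref{clubfacts}\eqref{many clubsuit from one}), and while the corresponding partition lemma for the $\p^-$-framework is available within Brodsky--Rinot, executing it carefully is the crux; everything else reduces to appropriate encoding of subsets and dominating pairs as finite sets feeding into Fact~\ref{Fact Good proxy}.
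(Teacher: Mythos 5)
Your overall architecture coincides with the paper's: partition the relevant stationary set into $\omega$-many pieces each retaining the proxy-style hitting, pass to the $\mup$-form of the hitting via Fact~\ref{Fact Good proxy}, define $s_\alpha$ as the intersection of the $\alpha$-th club with the union of the earlier pieces, and verify Clauses (\ref{lemma - ladder-system club sequence with square - Clause hitting}) and (\ref{lemma - ladder-system club sequence with square - Clause square hitting}) by encoding the given data as families of finite sets with $\mup$ equal to $\lambda^+$. Those two verifications are correct (your disjoint-supports refinement in Clause~(\ref{lemma - ladder-system club sequence with square - Clause square hitting}) is a harmless variant of the quadruple trick used in the paper). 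The genuine gap is exactly where you locate it: the partition step is asserted, not proven, and there is no off-the-shelf ``partition lemma in the style of Fact~\ref{clubfacts}\eqref{many clubsuit from one}'' to cite; the paper proves this step directly. The argument is short but essential: let $\mathcal I$ be the collection of all $T\s E^{\lambda^+}_\lambda$ such that the given sequence fails to witness $\p_{\lambda}^-(\lambda^+,2,\sqleft{\lambda^+},\lambda,\{T\})$. Since $\theta=\lambda$ and $\lambda\cdot\lambda=\lambda$, one may merge $\lambda$-many witnessing families of cofinal sets into a single family, so $\mathcal I$ is a $\lambda^+$-complete ideal containing every non-stationary subset of $E^{\lambda^+}_\lambda$; by Ulam's theorem it is not weakly $\lambda^+$-saturated, which yields $\lambda^+$-many pairwise disjoint $\mathcal I$-positive (hence stationary) sets, of which one keeps countably many as $S_1,S_2,\dots$ and only then invokes Fact~\ref{Fact Good proxy} for that countable family. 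You need to supply this (or an equivalent) argument rather than defer to a cited lemma.

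A second, smaller defect: you set $S_0:=\lambda^+\setminus E^{\lambda^+}_\lambda$, but the lemma requires every $S_n$ to be stationary, and for $\lambda=\omega$ your $S_0$ is the set of successor ordinals together with $0$, which is non-stationary. The Ulam argument above repairs this for free: it produces $\lambda^+$-many pairwise disjoint positive sets, only countably many of which are used as $S_1,S_2,\dots$, so the complement $S_0:=\lambda^+\setminus\bigcup_{n\ge1}S_n$ contains the remaining positive sets and is therefore stationary for every infinite regular $\lambda$.
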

\begin{proof} 
Let $\vec C=\langle C_\alpha\mid\alpha<\lambda^+\rangle$ be a $\p_{\lambda}^-(\lambda^+,\lambda^+,\sqleft{\lambda^+},\lambda,\{E^{\lambda^+}_\lambda\})$-sequence.
Let $\mathcal I$ denote the collection of all $T\s E^{\lambda^+}_\lambda$
such that $\vec C$ is not a $\p_{\lambda}^-(\lambda^+,\lambda^+,\sqleft{\lambda^+},\lambda,\{T\})$-sequence.
Evidently, $\mathcal I$ is a $\lambda^+$-complete ideal over $E^{\lambda^+}_\lambda$. So, by Ulam's theorem,
$\mathcal I$ is not weakly $\lambda^+$-saturated.
This means that exists a sequence $\langle S_\iota\mid \iota<\lambda^+\rangle$ of pairwise disjoint subsets of $E^{\lambda^+}_\lambda$,
such that, for each $\iota<\lambda^+$, 
$\vec C$ is a $\p_{\lambda}^-(\lambda^+,\lambda^+,\sqleft{\lambda^+},\lambda,\{S_\iota\})$-sequence.
In particular, we may fix a family $\mathcal S$ consisting of $\aleph_0$-many pairwise disjoint stationary subsets of $E^{\lambda^+}_{\lambda}$
such that $\p_{\lambda}^-(\lambda^+,\lambda^+,\sqleft{\lambda^+},\lambda,\mathcal S)$ holds and yet $S_0:=\lambda^+\setminus\bigcup\mathcal S$ is stationary.
Fix an injective enumeration $\langle S_{n+1}\mid n<\omega\rangle$ of $\mathcal S$.
	For every $\alpha\in E^{\lambda^+}_{\lambda}$, let $n(\alpha)$ be such that $\alpha\in S_{n(\alpha)}$.
	For each $n<\omega$, let $ W_n:=\bigcup_{i\leq n}S_i $. 
Now, let $\vec D= \langle D_\alpha\mid\alpha<\lambda^+\rangle$ be a	 $\p_{\lambda}^-(\lambda^+,\lambda^+,\sqleft{\lambda^+},\lambda,\{S_{n+1} \mid n<\omega \})$-sequence as in Fact \ref{Fact Good proxy}.
	For every $\alpha\in E^{\lambda^+}_{\lambda}$, let

$$s_\alpha:=\begin{cases}
W_{n(\alpha)-1}\cap D_\alpha,&\text{if }n(\alpha)>0\ \&\ \sup(W_{n(\alpha)-1}\cap D_\alpha)=\alpha;\\
\emptyset,&\text{otherwise.}
\end{cases}$$

	We claim that the sequence $\langle s_\alpha \mid \alpha \in E^{\lambda^+}_{\lambda}\rangle$ is as sought.
	Notice that Clauses (\ref{lemma - ladder-system club sequence with square - Clause type}) and (\ref{lemma - ladder-system club sequence with square - Clause bigcup S_i is open}) 
	hold by our very construction, as $D_\alpha$ has order-type $\lambda$ for every $\alpha\in E^{\lambda^+}_{\lambda}$.
	\begin{claim} 	Let $k<\omega$ and $\mathcal F \subseteq [W_k]^{\lambda^+}$ be a family of size $\lambda$.	
	For every integer $n>k$, $\{ \alpha\in S_n \mid \forall F\in\mathcal F~[ \sup(s_\alpha\cap F)=\alpha]  \}$ is stationary.
	\end{claim}
	\begin{proof} Let $\{\mathcal B_i\mid i<\lambda\}$ be some enumeration of $\{ [F]^1 \mid F\in\mathcal F\}$.
	Evidently, for every $i<\lambda$, $\mathcal B_i\subseteq[\lambda^+]^{<\omega}$ and $\mup(\mathcal B_i)=\lambda^+$.
	Now, by Fact~\ref{Fact Good proxy}, Clause~(\ref{Fact Good proxy - Clause hitting}),
	for every $n<\omega$, the set $G_n$ of all $\alpha \in S_n$ such that, for all $i<\lambda$,
		$$\mup\{ x\in\mathcal B_i\mid x\subseteq D_\alpha\}=\alpha,$$
		is stationary.
		In particular, for all $n>k$, $\alpha\in G_n$ and $F\in\mathcal F$:
		\[\sup(s_\alpha\cap F)=\sup(W_{n(\alpha)-1}\cap D_\alpha\cap F)=\alpha.\qedhere\]
	\end{proof}
	\begin{claim} 	Let $B_0,B_1$ be two dominating subsets of $\lambda^+\times\lambda^+$. 
	 Then there exists some $m<\omega$ such that for every $n\in\omega\setminus m$ the following set is stationary:
		$$ \{ \alpha\in S_n \mid \forall i<2~[(s_\alpha\times s_\alpha)\cap B_i\text{ dominates }(\alpha,\alpha)] \}.$$
    \end{claim}
    \begin{proof} For every $\epsilon<\lambda^+$ and $i<2$, fix $(\xi^i_\epsilon,\zeta^i_\epsilon)\in B_i$ with $\min\{\xi^i_\epsilon,\zeta^i_\epsilon\}>\epsilon$.
    For every $n<\omega$, let 
    $$\mathcal B_n:=\{\{\xi^0_\epsilon,\zeta^0_\epsilon,\xi^1_\epsilon,\zeta^1_\epsilon\}\mid \epsilon<\lambda^+, \max\{n(\xi^0_\epsilon),n(\zeta^0_\epsilon),n(\xi^1_\epsilon),n(\zeta^1_\epsilon)\}=n\}.$$    
    By the pigeonhole principle, we may find $m<\omega$ such that $|\mathcal B_{m}|=\lambda^+$. In particular, 
    $\mathcal B_{m}\s[\lambda^+]^{<\omega}$ and $\mup(\mathcal B_{m})=\lambda^+$.
	Now, by Fact~\ref{Fact Good proxy}, Clause (\ref{Fact Good proxy - Clause hitting}),
	for every $n<\omega$, the set $G_n$ of all $\alpha \in S_n$ such
		$$\mup\{ x\in\mathcal B_m\mid x\subseteq D_\alpha\}=\alpha,$$
		is stationary.
		In particular, for all $n>m$, $\alpha\in G_n$ and $i<2$, 
		$(s_\alpha)^2\cap B_i=(D_\alpha\cap W_{n(\alpha)-1})^2\cap B_i$ and it dominates $(\alpha,\alpha)$.    
    \end{proof}
This completes the proof.
\end{proof}

At this point, the proof of Proposition~\ref{prop61} continues exactly as in \cite[Theorem~4]{diamond_omega_2_LS_doweker},
using the sequences $\langle S_n\mid n<\omega\rangle$ and $\langle s_\alpha\mid\alpha<\lambda^+\rangle$ constructed in the preceding lemma.

\begin{remark} Lemma~\ref{lemma - ladder-system club sequence with square} also implies that 
$\p_{\omega}^-(\omega_1,2,\sqleft{\omega_1},\omega,\{\omega_1\})$ is a weakening of $\clubsuit^*$ sufficient for the constructions of \cite{MR1429177}.
\end{remark}

\section{Acknowledgements}
Some of the results of this paper come from the second author's M.Sc.~thesis written under the supervision of the first author at Bar-Ilan University.
We are grateful to Bill Weiss for kindly sharing with us a scan of Dahroug's handwritten notes with the construction of an Ostaszewski space from a Souslin tree and $\ch$.
Our thanks go to Tanmay Inamdar for many illuminating discussions,
and to Istv\'an Juh\'asz for reading a preliminary version of this paper and providing a valuable feedback.
We also thank the  referee for a useful feedback.

Both authors were partially supported by the Israel Science Foundation (grant agreement 2066/18).
The first author was also partially supported by the European Research Council (grant agreement ERC-2018-StG 802756).

\newcommand{\etalchar}[1]{$^{#1}$}

\end{document}